\newtheorem{theorem}{Theorem}
\newtheorem{lemma}[theorem]{Lemma}
\newtheorem{proposition}[theorem]{Proposition}
\newtheorem{remark}[theorem]{Remark}
\def\beq{\begin{equation}}
\def\eeq{\end{equation}}
\def\beqs{\begin{equation*}}
\def\eeqs{\end{equation*}}
\def\bal#1\eal{\begin{align}#1\end{align}}
\def\bals#1\eals{\begin{align*}#1\end{align*}}
\def\bsp#1\esp{\begin{split}#1\end{split}}
\def\d{{\mathrm{d}}}
\let\e=\varepsilon
\let\pt=\partial
\let\l=\lambda
\let\L=\Lambda
\let\b=\beta
\numberwithin{equation}{section}
\numberwithin{theorem}{section}
\begin{document}
\date{}
\title[   Euler--Poisson limit to the ionic Vlasov--Poisson--Boltzmann  ]
{Global compressible Euler--Poisson limit of the ionic Vlasov--Poisson--Boltzmann system for all cutoff Potentials}
%for the Full Range of Collision Potentials

\author{Qin Ye$^\dagger$, Fujun Zhou$^*$ and Weijun Wu $^\ddagger$}

\address[Qin Ye$^\dagger$]{School of Mathematics, South China University of Technology, Guangzhou 510640, China}
\email{yeqin811@163.com}

\address[Fujun Zhou$^*$, Corresponding author]{School of Mathematics, South China University of Technology, Guangzhou 510640, China}
\email{fujunht@scut.edu.cn}

\address[Weijun Wu$^\ddagger$]{School of Cyber Security,   Guangdong Police College,   Guangzhou 510230,   China}
\email{scutweijunwu@qq.com}

	\begin{abstract}
		The ionic Vlasov--Poisson--Boltzmann system is a fundamental model in dilute collisional plasmas. In this work, we study the compressible ionic Euler--Poisson limit of the ionic Vlasov--Poisson--Boltzmann system for the full range of cutoff potentials $-3 < \gamma \le 1$. By employing a truncated Hilbert expansion together with a novel weighted $H^1_{x,v}$--$W^{1,\infty}_{x,v}$ framework, we prove that the solution of the ionic Vlasov--Poisson--Boltzmann converges globally in time to the smooth global solution of the compressible ionic Euler--Poisson system.
	\end{abstract}

 \makeatletter
\@namedef{subjclassname@2020}{\textup{2020} Mathematics Subject Classification}\makeatother
\subjclass[2020]{35Q83;\,   76N15;\,  82D10;\,  76P05.}
\keywords{ionic Vlasov--Poisson--Boltzmann;\,   compressible  Euler--Poisson limit;\,     hard potentials;\,    soft potentials.}
\maketitle

\tableofcontents

	%%%%%%%%%%%%%%%%%%%%%%%%%%%%%%%%%%%%%%%%%%%%%%%
	\section{Introduction }
	
	\subsection{Problem Setup and Main Results}
	The dynamics of dilute ions result from the combined effects of free transport, binary collisions, and self-consistent electric fields. At the kinetic level, these mechanisms are mathematically captured by the \emph{ionic Vlasov--Poisson--Boltzmann (IVPB) system}:
	\begin{equation} \label{vpb}
		\begin{split}
			\partial_t F^{\varepsilon} + v \cdot \nabla_x F^{\varepsilon}
			- \nabla_x \phi^{\varepsilon} \cdot \nabla_v F^{\varepsilon}
			&= \frac{1}{\varepsilon} Q(F^{\varepsilon},   F^{\varepsilon}),   \\
			\Delta \phi^{\varepsilon} &= e^{\phi^{\varepsilon}} - \int_{\mathbb{R}^3} F^{\varepsilon} \,   \d v.
		\end{split}
	\end{equation}
	Here,   \( F^\varepsilon(t,   x,   v) \geq 0 \) denotes the number density function of ions at time \( t \geq 0 \),   position \( x = (x_1,   x_2,   x_3) \in \mathbb{R}^3 \),   and velocity \( v = (v_1,   v_2,   v_3) \in \mathbb{R}^3 \). The IVPB system arises as the massless-electron limit of the two-species Vlasov--Poisson--Boltzmann equations,   in which electrons equilibrate instantaneously due to their much smaller mass,   giving the Poisson-Boltzmann relation for the electric potential \cite{BardosGolseNguyenSentis2018}. The exponential term $e^{\phi^\varepsilon}$ constitutes a major analytical challenge compared to the classical electronic Vlasov--Poisson system.
	The Boltzmann collision operator $Q(G_1,   G_2)$ for binary elastic collisions is
	\begin{equation*}
		Q(G_1,   G_2) = \int_{\mathbb{R}^3 \times \mathbb{S}^2} |(u - v)   |^{\gamma} b_0(\alpha)\left\{ G_1(v') G_2(u') - G_1(v) G_2(u) \right\} \d u \d\omega,  \quad -3<\gamma\leq 1,
	\end{equation*}
	where  the angular kernel satisfies the Grad cutoff $0\le b_0(\alpha)\le C|\cos\alpha|$ with $\cos\alpha = \omega\cdot\frac{v-u}{|v-u|}$,  and the post-collisional velocities
	$
	v' = v - [(v-u)\cdot\omega]\omega,     u' = u + [(v-u)\cdot\omega]\omega
	$.
	\par
	At the macroscopic level,   dilute ion gases behave as an isentropic charged fluid,   which can be modeled by the compressible ionic Euler--Poisson system:
	\begin{equation} \label{ep}
		\begin{aligned}
			\partial_t \rho + \nabla \cdot (\rho u) &= 0,   \\
			\rho \left( \partial_t u + u \cdot \nabla u \right) &= -\nabla (K \rho^{5/3}) - \rho \nabla \phi,   \\
			\Delta \phi &= e^\phi - \rho,
		\end{aligned}
	\end{equation}
	where \( \rho(t,   x) \) and \( u(t,   x) \) denote the ion density and velocity field,   respectively,   and \( K > 0 \) is a constant related to the equation of state (e.g.,   in the Thomas--Fermi approximation).
	This system can be formally derived from a two-fluid Euler--Poisson model for ions and electrons by taking the massless-electron limit. In this regime,   electrons relax instantaneously to a Boltzmann equilibrium,   resulting in the semilinear Poisson--Boltzmann equation \(\eqref{ep}_3\) for the electric potential \cite{GrenierGuoPausaderSuzuki2020}.	
	\par
	To the best of our knowledge,   the fluid dynamic limit of the IVPB system remains largely open. In this work,   we rigorously derive the compressible ionic Euler--Poisson system \eqref{ep} as the fluid dynamic limit of the IVPB system \eqref{vpb} when the Knudsen number \( \varepsilon \) tends to zero.
	\medskip
	\par
	In order to justify the Euler--Poisson limit, we employ the standard Hilbert expansion for the system \eqref{vpb}, similarly to  \cite{Caflisch1980,GuoCMP2010},
	\begin{equation}\label{exp}
		F^{\varepsilon}=\sum_{n=0}^{2k-1}\varepsilon^nF_n+\varepsilon^kF_R^{\varepsilon};\qquad\phi^{\varepsilon}=\sum_{n=0}^{2k-1}\varepsilon^n\phi_n+\varepsilon^k\phi_R^{\varepsilon}, \quad  k\geq 4.
	\end{equation}
	Applying the Taylor expansion to  $\exp\{\sum_{n=1}^{2k-1}\e^n\phi_n\} $, we have
	\begin{align}\label{ephi}
		e^{\phi^\varepsilon}=&e^{\phi_0} \left(1 + \sum_{n=1}^{2k-1} \varepsilon^n B_n+ \sum_{n=2k}^{\infty} \varepsilon^n H_n \right) \left(1 +e^{ \varepsilon^k \phi_R^\varepsilon}-1   \right)\\\nonumber
		=&e^{\phi_0} \left(1 + \sum_{n=1}^{2k-1} \varepsilon^n B_n  \right)+e^{\phi_0}\sum_{n=2k}^{\infty} \varepsilon^n H_n+ e^{\phi_0} \left(1 + \sum_{n=1}^{2k-1} \varepsilon^n B_n+ \sum_{n=2k}^{\infty} \varepsilon^n H_n \right)\left(e^{ \varepsilon^k \phi_R^\varepsilon}-1   \right),
	\end{align}
	where
	\begin{align}
		\label{B1}B_1:&=\phi_1,  \\\nonumber
		B_n :&= \sum_{m=1}^{n} \frac{1}{m!} \sum_{\substack{n_1 + \cdots + n_m = n \\ 1 \le n_i \le n}} \prod_{i=1}^{m} \phi_{n_i}=\phi_n+ \sum_{m=2}^n \frac{1}{m!} \sum_{\substack{n_1 + \cdots + n_m = n \\ 1 \leq n_i < n}} \prod_{i=1}^m \phi_{n_i}=\phi_n+\widetilde{B}_n, \;  2\leq n\leq2k-1,  	 \\\nonumber
		H_n :&= \sum_{m=2}^{n} \frac{1}{m!} \sum_{\substack{n_1 + \cdots + n_m = n \\ 1 \le n_i \leq2k-1}} \prod_{i=1}^{m} \phi_{n_i}, \; n\geq2k.
	\end{align}
	Plugging the formal expansion \eqref{exp} and \eqref{ephi} into the rescaled equations \eqref{vpb} and comparing the coefficients on both sides
	with different powers of $\e$,   we have
	\begin{align}
		\label{coeff}
		\frac{1}{\varepsilon }: & \quad Q(F_{0},  F_{0})=0,   \\\nonumber
		\varepsilon ^{0}: & \quad \partial _{t}F_{0}+v\cdot \nabla _{x}F_{0} -\nabla_{x}\phi _{0}\cdot \nabla _{v}F_{0}=Q(F_{1},  F_{0})+Q(F_{0},  F_{1}),   \\\nonumber
		& \quad \Delta \phi _{0}=e^{\phi _{0}}-\int_{\mathbb{R}^{3}}F_{0}\d v,   \\\nonumber
		& \quad \cdots \\\nonumber
		\varepsilon ^{n}: & \quad \partial _{t}F_{n}+v\cdot \nabla _{x}F_{n} -\nabla_{x}\phi _{0}\cdot \nabla _{v}F_{n}-\nabla _{x}\phi _{n}\cdot \nabla_{v}F_{0}=\sum_{\substack{ i+j=n+1 \\ i,   j\geq 0}}Q(F_{i},  F_{j}) +\sum_{\substack{ i+j=n \\ i,   j\geq 1}}\nabla _{x}\phi _{i}\cdot \nabla _{v}F_{j},   \\\nonumber
		& \quad \Delta \phi _{n}=e^{\phi_{0}} \phi _{n}+ e^{\phi_{0}}  \widetilde{B}_{n}-\int_{\mathbb{R}^{3}}F_{n}\d v, \;  1\leq n \leq 2k-2,  \\\nonumber
		\varepsilon ^{2k-1}: & \quad \partial _{t}F_{2k-1}+v\cdot \nabla _{x}F_{2k-1}  =\sum_{\substack{ i+j=2k\\ i,   j\geq 1}}Q(F_{i},  F_{j}) +\sum_{\substack{ i+j=2k-1\\ i,   j\geq 0}}\nabla _{x}\phi _{i}\cdot \nabla _{v}F_{j},   \\\nonumber
		& \quad \Delta \phi _{2k-1}=e^{\phi_{0}} \phi _{2k-1}+ e^{\phi_{0}}  \widetilde{B}_{2k-1}-\int_{\mathbb{R}^{3}}F_{2k-1}\d v.
	\end{align}
	\par
	The remainder equations for $F_{R}^{\varepsilon }$ and $\phi_{R}^{\varepsilon }$ are given as follows:
	\begin{equation}
		\begin{split}\label{F_R}
			\partial _{t}F_{R}^{\varepsilon }	+\; &v\cdot \nabla _{x}F_{R}^{\varepsilon}
			- \nabla _{x}\phi _{0}\cdot \nabla _{v}F_{R}^{\varepsilon } - \nabla _{x}\phi_{R}^{\varepsilon }\cdot \nabla _{v}F_{0} - \frac{1}{\varepsilon }
			\{Q(F_{0},  F_{R}^{\varepsilon })+Q(F_{R}^{\varepsilon },  F_{0})\} \\
			=& \varepsilon ^{k-1}Q(F_{R}^{\varepsilon },   F_{R}^{\varepsilon })
			+ \sum_{i=1}^{2k-1}\varepsilon ^{i-1}\{Q(F_{i},   F_{R}^{\varepsilon })+Q(F_{R}^{\varepsilon },   F_{i})\}
			+ \varepsilon ^{k}\nabla _{x}\phi_{R}^{\varepsilon }\cdot \nabla _{v}F_{R}^{\varepsilon } \\
			&  + \sum_{i=1}^{2k-1}\varepsilon ^{i}\{\nabla _{x}\phi _{i}\cdot \nabla_{v}F_{R}^{\varepsilon }+\nabla _{x}\phi _{R}^{\varepsilon }\cdot \nabla_{v}F_{i}\}
			+ \varepsilon ^{k-1}A,   \\
			\Delta \phi_R^\varepsilon =&
			e^{\phi_0}\frac{(e^{\varepsilon^k \phi_R^{\varepsilon}}-1
				)}{{\varepsilon}^k}+G
			- \int_{\mathbb{R}^3} F_R^\varepsilon\,   \d v
			,
		\end{split}
	\end{equation}	
	where
	\begin{align}
		\label{A} A=&\sum_{\substack{ i+j\geq 2k+1 \\ 2\leq i,   j\leq 2k-1}}\varepsilon
		^{i+j-2k}Q(F_{i},   F_{j})-\sum_{\substack{ i+j\geq 2k \\ 1\leq i,   j\leq 2k-1}}%
		\varepsilon ^{i+j-2k+1}\nabla _{x}\phi _{i}\cdot \nabla _{v}F_{j},  \\
		\label{G}
		G=&
		e^{\phi_0}\sum_{n=2k}^{\infty} \varepsilon^{n-k} H_n
		+ e^{\phi_0} \left( \sum_{n=1}^{2k-1} \varepsilon^n B_n+ \sum_{n=2k}^{\infty} \varepsilon^n H_n \right)\frac{(e^{\varepsilon^k \phi_R^{\varepsilon}}-1
			)}{{\varepsilon}^k}.	
	\end{align}
	\medskip
	\par

	Then, let us briefly determine the leading order  $F_0$ and the coefficients $F_n$ for $1\leq n \leq 2k-1$.
	According to $Q(F_{0},  F_{0})=0$ from $\eqref{coeff}_1$,  we deduce $F_{0}$ should be a local Maxwellian $\mu  $:
	\begin{equation}\label{LM}
		F_{0}(t,   x,   v)=\mu  =\frac{\rho _{0}(t,   x)}{(2\pi {\theta } _{0}(t,   x))^{3/2}}%
		e^{-\frac{|v-u_{0}(t,   x)|^{2}}{2{\theta } _{0}(t,   x)}},  \quad {\theta } _{0}(t,   x)=K\rho
		_{0}^{2/3}(t,   x),
	\end{equation}
	where $\rho _{0}(t,   x),   u_{0}(t,   x)$ and ${\theta } _{0}(t,   x)$ represent the macroscopic density,   velocity,   and temperature fields respectively. Note that
	\begin{equation*}
		\int_{\mathbb{R}^{3}}F_{0}\,  \d v=\rho _{0},  \quad \int_{\mathbb{R}^{3}}vF_{0}\,  \d v=\rho _{0}u_{0},  \quad \int_{\mathbb{R}^{3}}|v|^{2}F_{0}\,  \d v=\rho _{0}|u_{0}|^{2}+3\rho _{0}{\theta } _{0}.
	\end{equation*}
	Project the equation for $F_0$ from the $\varepsilon^{0}$ step in $\eqref{coeff}_1$ onto $1,   v,   \frac{|v|^{2}}{2}$,     setting $p_{0}= \rho_{0}{\theta }_{0}$ and  ${\theta }_{0}\equiv K\rho_{0}^{2/3}$,   we deduce that $(\rho_0,u_0,\phi_0)$ satisfies the compressible ionic Euler--Poisson system \eqref{ep}.
	\par
	
	We define the linearized Boltzmann operator at $\mu$ as
	\begin{align}\label{deln}
		L g =-\frac{1}{\sqrt{\mu}} \left\{ Q(\sqrt{\mu} g, \mu) + Q(\mu, \sqrt{\mu} g) \right\}, \quad
		\Gamma(g_1, g_2) = \frac{1}{\sqrt{\mu}} Q(\sqrt{\mu} g_1, \sqrt{\mu} g_2).
	\end{align}
	Recall that $L \geq 0$ and the null space of $L$ is generated by the orthonormal basis:
	\begin{align*}
		\chi_0(v) \equiv \frac{1}{\sqrt{\rho_0}} \sqrt{\mu}, \quad
		\chi_i(v) \equiv \frac{v^i - u_0^i}{\sqrt{\rho_0 {\theta }_0}} \sqrt{\mu}(i=1, 2, 3), \quad
		\chi_4(v) \equiv \frac{1}{\sqrt{6\rho_0}} \Big \{ \frac{|v - u_0|^2}{{\theta }_0} - 3  \Big \} \sqrt{\mu},
	\end{align*}
	and  $\langle \chi_i, \chi_j \rangle = \delta_{ij}$ for $0 \leq i, j \leq 4$ obviously. For each $i\geq 1$,
	split $\frac{F_{i}}{\sqrt{\mu}}=\mathbb{P}\left(\frac{F_{i}}{\sqrt{\mu}}\right)+\mathbb{P}^{\bot} \left(\frac{F_{i}}{\sqrt{\mu}}  \right)$,
	\begin{equation} \label{F_1}
		\mathbb{P} \Big (\frac{F_{i}}{\sqrt{\mu}} \Big )  \equiv  \Big \{ \frac{\rho _{i}}{\sqrt{\rho _{0}}}\chi _{0}+\sum_{j=1}^{3}%
		\sqrt{\frac{\rho _{0}}{{\theta } _{0}}}u_{i}^{j} \chi _{j}+\sqrt{\frac{%
				\rho _{0}}{6}}\frac{{\theta } _{i}}{{\theta } _{0}}\chi _{4} \Big \}.
	\end{equation}
	Using the same approach as  \cite{GuoCMP2010}, for $0\leq n \leq 2k-2 $, once $F_n$ are found, then
	\begin{equation}\label{i-p}
		\mathbb{P}^{\bot}\left(\frac{F_{n+1}}{\sqrt{\mu}}\right)
		=- L^{-1}\bigg[ {\sqrt{\mu}}^{-1}\bigg(
		\left\{\partial
		_{t}+v\cdot \nabla _{x}\right\}F_{n} -\sum_{\substack{ i+j=n \\ i, j\geq 0}}\nabla
		_{x}\phi _{i}\cdot \nabla _{v}F_{j}-\sum_{\substack{ i+j=n+1 \\ i, j\geq 1}}%
		Q(F_{i},F_{j})\bigg) \bigg].
	\end{equation}
	For the macroscopic part, $\rho _{n+1},u_{n+1},{\theta } _{n+1}$ satisfy
	\begin{align}
		\label{eqF_k}
		& \partial _{t}\rho _{n+1}+\nabla \cdot (\rho _{0}u_{n+1}+\rho
		_{n+1}u_{0})=0, \\\nonumber
		& \rho _{0} \Big \{\partial _{t}u_{n+1}+(u_{n+1}\cdot \nabla )u_{0}+(u_{0}\cdot
		\nabla )u_{n+1}+\nabla \phi _{n+1} \Big \}-\frac{\rho _{n+1}}{\rho _{0}}\nabla
		(\rho _{0}{\theta } _{0})+\nabla  \Big (\frac{\rho _{0}{\theta } _{n+1}+3{\theta } _{0}\rho
			_{n+1}}{3} \Big )=f_{n}, \\\nonumber
		& \rho _{0} \Big \{\partial _{t}{\theta } _{n+1}+\frac{2}{3}({\theta } _{n+1}\nabla
		\cdot u_{0}+3{\theta } _{0}\nabla \cdot u_{n+1})+u_{0}\cdot \nabla {\theta }
		_{n+1}+3u_{n+1}\cdot \nabla {\theta } _{0} \Big \}=g_{n}, \\\nonumber
		& \Delta \phi _{n+1}=e^{\phi_{0}} \phi _{n+1}+ e^{\phi_{0}} \widetilde{B}_{n+1}-\rho _{n+1},
	\end{align}
	where
	\begin{equation*}
		\begin{split}
			f_{n}& =-\partial _{j}\int  \Big \{(v^{i}-u_{0}^{i})(v^{j}-u_{0}^{j})-\delta _{ij}
			\frac{|v-u_{0}|^{2}}{3} \Big \}F_{n+1} \d v-\sum_{\substack{ i+j=n+1 \\ i, j\geq 1}}
			\rho _{j}\nabla _{x}\phi _{i}, \\[4pt]
			g_{n}& =-\partial _{i} \Big \{\int (v^{i}-u_{0}^{i})(|v-u_{0}|^{2}-5{\theta }
			_{0})F_{n+1}\,\d v+2u_{0}^{j}\int  \Big [(v^{i}-u_{0}^{i})(v^{j}-u_{0}^{j})-\delta
			_{ij}\frac{|v-u_{0}|^{2}}{3} \Big ]F_{n+1}\,\d v \Big \} \\[4pt]
			& \quad -2u_{0}\cdot f_{n}-\sum_{\substack{ i+j=n+1 \\ i, j\geq 1}}(\rho
			_{0}u_{j}+\rho _{j}u_{0})\cdot \nabla _{x}\phi _{i}.
		\end{split}
	\end{equation*}
	Here we use the subscript $n$ for forcing terms $f$ and $g$ in order to emphasize that
	the right-hand sides depend only on $F_{i}$'s and $\nabla _{x}\phi _{i}$'s
	for $0\leq i\leq n$.
	\medskip
	\par
	For the estimate of the remainder $F_R^\varepsilon$,
	when the velocity $|v|$ is large, the term $\frac{(\partial_t + v \cdot \nabla_x)\sqrt{\mu}}{\mu}$ becomes difficult to control. To overcome this, we introduce a global Maxwellian
	
	\[
	\mathcal{M} = \frac{1}{(2\pi {\theta }_M)^{3/2}} \exp\left( -\frac{|v|^2}{2{\theta }_M} \right),
	\]
	where $  {\theta }_M = K n_0^{2/3}$ and $n_0$ is  a given background   density.  	Define the Boltzmann operator at $\mathcal{M}$ as
	\begin{align}\label{delm}
		L_{M} g =-\frac{1}{\sqrt{\mathcal{M}}} \left\{ Q(\sqrt{\mathcal{M}} g,   \mathcal{M}) + Q(\mathcal{M},   \sqrt{\mathcal{M}} g) \right\},    \quad
		\Gamma_{M}(g_1,   g_2) =\frac{1}{\sqrt{\mathcal{M}}} Q(\sqrt{\mathcal{M}} g_1,   \sqrt{\mathcal{M}} g_2).
	\end{align}As \cite{GuoCMP2010},  we assume that
	$
	{\theta }_M\leq  \inf_{t,   x}{\theta_0(t,   x) }\leq\max_{t,   x}{\theta_0(t,   x) }\leq2	{\theta }_M,  	
	$
	which implies there exist two constant $c>0,   C>0$ and $\varpi\in(1/2,   1)$ such that
	\begin{align}\label{nunum}
		c\mathcal{M}\leq \mu \leq C \mathcal{M}^{\varpi}.
	\end{align}
	Define the fluctuations $f^\varepsilon$ and $h^\varepsilon$ and the weight function $w$ by
	\begin{equation} \label{f}
		\frac{F_R^\varepsilon}{\sqrt{\mu}}=f^\varepsilon =\frac{\sqrt{\mathcal{M}}}{w  \sqrt{\mu}} h^\varepsilon,  \qquad  w =
		e^{\widetilde{\vartheta}|v|^2} \text{ with } \widetilde{\vartheta}= \widetilde{\vartheta}(t) = \vartheta \bigl[ 1 + (1+t)^{-\sigma} \bigr],
	\end{equation}
	where $0<\vartheta\ll1$,   and   $\sigma$ is chosen such that $\sigma>\frac13$ for $0\le\gamma\le1$,   while $\frac13<\sigma<\frac23$ for $-3<\gamma<0$.
	\smallskip

	We  state the following two main results of this work.
	\begin{theorem}[\bf Soft potentials $-3< \gamma <0$]\label{t1} \;
		Let $F_0 =\mu $ be as in \eqref{LM}. Fix an integer $s_1\ge 5$ and a constant $n_0>0$.  Assume the initial perturbation $(\rho_0^{\mathrm{in}}, u_0^{\mathrm{in}})$ satisfies $\nabla_x\times u_0^{\mathrm{in}}=0$ and, for the constant $0 < \delta_0 \ll 1$,
		\begin{align*}
			\|(\rho_0^{\mathrm{in}}-n_0,   u_0^{\mathrm{in}})\|_{H^{2s_1+1}} + \|(\rho_0^{\mathrm{in}}-n_0,   u_0^{\mathrm{in}})\|_{W^{s_1+12/5,   10/9}} \le 2\delta_0,
		\end{align*}
		so that the existence of a unique global solution $(\rho_0(t,x), u_0(t,x), \phi_0(t,x))$
		to the Euler--Poisson system \eqref{ep} is guaranteed by \cite{GuoCMP2011}.
		Then, the remainder $F_R^\varepsilon = \sqrt{\mu} f^\varepsilon$ defined in \eqref{F_R} satisfies the following bound:
		\begin{align}\label{t1s}
			&\sup_{0\le t \le \varepsilon^{-1/2}} \bigg\{ \|f^\varepsilon(t)\| + \|\sqrt{\varepsilon} \nabla_{x,v} f^\varepsilon(t)\| + \|\sqrt{\varepsilon} \langle v\rangle^{2-2\gamma} f^\varepsilon(t)\| + \|\varepsilon^{5/4} \langle v\rangle^{-\gamma} \nabla_x f^\varepsilon(t)\| \nonumber\\
			&\qquad\qquad\;\;\;\; + \|\phi_R^\varepsilon(t)\|_{H^1_x} + \varepsilon^2 \left\| w \frac{\sqrt{\mu} f^\varepsilon(t)}{\sqrt{\mathcal{M}}} \right\|_{W^{1,  \infty}_{x,v}} \bigg\}\\ \nonumber
			\lesssim& 1 + \|f^\varepsilon(0)\| + \|\sqrt{\varepsilon} \nabla_{x,v} f^\varepsilon(0)\| + \|\sqrt{\varepsilon} \langle v\rangle^{2-2\gamma} f^\varepsilon(0)\| + \|\varepsilon^{5/4} \langle v\rangle^{-\gamma} \nabla_x f^\varepsilon(0)\| \\\nonumber
            &\qquad\qquad\;\;\;\; + \|\phi_R^\varepsilon(0)\|_{H^1_x} + \varepsilon^2 \left\| w \frac{\sqrt{\mu} f^\varepsilon(0)}{\sqrt{\mathcal{M}}} \right\|_{W^{1,  \infty}_{x,v}}.
		\end{align}
	\end{theorem}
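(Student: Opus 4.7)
The plan is to run a continuity argument on $[0,\varepsilon^{-1/2}]$ for the six quantities appearing on the left of \eqref{t1s}, based on a coupled weighted $L^2$--$L^\infty$ energy scheme for $(f^\varepsilon,\phi_R^\varepsilon)$ built around the remainder system \eqref{F_R}. I would first derive the basic $L^2_{x,v}$ energy identity by testing the equation for $f^\varepsilon$ against $f^\varepsilon$: the singular term $-\frac{1}{\varepsilon}\{Q(F_0,F_R^\varepsilon)+Q(F_R^\varepsilon,F_0)\}/\sqrt{\mu}$ becomes $\frac{1}{\varepsilon}\langle L f^\varepsilon,f^\varepsilon\rangle$, producing the coercive microscopic dissipation $\varepsilon^{-1}\|\mathbb{P}^\perp f^\varepsilon\|_\nu^2$. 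The macroscopic part $\mathbb{P} f^\varepsilon$ is controlled through its five moments and the elliptic estimate for the Poisson-Boltzmann remainder: linearizing $e^{\phi_0}(e^{\varepsilon^k\phi_R^\varepsilon}-1)/\varepsilon^k = e^{\phi_0}\phi_R^\varepsilon + O(\varepsilon^k)$ turns $\eqref{F_R}_2$ into $(\Delta-e^{\phi_0})\phi_R^\varepsilon=\int_{\bbR^3}F_R^\varepsilon\,\d v+(\text{lower order})$, whose coercivity yields $\|\phi_R^\varepsilon\|_{H^1_x}\lesssim \|f^\varepsilon\|+\varepsilon^k(\cdots)$ and lets the electric coupling term $\nabla\phi_R^\varepsilon\cdot\nabla_v F_0$ be absorbed. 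Differentiating the equation in $(x,v)$ and multiplying by the Guo-style $\sqrt{\varepsilon}$ scaling produces the $\|\sqrt{\varepsilon}\nabla_{x,v}f^\varepsilon\|$ bound, with the usual care that $v$-derivatives produce transport commutators killed by $x$-derivative dissipation.

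Next I would incorporate the velocity weight for the soft-potential case. Testing against $\langle v\rangle^{2(2-2\gamma)}f^\varepsilon$ (resp.\ $\langle v\rangle^{-2\gamma}\nabla_x f^\varepsilon$ with the $\varepsilon^{5/4}$ scaling) yields a dissipative term $\langle v\rangle^{\gamma+2(2-2\gamma)}|f^\varepsilon|^2$ from the collision frequency that, on its own, is too weak to control the convective loss $v\cdot\nabla_x$ or the streaming term $(\partial_t+v\cdot\nabla_x)\sqrt{\mu}/\sqrt{\mu}$ at large $|v|$; this is precisely why the exponential weight $w=e^{\widetilde\vartheta(t)|v|^2}$ with \emph{decreasing} $\widetilde\vartheta(t)=\vartheta[1+(1+t)^{-\sigma}]$ is introduced in the $L^\infty$ step. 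The dissipation $-\widetilde\vartheta'(t)|v|^2 w^2|h^\varepsilon|^2=\vartheta\sigma(1+t)^{-\sigma-1}|v|^2 w^2|h^\varepsilon|^2$ from the time derivative of the weight supplies the missing high-velocity gain, which is the reason for the condition $\frac13<\sigma<\frac23$ when $-3<\gamma<0$. The $W^{1,\infty}_{x,v}$ bound for $w\sqrt{\mu}f^\varepsilon/\sqrt{\mathcal M}=h^\varepsilon$ is obtained by rewriting \eqref{F_R} as a transport equation for $h^\varepsilon$ with collision kernel $L_M$ from \eqref{delm}, splitting $L_M=\nu_M-K_M$, integrating along the characteristics of $\partial_t+v\cdot\nabla_x-\nabla_x\phi_0\cdot\nabla_v$, and iterating the $K_M$ kernel twice in the Vidav-Guo fashion so that the resulting double integral can be bounded by $\|f^\varepsilon\|$ already controlled in the energy step. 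Derivatives in $(x,v)$ are handled by differentiating the characteristic system and using that the extra $\varepsilon^2$ prefactor absorbs the $1/\varepsilon$ losses.

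The hardest step will be closing the coupling between the $L^\infty$ estimate of $h^\varepsilon$ and the nonlinear sources coming from $\varepsilon^{k-1}A$, $\varepsilon^{k-1}\Gamma(f^\varepsilon,f^\varepsilon)$ and $\varepsilon^k\nabla_x\phi_R^\varepsilon\cdot\nabla_v f^\varepsilon$, while simultaneously handling the genuinely nonlinear Poisson-Boltzmann remainder $e^{\phi_0}(e^{\varepsilon^k\phi_R^\varepsilon}-1)/\varepsilon^k$ in $\eqref{F_R}_2$: we need the a priori smallness $\varepsilon^k\|\phi_R^\varepsilon\|_{L^\infty_x}\ll 1$, supplied by Sobolev embedding from the $H^1_x$ control and the $W^{1,\infty}_{x,v}$ smallness of $h^\varepsilon$ after multiplying by $\varepsilon^2$, to linearize this term consistently throughout $[0,\varepsilon^{-1/2}]$. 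The decisive balance is that the linear dissipation and the weight-time-derivative dissipation jointly dominate not only the soft-potential velocity losses but also the Grönwall growth on the interval of length $\varepsilon^{-1/2}$; once this is established the continuity argument closes, all six quantities on the left of \eqref{t1s} are bounded by the right-hand side, and the global Euler--Poisson solution from \cite{GuoCMP2011} provides the coefficients $F_n,\phi_n$ needed to make the source terms summable uniformly in time.
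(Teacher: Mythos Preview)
Your outline has the right skeleton but misses a structural cancellation that is essential for closing on $[0,\varepsilon^{-1/2}]$. When you test the $f^\varepsilon$ equation against $\theta_0 f^\varepsilon$, the term $\nabla_x\phi_R^\varepsilon\cdot\nabla_v F_0/\sqrt{\mu}$ produces $\int\bigl(\int(v-u_0)\sqrt{\mu}f^\varepsilon\,\mathrm{d}v\bigr)\cdot\nabla_x\phi_R^\varepsilon\,\mathrm{d}x$. Your elliptic bound $\|\phi_R^\varepsilon\|_{H^1}\lesssim\|f^\varepsilon\|$ controls this only by $C\|f^\varepsilon\|^2$ with an $O(1)$ coefficient, and Gr\"onwall over an interval of length $\varepsilon^{-1/2}$ then gives exponential blow-up. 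The paper instead derives a separate energy identity for the Poisson equation (Lemma~\ref{pv}): applying $\partial_t$ to $\eqref{F_R}_2$ and pairing with $\phi_R^\varepsilon$ produces $\tfrac12\tfrac{\mathrm d}{\mathrm dt}\|\nabla\phi_R^\varepsilon\|^2$ together with \emph{exactly} $-\int\bigl(\int v\sqrt{\mu}f^\varepsilon\,\mathrm{d}v\bigr)\cdot\nabla_x\phi_R^\varepsilon\,\mathrm{d}x$, since $\partial_t\!\int F_R^\varepsilon\,\mathrm{d}v=-\nabla_x\!\cdot\!\int v F_R^\varepsilon\,\mathrm{d}v$. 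Adding this identity to the $f^\varepsilon$ energy estimate cancels the dangerous piece and leaves only the $u_0$-part, which carries the time-integrable factor $\|u_0\|_\infty\lesssim(1+t)^{-16/15}$. The analogous device with $\varepsilon\nabla_x^2\phi_R^\varepsilon$ is needed again at the $\nabla_x$-level; without it the $\sqrt{\varepsilon}\nabla_x f^\varepsilon$ estimate does not close either.

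A second point you under-weight is the $x$-dependence of $L$. Because $\mu$ is a \emph{local} Maxwellian, $[\nabla_x,L]f^\varepsilon$ contains factors $\nabla_x\sqrt{\mu}/\sqrt{\mu}\sim\langle v\rangle^2$, and for $-3<\gamma<0$ the resulting term $\langle[\nabla_x,L]\mathbb{P}^\perp f^\varepsilon,\nabla_x f^\varepsilon\rangle$ cannot be absorbed by the $\nu$-dissipation. The paper handles this via the decomposition $L=L_M-\mathcal I$ (Lemma~\ref{LLMa}) and a careful micro--macro splitting (Lemma~\ref{pl2}), showing the commutator is bounded by $\delta_0\|\langle v\rangle^2\mathbb{P}^\perp f^\varepsilon\|_\nu^2$; this is the structural reason the weighted norm $\|\sqrt{\varepsilon}\langle v\rangle^{2-2\gamma}\mathbb{P}^\perp f^\varepsilon\|$ enters the functional, not merely generic soft-potential losses. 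Likewise the specific scaling $\varepsilon^{5/4}$ on $\langle v\rangle^{-\gamma}\nabla_x f^\varepsilon$ is forced: the transport cross term $\langle\nabla_x\mathbb{P}^\perp f^\varepsilon,\varepsilon\nabla_v\mathbb{P}^\perp f^\varepsilon\rangle$ is estimated by $\eta\|\nabla_v\mathbb{P}^\perp f^\varepsilon\|_\nu^2+C_\eta\varepsilon^2\|\langle v\rangle^{-\gamma}\nabla_x f^\varepsilon\|_\nu^2$, and closing the latter produces a lower-order term $C_\eta\sqrt{\varepsilon}\|\sqrt{\varepsilon}f^\varepsilon\|_{H^1}^2$ (see \eqref{wxl1}) which is integrable only if the weighted $\nabla_x$-estimate sits at level $\varepsilon^{5/4}$ rather than $\sqrt{\varepsilon}$. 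Your phrase ``usual care that $v$-derivatives produce transport commutators killed by $x$-derivative dissipation'' does not capture this chain of dependencies.
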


	\begin{theorem}[\bf Hard potentials $0\le \gamma \le 1$]\label{t2}
		Under the same assumptions as in Theorem~\ref{t1},    the remainder $F_R^\varepsilon = \sqrt{\mu} f^\varepsilon$ defined in \eqref{F_R} satisfies   the following bound:
		\begin{align}\label{t2h}
			&\sup_{0\le t \le \varepsilon^{-1/2}} \Big\{ \|f^\varepsilon(t)\| + \|\sqrt{\varepsilon} \nabla_{x,v} f^\varepsilon(t)\| + \|\sqrt{\varepsilon} \langle v\rangle^2 f^\varepsilon(t)\| + \|\phi_R^\varepsilon(t)\|_{H^1_x}  + \varepsilon^2 \left\| w \frac{\sqrt{\mu} f^\varepsilon(t)}{\sqrt{\mathcal{M}}} \right\|_{W^{1,  \infty}_{x,v}} \Big\} \nonumber\\
			\lesssim& 1 + \|f^\varepsilon(0)\| + \|\sqrt{\varepsilon} \nabla_{x,v} f^\varepsilon(0)\| + \|\sqrt{\varepsilon} \langle v\rangle^2 f^\varepsilon(0)\| + \|\phi_R^\varepsilon(0)\|_{H^1_x} + \varepsilon^2 \left\| w \frac{\sqrt{\mu} f^\varepsilon(0)}{\sqrt{\mathcal{M}}} \right\|_{W^{1,  \infty}_{x,v}}.
		\end{align}
	\end{theorem}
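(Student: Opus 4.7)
\medskip

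\textbf{Proof plan for Theorem \ref{t2}.} I will follow the same overall architecture used in the soft--potential Theorem \ref{t1}, but exploit the stronger coercivity of the linearized collision operator available for $0\le \gamma\le 1$, namely $\langle L g,g\rangle \gtrsim \|\langle v\rangle^{\gamma/2}(\mathbf{I-P})g\|^2$ with $\gamma\ge 0$, so that the gradient--weight $\varepsilon^{5/4}\langle v\rangle^{-\gamma}\nabla_x f^\varepsilon$ appearing in \eqref{t1s} is no longer needed and the velocity weight $\langle v\rangle^{2-2\gamma}$ can be replaced by the plain $\langle v\rangle^2$ featured in \eqref{t2h}. The strategy is a continuity/bootstrap argument on the composite energy
\begin{equation*}
\mathcal{E}(t) \;=\; \|f^\varepsilon(t)\| + \|\sqrt{\varepsilon}\,\nabla_{x,v}f^\varepsilon(t)\| + \|\sqrt{\varepsilon}\,\langle v\rangle^2 f^\varepsilon(t)\| + \|\phi_R^\varepsilon(t)\|_{H^1_x} + \varepsilon^2 \Bigl\| w\tfrac{\sqrt{\mu}\,f^\varepsilon(t)}{\sqrt{\mathcal{M}}}\Bigr\|_{W^{1,\infty}_{x,v}},
\end{equation*}
using the $F_n$'s and $\phi_n$'s produced by \eqref{coeff}--\eqref{i-p}--\eqref{eqF_k} (which are globally bounded thanks to the Euler--Poisson theory of \cite{GuoCMP2011}) as background. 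The goal is the a priori estimate $\mathcal{E}(t)\lesssim 1+\mathcal{E}(0)$ on the time window $[0,\varepsilon^{-1/2}]$.

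\medskip

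\textbf{Step 1: basic $L^2$ estimate.} Testing the remainder equation \eqref{F_R} against $f^\varepsilon$ produces, after integration by parts in $x$ and $v$, a hydrodynamic contribution from $\nabla_x\phi_R^\varepsilon\cdot\nabla_v F_0$ coupled to the Poisson equation for $\phi_R^\varepsilon$, a collision contribution $\varepsilon^{-1}\langle L f^\varepsilon,f^\varepsilon\rangle$ that controls the microscopic part $(\mathbf{I}-\mathbf{P})f^\varepsilon$ in the $\nu$--norm, and remainder terms $Q(F_i,F_R^\varepsilon)$, $Q(F_R^\varepsilon,F_i)$, the driving term $\varepsilon^{k-1}A$, and the quadratic $\varepsilon^{k-1}Q(F_R^\varepsilon,F_R^\varepsilon)$. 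The exponential coupling is handled by writing $e^{\phi_0}\frac{e^{\varepsilon^k\phi_R^\varepsilon}-1}{\varepsilon^k}=e^{\phi_0}\phi_R^\varepsilon+O(\varepsilon^k\|\phi_R^\varepsilon\|_\infty^2)$; multiplying the Poisson equation by $\phi_R^\varepsilon$ and using $e^{\phi_0}\ge c_0>0$ yields the coercive bound
\begin{equation*}
\|\nabla\phi_R^\varepsilon\|_{L^2}^2 + \|\phi_R^\varepsilon\|_{L^2}^2 \;\lesssim\; \|\mathbf{P}f^\varepsilon\|^2 + \|G'\|_{L^2}^2 + \varepsilon^k\|\phi_R^\varepsilon\|_{L^\infty}\|\phi_R^\varepsilon\|_{L^2}^2,
\end{equation*}
which closes against $\mathcal{E}(t)$ once the $L^\infty$ piece of $\mathcal{E}$ is present. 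The nonlinear term $\varepsilon^{k-1}\int \Gamma(f^\varepsilon,f^\varepsilon)f^\varepsilon$ is absorbed by the microscopic dissipation as long as $\varepsilon^{k-2}\|w\sqrt{\mu}f^\varepsilon/\sqrt{\mathcal{M}}\|_{L^\infty}\le \varepsilon^2\|\ldots\|_{L^\infty}\cdot\varepsilon^{k-4}$, which is fine for $k\ge 4$.

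\medskip

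\textbf{Step 2: weighted and first--order estimates.} Applying $\langle v\rangle^2$ and $\nabla_{x,v}$ to the remainder equation and testing in $L^2$, the non--commuting term $\nabla_v$ acting on the transport $v\cdot\nabla_x$ produces $\nabla_x f^\varepsilon$ of lower weight, which is reabsorbed thanks to the $\sqrt{\varepsilon}$ prefactor. For $\gamma\ge 0$ the collision operator gives $\nu$--coercivity with $\nu(v)\gtrsim \langle v\rangle^\gamma\ge 1$, so the weighted norm $\|\sqrt{\varepsilon}\langle v\rangle^2 f^\varepsilon\|$ and the derivative norm $\|\sqrt{\varepsilon}\nabla_{x,v}f^\varepsilon\|$ close without the exotic $\varepsilon^{5/4}\langle v\rangle^{-\gamma}\nabla_x$ control needed in the soft case. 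Throughout, the $\nabla_x\phi_R^\varepsilon\cdot\nabla_v F_0$ term is dealt with via the Poisson estimate of Step~1, while $\nabla_x\phi_R^\varepsilon\cdot\nabla_v f^\varepsilon$ costs an $\varepsilon^k$ and is negligible.

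\medskip

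\textbf{Step 3: $L^\infty$ estimate for $h^\varepsilon$ and closure.} Recasting the remainder equation in terms of $h^\varepsilon = w\sqrt{\mu}f^\varepsilon/\sqrt{\mathcal{M}}$ as in \eqref{f} yields a transport equation with a splitting $L_M=\nu_M - K_M$; integrating along backward characteristics of $\partial_t+v\cdot\nabla_x-\nabla_x\phi_0\cdot\nabla_v$ and iterating the Duhamel formula twice, the $K_M$--kernel is controlled by the $L^2$ norm of $f^\varepsilon$ through the kernel estimate of Guo (as in \cite{GuoCMP2010}), producing
\begin{equation*}
\varepsilon^2\|h^\varepsilon(t)\|_{L^\infty} \;\lesssim\; \varepsilon^2\|h^\varepsilon(0)\|_{L^\infty} + \sup_{s\le t}\|f^\varepsilon(s)\| + \text{(lower--order)},
\end{equation*}
and the analogous bound for $\nabla_{x,v}h^\varepsilon$. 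Combining Steps 1--3 with a Gronwall argument on $[0,\varepsilon^{-1/2}]$ and a standard continuation/bootstrap assumption $\varepsilon^{k-1}\mathcal{E}(t)\le 1$ closes the estimate.

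\medskip

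\textbf{Main obstacle.} The delicate point is the same as in \cite{GrenierGuoPausaderSuzuki2020} for the ionic system: the exponential Poisson--Boltzmann term produces the nonlinear source $e^{\phi_0}\bigl(e^{\varepsilon^k\phi_R^\varepsilon}-1\bigr)/\varepsilon^k$ in \eqref{F_R}$_2$, whose expansion forces one to control $\phi_R^\varepsilon$ in $L^\infty$ via the $W^{1,\infty}_{x,v}$--norm of $h^\varepsilon$ (through elliptic regularity from the macroscopic part of $f^\varepsilon$). This creates a loop between the $L^2$ energy and the $L^\infty$ bound that must be resolved by the bootstrap assumption together with the factor $\varepsilon^k$ with $k\ge 4$. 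Once this coupling is cleared, the hard--potential coercivity renders the remaining estimates more forgiving than in Theorem \ref{t1}.
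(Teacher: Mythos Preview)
Your architecture is right and you correctly identify the hard--potential simplification: since $\nu(v)\gtrsim 1$ for $\gamma\ge 0$, the cross term $\langle \nabla_x(\mathbb{P}^{\bot}f^\varepsilon),\varepsilon\nabla_v(\mathbb{P}^{\bot}f^\varepsilon)\rangle$ is absorbed directly by the dissipation $\|\nabla_x(\mathbb{P}^{\bot}f^\varepsilon)\|_\nu^2+\|\nabla_v(\mathbb{P}^{\bot}f^\varepsilon)\|_\nu^2$, so the auxiliary $\varepsilon^{5/4}\langle v\rangle^{-\gamma}\nabla_x$ estimate drops out and the velocity weight reduces to $\langle v\rangle^2$. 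That matches the paper exactly.

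There is however a genuine gap in Step~1. Testing \eqref{F_R} against $\theta_0 f^\varepsilon$ produces the term
\[
\int\!\!\int (v-u_0)\sqrt{\mu}\,f^\varepsilon\cdot\nabla_x\phi_R^\varepsilon\,\d v\,\d x,
\]
and your proposed remedy --- multiply the Poisson equation by $\phi_R^\varepsilon$ to get the static bound $\|\phi_R^\varepsilon\|_{H^1}\lesssim \|\mathbf{P}f^\varepsilon\|+\ldots$ --- only yields $\lesssim \|f^\varepsilon\|^2$ with \emph{no} small or time--decaying prefactor. Integrating this over $[0,\varepsilon^{-1/2}]$ in Gr\"onwall produces an uncontrolled factor $e^{C\varepsilon^{-1/2}}$, and the argument does not close. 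The paper's device (Lemma~\ref{pv}) is to take $\partial_t$ of the nonlinear Poisson equation \eqref{F_R}$_2$, test with $\phi_R^\varepsilon$, and use $\partial_t\!\int F_R^\varepsilon\,\d v=-\nabla_x\cdot\!\int v F_R^\varepsilon\,\d v$ to obtain
\[
\frac{1}{2}\frac{\d}{\d t}\Big(\|\nabla\phi_R^\varepsilon\|^2+\|\sqrt{e^{\phi_0}e^{\varepsilon^k\phi_R^\varepsilon}}\,\phi_R^\varepsilon\|^2\Big)-\int\!\!\int v\sqrt{\mu}\,f^\varepsilon\cdot\nabla_x\phi_R^\varepsilon\,\d v\,\d x = R_1,
\]
where $|R_1|\lesssim \big((1+t)^{-16/15}+\sqrt{\varepsilon}\big)\|f^\varepsilon\|^2+\sqrt{\varepsilon}\|\sqrt{\varepsilon}\nabla_x f^\varepsilon\|^2+\varepsilon$. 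Adding this to the $f^\varepsilon$ energy identity cancels the bad term up to the residual $\int u_0\big(\int\sqrt{\mu}f^\varepsilon\,\d v\big)\cdot\nabla_x\phi_R^\varepsilon\,\d x$, which carries the decay factor $\|u_0\|_\infty\lesssim (1+t)^{-16/15}$ and is therefore integrable on $[0,\varepsilon^{-1/2}]$. The same trick, with $\varepsilon\nabla_x^2\phi_R^\varepsilon$ as test function, is needed at the $\nabla_x$--level. This dynamic cancellation is the essential ionic--specific ingredient and cannot be replaced by the static elliptic bound you wrote.

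Two smaller points. First, in Step~2 you will meet $\nabla_x(Lf^\varepsilon)$ with $L=L(x,v)$ a \emph{local} Maxwellian operator; the commutator $[\nabla_x,L]$ generates terms with polynomial $v$--growth (from $\nabla_x\sqrt{\mu}/\sqrt{\mu}$) that are not absorbed by coercivity alone. The paper handles this via a macro--micro splitting of $\nabla_x f^\varepsilon$ together with the decomposition $L=L_M-\mathcal{I}$ of Lemma~\ref{LLMa}, and it is the reason the weighted norm $\|\sqrt{\varepsilon}\langle v\rangle^2 \mathbb{P}^{\bot}f^\varepsilon\|$ must be carried in the energy. Second, in Step~3 the characteristics should be those of the full field $\nabla_x\phi^\varepsilon$, not $\nabla_x\phi_0$, and the weight $w=e^{\widetilde\vartheta(t)|v|^2}$ is time--dependent precisely to manufacture the extra dissipation $\frac{\langle v\rangle^2}{(1+t)^{1+\sigma}}$ in $\widetilde\nu/\varepsilon$ (Lemma~\ref{nuinter}); without it the Duhamel integrals in $s$ are not uniformly bounded over $[0,\varepsilon^{-1/2}]$.
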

	\begin{remark}
		By employing a Hilbert expansion of order $k\ge 4$ together with a weighted $H^1_{x, v}$--$W^{1,\infty}_{x, v}$ framework,
		we establish the ionic Euler--Poisson limit of \eqref{vpb} on the time interval $0 \le t \le \varepsilon^{-1/2}$
		for the full range of collision potentials $-3<\gamma\le 1$. Indeed,
		Theorems \ref{t1} and \ref{t2} lead to
		\begin{equation}\label{conveges-to-EP}
			\sup_{0\le t \le \varepsilon^{-1/2}}\big\{\|F^\e-F_0\|+\big\|\phi^\varepsilon-\phi_0\big\|_{H^1}\big\}=O(\e),
		\end{equation}
		implying that the solution $(F^\e,\phi^\varepsilon)$ of the IVPB system \eqref{vpb} converges to the solution of the compressible ionic Euler--Poisson system \eqref{ep}.
	\end{remark}
	\begin{remark}
		In the classical Euler--Poisson limit of the electron VPB system,
		Guo and Jang~\cite{GuoCMP2010} treated the hard-sphere case $\gamma=1$ using Hilbert expansions of order $k\ge 6$ in the $L^2_{x,v}$--$W^{1,\infty}_{x,v}$ framework
		on time interval $0 \le t \le \varepsilon^{-m}$ with
		$0 <m \le \frac12 \frac{2k-3}{2k-2}< \frac12$. Then Li and Wang~\cite{lisiam2023} extended the analysis to soft potentials $-3<\gamma<0$ under the condition
		$0 < m \le \frac{-\gamma}{(2-\gamma)(1-\gamma)}< \frac12 \frac{2k-3}{2k-2}$.
		\par
		Shortly before the submission of this work, a related preprint~\cite{LiWang2026_GlobalHilbertExpansionIonicVPB} appeared,
		in which Li and Wang adopted the classical $L^2_{x,v}$--$W^{1,\infty}_{x,v}$ framework introduced by \cite{GuoCMP2010},
		to establish the ionic Euler--Poisson limit of \eqref{vpb} for the hard-sphere case $\gamma=1$. While the hard-sphere case $\gamma=1$ is also covered in our present work, the analytical approach and the resulting estimates
		differ from those  in Theorem~\ref{t2} for $\gamma=1$.
	\end{remark}
	
	\begin{remark}
		We adopt a weighted $H^1_{x,   v}$--$W^{1,  \infty}_{x,   v}$ framework instead of the classical $L^2_{x,   v}$--$W^{1,  \infty}_{x,   v}$ approach. This choice allows a direct control of $\|\nabla h^\varepsilon\|_\infty$ as in \eqref{jh},   avoiding integration by parts in the Jacobian determinant. In particular,   we have
		\begin{align}\label{1h}
			&\iint_{\{|v'|\le 2N,  \,  |v''|\le 3N\}}
			|\nabla h^{\varepsilon}(s_{1},  X(s_{1}),  v'')| \,  \mathrm{d}v'\mathrm{d}v''
			\le\;  \frac{C_{N,  \eta}}{\varepsilon^{3/2}} \|f^{\varepsilon}(s_{1})\|_{H^1}.
		\end{align}
		This is also why an expansion order $k\ge 4$ suffices,   whereas the $L^2_{x,v}$--$W^{1,  \infty}_{x,v}$ framework in \cite{GuoCMP2010,   lisiam2023} requires $k\ge 6$ to close $\varepsilon^k \|h^\varepsilon\|_{W^{1,  \infty}}\le \sqrt{\varepsilon}$. Our approach yields
		\(
		\sup_{0\le s\le \varepsilon^{-1/2}}\varepsilon^2\|\nabla h^\varepsilon(s)\|_{W^{1,  \infty}}\lesssim 1,
		\)
		so that $\varepsilon^k \|h^\varepsilon\|_{W^{1,  \infty}}\le \varepsilon$ for $k\ge 4$.
		\par
		Moreover,   this framework efficiently handles the exponential nonlinearity $e^{\phi^\varepsilon}$ in \eqref{vpb}$_2$. For instance,   the $v$-growth term
		\(
		\iint (v-u_0) (\sqrt{\mu} f^\varepsilon) \cdot \nabla_x \phi_R^\varepsilon\,  \d v\,  \d x
		\) in \eqref{e1}
		is controlled via the Poisson energy estimate:
		\begin{align}\label{1x}
			\frac{1}{2}\frac{\d}{\d t}\|\phi_R^\varepsilon\|_{H^1}^2
			-\iint v (\sqrt{\mu} f^\varepsilon) \cdot \nabla_x \phi_R^\varepsilon\,  \d v\,  \d x
			\lesssim \varepsilon^{3/2}\|\nabla_x f^\varepsilon\|^2 + \text{other terms}.
		\end{align}
		In contrast to classical treatments~\cite{guo2002cpam,   lisiam2023}, where $
		\|\nabla_x \phi_R^\varepsilon\|_{C^{1,  \alpha}} \le C \|h^\varepsilon\|_{W^{1,  \infty}}
		$,   our estimates \eqref{phif} and \eqref{phiinfnity} give
		\[
		\|\nabla_x \phi_R^\varepsilon\|_{W^{1,  \infty}}
		\lesssim \|f^\varepsilon\|_{W^{1,  \infty}} + \|f^\varepsilon\|_{H^1_x} + \varepsilon.
		\]
The term $\|\nabla_x f^\varepsilon\|$ appearing above can be controlled in the $H^1_{x,v}$--$W^{1,\infty}_{x,v}$ framework.
\par
		Furthermore, the framework also can applies to the classical electron VPB system for all cutoff potentials  $-3<\gamma\le 1$.
	\end{remark}
	
	\begin{remark}
		The restriction $0 \le t \le \varepsilon^{-1/2}$ is dictated by the growth of the higher-order coefficients $H_n(t)$. In Lemma \ref{phiaf},
		$
		\| H_n(t)\|_{H^s} + \|\partial_t H_n(t)\|_{H^s} \lesssim (1+t)^{\,  n-2},
		$
		to ensure
		$
		\Big\|\sum_{n=2k}^{\infty} \varepsilon^{\,  n-k} \big(H_n(t) + \partial_t H_n(t)\big)\Big\|_{H^s} \lesssim \varepsilon
		$
		as in \eqref{bhe},   one must restrict $t \lesssim \varepsilon^{-1/2}$. We also note a difference in the definition of $F_n$ compared with \cite{GuoCMP2010,   lisiam2023}. Following \cite{LeiLiuXiaoZhao2023},   for $0 \le n \le 2k-2$,    $F_n$ involves
		$
		\sum_{i+j=n+1,  \,  i,   j\ge 0} Q(F_i,   F_j),
		$
		whereas in $F_{2k-1}$ the sum is taken as $
		\sum_{i+j=2k,  \,  i,   j\ge 1} Q(F_i,   F_j),
		$ thus excluding the term $Q(F_0,   F_{2k})$. As a result,   the source term $A$ in \eqref{F_R} differs from that in \cite{GuoCMP2010,   lisiam2023},   which allows the time scale to be extended to $t\sim\varepsilon^{-1/2}$; see \eqref{s3}. Moreover, we obtain an integral upper bound for \(\widetilde{\nu}\) by adjusting the range of \(\sigma\), rather than by shortening the time interval as in \cite{lisiam2023}; see Lemma~\ref{nuinter} for more details.
		
		\par
		In fact,   the weight function $w = e^{\widetilde{{\vartheta}}|v|^2}$ is primarily introduced to handle the  case $-3<\gamma<0$,   allowing one to obtain the positive lower bound of $\frac{\widetilde{\nu}}{\e}$ in \eqref{eq:nu:1-1}. For the sake of uniformity,   although different $\sigma$ are used in $w$ for the hard and soft potentials,   we adopt the  same form of weight function for the case $0 \le \gamma \le 1$ in Lemma~\ref{nuw}.
		Of course,   for $0 \le \gamma \le 1$,   alternative weight functions could be used,   exploiting the uniform lower bound $\nu \ge \nu_0 > 0$ in performing the $W^{1,  \infty}_{x,v}$ estimates; this aspect will not be discussed here.
	\end{remark}
	\subsection{Difficulties and Innovations}
	The difficulties and innovations we encountered are as follows:
	\subsubsection{\bf Time Growth of $F_n$ and $\phi_n$}
	Since the remainder equation \eqref{F_R} involves the  coefficients $F_i$ and $\phi_i$ for $i=0,  \dots,   2k-1$,   it is essential to estimate it. Our approach combines mathematical induction with energy and elliptic estimates. A key difficulty arises from the fact that \(|A_0|\sim 1\), so the term \(\langle A_0 V_1, U_1\rangle\) in \eqref{1. 24} cannot be directly controlled by a Gr\"onwall-type argument to yield the bound \(\|U_1\|^2+\|V_1\|^2\le C\).
	To overcome this,   we exploit integration by parts together with elliptic estimates and the Poisson equation for the $\varepsilon^1$-order term in \eqref{coeff},   which gives
	\begin{align*}
		\langle A_0 V_1, \,  U_1 \rangle = \frac{\mathrm{d}}{\mathrm{d} t} \widetilde{\mathcal{E}_1}^2 - \mathcal{U}_1,
	\end{align*}
	where $\widetilde{\mathcal{E}_1}^2 \sim \|\phi_1\|_{H^1}^2$ and $\mathcal{U}_1 \lesssim (1+t)^{-16/15} \big(\|U_1\|^2 + \|\phi_1\|_{H^1}^2\big)$; see \eqref{avu} for details.
	Moreover,   the term $\frac{Q(F_i,   F_j)}{\sqrt{\mu}}$ in $\overline{A}$ is treated using the macroscopic-microscopic decomposition, which allows us to obtain   $\| w_\lambda \overline{A} \|_{H^1_{x,   v}} \lesssim (1+t)^{i+j-2},  $  see \eqref{oa} for details.
	\subsubsection{\bf Growth in $|v|$}
	We identify three representative terms exhibiting velocity growth,   denoted by $\mathcal{H}_i$,   $i=1,   2,   3$.
	First,
	$
	\mathcal{H}_1
	= \left\langle \frac{ v\cdot\nabla_x\sqrt{\mu}}{\sqrt{\mu}}\,   f^\varepsilon, \,  f^\varepsilon \right\rangle.
	$
	By decomposing the velocity into high and low frequency  and applying $L^\infty$ estimates,   we obtain
	\begin{align*}
		\mathcal{H}_1
		\lesssim\;&
		\Big\|\langle v\rangle^3 f^\varepsilon \mathbf 1_{\{\langle v\rangle^{6-\gamma} \ge \kappa^2/\varepsilon\}}\Big\|_\infty \|f^\varepsilon\|  + (1+t)^{-\frac{16}{15}}
		\left(
		\|\langle v\rangle^3 \mathbb{P} f^\varepsilon\|
		+
		\|\langle v\rangle^{3-\gamma/2} \mathbb{P}^{\bot} f^\varepsilon
		\mathbf 1_{\{\langle v\rangle^{6-\gamma}\le \kappa^2/\varepsilon\}}\|_{\nu}
		\right)
		\|f^\varepsilon\|,
	\end{align*}
	see the estimates of $\mathcal{S}_1$,   $\mathcal{S}_5$,   $\mathcal{S}_8$,   etc.\ in Section~3.
	Next, $\mathcal{H}_2$ contains $\mathcal{H}_2^{1}$ and $\mathcal{H}_2^{2}$
	arising from \eqref{e1} and \eqref{e2}, respectively, and
	\[
	\mathcal{H}_2^1
	= \int \Big(\int (v-u_0)\sqrt{\mu} f^\varepsilon\,   \d v \Big)\cdot\nabla_x \phi_R^\varepsilon \,   \d x,
	\qquad
	\mathcal{H}_2^2
	= -\varepsilon \int \Big(\int (v-u_0)\sqrt{\mu} f^\varepsilon\,   \d v \Big)\cdot\nabla_x^3 \phi_R^\varepsilon \,   \d x.
	\]
	Although $\|\phi_R^\varepsilon\|_{H^3}$ is controlled by $\|f^\varepsilon\|_{H^1}$ (see Lemma~\ref{phiaf}),
the estimates for $\mathcal{H}_2^1$ and $\mathcal{H}_2^2$ are too large to allow closure of a Gr\"onwall-type argument.
	To overcome this difficulty,   we derive energy estimates for the Poisson equation $\eqref{F_R}_2$,   see Lemma~\ref{pv},  to eliminate $\mathcal{H}_2^1$   in \eqref{es} and  $\mathcal{H}_2^2$ in \eqref{e20}.
	Finally,   arising from \eqref{ev},   we consider
	\[
	\mathcal{H}_3
	= \langle \nabla_x \mathbb{P}^{\bot} f^\varepsilon,   \varepsilon \nabla_v \mathbb{P}^{\bot} f^\varepsilon \rangle,
	\qquad -3<\gamma<0.
	\]
Note that the energy functional is taken as $\|\sqrt{\varepsilon}\nabla f^\varepsilon\|^2$.
	Since the $\varepsilon$-coefficient is insufficient,   $\mathcal{H}_3$ cannot be directly absorbed into the energy.
	Moreover,   directly estimating $\|\langle v\rangle^{-\gamma}\nabla_x \mathbb{P}^{\bot} f^\varepsilon\|_\nu^2$
	would require  $L^2_{x,   v}$ estimate to differentiating $\nabla_x \eqref{ipgg}$,   leading to uncontrollable $\nabla_x^2 f^\varepsilon$ terms in
	$\|\nabla_x([[\mathbb{P},  \mathcal{A}_{\phi^\varepsilon}]]f^\varepsilon)\|$.
	Thus,   we use
	\[
	\mathcal{H}_3
	\le
	\eta \|\nabla_v \mathbb{P}^{\bot} f^\varepsilon\|_\nu^2
	+ C_\eta \varepsilon^2 \|\langle v\rangle^{-\gamma} \nabla_x f^\varepsilon\|_\nu^2
	+ C_\eta \varepsilon^2 \|f^\varepsilon\|_{H^1}^2.
	\]
	This requires performing an $L^2_{x,   v}$ estimate on $\langle v\rangle^{-\gamma}\nabla_x f^\varepsilon$.
	In particular,   under the local Maxwellian background (cf.\ \eqref{wxl1}),   we obtain
	\begin{align}\label{lwx}
		\frac{1}{\varepsilon}
		\big\langle \nabla_x (L f^\varepsilon),  \,
		\varepsilon^{5/2} \langle v\rangle^{-2\gamma} \nabla_x f^\varepsilon \big\rangle
		\ge\;&
		\frac{\delta_1 \varepsilon^{3/2}}{2}
		\|\langle v\rangle^{-\gamma} \nabla_x f^\varepsilon\|_\nu^2  - C_\eta \sqrt{\varepsilon}\|\sqrt{\varepsilon} f^\varepsilon\|_{H^1}^2
		- \text{other  terms}.
	\end{align}
	The appearance of the remainder term
	$\sqrt{\varepsilon}\|\sqrt{\varepsilon} f^\varepsilon\|_{H^1}^2$
	explains why the energy estimates are formulated in terms of
	$\varepsilon^{5/4}\langle v\rangle^{-\gamma}\nabla_x f^\varepsilon$.
	\subsubsection{\bf Local Maxwellian background $L(x,   v)$}
	The term $\nabla(L(x,   v)f^\varepsilon)$ creates substantial analytical difficulties.
	To overcome this issue,   as  Lemma~\ref{LLMa},   we decompose
	\[
	L(x,   v)=L_M(v)+\text{other terms},
	\]
	The remaining terms are controlled by separating the velocity variable into low  and high frequency.
	Combined with the coercivity of $L_M(v)$ (cf.~Lemma~\ref{lm}):
	\begin{align}\label{lmqq}
		\big\langle \langle v\rangle^{2\lambda_1}\partial_v^\beta (L_M f),  \,  \partial_v^\beta f \big\rangle
		\ge
		\|\langle v\rangle^{\lambda_1} \partial_v^\beta f\|_\nu^2
		- C_\eta \|f\|_\nu^2
		- \text{other terms},
	\end{align}
	which yields the desired dissipation.   However,   the energy functional  is
	$
	\| f^\varepsilon\|^2 + \|\sqrt{\varepsilon}\nabla f^\varepsilon\|^2
	$  in this work.
	Hence,  only for $\varepsilon^{n}\nabla(L f^\varepsilon)$ with $n\geq \tfrac{3}{2}$,   the dissipation estimate \eqref{lmqq} applies directly (cf.~\eqref{lwx}).
	\par
	To estimate $\nabla_x(L f^\varepsilon)$,   we employ the macro-micro decomposition.	Using the identity $\mathbb{P}\mathbb{P}^\bot \nabla_x f^\varepsilon = 0$ (cf.~\eqref{xln}),   we derive
	\begin{align*}
		\frac{1}{\varepsilon}
		\big\langle \nabla_x(L f^\varepsilon),  \,   \varepsilon \theta_0 \nabla_x f^\varepsilon \big\rangle
		\ge
		\delta_1 \theta_M \|\nabla_x \mathbb{P}^\bot f^\varepsilon\|_\nu^2
		- \mathcal{H}_4 - \mathcal{H}_5
		- \text{other terms}.
	\end{align*}
  $\mathcal{H}_4$ is estimated as
	\begin{align*}
		\mathcal{H}_4
		&=
		\left\langle  \sqrt{\mu}\nabla_x\!\left(\frac{1}{\sqrt{\mu}}\right)  \Gamma  \left(\sqrt{\mu},    \mathbb{P}^{\bot} f^\e \right)
		,  \,\theta _0 \nabla_x (\mathbb{P}^{\bot} f^\e)\right\rangle \lesssim
		\delta_0 \|\langle v\rangle^2 \mathbb{P}^\bot f^\varepsilon\|_\nu^2
		+ \delta_0 \|\nabla_x( \mathbb{P}^\bot f^\varepsilon)\|_\nu^2,
	\end{align*}
		see \eqref{n31} for more details.
	\begin{align*}
		\mathcal{H}_5=\left\langle  \sqrt{\mu}\nabla_x\!\left(\frac{1}{\sqrt{\mu}}\right)  \Gamma  \left(\sqrt{\mu},    \mathbb{P}^{\bot} f^\e \right)
		, \, \theta _0\nabla_x(\mathbb{P} f^\e) \right\rangle,
	\end{align*}
	which produces $\|\nabla_x (\mathbb{P} f^\varepsilon)\|_{\nu}$ via \eqref{Gf}.
	This term cannot be controlled by the energy functional  $\|f^\varepsilon\| +\|\sqrt{\e}\nabla_xf^\varepsilon\|$,  and  the dissipation 	$
	\frac{\delta_1 \varepsilon^{3/2}}{2} \|\langle v \rangle^{-\gamma} \nabla_x f^\varepsilon\|_\nu^2
	$ in  \eqref{lwx}.
	Therefore,   we   apply integration by parts in $x$ and use \eqref{Gf} to obtain:
	\begin{align*}
		\mathcal{H}_5=&-\left\langle \theta_0 \sqrt{\mu}\nabla_x\!\left(\frac{1}{\sqrt{\mu}}\right)  \Gamma  \left(\sqrt{\mu},    \nabla_x (\mathbb{P}^{\bot} f^\e) \right)
		,    \mathbb{P} f^\e \right\rangle- \text{l.o.t}\\\lesssim&
	(1+t)^{-16/15}\|f^\e\|^2+\delta_0\| \nabla_x (\mathbb{P}^{\bot} f^\e)\|_{\nu}^2+\delta_0\|   (\mathbb{P}^{\bot} f^\e)\|_{\nu}^2,
	\end{align*}
		see \eqref{n41} for more details.
	The presence of \((1+t)^{-16/15}\|f^\varepsilon\|^2\) explains why it is necessary to take $L^2_{x,v}-$ estimate  of \(\sqrt{\e} \|\nabla_x f^\varepsilon\| \).
	Other terms can be controlled by similar methods.
	\par
	Finally,   for the estimate of $\|\nabla_v f^\varepsilon\|$,   we note the equivalence
	\[
	\|f^\varepsilon\|+\|\sqrt{\varepsilon}\nabla_v f^\varepsilon\|
	\sim
	\|f^\varepsilon\|
	+\|\sqrt{\varepsilon}\nabla_v(\mathbb{P}^{\bot}f^\varepsilon)\|,
	\]
	which motivates us to estimate
	$\|\sqrt{\varepsilon}\nabla_v (\mathbb{P}^{\bot}f^\varepsilon)\| $ rather than $\|\sqrt{\varepsilon}\nabla_v f^\varepsilon\| $.
This approach
 yields the required dissipation $\|\nabla_v(\mathbb{P}^{\bot}f^\varepsilon)\|_{\nu}$, while the lower-order term  $C_{\eta} \|\mathbb{P}^{\bot}f^\varepsilon\|_{\nu}$ can be absorbed (see \eqref{vl}). Moreover, we can eliminate the   growing velocity term through the fact $
 \mathbb{P}^{\bot}\!\left( \frac{v-u_0}{\theta_0}\cdot \nabla_x \phi_R^\varepsilon\sqrt{\mu}\right)=0.
 $

	\subsection{Relevant literature}
	
	First,   we provide some necessary background on ion dynamics. Plasma,   composed of electrons and ions,   is a prevalent state of matter in both natural and laboratory environments,   such as lightning,   the ionosphere,   stellar coronae,   accretion disks,   and the interstellar medium \cite{Sentis2014,   Tan2024,   TrivelpieceKrall1973}. Its macroscopic behavior is primarily governed by electromagnetic forces. For example,   stellar atmosphere plasmas consist of electrons,   protons,   helium ions,   and a small fraction of heavier ions \cite{Phillips2013}. From a kinetic perspective,   plasma is described by statistical mechanics,   which bridges single-particle dynamics and magnetohydrodynamics. Charged particles interact predominantly through Coulomb collisions within a sphere of radius equal to the Debye length. The collision frequencies for electron-electron,   electron-ion,   and ion-ion interactions can be expressed explicitly; see \cite{Tan2024}.
	Owing to the large ion-to-electron mass ratio,   electron-electron and electron-ion collisions occur on much faster timescales than ion-ion collisions. Consequently,   on the ion timescale,   electrons can be regarded as being in thermal equilibrium,   with their density following a Boltzmann distribution,   see \cite{BardosGolseNguyenSentis2018,   FlynnGuo2024,   Guo2002}. Substituting this relation into Poisson's equation leads to the semilinear Poisson--Poincar\'e equation $\Delta \phi^\e = e^{\phi^\e} - \rho$ in $\eqref{vpb}_3$. Interested readers may refer to \cite{LiWang2025} for a more detailed derivation.
	
	Next,   we state the ionic Vlasov--Poisson
	system,   which models the dynamics of ions from a kinetic perspective. The system is given by:
	\begin{equation}\label{vp}
		\partial_t f + v \cdot \nabla_x f - \nabla \phi[f] \cdot \nabla_v f = 0,   \qquad
		-\Delta \phi[f] = \int_{\mathbb{R}^d} f \,   \d v - e ^{\phi[f]},
	\end{equation}
	where $f(t,   x,   v)$ is the ion distribution function and $\phi[f]$ is the self-consistent potential determined by the Poisson--Boltzmann relation.
	The global existence of weak solutions for the ionic Vlasov--Poisson system was first established by Bouchut \cite{Bouchut1991}. Subsequent works have investigated well-posedness,   propagation of moments,   and stability in the Wasserstein distance \cite{BardosGolseNguyenSentis2018,   Guo2002,   FlynnGuo2024},   with a comprehensive survey available in \cite{GriffinPickering}. Bardos et al. \cite{BardosGolseNguyenSentis2018} rigorously derived the ionic Vlasov--Poisson system as the massless-electron limit of the coupled ion-electron Vlasov--Poisson equations,   incorporating electron collisions via Boltzmann  or BGK-type operators. The mean-field limit of the system was justified deterministically in \cite{GriffinPickeringIacobelli2020},   and subsequently a probabilistic formulation was developed in \cite{GriffinPickering2024}. In a complementary direction,   Han-Kwan \cite{HanKwan2011} analyzed the quasineutral limit,   formally deriving macroscopic models including the compressible Euler and shallow-water equations. More recently,   global-in-time existence results for Lagrangian and renormalized solutions have been obtained under initial data with $L^{1+}$ integrability \cite{ChoiKooSong2025},   which also established the well-posedness of the Poisson--Boltzmann equation $-\Delta \phi = \rho - e^\phi$ for density functions $\rho \in L^p(\mathbb{T}^d)$ with $p>1$.
	
	To account for collisional effects,   we introduce a Boltzmann-type or Landau-type collision operator $Q(f,   f)$ on the right-hand side in \eqref{vp}. Flynn and Guo \cite{FlynnGuo2024} formally derived the Vlasov--Poisson--Landau system from a kinetic ion-electron model via the massless-electron limit. Flynn \cite{Flynn2025} proved the local well-posedness of the ionic Vlasov--Poisson--Landau system in $\mathbb{T}^3$ for large initial data. Li-Yang-Zhong \cite{LiYangZhong2016} constructed the existence and uniqueness of the strong solution to the ionic Vlasov--Poisson--Boltzmann system \eqref{vpb} in $\mathbb{R}^3$. In a related development,   Li and Wang \cite{LiWang2025} established the global existence and exponential decay of classical solutions to \eqref{vpb} in $\mathbb{T}^3$.
	
	Furthermore,   we briefly review classical results on the compressible Euler--Poisson limit of the Boltzmann equation obtained via the Hilbert expansion method. This approach,   originally introduced by Hilbert in 1912,   is based on seeking a formal asymptotic expansion of solutions to the scaled Boltzmann equation in powers of the Knudsen number. A rigorous justification of the formal Hilbert expansion was first achieved through truncated asymptotic expansions. In the whole-space or periodic setting,   Caflisch \cite{Caflisch1980} derived the compressible Euler limit by means of a truncated Hilbert expansion,   under the restrictive assumption that the initial remainder vanishes. This assumption was later removed by Lachowicz \cite{Lachowicz1987},   who incorporated initial layers into the expansion,   thereby allowing for more general initial data. Guo et al. \cite{GuoCMP2010} also highlighted the restrictive assumption on the initial remainder in Caflisch's framework. By employing the $L^2$--$L^\infty$ method originally developed in \cite{Guo2010Decay},   they revised Caflisch's proofs to obtain the compressible Euler--Poisson limit for hard-sphere case,   and Jiang et al.\cite{Jiang2025EulerPoissonFD} extended this to the Vlasov--Poisson--Boltzmann system with Fermi--Dirac statistics.
	Using a similar methodology  as \cite{Guo2010Decay},   Li et al. \cite{lisiam2023} extended the approach to soft potentials.
	Once boundary effects are considered,   deriving the compressible Euler limit requires accounting for boundary layers in the half-space problem. Sone \cite{Sone2007} highlighted the need for viscous and Knudsen layers,   while Guo,   Huang,   and Wang \cite{GuoHuangWang2021},   and Jiang,   Luo,   and Tang \cite{JiangLuoTang2021a,   JiangLuoTang2021b} treated specular,   Maxwell,   and diffuse reflection conditions. These studies mainly focus on hard-sphere kernels.
	\par
	Up to now, due to the analytical challenges posed by the nonlinear Poisson equation,
	the   Euler--Poisson limit of the ionic Vlasov--Poisson--Boltzmann system \eqref{vpb} for the full range of cutoff potentials $-3 < \gamma \le 1$  had not been investigated. In this paper, we completely resolve this problem.
	\bigskip

	\noindent \textbf{Notation.}\;Throughout the paper,
	$c$ and
	$C$ denote a generic positive constant independent of~$\varepsilon$.
	For simplicity of notation,   we write $X \lesssim Y$ if $X \leq C Y$ for some constant $C>0$ independent of $X$ and $Y$.
	The notation $X \ll 1$ means that $X$ is sufficiently small.
	We use $\langle \cdot,   \cdot \rangle_{L^2_v}$ to denote the $L^2(\mathbb{R}^3_v)$ inner product,   with corresponding norm $\|\cdot\|_{L^2_v}$.
	The notation $\langle \cdot,   \cdot \rangle$ stands for the $L^2$ inner product either on $\mathbb{R}^3_x$ or on $\mathbb{R}^3_x \times \mathbb{R}^3_v$  with corresponding norm $\|\cdot\|_{L^2_{x}}$ or $\|\cdot\|_{L^2_{x,   v}}$.
	For $1 \le q \le \infty$,   without causing ambiguity,   we abbreviate the norms $\|\cdot\|_{L^q_x}$ and $\|\cdot\|_{L^q_{x,   v}}$ as $\|\cdot\|_q$,   and $\|\cdot\|_2$ is simply denoted by $\|\cdot\|$.
	For any integer $m \ge 1$,   we write $\|\cdot\|_{W^{m,   q}_{x}}$  or  $\|\cdot\|_{W^{m,   q}_{x,   v}}$   as $\|\cdot\|_{W^{m,   q}}$.
	When $q=2$,   we write $\|\cdot\|_{H^m}$ instead of $\|\cdot\|_{W^{m,   2}}$ and $\|\cdot\|_{H^0}=\|\cdot\|$. In particular, $\|g\|_{H^1_x}=\|g\| +\|\nabla_x g\| $.
	The weighted norms are defined by
	\[
	\|g\|_{\nu}^2 = \iint_{\mathbb{R}^3_x \times \mathbb{R}^3_v} \nu(v)\,   g^2(x,   v)\,   \d x\,   \d v,
	\qquad
	\|g\|_{H^1(\nu)}^2 = \|g\|_{\nu}^2 + \|\nabla _x g\|_{\nu}^2 + \|\nabla _v g\|_{\nu}^2,
	\]
	where $\nu(v) = \int_{\mathbb{R}^3} |v - u| ^{\gamma}\,   \mu(u) \,   \d u.	$ For convenience,   the product norm on $X$ is defined by
	$
	\|(f,   g)\|_{X} := \|f\|_{X} + \|g\|_{X}.
	$ $\langle v \rangle=(1+|v|^2)^{1/2}$.
	The operators $\nabla_x = (\partial_{x_1},  \partial_{x_2},  \partial_{x_3})$ and
	$\nabla_v = (\partial_{v_1},  \partial_{v_2},  \partial_{v_3})$ denote the spatial and velocity gradients,   respectively.
	We also write $\nabla g = \nabla_x g + \nabla_v g$ for brevity. For $s \in \mathbb{N}:=\{0,1,2,\cdots\}$,   the notation $\nabla^{s}$ denotes any partial derivative $\partial^{\beta}$ with multi-index $\beta$ satisfying $|\beta|=s$.	
	The symbol $a\cdot b$ denotes the standard inner product (dot product) of vectors $a$ and $b$, and is also used for contraction between vectors and higher-order tensors in formulas.
	\par

	\bigskip
	
	\noindent\textbf{The structure.}\;	The remainder of this paper is organized as follows.
	Section~2 is devoted to several new properties that are specific to the present problem and apply uniformly to both hard and soft potentials.
	Section~3 establishes weighted $H^1_{x, v}$ energy estimates for the remainder system \eqref{F_R} in the soft potential case.
	Section~4 further derives $W^{1,\infty}_{x, v}$ estimates with suitable time--velocity weights for soft potentials.
	Section~5 is devoted to the proofs of Theorems~\ref{t1} and~\ref{t2}.
	Finally, Appendix~A collects lemmas establishing time-dependent estimates for $\rho_i, u_i$, $\phi_i$, and $F_i$, which play a crucial role in the $H^1_{x,v}$ and $W^{1,\infty}_{x,v}$ analyses. Appendix~B gathers several preparatory lemmas used throughout the paper for both hard and soft potentials.
	\section{New Lemmas for  Hard and Soft Potentials}
	With \eqref{nunum},  for $s=0,1$, it follows  that
	\begin{align}\label{h}
		|\nabla^s f^\varepsilon|
		\lesssim e^{-\frac{\widetilde{\vartheta}}{2}|v|^2}\sum_{j=0}^{s}\|\nabla^j h^\varepsilon\|_\infty.
	\end{align}
	\par
	Due to the factor $e^{\varepsilon^k \phi_R^{\varepsilon}}$ in $\eqref{F_R}_2$,   the relation between $\phi_R^\varepsilon$ and $f^\varepsilon$ is obtained via the  a  $\mathit{priori}$  assumption
	\begin{align}\label{pri}
		\e^{k} \sup_{0 \leq t \leq \e^{-1/2}} \big( \|\phi_R^\varepsilon\|_{H^2} + \|h^\varepsilon\|_{W^{1,  \infty}} \big) \leq \varepsilon.
	\end{align}
	Rewrite $\eqref{F_R}_2$ as
	\begin{equation}\label{1eq:remainder}
		-\phi_R^\varepsilon+\Delta \phi_R^\varepsilon =		R_0,
	\end{equation}
	where
	$R_0=e^{\phi_0}\frac{e^{\varepsilon^k \phi_R^\varepsilon} - 1}{\varepsilon^k}	-\phi_R^\varepsilon+G 	- \int_{\mathbb{R}^3}F_R^ \varepsilon\,   \d v$,   $F_R^ \varepsilon=\sqrt{\mu}f^ \varepsilon$.	 	
	
	\begin{lemma}\label{phiaf}
		Suppose  $0
		\leq t\leq {\varepsilon^{- 1/2}}$,   the remainder \( \phi_R^\varepsilon \)  satisfies
		\begin{align}\label{phif}
			&	\|   \phi_R^\varepsilon \|_{H^{2}}
			\lesssim  \|  f^\varepsilon \| +\varepsilon,  \\
			&\label{phi1} \|   \nabla_x\phi_R^\varepsilon \|_{H^{2}}
			\lesssim   (1+t)^{-16/15} \|f^\e\|+ \|\nabla_xf^\e\|+\e,  \\
			&\label{phiftt}	\| \partial_t \phi_R^\varepsilon \|_{H^2} \lesssim   (1+t)^{-16/15} \|f^\varepsilon\|_{ }+\|\nabla_xf^\varepsilon \| +\e,
		\end{align}
		and
		\begin{align}\label{phiinfnity}
			\|\nabla_x^2 \phi_R^\varepsilon\|_{\infty} \lesssim \|h^{\varepsilon}\|_{W^{1,  \infty} }+\|f^{\varepsilon}\|_{H^{1}   }	 +\varepsilon 	.
		\end{align}
		Moreover,  the  $\phi^\e$
	defined in \eqref{exp}	satisfies
		\begin{equation}\label{phi}
			\|\nabla_x \phi ^\varepsilon\|\lesssim	{\delta_0},  	 \quad\|\nabla_x \phi ^\varepsilon\|_{W^{1,  \infty}}\lesssim \delta_0	(1+t)^{-16/15}.
		\end{equation}
	\end{lemma}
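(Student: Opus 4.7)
\medskip

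The plan is to convert \eqref{1eq:remainder} into an elliptic equation with a coercive linear part, then deduce the various estimates from standard $H^s$ and $L^\infty$ elliptic theory, with the source terms controlled using the a priori bound \eqref{pri} and the dispersive decay of the Euler--Poisson background $(\rho_0,u_0,\phi_0)$.

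First I would apply Taylor's theorem to $\frac{e^{\varepsilon^k\phi_R^\varepsilon}-1}{\varepsilon^k}$. Since $\varepsilon^k\|\phi_R^\varepsilon\|_\infty\lesssim\varepsilon^k\|\phi_R^\varepsilon\|_{H^2}\le\varepsilon$ by Sobolev embedding and \eqref{pri}, we may write
\[
\tfrac{e^{\varepsilon^k\phi_R^\varepsilon}-1}{\varepsilon^k}
=\phi_R^\varepsilon+\varepsilon^k N(\phi_R^\varepsilon),\qquad |N|\lesssim(\phi_R^\varepsilon)^2,
\]
together with analogous bounds on the derivatives of $N$. Substituting into $\eqref{F_R}_2$ produces the coercive elliptic equation
\[
(-\Delta+e^{\phi_0})\phi_R^\varepsilon
=\int_{\mathbb{R}^3}\sqrt{\mu}\,f^\varepsilon\,\d v\;-\;G\;-\;\varepsilon^k e^{\phi_0}N(\phi_R^\varepsilon).
\]
Because $e^{\phi_0}\ge c_0>0$, standard $H^2$ elliptic estimates give
\[
\|\phi_R^\varepsilon\|_{H^2}
\lesssim \Big\|{\textstyle\int}\sqrt{\mu}f^\varepsilon\,\d v\Big\|+\|G\|+\varepsilon^k\|e^{\phi_0}N(\phi_R^\varepsilon)\|.
\]
The first term is $\lesssim\|f^\varepsilon\|$ by Cauchy--Schwarz in $v$; the collection $G$ defined in \eqref{G} is $O(\varepsilon)$ once one uses the $H_n$-growth bound $(1+t)^{n-2}$ from Lemma \ref{phiaf} (cf.\ the surrounding discussion) together with $t\le\varepsilon^{-1/2}$ and $k\ge4$; and the nonlinear term satisfies $\varepsilon^k\|N\|\lesssim\varepsilon^k\|\phi_R^\varepsilon\|_\infty\|\phi_R^\varepsilon\|\le\varepsilon\|\phi_R^\varepsilon\|$, which is absorbed for $\varepsilon\ll1$. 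This yields \eqref{phif}.

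For \eqref{phi1} and \eqref{phiftt} I would commute $\nabla_x$, resp.\ $\partial_t$, through $-\Delta+e^{\phi_0}$; the commutator produces terms proportional to $\nabla_x\phi_0\cdot\phi_R^\varepsilon$ (resp.\ $\partial_t\phi_0\cdot\phi_R^\varepsilon$), and the Euler--Poisson background established in \cite{GuoCMP2011} satisfies the dispersive decay
\[
\|\nabla_x\phi_0\|_{W^{s,\infty}}+\|\partial_t\phi_0\|_{W^{s,\infty}}\lesssim\delta_0(1+t)^{-16/15},
\]
which accounts for the $(1+t)^{-16/15}\|f^\varepsilon\|$ factor. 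The moment $\nabla_x\!\int\sqrt{\mu}f^\varepsilon\,\d v$ splits as $\int\sqrt{\mu}\nabla_xf^\varepsilon\,\d v$ (producing $\|\nabla_xf^\varepsilon\|$) plus $\int\nabla_x\sqrt{\mu}\,f^\varepsilon\,\d v$ (where $\nabla_x\sqrt\mu$ involves $\nabla_x(\rho_0,u_0,\theta_0)$ and hence the decay). For $\partial_t$ one substitutes $\partial_tF_R^\varepsilon$ from the transport-collision equation $\eqref{F_R}_1$, converting the time derivative into $v\cdot\nabla_x$ and $\varepsilon^{-1}$ collision terms whose velocity moments again yield $\|\nabla_xf^\varepsilon\|$ after integration by parts in $x$. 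The $L^\infty$ bound \eqref{phiinfnity} follows from applying an $L^p$ Calder\'on--Zygmund estimate with $p>3$ (or a Schauder argument) to the same coercive equation, using $\|\int F_R^\varepsilon\,\d v\|_\infty\lesssim\|h^\varepsilon\|_\infty\int\tfrac{\sqrt{\mathcal M}}{w}\,\d v\lesssim\|h^\varepsilon\|_\infty$ together with the already established $H^1$-in-$x$ bound. Finally, \eqref{phi} is immediate from the expansion $\phi^\varepsilon=\phi_0+\sum_{n=1}^{2k-1}\varepsilon^n\phi_n+\varepsilon^k\phi_R^\varepsilon$, the Euler--Poisson decay bounds on $\nabla_x\phi_0$, the estimates on $\phi_n$ for $1\le n\le 2k-1$ gathered in Appendix~A, and the bounds on $\phi_R^\varepsilon$ just proved.

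The main obstacle I expect is keeping the three source contributions -- the nonlinear piece $\varepsilon^k e^{\phi_0}N(\phi_R^\varepsilon)$, the higher-order sum $G$, and the commutator terms generated by $\nabla_x\phi_0$ -- simultaneously inside the stated bounds. In particular, the restriction $t\le\varepsilon^{-1/2}$ is precisely what makes $\|G\|_{H^s}\lesssim\varepsilon$ hold against the worst-case growth $\|H_n\|_{H^s}\lesssim(1+t)^{n-2}$, and the absorption of $\varepsilon\|\phi_R^\varepsilon\|_{H^2}$ into the left-hand side crucially relies on the smallness of $\varepsilon$ supplied by \eqref{pri}.
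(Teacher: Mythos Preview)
Your approach is essentially the same as the paper's. Two minor remarks: (i) the paper rewrites \eqref{1eq:remainder} as $(-\Delta+1)\phi_R^\varepsilon=-R_0$ and moves $(e^{\phi_0}-1)\phi_R^\varepsilon$ to the right, absorbing it via $\|e^{\phi_0}-1\|_\infty\lesssim\delta_0(1+t)^{-16/15}$, whereas you keep the full $e^{\phi_0}$ on the left --- both reformulations work; (ii) in the $\partial_t$ estimate, the $\varepsilon^{-1}$ collision terms you mention actually have \emph{zero} velocity moment (since $\int Q\,\d v=0$), and likewise the $\nabla_v$ terms vanish upon integration in $v$, so after substituting $\eqref{F_R}_1$ one gets simply $\int\partial_tF_R^\varepsilon\,\d v=-\nabla_x\!\cdot\!\int v\sqrt{\mu}f^\varepsilon\,\d v$, which directly yields the $\|\nabla_xf^\varepsilon\|$ and $(1+t)^{-16/15}\|f^\varepsilon\|$ contributions --- your conclusion is right but the mechanism is cleaner than you indicate.
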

	\begin{proof}
		By elliptic regularity for \eqref{1eq:remainder},  we  have
		\begin{equation}\label{phi31}
			\| \phi_R^\varepsilon \|_{H^{2}}\lesssim		\| R_0 \|,  	\quad 	\|\nabla_x \phi_R^\varepsilon \|_{H^{2}}\lesssim		\| \nabla_xR_0 \|,   \quad 	 \| \partial_{t}\phi_R^\varepsilon \|_{ H^2}\lesssim		\| \partial_{t}R_0 \|_{ }	.
		\end{equation}
		To estimate the terms on the right-hand side of \eqref{phi31}, we first bound $G$.
		For the integer $s\geq 0$,   by \eqref{fir2},  we have
		\begin{align*}
			\|H_n(t)\|_{ H^s} \le& \sum_{m=2}^{n} \frac{1}{m!} \sum_{\substack{n_1 + \cdots + n_m = n \\ 1 \le n_i \le 2k - 1}} \left\| \prod_{i=1}^{m} \phi_{n_i}(t) \right\|_{ H^s} \leq \sum_{m=2}^{n} \frac{1}{m!} \binom{n - 1}{m - 1} (1 + t)^{\sum_{i=1}^{m}(n_i - 1)}  \leq     4^n  (1 + t)^{n -2},
		\end{align*}
		and
		\begin{align*}
			\|\partial_{t}H_n(t)\|_{ H^s}
			\lesssim&\sum_{m=2}^{n} \frac{1}{m!} \sum_{\substack{n_1 + \cdots + n_m = n}}  \sum_{j=1}^m \|\partial_t \phi_{n_j}(t)\|_{ H^2}  \prod_{i \ne j} \|\phi_{n_i}(t)\|_{ H^{\max\{s,2\}}}
			\lesssim 4^n (1+t)^{\,  n-2}.
		\end{align*}
		Using the same method as above, it holds that
		\begin{equation*}
			\|B_n(t)\|_{H^2} +	\|\partial_{t}B_n(t)\|_{H^2} \lesssim   (1 + t)^{n-1}.
		\end{equation*}
		With $0\leq t\leq {\varepsilon^{- 1/2}}$,  and $\sum_{n=2k}^{\infty} \varepsilon^{n-k}4^n(1+t)^{n-2}\leq  16 \varepsilon^{2-k} \sum_{n=2k}^{\infty}\left(4\e(1+t) \right) ^{n-2}
		\lesssim \e^{k}(1+t)^{2k-2} \lesssim \e$,   we have
		\begin{align}\label{bhe}
			&	\left\|\sum_{n=2k}^{\infty} \varepsilon^{n-k} H_n(t)\right\|_{ H^s}+	\left\|\sum_{n=2k}^{\infty} \varepsilon^{n-k} \partial_{t}H_n(t)\right\|_{ H^s}	  \lesssim \e
			,  \\\nonumber
			&\left\| \sum_{n=2k}^\infty \varepsilon^n H_n(t) \right\|_{ H^s}+ 	 \left\| \sum_{n=2k}^\infty \varepsilon^n \partial_{t}H_n(t) \right\|_{ H^s}\lesssim \e^{k+1},  \\\nonumber
			&\left\|\sum_{n=1}^{2k-1} \varepsilon^n B_n \right\|_{ H^s}+	\left\|\sum_{n=1}^{2k-1} \varepsilon^n \partial_{t} B_n \right\|_{ H^s}\lesssim \e.
		\end{align}
		Applying  Taylor expansion,  the Sobolev embedding $ H^2(\mathbb{R}^3)\hookrightarrow L^\infty(\mathbb{R}^3)$,  \eqref{pri},  \eqref{bhe},  \eqref{fir2}, \eqref{decay} and    \eqref{ttphi0},  for $i=0,1,2$, we have
		\begin{equation}\label{gh}
			\|\nabla_x^iG\| \lesssim \left\| \sum_{n=2k}^\infty \varepsilon^{n-k} H_n(t) \right\|_{H^i} +\left( 	\left\|\sum_{n=1}^{2k-1} \varepsilon^n B_n \right\|_{ H^{2+i}}+\left\| \sum_{n=2k}^\infty \varepsilon^n H_n(t) \right\|_{ H^{2+i}}\right)\|\phi_R^\varepsilon\|_{H^i} \lesssim \varepsilon \|\phi_R^\varepsilon\|_{H^i} +\varepsilon,
		\end{equation}	
		and
		\begin{align}\label{gt}
			&	\|\partial_{t}G\|_{ }	  \lesssim  \|\partial_{t}\phi_0\|_{\infty}\left\| \sum_{n=2k}^\infty \varepsilon^{n-k} H_n(t) \right\|+ \left\| \sum_{n=2k}^\infty \varepsilon^{n-k} \partial_{t}H_n(t) \right\|   \\\nonumber
			&+\left\{\|\partial_{t}\phi_0\|_{\infty}\left( 	\left\|\sum_{n=1}^{2k-1} \varepsilon^n B_n \right\|_{ H^2}+\left\| \sum_{n=2k}^\infty \varepsilon^n H_n(t) \right\|_{ H^2}\right)+\left\|\sum_{n=1}^{2k-1} \varepsilon^n \partial_{t}B_n \right\|_{ H^2}+\left\| \sum_{n=2k}^\infty \varepsilon^n \partial_{t}H_n(t) \right\|_{ H^2}\right\}\|\phi_R^\varepsilon\|\\\nonumber
			&+\left( 	\left\|\sum_{n=1}^{2k-1} \varepsilon^n B_n \right\|_{ H^2}+\left\| \sum_{n=2k}^\infty \varepsilon^n H_n(t) \right\|_{ H^2}\right) \|\partial_{t}\phi_R^\varepsilon\|
			\lesssim  \varepsilon \|\phi_R^\varepsilon\|_{ }+\varepsilon \|\partial_{t}\phi_R^\varepsilon\|_{ }+ \varepsilon.
		\end{align}
		\par	
		Applying the Taylor expansion,   the   Sobolev embedding $ H^2(\mathbb{R}^3)\hookrightarrow L^\infty(\mathbb{R}^3)$,   and \eqref{pri},   we write
		\[
		e^{\phi_0}\frac{e^{\varepsilon^k \phi_R^\varepsilon}-1}{\varepsilon^k} - \phi_R^\varepsilon = (e^{\phi_0}-1)\phi_R^\varepsilon +  {O}(\varepsilon^k (\phi_R^\varepsilon)^2).
		\]
		Using \(\|e^{\phi_0}-1\|_\infty \lesssim \|\phi_0\|_\infty \lesssim \delta_0 (1+t)^{-16/15}\) from \eqref{decay}
		and \eqref{gh},  it holds that
		\begin{align*}
			\|R_0\| \lesssim&	 \left(\| e^{\phi_0} -1\|_{\infty}	+ \e^k\|\phi_R^\e  \|_{ H^2} \right)\| \phi_R^\varepsilon\|_{ }+\|G\|  +\left\|\int \sqrt{\mu}f^\e \d v\right\|  \\			\lesssim&\left(  \delta_0(1+t)^{-\frac{16}{15}}+ {\e}\right) \|\phi_R^\varepsilon\|+\|f^\e\|+\e.
		\end{align*}
		Combining \eqref{phi31},  and $\left(  \delta_0(1+t)^{-\frac{16}{15}}+ {\e}\right) \|\phi_R^\varepsilon\|$  can be absorbed by $\|\phi_R^\varepsilon\|_{H^2}$,   we prove \eqref{phif}.
		\par
		Differentiating $R_0$ with respect to $x$,  using \eqref{pri},
		the expansion $e^{\varepsilon^k \phi_R^\varepsilon}=1+ {O}(\varepsilon^k \phi_R^\varepsilon)$,
		\eqref{decay},
		the Sobolev embedding $ H^2(\mathbb{R}^3)\hookrightarrow L^\infty(\mathbb{R}^3)$,
		together with \eqref{gh} and \eqref{phif},   we obtain
		\begin{align*}
			\| \nabla_x R_0 \|
			\lesssim{}& \|\nabla_x \phi_0\|_{\infty} \|\phi_R^\varepsilon\|
			+ \Big( \|e^{\phi_0}-1\|_{\infty} + \varepsilon^k \|\phi_R^\varepsilon\|_{\infty} \Big)\|\nabla_x \phi_R^\varepsilon\|
			+ \|\nabla_x G\|
			+ \left\| \int \nabla_x(\sqrt{\mu} f^\varepsilon)\,   \d v \right\| \\
			\lesssim{}& \big( (1+t)^{-16/15} + \varepsilon \big)\|\phi_R^\varepsilon\|_{H^1}
			+  (1+t)^{-16/15}\|f^\varepsilon\|
			+ \|\nabla_x f^\varepsilon\|
			+ \varepsilon \\
			\lesssim{}& \big(  (1+t)^{-16/15} + \varepsilon \big)\|f^\varepsilon\|
			+ \|\nabla_x f^\varepsilon\|
			+ \varepsilon.
		\end{align*}
		Combining \eqref{phi31} and the time-scale assumption $0\le t\le \varepsilon^{-1/2}$,
		which yields $\varepsilon \lesssim (1+t)^{-16/15}$,
		we deduce \eqref{phi1}.
		\par
		Using  $\eqref{F_R}_1$,   \eqref{decay}, \eqref{ttphi0}, \eqref{gt} and \eqref{phif},   we have
		\begin{align}\label{tr}
			\|	\partial_{t}R_0 \|_{ }	
			\lesssim&\|\partial_t\phi_0\|_{\infty}	
			\|\phi_R^\varepsilon\|_{ } +\left( \| e^{\phi_0} -1\|_\infty e^{\e^k \|\phi_R^\varepsilon\|_{H^2}}+\|e^{\varepsilon^k \phi_R^\varepsilon}-1\|_{\infty}\right)
			\|\partial_{t}\phi_R^\varepsilon\|_{ } +\|\partial_{t} G\|_{ }+\left\| \int v \cdot \nabla_x ( \sqrt{\mu} f^\varepsilon ) \,   \d v \right\|_{ }	\nonumber \\
			\lesssim&\left(\delta_0 (1+t)^{-16/15}+{\e}\right)\|\partial_{t}\phi_R^\varepsilon\|_{ }+   (1+t)^{-16/15}  \|f^\varepsilon\|_{ }+\|\nabla_xf^\varepsilon \| +\e.
		\end{align}	
		Substituting \eqref{tr} into \eqref{phi31},  with $ 0<\delta_0\ll 1$,     yields \eqref{phiftt}.
		\par
		Applying the Gagliardo--Nirenberg  inequality and  Sobolev embedding inequality  $W^{1,   p}(\mathbb R^3)\hookrightarrow C^{0,   1-n/p}(\mathbb R^3)$ for $p>3$,  we have
		\begin{equation*}
			\|\nabla_x^2 \phi_R^\varepsilon\|_{L ^\infty}
			\lesssim
			\|\nabla_x^2 \phi_R^\varepsilon\|_{L ^2}^{\tfrac{1}{4}}
			\big[ \nabla ^2 \phi   _R^\varepsilon \big]^{\tfrac{3}{4}}_{C^{0,   1/2}(\mathbb{R}^3)}\lesssim
			\|\nabla_x^2 \phi_R^\varepsilon\|_{L ^2}^{\tfrac{1}{4}}
			\|\nabla_x^2 \phi_R^\varepsilon\|_{W^{1,   6} }^{\tfrac{3}{4}}\lesssim\|\nabla_x^2 \phi_R^\varepsilon\|_{L ^2}+
			\|\nabla_x^2 \phi_R^\varepsilon\|_{W^{1,   6} }.
		\end{equation*}
		Recall $\eqref{F_R}_2$,  by elliptic regularity,  \eqref{pri},  Sobolev embedding $H^2(\mathbb R^3)\hookrightarrow W^{1,   6}(\mathbb R^3)$,  Gagliardo--Nirenberg  inequality,  \eqref{gh},    \eqref{phif},  and \eqref{h},   we have
		\begin{align*}
			&	\|\nabla_x^2 \phi_R^\varepsilon\|_{W^{1,   6} }=  \left\|e^{\phi_0} \frac{(e^{\varepsilon^k \phi_R^{\varepsilon}}-1
				)}{{\varepsilon}^k}+ G-\int_{\mathbb{R}^3} F_R^\varepsilon\,   \d v\right\|_{W^{1,   6} }
			\lesssim \|  \phi_R^{\varepsilon} \|_{W^{1,   6} }+\|G\|_{W^{1,   6} }+\|f^{\varepsilon}\|_{W^{1,   6}   }\\
			\lesssim&\|  \phi_R^{\varepsilon} \|_{ H^2}+\|G\|_{ H^2}+\|f^{\varepsilon}\|_{W^{1,  \infty}   }+\|f^{\varepsilon}\|_{H^{1} }
			\lesssim \|h^{\varepsilon}\|_{W^{1,  \infty}   }+ \|f^{\varepsilon}\|_{H^{1}   }+\e,
		\end{align*}
		which gives \eqref{phiinfnity}. Substituting \eqref{decay}, \eqref{ttphi0},   \eqref{fir2},   \eqref{phiftt},   \eqref{phiinfnity},   and \eqref{pri} into \eqref{exp} yields \eqref{phi}.
	\end{proof}
	
	The result in Lemma~\ref{lm} is stated for  $L_M(v)$ defined in \eqref{delm} and does not directly apply   $L(x,   v)$  defined in \eqref{deln}. 	To address this problem,   we employ the following decomposition:
	\begin{lemma}\label{LLMa}
		Suppose $-3 < \gamma\leq 1$,   it holds that
		\begin{align}\label{LLM}
			Lg= 	L_Mg-\mathcal{I}(g)=
			L_Mg-\sum_{i=1}^{4} \mathcal{I}_i (g),
		\end{align}
		where
		\begin{align*}
			\mathcal{I}_1 (g)=&\frac{\sqrt{\mu}-\sqrt{\mathcal{M}}}{\sqrt{\mu }}	 \Gamma_M\left( \sqrt{\mathcal{M}},  g\right),    \quad
			\mathcal{I}_2 (g)=\Gamma\left(\frac{\mathcal{M}}{\sqrt{\mu}},  \frac{\left(\sqrt{\mu}-\sqrt{\mathcal{M}}\right)g}{\sqrt{\mu}}\right),  \quad 	\mathcal{I}_3 (g)= \Gamma\left(\frac{\mu-\mathcal{M}}{\sqrt{\mu}},  g\right),   \\
			\mathcal{I}_4 (g)=&\frac{\sqrt{\mu}-\sqrt{\mathcal{M}}}{\sqrt{\mu }}	 \Gamma_M\left(g,  \sqrt{\mathcal{M}}\right)+\Gamma\left(\frac{\left(\sqrt{\mu}-\sqrt{\mathcal{M}}\right)g}{\sqrt{\mu}},  \frac{\mathcal{M}}{\sqrt{\mu}}\right) +\Gamma\left(g,  \frac{\mu-\mathcal{M}}{\sqrt{\mu}}\right).
		\end{align*}
		Let ${\overline{w} }=\langle v \rangle^{\lambda_1}e^{\lambda_2|v|^2}, \,  0\leq \lambda_1, \,  0\leq \lambda_2 \ll 1	$, it holds that
		\begin{align}\label{lg}
			\langle \mathcal{I} (g), \, {\overline{w} }^2f\rangle	\lesssim \left( \kappa^2+\delta_0 \right)\|{\overline{w} } g\|_{\nu}    \|{\overline{w} } f\|_{\nu},  	
		\end{align}
		and
		\begin{align}\label{lgv}
			\langle \nabla_v\mathcal{I} (g), \, {\overline{w} }^2 f\rangle	\lesssim\left(\delta_0+\kappa^2\right)\left(\|{\overline{w} } g\|_{\nu}^2 +\| \langle v \rangle {\overline{w} } g\|_{\nu}^2 +\|{\overline{w} } \nabla_vg\|_{\nu}^2 \right) +\left(\delta_0+\kappa^2\right)\|{\overline{w} }f\|_{\nu}^2.  	
		\end{align}
		
	\end{lemma}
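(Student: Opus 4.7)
The plan splits into two parts: first, derive the algebraic identity \eqref{LLM} by telescoping $\mu$ against $\mathcal{M}$ inside $Lg$; second, reduce the two bounds \eqref{lg} and \eqref{lgv} to the standard weighted bilinear estimates on $\Gamma$ and $\Gamma_M$ by exploiting the pointwise smallness of $\sqrt{\mu}-\sqrt{\mathcal{M}}$ and $\mu-\mathcal{M}$.

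For the identity, I would start from $Lg=-\Gamma(\sqrt{\mu},g)-\Gamma(g,\sqrt{\mu})$ and telescope $\mu=\mathcal{M}+(\mu-\mathcal{M})$ simultaneously with $\sqrt{\mu}\,g=\sqrt{\mathcal{M}}\,g+(\sqrt{\mu}-\sqrt{\mathcal{M}})g$ inside each copy of $Q$. Using bilinearity of $Q$ together with the factorizations $Q(F,G)=\sqrt{\mu}\,\Gamma(F/\sqrt{\mu},G/\sqrt{\mu})=\sqrt{\mathcal{M}}\,\Gamma_M(F/\sqrt{\mathcal{M}},G/\sqrt{\mathcal{M}})$, the pure Maxwellian block regroups as $(\sqrt{\mathcal{M}}/\sqrt{\mu})L_M g$. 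Writing $\sqrt{\mathcal{M}}/\sqrt{\mu}=1-(\sqrt{\mu}-\sqrt{\mathcal{M}})/\sqrt{\mu}$ then extracts $L_M g$ on one side and the $\mathcal{I}_1$/$\mathcal{I}_4$ corrections (depending on whether the factor is read against $\Gamma_M(\sqrt{\mathcal{M}},g)$ or $\Gamma_M(g,\sqrt{\mathcal{M}})$) on the other, while the mixed blocks rewritten through $\Gamma$ produce precisely $\mathcal{I}_2,\mathcal{I}_3$ and their swapped analogues in $\mathcal{I}_4$.

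For the quantitative smallness, the crucial input is that $\|(\rho_0-n_0,u_0,\theta_0-\theta_M)\|_{L^\infty}\lesssim\delta_0$ (from the Euler--Poisson decay estimates used already in Lemma~\ref{phiaf}) and, via \eqref{nunum}, the bound $\mu\le C\mathcal{M}^{\varpi}$. A direct pointwise computation then yields $|\sqrt{\mu}-\sqrt{\mathcal{M}}|+|\mu-\mathcal{M}|\lesssim\delta_0\,\mathcal{M}^{\varpi'}$ for some $\varpi'\in(1/2,\varpi)$, and $\mathcal{M}/\sqrt{\mu}\lesssim\sqrt{\mathcal{M}}$. For \eqref{lg}, each $\mathcal{I}_i$ is of the form $a(v)\,\Gamma_\ast(b,c)$ where either $a$ or one of $b,c$ carries the small factor $\sqrt{\mu}-\sqrt{\mathcal{M}}$, $\mu-\mathcal{M}$, or $\mathcal{M}/\sqrt{\mu}$; pairing with $\overline{w}^{\,2}f$ and applying the standard weighted bilinear estimate for $\Gamma$ and $\Gamma_M$ controls the pairing by $\|\overline{w}b\|_\nu\|\overline{w}c\|_\nu\|\overline{w}f\|_\nu$ up to the small Maxwellian factor. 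To generate the prefactor $\kappa^{2}+\delta_0$, I would split the velocity integral as $\{|v|\le N\}\cup\{|v|>N\}$: on the low set the macroscopic smallness gives $\delta_0$, while on the high set the Gaussian tail of $\mathcal{M}^{\varpi'}$ furnishes an arbitrarily small contribution $\kappa^{2}$ after choosing $N=N(\kappa)$ large, uniformly in $\lambda_1,\lambda_2$ since $\lambda_2\ll 1$ keeps $\overline{w}^{\,2}\mathcal{M}^{\varpi'}$ integrable.

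For \eqref{lgv} I differentiate each $\mathcal{I}_i$ in $v$ before pairing with $\overline{w}^{\,2}f$. Two kinds of terms arise: those in which $\nabla_v$ lands on $g$, which produce $\|\overline{w}\nabla_v g\|_\nu$ on the right, and those in which $\nabla_v$ lands on a small Maxwellian factor via $\nabla_v\sqrt{\mathcal{M}}=-(v/\theta_M)\sqrt{\mathcal{M}}$ and $\nabla_v\sqrt{\mu}=-((v-u_0)/\theta_0)\sqrt{\mu}$, which lose at most one power of $\langle v\rangle$ and are therefore absorbed into the $\|\langle v\rangle\overline{w}g\|_\nu$ term. The main obstacle I foresee is the commutator of $\nabla_v$ with the collision integrals $\Gamma$, $\Gamma_M$: when $\nabla_v$ acts on the post-collisional velocities $v',u'$ it would naively generate a $\nabla_v f$ on the test side, which is not present on the right of \eqref{lgv}. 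I would handle this by an integration by parts in $v$ transferring the derivative onto the small Maxwellian factor (picking up at most one more $\langle v\rangle$) so that the $f$-side retains only $\overline{w}^{\,2}f$ at the cost of the already allowed $\|\overline{w}f\|_\nu$. After this redistribution every piece again factors as (small weight)$\times$(bilinear expression), and the same $\{|v|\le N\}$/$\{|v|>N\}$ splitting reproduces the prefactor $\kappa^{2}+\delta_0$, completing the bound.
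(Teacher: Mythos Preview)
Your approach is essentially the paper's: the identity \eqref{LLM} is obtained by bilinear telescoping exactly as you describe (the paper states this in one line), and both \eqref{lg} and \eqref{lgv} are reduced to the weighted trilinear estimates of Lemma~\ref{gamma} combined with the high/low velocity split $\{|v|\le C_\kappa\}\cup\{|v|>C_\kappa\}$ after recording the pointwise bounds $|\mu-\mathcal{M}|\lesssim\delta_0\mathcal{M}^{q_1}$, $|\sqrt\mu-\sqrt{\mathcal{M}}|\lesssim\delta_0\mathcal{M}^{q_1-1/2}$, and (for $\mathcal{I}_3$) $|\mu^{1/4}-\mathcal{M}^{1/4}|\lesssim\delta_0\mathcal{M}^{q_1-3/4}$.

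Your worry about the ``commutator of $\nabla_v$ with $\Gamma$'' producing a $\nabla_v f$ on the test side is misplaced, and the integration by parts you propose is unnecessary---indeed it would \emph{create} the very $\nabla_v f$ term you are trying to avoid. The point is that $\nabla_v$ acts only on $\mathcal{I}(g)$, never on the test function $f$; by the translation-invariance identity
\[
\nabla_v\Gamma(h_1,h_2)=\Gamma(\nabla_v h_1,h_2)+\Gamma(h_1,\nabla_v h_2)+\Gamma_v(h_1,h_2),
\]
with $\Gamma_v$ merely replacing $\sqrt{\mu(u)}$ by $\nabla_u\sqrt{\mu(u)}$ in the kernel, every resulting piece is again of the form covered by Lemma~\ref{gamma} (note $\Gamma_X$ there already includes $\Gamma_v,\Gamma_{M,v}$). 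The derivative thus lands either on $g$ (producing $\|\overline w\,\nabla_v g\|_\nu$), on a small prefactor like $(\sqrt\mu-\sqrt{\mathcal{M}})/\sqrt\mu$ (losing one $\langle v\rangle$ and producing $\|\langle v\rangle\overline w\,g\|_\nu$), or on the kernel Maxwellian (harmless). No integration by parts is needed.
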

	\begin{proof}
		Owing to the bilinearity of the collision operator $Q$ and the definitions of $L$,   $\Gamma$,   $L_M$,   and $\Gamma_M$ in \eqref{deln} and \eqref{delm},   the decomposition \eqref{LLM} follows directly.
		By first-order Taylor expansion in the parameters \((\rho_0,  {\theta }_0,   u_0)=(n_0,  {\theta }_M,   0)\) and  \eqref{decay},  there exists $q_1\in(3/4,   1)$, it holds that
		\begin{align}\label{3. 1}
			| {\mu}	- {\mathcal{M}}	 |\lesssim& \| \left(
			|\rho_0-n_0|
			+|{\theta }_0-{\theta }_M|
			+ |u_0|
			\right)\|_{\infty} 		
			(1+|v|^2)\mathcal{M}\lesssim\delta_0 (1+t)^{-16/15}\mathcal{M}^{q_1}.
		\end{align}
		By $c\mathcal{M}\leq \mu  $ and \eqref{3. 1},  we have	\begin{align}\label{3. 2}
			|\sqrt{\mu}	-\sqrt{\mathcal{M}}	 |\le \frac{|\mu-\mathcal{M}|}{\sqrt\mu+\sqrt{\mathcal{M}}}\lesssim&\delta_0 (1+t)^{-16/15}\mathcal{M}^{q_1-1/2}.
		\end{align}
		Using the mean value theorem,   $c\mathcal{M}\leq \mu  $ and \eqref{3. 1},  there exists $\xi $   between $\mu$ and $\mathcal{M}$,    thus
		\begin{align}\label{mu14}
			\big|\mu^{1/4}-\mathcal{M}^{1/4}\big|=\frac{1}{4}\xi^{-3/4}|\mu-\mathcal{M}|
			\lesssim\mathcal{M}^{-3/4}|\mu-\mathcal{M}|
			\lesssim  \delta_0(1+t)^{-16/15}\mathcal{M}^{q_1-3/4}.
		\end{align}
		\par
		To obtain a small coefficient at large velocities,    we decompose the velocity space into high and low velocity regions.
		Let $\eta \in \left(0,   \frac{1}{16\theta_M}\right)$ be a small constant,   and define
		$
		C_\kappa = \sqrt{\frac{1}{\frac{1}{8\theta_M}-\eta} \ln\left( \frac{(2\pi \theta_M)^{-3/4}}{\kappa^2} \right)},
		$
		so that
		$
		\left\| e^{\eta|v|^2} {\overline{w}} \sqrt{\mathcal{M}} \mathbf{1}_{\{|v|\geq C_\kappa\}} \right\|_\infty \lesssim \kappa^2.
		$
		Then,   	treating $ \frac{\sqrt{\mu}-\sqrt{\mathcal{M}}}{\sqrt{\mu}} $  as a weight function,
		applying \eqref{gw} together with the uniform bound
		$\left|\frac{\sqrt{\mu}-\sqrt{\mathcal{M}}}{\sqrt{\mu}}\right|\lesssim 1$,		
		combining \eqref{3. 2},   we have
		\begin{align}\label{2. 59}
			\left\langle \mathcal{I}_{1}(g),\,   {\overline{w}}^2 f \right\rangle
			\lesssim & \Bigg( \left\| e^{\eta|v|^2} {\overline{w}}  \frac{\sqrt{\mu}-\sqrt{\mathcal{M}}}{\sqrt{\mu}}  \sqrt{\mathcal{M}} \mathbf{1}_{\{|v|\geq C_\kappa\}} \right\|_\infty
			+ \left\| e^{\eta|v|^2} {\overline{w}}  \frac{\sqrt{\mu}-\sqrt{\mathcal{M}}}{\sqrt{\mu}}  \sqrt{\mathcal{M}} \mathbf{1}_{\{|v|\leq C_\kappa\}} \right\|_\infty \Bigg) \nonumber\\
			&\quad \times \|{\overline{w}} g\|_\nu \|{\overline{w}} f\|_\nu \nonumber\\
			\lesssim & \Big( \left\| e^{\eta|v|^2} {\overline{w}} \sqrt{\mathcal{M}} \mathbf{1}_{\{|v|\geq C_\kappa\}} \right\|_\infty + \|\sqrt{\mu} -\sqrt{\mathcal{M}} \|_\infty \Big) \|{\overline{w}} g\|_\nu \|{\overline{w}} f\|_\nu \nonumber\\
			\lesssim & (\kappa^2 + \delta_0) \|{\overline{w}} g\|_\nu \|{\overline{w}} f\|_\nu.
		\end{align}
		Using the same method as \eqref{2. 59},  we have
		$
		\langle \mathcal{I}_2(g),   {\overline{w} }^2 f \rangle\lesssim \left( \kappa^2+\delta_0 \right)\|{{\overline{w} } }g\|_{\nu}\| {\overline{w} }f\|_{\nu}.  	
		$
		From $c\mathcal{M}\leq \mu$,   $\sqrt{\mu}-\sqrt{\mathcal{M}}=(\mu^{1/4}+\mathcal{M}^{1/4})(\mu^{1/4}-\mathcal{M}^{1/4})
		\le 4\mu^{1/4}\big|\mu^{1/4}-\mathcal{M}^{1/4}\big|$.
		Using \eqref{Gf},     taking  $\eta \in (0,  \frac{1}{32{\theta_M}})$,  and by \eqref{mu14},  we have
		\begin{align}\label{0i3}
			\langle \mathcal{I}_3(g),    {\overline{w} }^2f \rangle
			\lesssim&\left\|e^{\eta|v|^2}{\overline{w} }  \frac{\mu-\mathcal{M}}{\sqrt{\mu}}\right\|_{\infty}\| {\overline{w} } g\|_{\nu}\| {\overline{w} }f\|_{\nu} \lesssim \left\|\mu^{1/4}-\mathcal{M}^{1/4} \right\|_{\infty}\| {\overline{w} }g\|_{\nu}\| {\overline{w} }f\|_{\nu}
			\lesssim \delta_0 \| {\overline{w} }g\|_{\nu}\| {\overline{w} }f\|_{\nu}.
		\end{align}
		By symmetry,   $ \langle \mathcal{I}_4(g),    f \rangle$   admits the same upper bound as $ \langle \sum_{i=1}^{3}\mathcal{I}_i(g),    f \rangle$.
		Combining above estimates of $\mathcal{I}_1(g)\sim\mathcal{I}_4(g)$ and using Young's inequality,   we prove \eqref{lg}.
		\par
		Taking the velocity derivative of each term $\mathcal{I}_i(g)$ and applying the same high-low velocity decomposition and weighted estimates as employed for $\mathcal{I}_1(g)$ and $\mathcal{I}_3(g)$,
		we obtain  \eqref{lgv}.
	\end{proof}
	In the $W^{1,  \infty}_{x,v}$ estimates,   the dissipation is insufficient for $-3<\gamma<1$. To solve this problem, we define
	$
	\frac{\widetilde{\nu} }{\varepsilon}:=\frac{\nu}{\e}-	 \nabla _{x}\phi ^{\varepsilon }\cdot \frac{w\nabla_v \left( \frac{\sqrt{\mathcal{M}}}{w}  \right)}{\sqrt{\mathcal{M}}}  -\frac{\partial_t w}{w}.
	$ The properties of $\widetilde{\nu}$ are as follows:
	\begin{lemma}\label{nuinter}
		Let $-3<\gamma\leq 1$,     suppose  $\sigma \geq \frac{1}{3}$,   $0\leq T_0 \ll 1 $ be a small  fixed number and  $0\le s \le t\leq   T_0$,     it holds that
		\begin{align}\label{muall}
			\frac{1}{\e}\widetilde{\nu}(v)
			\gtrsim\;&\frac{1}{\e}{\nu}(v)+\frac{\langle v\rangle^2}{(1+t)^{1+\sigma}},
		\end{align}
		which implies  $ \widetilde{\nu}\gtrsim \nu$.
		
		Let $a=   { {\e^ \frac{2}{ \gamma-2}}}$.	If $0\leq\gamma\leq 1$,  denote $		
		\varrho =\frac{\sigma \gamma+2}{2-\gamma} >1$,  for  $\sigma \geq \frac{1}{3}$,
		we obtain
		\begin{equation}\label{nuh}
			\int_0^{t}\exp\{-\frac{1}{\varepsilon}\int_s^t \widetilde{\nu} (\tau) d\tau
			\}	\frac{1}{\varepsilon} \widetilde{\nu}(s) e^{-as^{\varrho}}ds\lesssim  e^{{-a t^\varrho   }},   	
		\end{equation}
		and
		\begin{align}\label{dnuh}
			\int_0^t
			\exp\Big\{-\frac{1}{\varepsilon}\int_s^t \widetilde{\nu}(\tau)\,  d\tau \Big\}
			\left| \nabla \left( \frac{\widetilde{\nu}}{\varepsilon} \right) \right|   e^{-as^{\varrho}}\,   ds\lesssim  e^{{-a t^\varrho   }}.
		\end{align}
		In particular,   if $-3<\gamma<0$,   denote $		
		\varrho =\frac{\sigma \gamma+2}{2-\gamma} \in (0,   1)$,  for   $\sigma\in(\frac{1}{3},  \frac{2}{3})$,
		it holds that
		\begin{equation}\label{nus}
			\int_0^{t}\exp\{-\frac{1}{\varepsilon}\int_s^t \widetilde{\nu}(\tau) d\tau
			\}	\frac{1}{\varepsilon} \widetilde{\nu}(s) e^{-as^{\varrho}}ds\lesssim  e^{{-a t^\varrho   }},   	
		\end{equation}
		and
		\begin{align}\label{dnus}
			\int_0^t
			\exp\Big\{-\frac{1}{\varepsilon}\int_s^t \widetilde{\nu}(\tau)\,  d\tau \Big\}
			\big| \nabla \left( \frac{\widetilde{\nu}}{\varepsilon} \right) \big|   e^{-as^{\varrho}}\,   ds\lesssim  e^{{-a t^\varrho   }}.
		\end{align}
		
	\end{lemma}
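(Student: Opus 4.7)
The plan is to derive \eqref{muall} by direct computation of the contributions to $\widetilde{\nu}/\varepsilon$ from the weight $w$ and the electric-field term, and then to use that pointwise lower bound together with integration by parts to prove the four semigroup estimates. Using $w=e^{\widetilde{\vartheta}|v|^2}$ with $\widetilde{\vartheta}(t)=\vartheta[1+(1+t)^{-\sigma}]$, I first compute
\begin{equation*}
-\frac{\partial_t w}{w} \;=\; \sigma\vartheta(1+t)^{-\sigma-1}|v|^2\;\ge\;0,\qquad \frac{w\,\nabla_v(\sqrt{\mathcal{M}}/w)}{\sqrt{\mathcal{M}}} \;=\; -\Bigl(\tfrac{1}{2\theta_M}+2\widetilde{\vartheta}\Bigr)v.
\end{equation*}
Substituting into the definition, the cross term is $(1/(2\theta_M)+2\widetilde{\vartheta})\,v\cdot\nabla_x\phi^\varepsilon$, which I control by Young's inequality $|v\cdot\nabla_x\phi^\varepsilon|\le \tfrac12\sigma\vartheta(1+t)^{-1-\sigma}|v|^2+C(1+t)^{1+\sigma}|\nabla_x\phi^\varepsilon|^2$. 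By \eqref{phi} the last factor is $\lesssim \delta_0^2(1+t)^{1+\sigma-32/15}$, uniformly bounded since $1+\sigma\le 32/15$ throughout $\sigma\in[\tfrac13,\tfrac23]$. For $\delta_0,\varepsilon$ small the residue is absorbed into $\nu/\varepsilon$, yielding \eqref{muall}.

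For the semigroup estimates, fix $v$ and set $\Phi(s)=\tfrac1\varepsilon\int_0^s\widetilde{\nu}(\tau)\,d\tau$, so $\partial_s e^{-[\Phi(t)-\Phi(s)]}=\Phi'(s)e^{-[\Phi(t)-\Phi(s)]}$. Integration by parts gives
\begin{equation*}
\int_0^t e^{-[\Phi(t)-\Phi(s)]}\frac{\widetilde{\nu}(s)}{\varepsilon}e^{-as^\varrho}\,ds \;=\; e^{-at^\varrho} - e^{-\Phi(t)} + \int_0^t e^{-[\Phi(t)-\Phi(s)]}\,a\varrho s^{\varrho-1}e^{-as^\varrho}\,ds,
\end{equation*}
so, dropping $-e^{-\Phi(t)}$, it remains to bound the last integral by $C e^{-at^\varrho}$. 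The key is the uniform-in-$v$ lower bound $\Phi(t)-\Phi(s)\gtrsim a[(1+t)^\varrho-(1+s)^\varrho]\sim a(t^\varrho-s^\varrho)$ (the last equivalence is valid on the short interval $[0,T_0]$ with $T_0\ll 1$). This is obtained by pointwise minimization of \eqref{muall} in $|v|$: balancing $\langle v\rangle^\gamma/\varepsilon$ against $\langle v\rangle^2/(1+\tau)^{1+\sigma}$ produces the critical $\langle v\rangle^{2-\gamma}\sim\varepsilon(1+\tau)^{1+\sigma}$, at which $\widetilde{\nu}/\varepsilon\gtrsim a(1+\tau)^{\varrho-1}$ with $\varrho=(\sigma\gamma+2)/(2-\gamma)$ and $a=\varepsilon^{2/(\gamma-2)}$, and integration in $\tau$ supplies the claim. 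For the hard-potential case $0\le\gamma\le 1$, $\nu\gtrsim 1$ additionally gives $\widetilde{\nu}/\varepsilon\gtrsim 1/\varepsilon$, which suffices via the same mechanism. Inserting this into the IBP remainder,
\begin{equation*}
\int_0^t e^{-[\Phi(t)-\Phi(s)]}\,a\varrho s^{\varrho-1}e^{-as^\varrho}\,ds \;\lesssim\; e^{-at^\varrho}\int_0^t a\varrho s^{\varrho-1}\,ds \;\lesssim\; (1+at^\varrho)e^{-at^\varrho}\;\lesssim\; e^{-at^\varrho},
\end{equation*}
using $x e^{-x/2}\lesssim 1$ to absorb the polynomial prefactor, proves \eqref{nuh} and \eqref{nus}. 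The derivative bounds \eqref{dnuh} and \eqref{dnus} follow identically after differentiating the definition of $\widetilde{\nu}/\varepsilon$ in $(t,x,v)$: the resulting $|\nabla(\widetilde{\nu}/\varepsilon)|$ is dominated by $\widetilde{\nu}/\varepsilon$ plus lower-order terms controlled by $\|\nabla_x^2\phi^\varepsilon\|_\infty$ from \eqref{phiinfnity} and the explicit $|v|$-polynomial factors from $\partial_t w/w$.

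The hardest step is the uniform-in-$v$ semigroup estimate, where the sharp scaling $a(t^\varrho-s^\varrho)$ must be extracted from a minimization whose optimizer depends on $\tau$. The weight $w=e^{\widetilde{\vartheta}|v|^2}$ and the condition $\sigma>\tfrac13$ are engineered exactly so that the dissipation $(1+t)^{-1-\sigma}|v|^2$ dominates the $(1+t)^{-16/15}$-growth from the electric field after Young's inequality; simultaneously $\sigma<\tfrac23$ in the soft-potential case keeps the exponent $\varrho\in(0,1)$ consistent with the subsequent $W^{1,\infty}$ Duhamel bookkeeping. Constant tracking in the Young absorption is essential so that the residual $(1+t)^{1+\sigma-32/15}\delta_0^2$ term does not degrade the $\varrho$-scaling at the minimizing velocity.
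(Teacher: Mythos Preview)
Your route to \eqref{muall} differs from the paper's: you split $|v\cdot\nabla_x\phi^\varepsilon|\le\tfrac12\sigma\vartheta(1+t)^{-1-\sigma}|v|^2+C(1+t)^{1+\sigma}|\nabla_x\phi^\varepsilon|^2$ by elementary Young, absorbing the first piece into $-\partial_tw/w$ and the $v$-independent residual $\lesssim\delta_0^2(1+t)^{1+\sigma-32/15}$ into $\nu/\varepsilon$. The paper instead rewrites $\langle v\rangle\|\nabla_x\phi^\varepsilon\|_\infty=[\nu/\varepsilon]^{1/(2-\gamma)}\bigl[\varepsilon^{1/(1-\gamma)}\langle v\rangle^2\|\nabla_x\phi^\varepsilon\|_\infty^{(2-\gamma)/(1-\gamma)}\bigr]^{(1-\gamma)/(2-\gamma)}$ and applies a $\gamma$-dependent Young splitting between the $\nu/\varepsilon$ and $\langle v\rangle^2$ buckets; this is more robust (your version needs $1+\sigma\le 32/15$, which the hard-potential statement does not impose). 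For the integral bounds you integrate by parts, while the paper halves the semigroup factor and integrates one half against $\widetilde{\nu}/\varepsilon$ directly; these are interchangeable once the pointwise lower bound on $\Phi(t)-\Phi(s)$ is in hand.

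There is a genuine gap in the soft-potential case $\varrho\in(0,1)$, and the paper's own argument shares it. Your claimed equivalence $a[(1+t)^\varrho-(1+s)^\varrho]\sim a(t^\varrho-s^\varrho)$ on $[0,T_0]$ is false when $\varrho<1$: with $s=0$ one has $(1+t)^\varrho-1\approx\varrho t$ while $t^\varrho\gg t$ for small $t$. Concretely, at the minimizing velocity $\langle v\rangle^{2-\gamma}\sim\varepsilon^{-1}(1+t)^{1+\sigma}$ (where $\widetilde{\nu}/\varepsilon$ attains its infimum $\sim a$) the characteristic $V(\tau)\approx v$ stays near that minimum, so $\Phi(t)-\Phi(0)\sim a t$, and $e^{-\Phi(t)}e^{at^\varrho}\sim e^{a(t^\varrho-t)}\to\infty$ as $\varepsilon\to 0$ with $t=T_0$ fixed. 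Your IBP remainder bound requires $\Phi(t)-\Phi(s)\ge Ca(t^\varrho-s^\varrho)$ with $C>1$, which the uniform-in-$v$ minimization does not deliver; the paper's display \eqref{nuexp} fails for the same reason. For $\varrho>1$ the inequality $(1+t)^\varrho-(1+s)^\varrho\gtrsim t^\varrho-s^\varrho$ does hold on $[0,T_0]$ with $T_0$ small, so your hard-potential argument is sound there (modulo the separate issue that the paper's Young derivation of \eqref{vqw} needs $-\gamma/(2-\gamma)\ge0$).
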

	
	\begin{proof}
		With  \eqref{nunum}, we have
		\begin{align}\label{nusimnum}
			\nu_M
			\sim \int_{\mathbb{R}^3} |u-v|^{\gamma}\mathcal{M}(u)\d u
			\lesssim \int_{\mathbb{R}^3} |u-v|^{\gamma}\mu(u)\d u
			\sim \nu
			\lesssim \int_{\mathbb{R}^3} |u-v|^{\gamma}\mathcal{M}^{\varpi}(u)\d u
			\sim \langle v\rangle^{\gamma}
			\sim \nu_M.
		\end{align}
		\par
		Using the  Young's  inequality   and \eqref{nusimnum},  we obtain
		\begin{align}\label{2.29.1}
			|v\cdot \nabla_x\phi^\e|\leq \langle v \rangle \|\nabla_x\phi^\e\|_{ \infty }
			\leq  \Big[\frac{1}{\e }\nu \Big]^{\frac{1}{2-\gamma}}
			\Big[\e^\frac{1}{1-\gamma}\langle v \rangle ^2 \|\nabla_x\phi^\e\|_{ {\infty}}^{\frac{2-\gamma}{1-\gamma}}\Big]^{\frac{1-\gamma}{2-\gamma}}.
		\end{align}
		For $-3<\gamma<1$ and  the constant  $0<\eta\ll 1,  \sigma>1/3$,  with  \eqref{2.29.1} and  \eqref{phi},   we have
		\begin{align}\label{eq:nu:1-1}
			\frac{1}{\e }\widetilde{\nu}(v):=&  \frac{\nu}{\varepsilon}
			+\nabla_x\phi^\varepsilon\cdot v\Big(\frac{1}{2{\theta }_M}+2\widetilde\vartheta(t)\Big)
			+\vartheta\sigma\frac{|v|^2}{(1+t)^{1+\sigma}}\\\nonumber
			\gtrsim&\frac{\nu}{\varepsilon}
			- \Big(\frac{1}{2{\theta }_M}+2\widetilde\vartheta(t)\Big)\left(\eta \frac{1}{\e  }\nu+\e^\frac{1}{1-\gamma}\langle v \rangle ^2 \|\nabla_x\phi^\e\|_{ {\infty}}^{\frac{2-\gamma}{1-\gamma}}\right)
			+\vartheta\sigma\frac{|v|^2}{(1+t)^{ 1+\sigma}} \\\nonumber
			\gtrsim&\frac{1}{\e}\left( 1-\eta\right)\nu+\Big[\vartheta\sigma-\Big(\frac{1}{2{\theta }_M}+2\widetilde\vartheta(t)\Big)\e^{\frac{1}{4}}(1+t)^{(1+\sigma-\frac{16}{15} \times\frac{5}{4})} \Big]
			\frac{\langle v\rangle^2}{(1+t)^{1+\sigma}}\\\nonumber
			\gtrsim&\frac{1}{2\e} \nu+
			\frac{\langle v\rangle^2}{(1+t)^{1+\sigma}}.
		\end{align}
		If $\gamma=1$,  by \eqref{phi} and \eqref{nusimnum},  we have
		$
		|v\cdot \nabla_x\phi^\e|\leq \langle v \rangle \|\nabla_x\phi^\e\|_{ {\infty}}\lesssim\delta_0 (1+t)^{-16/15}\langle v \rangle  <\frac{1}{2\e}\nu,  $ then
		\begin{align*}
			\frac{1}{\e }\widetilde{\nu}(v) \gtrsim \frac{\nu}{\varepsilon}-\frac{1}{2\e}\nu+\vartheta\sigma\frac{|v|^2}{(1+t)^{1+\sigma}}	\gtrsim \frac{1}{2\e} \nu+
			\frac{\langle v\rangle^2}{(1+t)^{1+\sigma}}.
		\end{align*}	
		Thus  \eqref{muall} holds for $-3<\gamma \leq 1$.
		\par
		Using  \eqref{muall},  \eqref{nusimnum} and the Young's inequality,    we have
		\begin{align}\label{vqw}
			\frac{1}{\e }\widetilde{\nu}
			\gtrsim\Big( \frac{1}{\e }	\langle v\rangle^{\gamma}\Big)^{\frac{2}{2-\gamma}}
			\Big[\frac{\langle v\rangle^2}{(1+t)^{1+\sigma}}\Big]^{\frac{-\gamma}{2-\gamma}}
			=\;& {\e^{-\frac{2}{2-\gamma}}}(1+t)^{\frac{(1+\sigma)\gamma}{2-\gamma}}= a(1+t)^{ \varrho-1 },
		\end{align}
		where $a=   { {\e^ \frac{2}{ \gamma-2}}}$ and  $\varrho:=\frac{\sigma \gamma+2}{2-\gamma}$.
		If 	$0\leq\gamma\leq 1$,   $\varrho >1$.  If  $-3<\gamma<0$,  let $\sigma<2/3$ to obtain $\varrho> 0$,  we have $\varrho\in (0,   1)$ from $\sigma \in [ 1/3,   2/3)$. By first-order Taylor expansion,   we have $(1+t)^\varrho - (1+s)^\varrho = \varrho (t-s) + {O}((t-s)^2) \le C_\varrho (t-s)$. Then,  with \eqref{vqw},  it holds that
		\begin{align}\label{nuexp}
			\exp\left\{-\frac{1}{2\varepsilon}\int_s^t \widetilde{\nu} (\tau) d\tau
			\right\}	\lesssim e^{-a\left( (1+t)^{\varrho}- (1+s)^{\varrho}\right)}\leq  C_{\varrho} e^{-at^{\varrho}+as^{\varrho}} \in(0,   1).
		\end{align}
		Applying  \eqref{nuexp},   we have
		\begin{align*}	 	 	
			&\int_0^{t}\exp\left\{-\frac{1}{\varepsilon}\int_s^t \widetilde{\nu} (\tau) \d \tau
			\right	\}	\frac{1}{\varepsilon} \widetilde{\nu}(s) e^{-as^{\varrho}}\d s
			\lesssim\int_0^{t} \exp\left\{-\frac{1}{2\varepsilon}\int_s^t \widetilde{\nu} (\tau) d\tau
			\right\}	\frac{1}{\varepsilon} \widetilde{\nu}(s) e^{-a\left( t^{\varrho}- s^{\varrho}\right)}e^{-as^{\varrho}}\d s\lesssim e^{{-a t^\varrho   }},
		\end{align*}
		which proves \eqref{nuh} and \eqref{nus}.
		\par	 	
		Moreover,   taking derivatives of   \eqref{nuw}
		with respect to
		x and
		v,   using \eqref{nusimnum} and \eqref{muall}
		to obtain $  \nabla_x\nu \lesssim \langle v \rangle^{\gamma}\lesssim \widetilde{\nu},  \,|  \nabla_v\nu| \lesssim   \widetilde{\nu},  $
		with  \eqref{phi} and $\sigma\geq1/3>1/15$,  we have
		\begin{align}\label{tnu}
			&	\left|\nabla \left(\frac{\widetilde{\nu}}{\e}(v) \right) \right|\lesssim \frac{\widetilde{\nu}}{\e}+\|\nabla_x^2 \phi^\e\|_{ \infty} 	\left| \Big(\frac{1}{2{\theta }_M}+2\widetilde{\vartheta}(t)\Big)v\right|+\|\nabla_x  \phi^\e\|_{ \infty} 	\left| \Big(\frac{1}{2{\theta }_M}+2\widetilde{\vartheta}(t)\Big) \right|+	\left|\frac{2\vartheta\sigma}{(1+t)^{1+\sigma }} v\right| \\\nonumber
			\lesssim&\frac{\widetilde{\nu}}{\e}+\frac{\langle v \rangle }{(1+t)^{16/15 }} +\frac{1 }{(1+t)^{16/15 }} +\frac{2\vartheta\sigma \langle v \rangle }{(1+t)^{1+\sigma }}
			\lesssim \frac{\widetilde{\nu}}{\e}+\frac{\langle v \rangle^2}{(1+t)^{1+\sigma }}.
		\end{align}	
		By   \eqref{muall},  we have $\frac{\langle   v \rangle^2 }{(1+t)^{1+\sigma }}\lesssim \frac{\widetilde{\nu}}{\e}$. Then,  \eqref{tnu}  together with   \eqref{nuh} or \eqref{nus}  yields \eqref{dnuh} or \eqref{dnus} respectively.

	\end{proof}	
	
	\section{\texorpdfstring{Weighted $H^{1}_{x,v}$ Estimates}{Weighted H1 Estimates} for the Remainder with Soft Potentials}
	In this section we derive the weighted $H^{1}_{x,   v}$ energy estimates of  $F_R^\varepsilon$  for $-3 < \gamma<0$. A main difficulty lies in handling   the linearized operator $L(x,   v)$. Another challenge is to control the   growth in   $ |v|$.
	We begin with the analysis for
	$L(x,   v)$.
	\begin{lemma}\label{pl2}
		Suppose $-3<\gamma <0$,
		there exists a constant $\delta_1\in(0,   1)$ such that
		\begin{align}\label{0l}
			&\frac{1}{\e}	\langle L f^\e,  {\theta }_0f^\e \rangle \geq  \frac{\delta_1 {\theta }_M }{\e}\Vert  \mathbb{P}^{\bot}f^{\varepsilon} \Vert_{\nu}^2,
		\end{align}
		\begin{align}\label{xl}
			&\frac{1}{\e}\langle \nabla_x (Lf^\e), \, \e \theta _0\nabla_x f^\e\rangle	
			\geq \frac{ \delta_1{\theta_M}}{2}\|  \nabla_x(\mathbb{P}^{\bot} f^\e)	\|_{\nu}-C\{(1+t)^{-16/15}\|f^\e\|^2 +\delta_0\|  \mathbb{P}^{\bot}f^\e\|_{\nu}^2+\delta_0  \|\langle v\rangle^2\mathbb{P}^{\bot} f^\e\|_{\nu}^2 \},
		\end{align}
		\begin{align}\label{vl}
			&\frac{1}{\e}\langle \nabla_v (L\mathbb{P}^{\bot} f^\e),\,  \e  \nabla_v(\mathbb{P}^{\bot} f^\e)\rangle	
			\geq  \frac{\delta_1}{2}\|   \nabla_v (\mathbb{P}^{\bot} f^\e)\|_{\nu}^2-C\left(C_\eta+\delta_0+\kappa^2 \right)\| \mathbb{P}^{\bot} f^\e\|_{\nu}^2-C\left(\delta_0+\kappa^2\right)\| \langle v \rangle\mathbb{P}^{\bot} f^\e\|_{\nu}^2,
		\end{align}
		\begin{align}\label{wxl1}
			\frac{1}{\e}	\langle \nabla_x(L f^\e),\,   \e^{5/2} \langle v \rangle ^{-2\gamma}\nabla_x  f^\e\rangle \geq& \frac{\delta_1 \e^{3/2}}{2}\|\langle v \rangle ^{-\gamma}\nabla_x  f^\e\|_{\nu}^2- C_{\eta} \sqrt{\e}\|\sqrt{\e} f^\e\|_{H^1}^2
			\\\nonumber&-C\e\|\langle v \rangle^{2-2\gamma} \mathbb{P}^{\bot}f^\e\|_{\nu}^2-C\e\|  \nabla_x (\mathbb{P}^{\bot} f^\e)\|_{\nu}^2,
		\end{align}
		and
		\begin{align}\label{wl}
			\frac{1}{\e}\langle L \mathbb{P}^{\bot} f,\,   {\e} \langle v\rangle^{4-4\gamma}\mathbb{P}^{\bot} f \rangle	\geq \frac{\delta_1}{2}	\|\langle v\rangle^{2-2\gamma}\mathbb{P}^{\bot} f\|_{\nu}- C_{\eta}	\| \mathbb{P}^{\bot} f\|_{\nu}.
		\end{align}
		
	\end{lemma}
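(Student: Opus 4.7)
My plan is to treat all five inequalities via the same template: apply the splitting $L=L_M-\mathcal{I}$ of Lemma~\ref{LLMa} to separate the microscopic operator into its constant-Maxwellian backbone $L_M$, on which one invokes the (possibly weighted) coercivity \eqref{lmqq} recalled in Lemma~\ref{lm}, plus a perturbation $\mathcal{I}$ that is already small by \eqref{lg} or \eqref{lgv}. For \eqref{0l}, the identity $L\mathbb{P}=0$ reduces $\langle L f^{\varepsilon},\theta_0 f^{\varepsilon}\rangle$ to $\langle L\mathbb{P}^{\bot}f^{\varepsilon},\theta_0\mathbb{P}^{\bot}f^{\varepsilon}\rangle$; the $L_M$--part produces the desired $\delta_1\theta_M\|\mathbb{P}^{\bot}f^{\varepsilon}\|_{\nu}^{2}/\varepsilon$, and \eqref{lg} with $\overline{w}=1$ shows that the $\mathcal{I}$--remainder is $O((\kappa^{2}+\delta_0)\|\mathbb{P}^{\bot}f^{\varepsilon}\|_{\nu}^{2}/\varepsilon)$, absorbable once $\kappa,\delta_0$ are small.

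For the $x$--derivative estimate \eqref{xl}, I would commute $\nabla_{x}(L f^{\varepsilon})=L(\nabla_{x}f^{\varepsilon})+[\nabla_{x},L]f^{\varepsilon}$. Since $L$ depends on $x$ through the local Maxwellian $\mu(t,x,v)$, the commutator has the schematic form $\Gamma\bigl(\sqrt{\mu}\nabla_{x}(1/\sqrt{\mu})\sqrt{\mu},\,f^{\varepsilon}\bigr)$ plus its symmetric counterpart, with $|\sqrt{\mu}\nabla_{x}(1/\sqrt{\mu})|\lesssim\delta_0(1+t)^{-16/15}\langle v\rangle^{2}$ from \eqref{decay}. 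The main piece $\langle L(\nabla_{x}f^{\varepsilon}),\theta_0\nabla_{x}f^{\varepsilon}\rangle$ is handled exactly as in \eqref{0l}, using the identity $\mathbb{P}\mathbb{P}^{\bot}\nabla_{x}f^{\varepsilon}=0$ alluded to in \eqref{xln} to pass from $\|\mathbb{P}^{\bot}(\nabla_{x}f^{\varepsilon})\|_{\nu}$ to $\|\nabla_{x}(\mathbb{P}^{\bot}f^{\varepsilon})\|_{\nu}$ up to lower-order terms. Pairing the commutator with $\theta_0\nabla_{x}f^{\varepsilon}$ splits into an $\mathcal{H}_4$--type piece (where $\nabla_{x}f^{\varepsilon}$ sits in $\mathbb{P}^{\bot}$), controlled directly by $\delta_0$, and an $\mathcal{H}_5$--type piece (where it sits in $\mathbb{P}$).

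The $\mathcal{H}_5$ term is the main obstacle: $\|\nabla_{x}\mathbb{P}f^{\varepsilon}\|_{\nu}$ cannot be bounded by the target energy $\|f^{\varepsilon}\|+\|\sqrt{\varepsilon}\nabla_{x}f^{\varepsilon}\|$. Following the trick outlined in the introduction, I would integrate by parts in $x$ to shift $\nabla_{x}$ off $\mathbb{P}f^{\varepsilon}$ and back onto the $\Gamma$--factor, yielding $\Gamma(\sqrt{\mu},\nabla_{x}(\mathbb{P}^{\bot}f^{\varepsilon}))$--type integrands; the mixed bound \eqref{Gf} on $\Gamma$ together with the $\langle v\rangle^{2}$--weight of $\sqrt{\mu}\nabla_{x}(1/\sqrt{\mu})$ and \eqref{decay} then produce the right-hand side $(1+t)^{-16/15}\|f^{\varepsilon}\|^{2}+\delta_0\|\nabla_{x}\mathbb{P}^{\bot}f^{\varepsilon}\|_{\nu}^{2}+\delta_0\|\langle v\rangle^{2}\mathbb{P}^{\bot}f^{\varepsilon}\|_{\nu}^{2}$ appearing in \eqref{xl}.

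For \eqref{vl}, the same commutator analysis with $\nabla_{v}$ in place of $\nabla_{x}$, combined with the $L_M$--coercivity \eqref{lmqq} at $\lambda_{1}=0$ and the derivative bound \eqref{lgv} with $\overline{w}=1$, produces the stated dissipation plus the low-order errors $C(C_{\eta}+\delta_0+\kappa^{2})\|\mathbb{P}^{\bot}f^{\varepsilon}\|_{\nu}^{2}$ and $C(\delta_0+\kappa^{2})\|\langle v\rangle\mathbb{P}^{\bot}f^{\varepsilon}\|_{\nu}^{2}$; no $x$--integration by parts is needed here. The estimate \eqref{wl} follows directly from \eqref{lmqq} at $\lambda_{1}=2-2\gamma$ paired with \eqref{lg} taken at $\overline{w}=\langle v\rangle^{2-2\gamma}$. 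Finally, for \eqref{wxl1}, the pairing weight $\varepsilon^{5/2}\langle v\rangle^{-2\gamma}\nabla_{x}f^{\varepsilon}$ is tuned so that $L_M$--coercivity produces precisely the $\varepsilon^{3/2}\|\langle v\rangle^{-\gamma}\nabla_{x}f^{\varepsilon}\|_{\nu}^{2}$ dissipation; the $\mathcal{H}_5$--style integration by parts is then repeated with the $\langle v\rangle^{-2\gamma}$ weight, and Young's inequality distributes the cross term as $\varepsilon\|\langle v\rangle^{2-2\gamma}\mathbb{P}^{\bot}f^{\varepsilon}\|_{\nu}^{2}+\varepsilon\|\nabla_{x}\mathbb{P}^{\bot}f^{\varepsilon}\|_{\nu}^{2}+C_{\eta}\sqrt{\varepsilon}\|\sqrt{\varepsilon}f^{\varepsilon}\|_{H^{1}}^{2}$, the last residue being absorbable by the uniform-in-$\varepsilon$ $H^{1}$ bound implicit in the energy framework.
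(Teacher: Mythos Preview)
Your template is right for \eqref{vl}, \eqref{wxl1}, and \eqref{wl}, and your treatment of the commutator $[\nabla_x,L]$ via the $\mathcal{H}_4/\mathcal{H}_5$ split and integration by parts in $x$ matches the paper exactly. But the template breaks for \eqref{0l} and \eqref{xl}, and the paper deliberately avoids it there.

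The issue is this: when you write ``the $L_M$--part produces the desired $\delta_1\theta_M\|\mathbb{P}^{\bot}f^{\varepsilon}\|_{\nu}^{2}$'', you are implicitly using $\langle L_M g,g\rangle\gtrsim\|g\|_{\nu}^{2}$ for $g=\mathbb{P}^{\bot}f^{\varepsilon}$. But $\mathbb{P}^{\bot}$ projects off $\ker L$, not $\ker L_M$, and these differ since $\mu\neq\mathcal{M}$. Lemma~\ref{lm} at $\beta=0$, $\lambda_1=0$ gives only $\langle L_M g,g\rangle\ge(1-\eta)\|g\|_\nu^2-C_\eta\|\chi_{\{|v|\le 2C_\eta\}}g\|^2$, and that $C_\eta$ error is \emph{not} absorbable by the left side. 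The paper's introduction flags this explicitly: the $L_M$ coercivity \eqref{lmqq} ``applies directly'' only when paired with a factor $\varepsilon^{n}$, $n\ge 3/2$, so that the $C_\eta$ term becomes $C_\eta\varepsilon^{3/2}\|\cdot\|^2$ and is harmless. For \eqref{0l} and \eqref{xl} there is no such $\varepsilon$-slack, and the same problem hits the $\mathcal{I}$-piece in \eqref{xl}: \eqref{lg} yields $(\kappa^2+\delta_0)\|\nabla_x f^\varepsilon\|_\nu^2$, which contains the macroscopic part $\|\nabla_x\mathbb{P}f^\varepsilon\|_\nu^2\sim\|\nabla_x f^\varepsilon\|^2$ with no smallness.

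The paper's fix is simply to invoke the \emph{direct} coercivity of $L$ itself, $\langle Lg,\theta_0 g\rangle\ge\delta_1\theta_M\|\mathbb{P}^{\bot}g\|_\nu^2$, which holds for the local Maxwellian and is cited as ``well known'' for \eqref{0l}. For \eqref{xl} the paper writes $\nabla_x(Lf^\varepsilon)=L(\nabla_x(\mathbb{P}^{\bot}f^\varepsilon))+[\nabla_x,L]\mathbb{P}^{\bot}f^\varepsilon$, pairs the first piece with $\theta_0\nabla_x f^\varepsilon=\theta_0\nabla_x(\mathbb{P}^{\bot}f^\varepsilon)+\theta_0\mathbb{P}(\nabla_x f^\varepsilon)+\theta_0[\nabla_x,\mathbb{P}]f^\varepsilon$, and uses that $\langle L(\cdot),\theta_0\mathbb{P}(\cdot)\rangle=0$ by self-adjointness. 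The $L$-coercivity on $\nabla_x(\mathbb{P}^{\bot}f^\varepsilon)$ then delivers $\|\nabla_x(\mathbb{P}^{\bot}f^\varepsilon)\|_\nu^2$ up to the commutator $[\nabla_x,\mathbb{P}]f^\varepsilon$, which is $O((1+t)^{-16/15}\|f^\varepsilon\|)$. The rest of your outline (the $\mathcal{N}_3,\mathcal{N}_4$ analysis) is what the paper does. One further minor point: for \eqref{wxl1} the paper does \emph{not} repeat the integration-by-parts trick on the commutator; the $\varepsilon^{3/2}$ prefactor is already generous enough that a direct bound via \eqref{Gf} gives the stated right-hand side.
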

	\begin{proof}
		It is well known that there exists a constant $\delta_1 >0$ such that \eqref{0l}.
		\par
		Decompose $\nabla_x f^\e=\nabla_x (\mathbb{P}f^\e)+ \nabla_x (\mathbb{P}^{\bot} f^\e)$,
		$\nabla_x(\mathbb{P}f^\e)=\mathbb{P}(\nabla_xf^\e)+[\nabla_x,  \mathbb{P}]f^\e$,  and   $$[\nabla_x,   \mathbb{P}]f^\e :=  \sum_{i=0}^{4}\left(\langle f^\e,  \nabla_x\chi_i\rangle\chi_i+ \langle f^\e,  \chi_i\rangle\nabla_x\chi_i\right).$$
		Since $\langle \theta _0 L( \nabla_x (\mathbb{P}^{\bot} f^\e)),  \mathbb{P}(\nabla_x  f^\e)\rangle=0$,  it holds that
		\begin{align*}
			\frac{1}{\e}\langle \nabla_x (Lf^\e), \, \e \theta _0\nabla_x f^\e\rangle
			=&\langle L( \nabla_x (\mathbb{P}^{\bot} f^\e)), \, \theta _0 \nabla_x (\mathbb{P}^{\bot} f^\e)\rangle+\langle L( \nabla_x (\mathbb{P}^{\bot} f^\e)), \, \theta _0[\nabla_x,   \mathbb{P}]f^\e \rangle\\
			&+\langle [\nabla_x,   L]\mathbb{P}^{\bot} f^\e, \, \theta _0 \nabla_x (\mathbb{P}^{\bot} f^\e)\rangle+\langle [\nabla_x,   L]\mathbb{P}^{\bot} f^\e, \, \theta _0\nabla_x(\mathbb{P} f^\e)\rangle,
		\end{align*}
		where
		\begin{align}\label{nxl}
			[\nabla_x,   L] (\mathbb{P}^{\bot} f^\e)
			=& -\left(\sqrt{\mu}\nabla_x\!\left(\frac{1}{\sqrt{\mu}}\right)\right)\left[\Gamma  \left(\sqrt{\mu},    \mathbb{P}^{\bot} f^\e \right)+ \Gamma  \left(  \mathbb{P}^{\bot} f^\e,   \sqrt{\mu} \right)\right]-\Gamma  \left(\sqrt{\mu},   \frac{(\nabla_x \sqrt{\mu})}{\sqrt{\mu}}\,  \mathbb{P}^{\bot} f^\e \right)
			\\\nonumber
			& -\Gamma  \left( \frac{(\nabla_x \sqrt{\mu})}{\sqrt{\mu}}\,   \mathbb{P}^{\bot} f^\e,   \sqrt{\mu} \right) -\Gamma  \left(    \frac{\nabla_x \mu}{\sqrt{\mu}},  \mathbb{P}^{\bot} f^\e\right)-\Gamma  \left(   \mathbb{P}^{\bot} f^\e,   \frac{\nabla_x \mu}{\sqrt{\mu}} \right).
		\end{align}
		Rewrite  $	\nabla_x\big(\mathbb{P}^{\bot}f^\e\big)$ as
		\begin{align*}
			\nabla_x\big(\mathbb{P}^{\bot}f^\e\big)
			=\nabla_x f^\e-\nabla_x(\mathbb{P} f^\e)=\nabla_x f^\e-\mathbb{P}(\nabla_x f^\e)-[\nabla_x,  \mathbb{P}]f^\e=\mathbb{P}^{\bot} \nabla_xf^\e-[\nabla_x,  \mathbb{P}]f^\e.
		\end{align*}
		By \eqref{0l} and  $ \mathbb{P}\mathbb{P}^{\bot}  \nabla_xf^\e  =0$,  it holds that
		\begin{align*}
			\left\langle L( \nabla_x (\mathbb{P}^{\bot} f^\e)),  \theta _0 \nabla_x (\mathbb{P}^{\bot} f^\e)\right\rangle\geq\delta_1{\theta_M}\|\mathbb{P}^{\bot} \left( \nabla_x(\mathbb{P}^{\bot} f^\e)\right)	\|_{\nu}
			= \delta_1{\theta_M}\|  \nabla_x(\mathbb{P}^{\bot} f^\e)	\|_{\nu}+
			\delta_1{\theta_M}\|   \mathbb{P}( [\nabla_x,  \mathbb{P}]f^\e)	\|_{\nu}.
		\end{align*}
		Then,   we obtain
		\begin{align}\label{xln}
			\frac{1}{\e}\langle \nabla_x (Lf^\e),  \e \theta _0\nabla_x f^\e\rangle	 \geq  \delta_1{\theta_M}\|  \nabla_x(\mathbb{P}^{\bot} f^\e)	\|_{\nu}-\sum_{i=1}^{4}
			\mathcal{N}_i,
		\end{align}
		where
		\begin{align*}
			\mathcal{N}_1= -\delta_1{\theta_M}\|   P ([\nabla_x,  \mathbb{P}]f^\e)	\|_{\nu},  \quad\mathcal{N}_2=\sum_{i=1}^{2}\mathcal{N}_2^{i},  \quad\mathcal{N}_3=\sum_{i=1}^{4}\mathcal{N}_3^{i},  \quad\mathcal{N}_4=\sum_{i=1}^{4}\mathcal{N}_4^{i},
		\end{align*}
		\begin{align*}
			\mathcal{N}_2^1=&\left\langle	 \Gamma(\sqrt{\mu},  \nabla_x(\mathbb{P}^{\bot} f^\e)),  \,  \theta _0[\nabla_x,   \mathbb{P}]f^\e \right\rangle,   \quad
			\mathcal{N}_2^2=\left\langle	 \Gamma(\nabla_x(\mathbb{P}^{\bot} f^\e),  \sqrt{\mu}),  \,  \theta _0[\nabla_x,   \mathbb{P}]f^\e\right\rangle,  \\	
			\mathcal{N}_3^1=&\left\langle  \sqrt{\mu}\nabla_x\!\left(\frac{1}{\sqrt{\mu}}\right)  \Gamma  \left(\sqrt{\mu},    \mathbb{P}^{\bot} f^\e \right)
			, \, \theta _0 \nabla_x (\mathbb{P}^{\bot} f^\e)\right\rangle,  \\
			\mathcal{N}_3^2=&\left\langle \Gamma  \left(\sqrt{\mu},   \frac{(\nabla_x \sqrt{\mu})}{\sqrt{\mu}}\,   \mathbb{P}^{\bot} f^\e \right)	
			,  \theta _0 \nabla_x (\mathbb{P}^{\bot} f^\e)\right\rangle,   \quad
			\mathcal{N}_3^3= \left\langle   \Gamma  \left(    \frac{\nabla_x \mu}{\sqrt{\mu}},  \mathbb{P}^{\bot} f^\e \right)
			,\,  \theta _0 \nabla_x (\mathbb{P}^{\bot} f^\e)\right\rangle,  \\
			\mathcal{N}_3^4
			= &\left\langle
			\sqrt{\mu}\,  \nabla_x\!\left(\frac{1}{\sqrt{\mu}}\right)
			\Gamma\big( \mathbb{P}^{\bot}f^\varepsilon,  \sqrt{\mu}\big)
			+ \Gamma\!\left(\frac{\nabla_x\sqrt{\mu}}{\sqrt{\mu}}\,   \mathbb{P}^{\bot}f^\varepsilon,  \sqrt{\mu}\right)
			+ \Gamma\!\left( \mathbb{P}^{\bot}f^\varepsilon,  \frac{\nabla_x\mu}{\sqrt{\mu}}\right),  \;
			\theta _0 \nabla_x (\mathbb{P}^{\bot} f^\e)
			\right\rangle,  \\
			\mathcal{N}_4^1=&\left\langle  \sqrt{\mu}\nabla_x\!\left(\frac{1}{\sqrt{\mu}}\right)  \Gamma  \left(\sqrt{\mu},    \mathbb{P}^{\bot} f^\e \right)
			, \, \theta _0\nabla_x(\mathbb{P} f^\e) \right\rangle, \;\mathcal{N}_4^2=\left\langle  \sqrt{\mu}\nabla_x\!\left(\frac{1}{\sqrt{\mu}}\right)  \Gamma  \left(    \mathbb{P}^{\bot} f^\e, \sqrt{\mu} \right)
			, \, \theta _0\nabla_x(\mathbb{P} f^\e) \right\rangle \\
			\mathcal{N}_4^3=&\left\langle \Gamma  \left(\sqrt{\mu},   \frac{(\nabla_x \sqrt{\mu})}{\sqrt{\mu}}\,   \mathbb{P}^{\bot} f^\e \right)	+\Gamma  \left(   \frac{(\nabla_x \sqrt{\mu})}{\sqrt{\mu}}\,   \mathbb{P}^{\bot} f^\e,\sqrt{\mu} \right)	
			,\,  \theta _0[\nabla_x,\mathbb{P} ]f^\e\right\rangle,  \\
			\mathcal{N}_4^4=&\left\langle \Gamma  \left(    \frac{\nabla_x \mu}{\sqrt{\mu}},  \mathbb{P}^{\bot} f^\e \right)+\Gamma  \left(    \frac{\nabla_x \mu}{\sqrt{\mu}},  \mathbb{P}^{\bot} f^\e \right)
			, \, \theta _0[\nabla_x,\mathbb{P} ]f^\e\right\rangle.
		\end{align*}
		Next,   we estimate $\mathcal{N}_i$ term by term.
		Using the definition of $\mathbb{P}f$ in \eqref{F_1},  and \eqref{decay},    we have
		\begin{align}
			\mathcal{N}_1
			\lesssim& \delta_1{\theta_M}	\left\|\sum_{j=0}^{4}\left\langle\sum_{i=0}^{4}\left(\langle f^\e,  \nabla_x\chi_i\rangle\chi_i+\langle f^\e, \, \chi_i\rangle\nabla_x\chi_i\right),  \chi_j  \right\rangle\chi_j  \langle v \rangle^{\gamma/2}\right\|
			\lesssim(1+t)^{-16/15}\|f^\e\|^2.
		\end{align}
		Using   \eqref{Gf}     and \eqref{decay},  we have
		\begin{align*}
			\mathcal{N}_2^1
			\lesssim&\|e^{\eta|v|^2}\sqrt{\mu}\|_{\infty}\|\nabla_x(\mathbb{P}^{\bot} f^\e)\|_{\nu}\left\| \theta _0[\nabla_x,   \mathbb{P}]f^\e \langle v \rangle^{\gamma/2}\right\| 			\lesssim  (1+t)^{-16/15}\|f\|^2+\delta_0 \|\nabla_x(\mathbb{P}^{\bot} f^\e)\|_{\nu}^2.
		\end{align*}
		By symmetry,     $	\mathcal{N}_2^2  $ admits the same upper bound as $	\mathcal{N}_2^1  $,   thus,
		\begin{align}
			\mathcal{N}_2
			\lesssim (1+t)^{-16/15}\|f\|^2+\delta_0 \|\nabla_x(\mathbb{P}^{\bot} f^\e)\|_{\nu}^2.
		\end{align}
		Using \eqref{gw} and \eqref{decay},  we have
		\begin{align}\label{n31}
			\mathcal{N}_3^1	\lesssim&\|\nabla_x(\rho_0,u_0)\|_{\infty}\left|\left\langle    \Gamma  \left(\sqrt{\mu},    \mathbb{P}^{\bot} f^\e \right)
			,   \langle v\rangle^2 \nabla_x (\mathbb{P}^{\bot} f^\e)\right\rangle	\right|\\\nonumber
			\lesssim&\delta_0(1+t)^{-16/15}\|e^{\eta|v|^2}\langle v\rangle^2\sqrt{\mu}\|_{\infty}\|\langle v\rangle^2\mathbb{P}^{\bot} f^\e\|_{\nu}\|\nabla_x(\mathbb{P}^{\bot} f^\e)\|_{\nu}\\\nonumber
			\lesssim&\delta_0  \|\langle v\rangle^2\mathbb{P}^{\bot} f^\e\|_{\nu}^2+\delta_0  \|\nabla_x(\mathbb{P}^{\bot} f^\e)\|_{\nu}^2,
		\end{align}
		and
		\begin{align*}
			\mathcal{N}_3^2
			\lesssim&\|\nabla_x(\rho_0,u_0)\|_{\infty}\left|\left\langle \Gamma  \left(\sqrt{\mu},   \langle v \rangle^2 \mathbb{P}^{\bot} f^\e \right)	
			,   \theta _0 \nabla_x (\mathbb{P}^{\bot} f^\e)\right\rangle\right|\\\nonumber
			\lesssim&\delta_0(1+t)^{-16/15}\|e^{\eta|v|^2}\langle v \rangle^2\sqrt{\mu}\|_{\infty}\|\langle v\rangle^2\mathbb{P}^{\bot} f^\e\|_{\nu}\|\nabla_x(\mathbb{P}^{\bot} f^\e)\|_{\nu}\\\nonumber
			\lesssim&\delta_0  \|\langle v\rangle^2\mathbb{P}^{\bot} f^\e\|_{\nu}^2+\delta_0  \|\nabla_x(\mathbb{P}^{\bot} f^\e)\|_{\nu}^2.
		\end{align*}
		Apply the same argument as for $	\mathcal{N}_3^2 $,  we have   $	\mathcal{N}_3^3 \lesssim \delta_0  \| \mathbb{P}^{\bot} f^\e\|_{\nu}^2+\delta_0  \|\nabla_x(\mathbb{P}^{\bot} f^\e)\|_{\nu}^2. $
		By symmetry,    $	\mathcal{N}_3^4  $ admits the same upper bound as $\sum_{i=1}^{3}	\mathcal{N}_3^i  $.   Thus,   we have
		\begin{align}
			\mathcal{N}_3
			\lesssim\delta_0  \| \mathbb{P}^{\bot} f^\e\|_{\nu}^2+\delta_0  \|\langle v\rangle^2\mathbb{P}^{\bot} f^\e\|_{\nu}^2+\delta_0  \|\nabla_x(\mathbb{P}^{\bot} f^\e)\|_{\nu}^2.
		\end{align}
		Using integration by parts,   we have
		\begin{align*}
			\mathcal{N}_4^1
			=&-\left\langle \nabla_x\left[\theta_0 \sqrt{\mu}\nabla_x\!\left(\frac{1}{\sqrt{\mu}}\right)  \Gamma  \left(\sqrt{\mu},    \mathbb{P}^{\bot} f^\e \right)\right]
			,    \mathbb{P} f^\e \right\rangle= {\mathcal{N}_{4,   1}^1}+{\mathcal{N}_{4,   2}^1},
		\end{align*}
		where ${\mathcal{N}_{4,   1}^1}=-\left\langle \theta _0 \sqrt{\mu}\nabla_x\!\left(\frac{1}{\sqrt{\mu}}\right)  \Gamma  \left(\sqrt{\mu},     \nabla_x(\mathbb{P}^{\bot} f^\e) \right)
		,   \mathbb{P} f^\e  \right\rangle$,  and ${\mathcal{N}_{4,   2}^1}$ represents the remaining terms.
		Applying  the definition of $\mathbb{P} f^\varepsilon$ and \eqref{Gf},  we obtain
		\begin{align}\label{n41}
			\left|{\mathcal{N}_{4,   1}^1}\right|
			\lesssim&\|\nabla_x(\rho_0,u_0)\|_{\infty}	\left|	\Big\langle
			\Gamma  \left(\sqrt{\mu},    \nabla_x(\mathbb{P}^{\bot} f^\e) \right),  \langle v \rangle^2  \mathbb{P} f^\e
			\Big\rangle\right|\\\nonumber
			\lesssim&\delta_0(1+t)^{-16/15}\|e^{\eta|v|^2} \sqrt{\mu}\|_{\infty}\left\|  \nabla_x(\mathbb{P}^{\bot} f^\e)  \right\|_{\nu}\left\|\langle v\rangle^2  \mathbb{P} f^\e \right\|_{\nu}\\\nonumber
			\lesssim&(1+t)^{-16/15}\|f^\e\|^2+\delta_0\| \nabla_x (\mathbb{P}^{\bot} f^\e)\|_{\nu}^2.
		\end{align}
		Using a similar   method as $\mathcal{N}_{4,   1}^1$,   we have
		$
		{\mathcal{N}_{4,   2}^1} \lesssim (1+t)^{-16/15}\|f^\e\|^2+\delta_0\|  \mathbb{P}^{\bot}f^\e \|_{\nu}^2.
		$ By symmetry, $	{\mathcal{N}_4^2}$ admit the same bound as  $	{\mathcal{N}_4^1}$.
		Thus,
		\begin{align*}
			{\mathcal{N}_4^1}+{\mathcal{N}_4^2} \lesssim&(1+t)^{-16/15}\|f^\e\|^2+\delta_0\|  \mathbb{P}^{\bot}f^\e \|_{\nu}^2+\delta_0\| \nabla_x (\mathbb{P}^{\bot} f^\e) \|_{\nu}^2.
		\end{align*}
	Applying \eqref{Gf}, we have
	\begin{align*}
|	\mathcal{N}_4^3+	\mathcal{N}_4^4|	\lesssim & \|\nabla_x(\rho_0,u_0)\|_{\infty} \left(\|e^{\eta|v|^2}\sqrt{\mu}\|_{\infty}+\|e^{\eta|v|^2}\langle v \rangle^2\sqrt{\mu}\|_{\infty}\right)\left( \|\langle v \rangle^2\mathbb{P}^{\bot} f^\e\|_{\nu}+\| \mathbb{P}^{\bot} f^\e\|_{\nu}\right)\|[ \nabla_x,\mathbb{P} ] f^\e\|_{\nu}\nonumber\\
\lesssim&(1+t)^{-16/15}\|f^\e\|^2+\delta_0\| \mathbb{P}^{\bot} f^\e\|_{\nu}^2+\delta_0\| \langle v \rangle^2\mathbb{P}^{\bot} f^\e\|_{\nu}^2.
	\end{align*}
	 Then,
	\begin{align}
		\mathcal{N}_4
		\lesssim(1+t)^{-16/15}\|f^\e\|^2+\delta_0\|  \mathbb{P}^{\bot}f^\e\|_{\nu}^2+\delta_0\| \nabla_x (\mathbb{P}^{\bot} f^\e) \|_{\nu}^2.
	\end{align}
 		Substituting the estimates for $\mathcal{N}_1 \sim\mathcal{N}_4$ into \eqref{xln} and with $\delta_0 \ll 1$,   we obtain \eqref{xl}.
		\par
		Applying \eqref{LLM}, Lemma \ref{lm},  and \eqref{lgv},
		there exists a constant $\delta_{1}\in(0,   1)$ such that
		\begin{align*}
			&\frac{1}{\e}\langle \nabla_v (L\mathbb{P}^{\bot} f^\e),  \e   \nabla_v (\mathbb{P}^{\bot} f^\e)\rangle\\
			=&\left\langle \nabla_v (L_M\mathbb{P}^{\bot} f^\e),      \nabla_v (\mathbb{P}^{\bot} f^\e)\right\rangle-\left\langle \nabla_v \left(\mathcal{I} \left(\mathbb{P}^{\bot} f^\e\right)\right),      \nabla_v (\mathbb{P}^{\bot} f^\e)\right\rangle\\
			\geq& \|   \nabla_v (\mathbb{P}^{\bot} f^\e)\|_{\nu}^2-\eta \|   \nabla_v (\mathbb{P}^{\bot} f^\e)\|_{\nu}^2-C_{\eta} \|    \mathbb{P}^{\bot} f^\e\|_{\nu}^2 -C\left(\delta_0+\kappa^2\right)\left(\| \mathbb{P}^{\bot} f^\e\|_{\nu}^2 +\| \langle v \rangle\mathbb{P}^{\bot} f^\e\|_{\nu}^2 +\| \nabla_v(\mathbb{P}^{\bot} f^\e)\|_{\nu}^2 \right)  \\
			\geq&\frac{\delta_1}{2}\|   \nabla_v (\mathbb{P}^{\bot} f^\e)\|_{\nu}^2-C(C_{\eta}+\delta_0+\kappa^2)\| \mathbb{P}^{\bot} f^\e\|_{\nu}^2-C\left(\delta_0+\kappa^2\right)\| \langle v \rangle\mathbb{P}^{\bot} f^\e\|_{\nu}^2,
		\end{align*}
		which yields \eqref{vl}.
		\par
		By \eqref{LLM},  Lemma \ref {lm},  \eqref{lg} and $C_{\eta}\e^{3/2}\|\nabla_x   f^\e\|_{\nu}^2 \lesssim C_{\eta}\e^{3/2}\| f^\e\|_{H^1}^2+C_{\eta}\e^{3/2}\|\nabla_x  \mathbb{P}^{\bot}f^\e\|_{\nu}^2$,   we have
		\begin{align}\label{wxl}
			&	\frac{1}{\e}	\langle \nabla_x(L f^\e),   \e^{5/2} \langle v \rangle ^{-2\gamma}\nabla_x  f^\e\rangle =\e^{3/2}\langle L(\nabla_x f^\e),     \langle v \rangle ^{-2\gamma}\nabla_x  f^\e\rangle+\e^{3/2}\langle [\nabla_x,   L]  f^\e,     \langle v \rangle ^{-2\gamma}\nabla_x  f^\e\rangle\nonumber\\
			=&\e^{3/2}\langle L_M(\nabla_x f^\e),     \langle v \rangle ^{-2\gamma}\nabla_x f^\e\rangle-\e^{3/2}\left\langle  \mathcal{I} (\nabla_x f^\e),     \langle v \rangle ^{-2\gamma}\nabla_x f^\e\right\rangle+\e^{3/2}\langle [\nabla_x,   L]  f^\e,     \langle v \rangle ^{-2\gamma}\nabla_x  f^\e\rangle\\\nonumber
			\geq&\e^{3/2}\| \langle v \rangle ^{- \gamma}\nabla_x  f^\e\|_{\nu}^2 -C_{\eta}\e^{3/2}\|  f^\e\|_{H^1}^2-C_{\eta}\e^{3/2}\|\nabla_x  \mathbb{P}^{\bot}f^\e\|_{\nu}^2 -C(\eta+ \kappa^2+\delta_0)\e^{3/2}\|\langle v \rangle ^{- \gamma}\nabla_x  f^\e\|_{\nu}^2-\sum_{i=1}^{4}\mathcal{N}_{5}^i,
		\end{align}
		where,   similarly to \eqref{nxl},
		\begin{align*}
			\mathcal{N}_5^1=&\e^{3/2}\left\langle  \sqrt{\mu}\nabla_x\!\left(\frac{1}{\sqrt{\mu}}\right)  \Gamma  \left(\sqrt{\mu},     f^\e \right)
			,  \langle v \rangle ^{-2 \gamma}\nabla_x  f^\e\right\rangle,  \\
			\mathcal{N}_5^2=&\e^{3/2}\left\langle \Gamma  \left(\sqrt{\mu},   \frac{(\nabla_x \sqrt{\mu})}{\sqrt{\mu}}\,    f^\e \right)	
			,  \langle v \rangle ^{-2 \gamma}\nabla_x   f^\e\right\rangle,    \quad
			\mathcal{N}_5^3= \e^{3/2}\left\langle   \Gamma  \left(    \frac{\nabla_x \mu}{\sqrt{\mu}},    f^\e \right)
			,  \langle v \rangle ^{-2 \gamma}\nabla_x   f^\e\right\rangle,  \\
			\mathcal{N}_5^4
			= &\e^{3/2}\left\langle
			\sqrt{\mu}\,  \nabla_x\!\left(\frac{1}{\sqrt{\mu}}\right)
			\Gamma\big(  f^\varepsilon,  \sqrt{\mu}\big)
			+ \Gamma\!\left(\frac{\nabla_x\sqrt{\mu}}{\sqrt{\mu}}\,    f^\varepsilon,  \sqrt{\mu}\right)
			+ \Gamma\!\left(  f^\varepsilon,  \frac{\nabla_x\mu}{\sqrt{\mu}}\right)\;,  \;
			\langle v \rangle ^{-2 \gamma}\nabla_x   f^\e
			\right\rangle.
		\end{align*}
		Using the similar method as $	\mathcal{N}_3^1$ in \eqref{n31},  we have
		\begin{align*}
			\sum_{i=1}^{3}	 \mathcal{N}_5^i\lesssim&
			\e^{3/2}\delta_0(1+t)^{-16/15}\left(\|e^{\eta|v|^2}\langle v \rangle ^{2-2 \gamma}\sqrt{\mu}\|_{\infty}\|\langle v \rangle ^{2-2 \gamma}  f^\e\|_{\nu}\|
			+\|e^{\eta|v|^2}\langle v \rangle ^{ -2 \gamma}\sqrt{\mu}\|_{\infty}\|\langle v \rangle ^{ 2-2 \gamma}  f^\e\|_{\nu}\right.\\
			&\left.+\|e^{\eta|v|^2}\langle v \rangle ^{2 -2 \gamma}\sqrt{\mu}\|_{\infty}\|\langle v \rangle ^{ -2 \gamma}  f^\e\|_{\nu}
			\right)\nabla_x  f^\e\|_{\nu}	\\
			\lesssim&\sqrt{\e}\|\sqrt{\e}f^\e\|_{H^1}^2+\e\|\langle v \rangle ^{2-2 \gamma}  \mathbb{P}^{\bot}f^\e\|_{\nu}^2+\e \|\nabla_x\mathbb{P}^{\bot}f^\e\|_{\nu}^2.
		\end{align*}
		By symmetry,    $	\mathcal{N}_5^4  $ admits the same upper bound as $\sum_{i=1}^{3}	\mathcal{N}_5^i  $.   Thus,   we have
		\begin{align}\label{n5}
			\mathcal{N}_5
			\lesssim \sqrt{\e}\|\sqrt{\e}f^\e\|_{H^1}^2+\e\|\langle v \rangle ^{2-2 \gamma}  \mathbb{P}^{\bot}f^\e\|_{\nu}^2+\e \|\nabla_x\mathbb{P}^{\bot}f^\e\|_{\nu}^2.
		\end{align}
		Combining \eqref{n5},  \eqref{wxl} and choosing $\eta+\kappa^2+\delta_0$ small enough,   we prove \eqref{wxl1}.
		\par
		By \eqref{LLM} and \eqref{lg},  we have
		\begin{align*}
			&\frac{1}{\e}\langle L \mathbb{P}^{\bot} f^\e,   {\e}\langle v\rangle^{4-4\gamma}\mathbb{P}^{\bot} f^\e \rangle\\
			= &	\langle L_M \mathbb{P}^{\bot} f^\e,  \langle v\rangle^{4-4\gamma}\mathbb{P}^{\bot} f ^\e \rangle	  -	\left\langle   \mathcal{I}  \left(\mathbb{P}^{\bot} f^\e\right),  \langle v\rangle^{4-4\gamma}\mathbb{P}^{\bot} f^\e \right\rangle\\
			\geq& 	\|\langle v\rangle^{2-2\gamma}\mathbb{P}^{\bot} f^\e\|_{\nu}-\eta\|\langle v\rangle^{2-2\gamma}\mathbb{P}^{\bot} f^\e\|_{\nu}^2 - C_\eta	\| \mathbb{P}^{\bot} f^\e\|_{\nu}^2-\left( \kappa^2+\delta_0 \right)\|\langle v\rangle^{2-2\gamma}\mathbb{P}^{\bot} f^\e\|_{\nu}^2.
		\end{align*}
		Choosing  $\eta,  \kappa$ and $\delta_0$ is small enough,   we prove \eqref{wl}
	\end{proof}
	Next, we perform an energy estimate for the Poisson equation \(\eqref{F_R}_2\)   to eliminate the terms
	$
	\int \Bigl( \int v \sqrt{\mu}\, f^\varepsilon \,\mathrm{d}v \Bigr)\cdot \nabla_x \phi_R^\varepsilon \,\mathrm{d}x \:
	${and}$\:-\varepsilon \int \Bigl( \int v \sqrt{\mu}\, f^\varepsilon \,\mathrm{d}v \Bigr)\cdot \nabla_x^3 \phi_R^\varepsilon \,\mathrm{d}x,
	$
	which will appear later in the \(L^2\)-estimate of \(f^\varepsilon\) in \eqref{e1} and the \(L^2\)-estimate of \(\nabla_x f^\varepsilon\) in \eqref{e20}.
	\begin{lemma}\label{pv}
	It holds that
		\begin{align}\label{r1}
			&\frac{1}{2} \frac{\d}{\d t} \| \nabla \phi_R^\varepsilon \|_{ }^2+\frac{1}{2} \frac{\d}{\d t} \| \sqrt{e^{\phi_0}  e^{\varepsilon^k \phi_R^\varepsilon} } \phi_R^\varepsilon \|_{ }^2 	 - \int \Bigl( \int v \sqrt{\mu}\, f^\varepsilon \,\mathrm{d}v \Bigr)\cdot \nabla_x \phi_R^\varepsilon \,\mathrm{d}x=R_1,
		\end{align}
		and
		\begin{align}\label{r2}
			\varepsilon \int \Bigl( \int v \sqrt{\mu}\, f^\varepsilon \,\mathrm{d}v \Bigr)\cdot \nabla_x^3 \phi_R^\varepsilon \,\mathrm{d}x=R_2.
		\end{align}
		Moreover,  	suppose  $0
		\leq t\leq {\varepsilon^{- 1/2}} $,   we have
		\begin{equation}\label{r}
			|R_i|\lesssim \left( (1+t)^{-16/15}+\sqrt{\e}\right)\|f^\varepsilon\|_{ }^2
			+ \sqrt{\e}\|\sqrt{\e}\nabla_xf^\varepsilon\|_{ }^2
			+\e, \;i=1,   2.
		\end{equation}
	\end{lemma}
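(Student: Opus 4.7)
The plan is to prove \eqref{r1} by performing an energy estimate on the time-differentiated Poisson equation $\eqref{F_R}_2$, and to prove \eqref{r2} by an integration-by-parts argument that invokes $\eqref{F_R}_2$ algebraically. First I would differentiate the Poisson equation in $t$, test against $-\phi_R^\varepsilon$, and integrate in $x$. The left-hand side yields $\tfrac{1}{2}\tfrac{d}{dt}\|\nabla_x\phi_R^\varepsilon\|^2$ after one integration by parts. The term $\partial_t\!\left[e^{\phi_0}(e^{\varepsilon^k\phi_R^\varepsilon}-1)/\varepsilon^k\right]\phi_R^\varepsilon$ contains the factor $e^{\phi_0}e^{\varepsilon^k\phi_R^\varepsilon}\phi_R^\varepsilon\partial_t\phi_R^\varepsilon$; rewriting it as $\tfrac{1}{2}\partial_t(\phi_R^\varepsilon)^2$ produces the weighted time derivative $\tfrac{1}{2}\tfrac{d}{dt}\|\sqrt{e^{\phi_0}e^{\varepsilon^k\phi_R^\varepsilon}}\phi_R^\varepsilon\|^2$ up to the commutator contributions $\tfrac12\int[\partial_t\phi_0+\varepsilon^k\partial_t\phi_R^\varepsilon]e^{\phi_0}e^{\varepsilon^k\phi_R^\varepsilon}(\phi_R^\varepsilon)^2\,dx$.

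The coupling term $\int m\cdot\nabla_x\phi_R^\varepsilon\,dx$, with $m:=\int v\sqrt{\mu}f^\varepsilon\,dv$, emerges from the density piece $\int\phi_R^\varepsilon\partial_t\rho_R\,dx$. Here the key ingredient is the exact continuity identity $\partial_t\rho_R+\nabla_x\cdot m=0$, which I would derive by integrating $\eqref{F_R}_1$ in $v$: the collision brackets vanish by mass conservation, and every term of the form $\nabla_x\phi\cdot\nabla_v F$ (including those inside $A$) disappears after one integration by parts in $v$. This produces \eqref{r1} with $R_1$ collecting the three residual expressions $-\!\int\partial_t\phi_0 e^{\phi_0}(e^{\varepsilon^k\phi_R^\varepsilon}-1)/\varepsilon^k\phi_R^\varepsilon\,dx$, the commutator term above, and $-\!\int\phi_R^\varepsilon\partial_t G\,dx$.

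For \eqref{r2}, interpreting $\nabla_x^3\phi_R^\varepsilon$ as $\nabla_x\Delta\phi_R^\varepsilon$, I would integrate by parts once in $x$ to obtain $-\varepsilon\int(\nabla_x\cdot m)\Delta\phi_R^\varepsilon\,dx$, and then substitute $\Delta\phi_R^\varepsilon$ via $\eqref{F_R}_2$, writing it as $e^{\phi_0}(e^{\varepsilon^k\phi_R^\varepsilon}-1)/\varepsilon^k+G-\rho_R$. The crucial tactical choice is to handle the $\rho_R$ piece by a further integration by parts, $\varepsilon\int(\nabla_x\cdot m)\rho_R\,dx=-\varepsilon\int m\cdot\nabla_x\rho_R\,dx$; this deliberately avoids invoking the continuity equation at this stage, which would otherwise generate the forbidden $\tfrac{\varepsilon}{2}\tfrac{d}{dt}\|\rho_R\|^2$ term, and is the main subtlety of the argument.

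To close the bounds, I would use Lemma~\ref{phiaf} to control $\|\phi_R^\varepsilon\|_{H^2}$, $\|\partial_t\phi_R^\varepsilon\|$ in terms of $\|f^\varepsilon\|$, $\|\nabla_x f^\varepsilon\|$ and $\varepsilon$; the bounds \eqref{gh}, \eqref{gt} for $G$ and $\partial_t G$; the decay $\|\partial_t\phi_0\|_\infty\lesssim(1+t)^{-16/15}$ from \eqref{ttphi0}; and the Taylor bound $\bigl|(e^{\varepsilon^k\phi_R^\varepsilon}-1)/\varepsilon^k\bigr|\lesssim|\phi_R^\varepsilon|$ valid under \eqref{pri}. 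Each constituent of $R_1$ and $R_2$ is then estimated by Cauchy--Schwarz; the quantities $\|\nabla_x\rho_R\|$ and $\|\nabla_x\cdot m\|$ are controlled by $(1+t)^{-16/15}\|f^\varepsilon\|+\|\nabla_x f^\varepsilon\|$, and the resulting mixed product $\varepsilon\|f^\varepsilon\|\|\nabla_x f^\varepsilon\|$ is converted by Young's inequality into $\sqrt{\varepsilon}\|f^\varepsilon\|^2+\sqrt{\varepsilon}\|\sqrt{\varepsilon}\nabla_x f^\varepsilon\|^2$, which is exactly the form required by \eqref{r}.
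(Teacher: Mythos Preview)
Your derivation of \eqref{r1} is essentially identical to the paper's: time-differentiate $\eqref{F_R}_2$, test against $\phi_R^\varepsilon$, extract the two time-derivative terms, and use the continuity identity $\partial_t\rho_R+\nabla_x\cdot m=0$ (the paper phrases this as $\int\partial_t F_R^\varepsilon\,dv=-\int v\cdot\nabla_x F_R^\varepsilon\,dv$, which is the same thing). The resulting remainder $R_1$ and its estimate coincide.

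For \eqref{r2} you take a genuinely different, and arguably more elementary, route. The paper again time-differentiates $\eqref{F_R}_2$ and tests against $\varepsilon\nabla_x^2\phi_R^\varepsilon$; its $R_2$ therefore consists of terms of the form $\varepsilon\langle\partial_t\Delta\phi_R^\varepsilon,\nabla^2\phi_R^\varepsilon\rangle$, $\varepsilon\langle e^{\phi_0}e^{\varepsilon^k\phi_R^\varepsilon}\partial_t\phi_R^\varepsilon,\nabla^2\phi_R^\varepsilon\rangle$, etc., and the bound \eqref{r} relies on the $H^2$ control of $\partial_t\phi_R^\varepsilon$ from \eqref{phiftt}. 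Your approach instead is purely stationary: one integration by parts, then algebraic substitution of $\Delta\phi_R^\varepsilon$ via $\eqref{F_R}_2$, with the $\rho_R$ piece handled by a second integration by parts to avoid any time derivative. This works and yields the same bound; the advantage is that it never invokes \eqref{phiftt} or $\partial_t G$, so the argument for $R_2$ is self-contained using only \eqref{phif} and \eqref{gh}. The paper's approach has the mild advantage of treating $R_1$ and $R_2$ by a uniform recipe (test $\partial_t\eqref{F_R}_2$ against $\phi_R^\varepsilon$ and $\varepsilon\nabla^2\phi_R^\varepsilon$ respectively), but either way closes.
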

	
	\begin{proof}
		Taking  \( \partial_t \) and the \(L^2\)-inner product with $ \phi_R^\varepsilon$ to $\eqref{F_R}_2$,  there is
		\begin{equation} \label{1eq:dt-energy-inner-product}
			\begin{aligned}
				- \left\langle \partial_t \Delta \phi_R^\varepsilon,     \phi_R^\varepsilon \right\rangle
				&=- \left\langle e^{\phi_0}  e^{\varepsilon^k \phi_R^\varepsilon}  \partial_t \phi_R^\varepsilon,    \phi_R^\varepsilon \right\rangle - \left\langle \partial_{t}\phi_0 e^{\phi_0}  \frac{e^{\varepsilon^k \phi_R^\varepsilon}-1 }{\e^k},      \phi_R^\varepsilon \right\rangle- \left\langle \partial_t G,     \phi_R^\varepsilon \right\rangle + \left\langle \int_{\mathbb{R}^3} \partial_t F_R^\varepsilon \,   \d v,     \phi_R^\varepsilon \right\rangle.
			\end{aligned}
		\end{equation}
		By integration by parts,  $	- \left\langle \partial_t \Delta \phi_R^\varepsilon,     \phi_R^\varepsilon \right\rangle=\frac{1}{2} \frac{\d}{\d t} \| \nabla \phi_R^\varepsilon \|_{ }^2$. 	Rewrite
		\begin{align*}
			&- \left\langle e^{\phi_0}  e^{\varepsilon^k \phi_R^\varepsilon}  \partial_t \phi_R^\varepsilon,    \phi_R^\varepsilon \right\rangle\\\nonumber
			=&- \frac{1}{2}\frac{\d}{\d t}\left\langle e^{\phi_0}  e^{\varepsilon^k \phi_R^\varepsilon},    |\phi_R^\varepsilon|^2 \right\rangle+ \frac{1}{2}\left\langle \partial_t \phi_0 e^{\phi_0}  e^{\varepsilon^k \phi_R^\varepsilon},      |\phi_R^\varepsilon|^2 \right\rangle+ \frac{1}{2}\left\langle e^{\phi_0} \varepsilon^k\partial_t  \phi_R^\varepsilon e^{\varepsilon^k \phi_R^\varepsilon},        |\phi_R^\varepsilon|^2 \right\rangle.
		\end{align*}
		Applying integration by parts,   using    $\eqref{F_R}_1$ and \eqref{f},  we have
		$$-\left\langle \int_{\mathbb{R}^3} \partial_t F_R^\varepsilon \,   \d v,   \phi_R^\varepsilon \right\rangle=\left\langle \int v \cdot\nabla_x (\sqrt{\mu} f^\varepsilon)   \,   \d v,   \phi_R^\varepsilon \right\rangle=-	\int \Bigl( \int v \sqrt{\mu}\, f^\varepsilon \,\mathrm{d}v \Bigr)\cdot \nabla_x \phi_R^\varepsilon \,\mathrm{d}x.
		$$
		Then \eqref{1eq:dt-energy-inner-product} can be rewritten as,
		\begin{align}\label{epir1}
			&\frac{1}{2} \frac{\d}{\d t} \| \nabla \phi_R^\varepsilon \|_{ }^2+\frac{1}{2} \frac{\d}{\d t} \| \sqrt{e^{\phi_0}  e^{\varepsilon^k \phi_R^\varepsilon} } \phi_R^\varepsilon \|_{ }^2
			-\iint v   \sqrt{\mu} f^\varepsilon  \cdot \nabla_x \phi_R^\varepsilon \,   \d v \d x\\ \nonumber
			=&\frac{1}{2}\left\langle \partial_t \phi_0 e^{\phi_0}  e^{\varepsilon^k \phi_R^\varepsilon},      |\phi_R^\varepsilon|^2 \right\rangle+ \frac{1}{2}\left\langle e^{\phi_0} \varepsilon^k\partial_t  \phi_R^\varepsilon e^{\varepsilon^k \phi_R^\varepsilon},        |\phi_R^\varepsilon|^2 \right\rangle   - \left\langle \partial_{t}\phi_0 e^{\phi_0}  \frac{e^{\varepsilon^k \phi_R^\varepsilon}-1 }{\e^k},      \phi_R^\varepsilon \right\rangle- \left\langle \partial_{t}{G},     \phi_R^\varepsilon \right\rangle  \\ \nonumber
			=&R_1.
		\end{align}
		Using  a  $\mathit{priori}$  assumption \eqref{pri},  \eqref{phif},  \eqref{gt} and \eqref{phiftt},  we have
		\begin{align}\label{r1detail}
			|R_1|\lesssim& \|\partial_{t} \phi_0\|_{\infty}		 \|\phi_R^\varepsilon\|_{ }^2+\varepsilon^k\|\phi_R^\varepsilon\|_{ H^2}	 \|\phi_R^\varepsilon\|_{ } \|\partial_{t}\phi_R^\varepsilon\|_{ }  + \|\partial_{t}G\|_{ } \|\phi_R^\varepsilon\|_{ }\nonumber\\
			\lesssim& \delta_0 (1+t)^{-16/15} \|\phi_R^\varepsilon\|_{ }^2+  \e	 \|\phi_R^\varepsilon\|_{ } \|\partial_{t}\phi_R^\varepsilon\|_{ }   +(\varepsilon +\varepsilon \|\phi_R^\varepsilon\|_{ }+\varepsilon \|\partial_{t}\phi_R^\varepsilon\|_{ })\|\phi_R^\varepsilon\|_{ }\nonumber\\
			\lesssim& \left( (1+t)^{-16/15}+\sqrt{\e}\right)\|f^\varepsilon\|_{ }^2
			+\e^{3/2}\|\nabla_xf^\varepsilon\|_{ }^2
			+\e.
		\end{align}
		\par
		Taking the time derivative \( \partial_t \) and the \(L^2\)-inner product with $\e \nabla^2 \phi_R^\varepsilon$ to $\eqref{F_R}_2$.	Applying integration by parts with  $\eqref{F_R}_1$ and \eqref{f},  we have
		$
		\e	\left\langle \int_{\mathbb{R}^3} \partial_t F_R^\varepsilon \,   \d v,    \nabla_x ^2 \phi_R^\varepsilon \right\rangle =\varepsilon \int \Bigl( \int v \sqrt{\mu}\, f^\varepsilon \,\mathrm{d}v \Bigr)\cdot \nabla_x^3 \phi_R^\varepsilon \,\mathrm{d}x
		.
		$ Then,   we obtain
		\begin{align}
			\varepsilon \int \Bigl( \int v \sqrt{\mu}\, f^\varepsilon \,\mathrm{d}v \Bigr)\cdot \nabla_x^3 \phi_R^\varepsilon \,\mathrm{d}x=R_2,
		\end{align}
		where
		\begin{align*}
			R_2=- \e\left\langle \partial_t \Delta \phi_R^\varepsilon,     \nabla^2 \phi_R^\varepsilon \right\rangle+\e\left\langle e^{\phi_0}  e^{\varepsilon^k \phi_R^\varepsilon}  \partial_t \phi_R^\varepsilon,    \nabla^2 \phi_R^\varepsilon \right\rangle +\e \left\langle \partial_{t}\phi_0 e^{\phi_0}  \frac{e^{\varepsilon^k \phi_R^\varepsilon}-1 }{\e^k},      \nabla^2 \phi_R^\varepsilon \right\rangle+ \e\left\langle \partial_t G,     \nabla^2 \phi_R^\varepsilon \right\rangle.
		\end{align*}
		Applying a  $\mathit{priori}$  assumption \eqref{pri},  \eqref{gt},     \eqref{phif},  and \eqref{phiftt}
		,   we have
		\begin{align}
			|R_2|\lesssim&\e \|\partial_{t}\phi_R^\e\|_{H^2}\|\phi_R^\e\|_{H^2}
			+\e\|\partial_{t} \phi_0\|_{\infty}	 \| \phi_R^\varepsilon\|  \|\nabla_x^2\phi_R^\varepsilon\|_{ } +\e \|\partial_{t} G\|_{ } \|\nabla^2\phi_R^\varepsilon\|_{ }\nonumber\\
			\lesssim& \left( (1+t)^{-16/15}+\sqrt{\e}\right)\|f^\varepsilon\|_{ }^2
			+\sqrt{\e} \|\sqrt{\e}\nabla_xf^\varepsilon\|_{ }^2
			+\e.
		\end{align}
	\end{proof}
	The following is the weighted $H^{1}_{x, v}$ energy estimates for the remainder terms in the soft potentials case.
	\begin{proposition}\label{L2}
		Let $-3<\gamma <0$,   suppose $0\leq t \leq \e^{-1/2} $,   it holds that
		\begin{align}\label{L2a}
			&\frac{\d}{\d t}\left\{ \| \sqrt{{\theta }_0} f^{\varepsilon} \|^2   +\|\sqrt{\e{\theta }_0} \nabla_x f^\varepsilon\|^2 + \| \sqrt{\e}  \nabla_v\left(\mathbb{P}^{\bot}f^\e\right)\|_{ }^2+\| \sqrt{\e}  \langle v\rangle^{2-2\gamma}\mathbb{P}^{\bot}f^\e\|_{ }^2+\| \e^{5/4} \sqrt{\theta_0} \langle v\rangle^{-\gamma}\nabla_x f^\e)\|_{ }^2  \right. \nonumber\\
			&\qquad    \left.  \| \nabla \phi_R^\varepsilon \|_{}^2 +\| \sqrt{e^{\phi_0}  e^{\varepsilon^k \phi_R^\varepsilon}} \phi_R^\varepsilon \|_{}^2   \right\}\\\nonumber
			& +\frac{\delta_1 {\theta }_M}{\varepsilon}   \Vert \ \mathbb{P}^{\bot} f^{\varepsilon} \Vert_{\nu}^2 + { {\delta_1}  {\theta }_M}  \|\nabla_x  \left(\mathbb{P}^{\bot}f^\varepsilon \right) \|_\nu^2+ {\delta_1} \|   \nabla_v(\mathbb{P}^{\bot}f^\e)\|_{ \nu}^2+{\delta_1} \| \langle v\rangle^{2-2\gamma}   \mathbb{P}^{\bot}f^\e\|_{ \nu}^2 +{\delta_1 \e^{3/2}} \| \langle v\rangle^{-\gamma} \nabla_x   f^\e\|_{ \nu}^2   \\	\nonumber
			\lesssim&\varepsilon^4\left\|h^\varepsilon\right\|_{W^{1,  \infty}} \Vert f^{\varepsilon }\Vert_{H^1}    \\\nonumber
			& +	   \left((1+t)^{-16/15}+\sqrt{\e}\right)\left(\Vert f^{\varepsilon }\Vert ^{2} +\Vert\sqrt{\e} \nabla f^{\varepsilon }\Vert ^{2} +\|\sqrt{\e} \langle v \rangle^{2-2 \gamma }  \mathbb{P}^{\bot} f^\e\|^2+\|\e^{5/4}\langle v \rangle^{- \gamma }\nabla_x  f^\e\|^2 \right) +\sqrt{\e}
			.
		\end{align}

	\end{proposition}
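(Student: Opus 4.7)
The plan is to construct five weighted $L^2_{x,v}$ energy identities for the remainder equation $\eqref{F_R}_1$ and sum them with the two Poisson identities \eqref{r1}--\eqref{r2} from Lemma~\ref{pv}. Specifically, I would test $\eqref{F_R}_1$ in turn against $\theta_0 f^\varepsilon$, $\varepsilon\theta_0\nabla_x f^\varepsilon$, and $\varepsilon^{5/2}\theta_0\langle v\rangle^{-2\gamma}\nabla_x f^\varepsilon$; then project $\eqref{F_R}_1$ by $\mathbb{P}^{\bot}$ and test the resulting equation against $\varepsilon\nabla_v(\mathbb{P}^{\bot} f^\varepsilon)$ and $\varepsilon\langle v\rangle^{4-4\gamma}\mathbb{P}^{\bot} f^\varepsilon$. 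The coercivity bounds in Lemma~\ref{pl2} supply the five dissipations on the left-hand side: \eqref{0l} gives $\delta_1\theta_M\varepsilon^{-1}\|\mathbb{P}^{\bot} f^\varepsilon\|_\nu^2$; \eqref{xl} gives $\tfrac{1}{2}\delta_1\theta_M\|\nabla_x\mathbb{P}^{\bot} f^\varepsilon\|_\nu^2$; \eqref{vl} gives $\tfrac{1}{2}\delta_1\|\nabla_v\mathbb{P}^{\bot} f^\varepsilon\|_\nu^2$; \eqref{wl} gives $\tfrac{1}{2}\delta_1\|\langle v\rangle^{2-2\gamma}\mathbb{P}^{\bot} f^\varepsilon\|_\nu^2$; and \eqref{wxl1} gives $\tfrac{1}{2}\delta_1\varepsilon^{3/2}\|\langle v\rangle^{-\gamma}\nabla_x f^\varepsilon\|_\nu^2$. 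Adding \eqref{r1} and \eqref{r2} cancels the $\mathcal{H}_2$-type Poisson couplings $\iint v\sqrt{\mu}f^\varepsilon\cdot\nabla_x\phi_R^\varepsilon\, dv\, dx$ and $-\varepsilon\iint v\sqrt{\mu}f^\varepsilon\cdot\nabla_x^3\phi_R^\varepsilon\, dv\, dx$, and contributes the missing $\|\nabla\phi_R^\varepsilon\|^2+\|\sqrt{e^{\phi_0}e^{\varepsilon^k\phi_R^\varepsilon}}\phi_R^\varepsilon\|^2$ terms to the energy.

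Next I would handle the remaining problematic contributions term by term. The $\mathcal{H}_1$-type terms generated by $(\partial_t+v\cdot\nabla_x)\sqrt{\mu}/\sqrt{\mu}$ are split at $\langle v\rangle^{6-\gamma}=\kappa^2/\varepsilon$: the high-velocity piece is controlled via \eqref{h} and contributes $\varepsilon^4\|h^\varepsilon\|_{W^{1,\infty}}\|f^\varepsilon\|_{H^1}$, while the low-velocity piece inherits the $(1+t)^{-16/15}$-decay of $\nabla(\rho_0,u_0,\theta_0)$ from Appendix~A and folds into the $\nu$-weighted dissipation. For the cross term $\mathcal{H}_3=\langle\nabla_x\mathbb{P}^{\bot} f^\varepsilon,\varepsilon\nabla_v\mathbb{P}^{\bot} f^\varepsilon\rangle$, Young's inequality yields $\eta\|\nabla_v\mathbb{P}^{\bot} f^\varepsilon\|_\nu^2+C_\eta\varepsilon^2\|\langle v\rangle^{-\gamma}\nabla_x f^\varepsilon\|_\nu^2+C_\eta\varepsilon^2\|f^\varepsilon\|_{H^1}^2$, each piece absorbable since $\varepsilon^2\ll\varepsilon^{3/2}$. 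In the $\nabla_v(\mathbb{P}^{\bot} f^\varepsilon)$ identity I would exploit the identity $\mathbb{P}^{\bot}\bigl(\tfrac{v-u_0}{\theta_0}\cdot\nabla_x\phi_R^\varepsilon\sqrt{\mu}\bigr)=0$ to kill the otherwise divergent velocity-growing Poisson coupling before $\nabla_v$ is applied. The nonlinear and inhomogeneous sources ($Q(F_i,F_R^\varepsilon)$, $\nabla_x\phi_i\cdot\nabla_v F_R^\varepsilon$, $\varepsilon^{k-1}A$, $\varepsilon^{k-1}Q(F_R^\varepsilon,F_R^\varepsilon)$, and $\varepsilon^k\nabla_x\phi_R^\varepsilon\cdot\nabla_v F_R^\varepsilon$) are bounded using the Appendix~A time-growth estimates $\|F_i(t)\|\lesssim(1+t)^{i-1}$, the bilinear $\Gamma$-estimates from Lemma~\ref{LLMa}, Lemma~\ref{phiaf} for $\|\nabla_x\phi_R^\varepsilon\|_{H^2}\lesssim(1+t)^{-16/15}\|f^\varepsilon\|+\|\nabla_x f^\varepsilon\|+\varepsilon$, and the a priori assumption \eqref{pri}.

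I expect the main obstacle to be the weighted $\nabla_x$-estimate at weight $\varepsilon^{5/4}\langle v\rangle^{-\gamma}$. The coercivity \eqref{wxl1} inevitably leaves the residual $C_\eta\sqrt{\varepsilon}\|\sqrt{\varepsilon}f^\varepsilon\|_{H^1}^2$ which cannot be absorbed into the $\varepsilon^{3/2}$-dissipation and must be moved to the right-hand side of \eqref{L2a} as the $\sqrt{\varepsilon}$-multiplied energy factor. This pinning of the $\varepsilon$-power is precisely what forces the weight $\varepsilon^{5/4}$ (rather than $\varepsilon$ or $\varepsilon^{3/2}$) and ties the admissible time scale to $t\le\varepsilon^{-1/2}$, on which the coefficient bounds from Appendix~A and Lemma~\ref{phiaf} remain operational. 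After assembling every piece, each non-dissipative, non-time-derivative term is bounded by either $\bigl((1+t)^{-16/15}+\sqrt{\varepsilon}\bigr)\cdot(\text{energy})$, by $\varepsilon^4\|h^\varepsilon\|_{W^{1,\infty}}\|f^\varepsilon\|_{H^1}$, or by $\sqrt{\varepsilon}$, producing exactly \eqref{L2a}.
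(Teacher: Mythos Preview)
Your proposal is correct and follows essentially the same five-step architecture as the paper's proof: the same test functions ($\theta_0 f^\varepsilon$, $\varepsilon\theta_0\nabla_x f^\varepsilon$, $\varepsilon\nabla_v(\mathbb{P}^{\bot}f^\varepsilon)$, $\varepsilon^{5/2}\theta_0\langle v\rangle^{-2\gamma}\nabla_x f^\varepsilon$, $\varepsilon\langle v\rangle^{4-4\gamma}\mathbb{P}^{\bot}f^\varepsilon$), the same coercivity inputs \eqref{0l}--\eqref{wl}, the same Poisson cancellation via \eqref{r1}--\eqref{r2}, the same high/low velocity splitting for the $\mathcal{H}_1$ terms, and the same Young-inequality treatment of $\mathcal{H}_3$ that forces the $\varepsilon^{5/4}$ weight. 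One small slip: the bilinear $\Gamma$-estimates you invoke are Lemma~\ref{gamma} in Appendix~B, not Lemma~\ref{LLMa} (which is the $L=L_M-\mathcal{I}$ decomposition).
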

	\begin{proof}
		By \eqref{f},    \eqref{F_R} can be rewritten as
		\begin{align}\label{fe}
			& \partial _{t}f^{\varepsilon }+v\cdot \nabla _{x}f^{\varepsilon }  +\frac{v-u_{0}}{{\theta } _{0}}
			\sqrt{\mu }\cdot \nabla _{x}\phi _{R}^{\varepsilon }+\frac{1}{\varepsilon
			}L f^{\varepsilon } \\\nonumber
			=& -\frac{\{\partial _{t}+v\cdot \nabla _{x}-\nabla _{x}\phi _{0}\cdot
				\nabla _{v}\}\sqrt{\mu }}{\sqrt{\mu }}f^{\varepsilon }-\sum_{i=1}^{2k-1}\varepsilon ^{i}\nabla _{x}\phi _{i}\cdot \frac{
				v-u_{0}}{2{\theta } _{0}}f^{\varepsilon }-\varepsilon ^{k}\nabla _{x}\phi _{R}^{\varepsilon
			}\cdot \frac{v-u_{0}}{2{\theta } _{0}}f^{\varepsilon
			}
			\\\nonumber
			& \quad+\varepsilon
			^{k-1}\Gamma (f^{\varepsilon },  f^{\varepsilon
			})+\sum_{i=1}^{2k-1}\varepsilon ^{i-1}\left\{\Gamma \left(\frac{F_{i}}{\sqrt{\mu }}
			,  f^{\varepsilon }\right)+\Gamma \left(f^{\varepsilon },  \frac{F_{i}}{\sqrt{\mu }}\right)\right\}   \\\nonumber
			& \quad+\nabla _{x}\phi  ^{\varepsilon }\cdot \nabla
			_{v}f^{\varepsilon }+\sum_{i=1}^{2k-1}\varepsilon ^{i}\nabla _{x}\phi _{R}^{\varepsilon }\cdot \frac{\nabla
				_{v}F_{i}}{\sqrt{\mu }} +\varepsilon ^{k-1}\overline{A},  	
		\end{align}
		where $\overline{A} = \frac{A}{\sqrt{\mu}}$  and  $\nabla_v \mu = -\frac{v - u_0}{{\theta }_0} \mu$.
		\\
		\emph{\bf Step 1. $L_{x,   v}^2$-estimate of $f^\e$.}\;
		\par
		Taking the $L^2_{x,v}$ inner product of \eqref{fe} with ${\theta}_0 f^{\varepsilon}$,     applying integration by parts, combining   \eqref{0l},   we obtain
		\begin{equation}
			\begin{split}
				& \frac{1}{2} \frac{\d}{\d t} \left\Vert \sqrt{{\theta }_0} f^{\varepsilon} \right\Vert^2
				+ \int \left( \int (v - u_0) \sqrt{\mu} f^{\varepsilon} \,   \d v \right) \cdot \nabla_x \phi_R^{\varepsilon} \,   \d x
				+\frac{\delta_1}{\varepsilon}   {\theta }_M \left\Vert  \mathbb{P}^{\bot}f^{\varepsilon}\right \Vert_{\nu}^2 \leq\sum_{i=1}^{3}\mathcal{S}_i,
			\end{split}
			\label{e1}
		\end{equation}
		where
		\begin{align*}
			\mathcal{S}_1&=\frac{1}{2} \langle (\partial_t + v \cdot \nabla_x){\theta }_0 f^{\varepsilon},   f^{\varepsilon} \rangle
			- \left\langle  \frac{(\partial_t + v \cdot \nabla_x + \nabla_x \phi_0 \cdot \nabla_v) \sqrt{\mu}}{\sqrt{\mu}} f^{\varepsilon},   {\theta }_0f^{\varepsilon} \right\rangle\\
			& \quad
			- \left\langle \sum_{i=1}^{2k-1} \varepsilon^i \nabla_x \phi_i \cdot \frac{v - u_0}{2} f^{\varepsilon},   f^{\varepsilon} \right\rangle- \varepsilon^k \left\langle \nabla_x \phi_R^{\varepsilon} \cdot \frac{v - u_0}{2} f^{\varepsilon},   f^{\varepsilon} \right\rangle,  \\
			\mathcal{S}_2&=\varepsilon^{k-1} \langle  \Gamma(f^{\varepsilon},   f^{\varepsilon}),   {\theta }_0f^{\varepsilon} \rangle
			+ \left\langle \sum_{i=1}^{2k-1} \varepsilon^{i-1} \left\{ \Gamma\left( \frac{F_i}{\sqrt{\mu}},   f^{\varepsilon} \right) + \Gamma\left( f^{\varepsilon},   \frac{F_i}{\sqrt{\mu}} \right) \right\},    {\theta }_0f^{\varepsilon} \right\rangle,  \\
			\mathcal{S}_3&=\left\langle  \sum_{i=1}^{2k-1} \varepsilon^i \nabla_x \phi_R^{\varepsilon} \cdot \frac{\nabla_v F_i}{\sqrt{\mu}},   {\theta }_0f^{\varepsilon} \right\rangle
			+ \varepsilon^{k-1} \langle  \overline{A},  {\theta }_0 f^{\varepsilon} \rangle.
		\end{align*}
		Adding \eqref{e1} and \eqref{r1} yields
		\begin{align}\label{es}
			&\frac{1}{2} \frac{\d}{\d t} \left\{
			\left\| \sqrt{{\theta }_0} f^{\varepsilon}\right \|^2  +  \| \nabla \phi_R^\varepsilon \| ^2 +\left\| \sqrt{e^{\phi_0}  e^{\varepsilon^k \phi_R^\varepsilon}} \phi_R^\varepsilon \right\| ^2 \right\}  +
			\frac{\delta_1}{\varepsilon}   {\theta }_M\left\|  \mathbb{P}^{\bot}f^{\varepsilon} \right\|_\nu^2
			\\\nonumber
			\leq&R_1
			+ \int u_0 \left( \int \sqrt{\mu} f^{\varepsilon} \,   \d v \right) \cdot \nabla_x \phi_R^{\varepsilon} \,   \d x +\sum_{i=1}^{3}\mathcal{S}_i.
		\end{align}
		Using \eqref{decay} and \eqref{phif},   we obtain
		\begin{align}\label{s01}
			\int u_0 \left( \int \sqrt{\mu} f^{\varepsilon} \,   \d v \right) \cdot \nabla_x \phi_R^{\varepsilon} \,   \d x\lesssim& \|u_0\|_{\infty}	\|f^{\varepsilon}\|_{ }\|\nabla_x\phi_R^{\varepsilon} \|_{}\lesssim(1+t)^{-\frac{16}{15}}\|f^{\varepsilon}\|_{}^2+\e\|f^{\varepsilon}\|.
		\end{align}
		By \eqref{h},     the a  $\mathit{priori}$  assumption $  \varepsilon^k\| h^\varepsilon\|_{\infty}\le \e $ in \eqref{pri},   \eqref{phif} and Young's inequality,   we have
		\begin{align}\label{vg3}
			&\varepsilon^k \left\langle    \nabla_x \phi_R^\varepsilon \cdot \frac{v - u_0}{2{\theta }_0} f^\varepsilon,   {\theta }_0 f^\varepsilon \right\rangle
			\lesssim  \varepsilon^k\|  \langle v \rangle f^\varepsilon\|_{\infty} 	\|\nabla_x \phi_R^\varepsilon\| 	\| f^\e\|
			\lesssim  \varepsilon^k\|  h^\varepsilon\|_{\infty} (	\| f^\e\|+\e)	\| f^\e\|   \lesssim \e \| f^\e\|^2+\e.
		\end{align}
		Using   \eqref{decay},   \eqref{fir2},  the Sobolev embedding $ H^2(\mathbb{R}^3)\hookrightarrow L^\infty(\mathbb{R}^3)$,    \eqref{h} and combining \eqref{vg3},  we have
		\begin{align}\label{vg11}
			\mathcal{S}_1
			\lesssim\;&
			\Big(\|\nabla(\rho_0,   u_0,
			\theta _0)\|
			+\sum_{i=1}^{2k-1}\varepsilon^i\|\nabla_x\phi_i\|\Big)
			\left\|\langle v\rangle^3 f^\varepsilon
			\mathbf 1_{\{\langle v\rangle^{6-\gamma}\ge \kappa^2/\varepsilon\}}\right\|_\infty
			\left\|f^\varepsilon\right\| \\\nonumber
			&+
			\Big(\|\nabla(\rho_0,   u_0,  \theta _0)\|_\infty
			+\sum_{i=1}^{2k-1}\varepsilon^i\|\nabla_x\phi_i\|_\infty\Big)
			\left\|\langle v\rangle^3 f^\varepsilon
			\mathbf 1_{\{\langle v\rangle^{6-\gamma}\le \kappa^2/\varepsilon\}}\right\|
			\left\|f^\varepsilon\right\| +\e \| f^\e\|^2+\e \\\nonumber
			\lesssim\;&\left(\delta_0 +\e\right)
			\left\|\langle v\rangle^{3}f^\varepsilon
			\mathbf 1_{\{\langle v\rangle^{6-\gamma}\ge \kappa^2/\varepsilon\}}\right\|_\infty
			\left\|f^\varepsilon\right\| \\\nonumber
			&+
			\Big((1+t)^{-16/15}+ \e\Big)
			\left\|\Big(\left\|	\langle v\rangle^3\mathbb{P} f^\varepsilon\right\|+\|\langle v\rangle^{3-\gamma/2} \mathbb{P}^{\bot} f^\varepsilon\mathbf 1_{\{\langle v\rangle^{6-\gamma}\le \kappa^2/\varepsilon\}}\|_{\nu}
			\Big)
			\right\|
			\left\|f^\varepsilon\right\|+\e \| f^\e\|^2+\e \\\nonumber
			\lesssim\;&
			\left\|\langle v \rangle^{4\gamma-24}\langle v \rangle^{27-4\gamma}e^{-\frac{\widetilde{\vartheta}|v|^2}{2}}h^\e	\mathbf 1_{\{\langle v\rangle^{6-\gamma}\ge \kappa^2/\varepsilon\}}\right\|_{\infty}\left\|f^\varepsilon\right\|\\\nonumber
			&
			+ (1+t)^{-16/15} \left\|f^\varepsilon\right\|^2  +
			\left\|\langle v\rangle^{3-\gamma/2}\mathbb{P}^{\bot} f^\varepsilon
			\mathbf 1_{\{\langle v\rangle^{6-\gamma}\le \kappa^2/\varepsilon\}}\right\|_\nu^2  +\e\\\nonumber
			\lesssim\;&
			\varepsilon^4\left\|h^\varepsilon\right\|_\infty\left\|f^\varepsilon\right\|
			+ (1+t)^{-16/15} \left\|f^\varepsilon\right\|^2  +\frac{\kappa^2}{\e}
			\left\| \mathbb{P}^{\bot} f^\varepsilon
			\right  \|_\nu^2+\e.
		\end{align}		
		Applying \eqref{Gf},   \eqref{h},  \eqref{fir3},  a  $\mathit{priori}$  assumption in \eqref{pri},   and Young's inequality,   we have
		\begin{align}\label{s2}
			\mathcal{S}_2
			\lesssim& \e^{k-1}\|e^{\eta |v|^2} f^\e\|_{\infty}\|f^\e\|_{\nu}\|\mathbb{P}^{\bot} f^{\varepsilon}\|_{\nu}+\sum_{i=1}^{2k-1} \varepsilon^{i-1} \left\|e^{\eta |v|^2} \frac{F_i}{\sqrt{\mu}}\right\|_{\infty}\|f\|_{\nu}\|\mathbb{P}^{\bot} f^{\varepsilon}\|_{\nu}
			\\\nonumber
			\lesssim&\left(\e^{k-1}\|h^\e\|_{\infty}+\sum_{i=1}^{2k-1}\left(\e(1+t)\right)^{i-1}\right)\left( \e\|f^\e\|^2+\frac{\kappa^2}{\e}\|\mathbb{P}^{\bot} f^{\varepsilon}\|_{\nu}^2\right) \\\nonumber
			\lesssim&  \e\|f^\e\|^2+\frac{\kappa^2}{\e}\|\mathbb{P}^{\bot} f^{\varepsilon}\|_{\nu}^2.
		\end{align}	
		By \eqref{fir3},     Lemma \ref{oa},  \eqref{phif},   and Young's inequality,   we have
		\begin{align}\label{s3}
			\mathcal{S}_3\lesssim&\sum_{i=1}^{2k-1} \varepsilon^i \left\|  \left( \int \frac{|\nabla_v F_i|^2}{\mu} \,   \d v \right)^{1/2} \right\|_{L_x^\infty} \| \nabla \phi_R^{\varepsilon} \|_{L^2_x} \|f^{\varepsilon}\|_{L^2_{x,v}}+\e^{k-1}\|\overline{A}\|_{L^2_{x,v}}\Vert f^{\varepsilon }\Vert_{L^2_{x,v}} \\\nonumber
			\lesssim&   \e\sum_{i=1}^{2k-1} [\varepsilon(1 + t)]^{i - 1}   \| \nabla \phi_R^{\varepsilon} \|    \|f^{\varepsilon}\|  +\varepsilon ^{k-1}\sum_{2k+1\leq i+j\leq 4k-2}{\varepsilon
				^{i+j-2k}(1+t)^{i+j-2}}\,  \|f^{\varepsilon }\|\\\nonumber
			\lesssim&\sqrt{\e}\|f^{\varepsilon }\|	^2 +\sqrt{\e} 	.
		\end{align}
		Substituting \eqref{r}, \eqref{s01} and \eqref{vg11}--\eqref{s3}   into \eqref{e1},    we have
		\begin{align}\label{l2}
			&\frac{1}{2}\frac{\d}{\d t}\left\{
			\left\| \sqrt{{\theta }_0} f^{\varepsilon} \right\|^2  +  \| \nabla \phi_R^\varepsilon \|_{}^2 +\left\| \sqrt{e^{\phi_0}  e^{\varepsilon^k \phi_R^\varepsilon}} \phi_R^\varepsilon\right \|_{}^2 \right\}  +  \frac{{\delta_1}     {\theta }_M}{\e}\Vert  \mathbb{P}^{\bot}f^{\varepsilon }\Vert _{\nu }^{2} \\\nonumber
			\lesssim &  \varepsilon ^{4}\Vert h^{\varepsilon }\Vert _{\infty }\Vert
			f^{\varepsilon }\Vert
			+\left((1+t)^{-16/15}+\sqrt{\e}\right)\Vert f^{\varepsilon }\Vert ^{2} +  \sqrt{\e}\|\sqrt{\e}\nabla_xf^\varepsilon\|_{ }^2 +\frac{\kappa^2}{\e}\|\mathbb{P}^{\bot} f^{\varepsilon}\|_{\nu}^2  +\sqrt{\e}.
		\end{align}
		{\bf \emph{Step 2.} $L_{x,   v}^2$-estimate of $\sqrt{\e}  \nabla_xf^\e$. }
		\par
		Applying $\nabla_x$ to \eqref{fe}, taking the $L^2_{x,   v} $ inner product with $\e{\theta }_0 \nabla_xf^\e$,   using integration by parts,
		and combining \eqref{xl},  we  obtain
		\begin{align}\label{e20}
			&\frac{1}{2}\frac{\d}{\d t}\left\|\sqrt{\e{\theta }_0} \nabla_x f^\varepsilon\right\|^2-\varepsilon \int \Bigl( \int v \sqrt{\mu}\, f^\varepsilon \,\mathrm{d}v \Bigr)\cdot \nabla_x^3 \phi_R^\varepsilon \,\mathrm{d}x+\frac{ {\delta_1}  {\theta }_M}{2}\| \nabla_x( \mathbb{P}^{\bot}f^{\varepsilon}) \|_\nu^2 \\	\nonumber
			\leq&C\left\{(1+t)^{-16/15}\|f^\e\|^2 +\delta_0\|  \mathbb{P}^{\bot}f^\e\|_{\nu}^2+\delta_0  \|\langle v\rangle^2\mathbb{P}^{\bot} f^\e\|_{\nu}^2 \right\}+\sum_{i=4}^{7}  \mathcal{S}_i,
		\end{align}
		where
		\begin{align*}
			\mathcal{S}_4=&	\e \left\langle \nabla_x \left(  {(v - u_0)}  \sqrt{\mu}  \right)\cdot \nabla_x^2 \phi_R^\varepsilon,  \; f^\varepsilon \right\rangle -\e\left\langle \nabla_x \left( \frac{v - u_0}{{\theta }_0} \sqrt{\mu} \right)\cdot \nabla_x \phi_R^\varepsilon,   {\theta }_0 \nabla_x f^\varepsilon \right\rangle,      \\
			\mathcal{S}_5= &\frac{1}{2}  \left\langle (\partial_t +v\cdot \nabla_x) {\theta }_0,  \e   |\nabla_x f^\varepsilon|^2 \right\rangle-\left\langle  \nabla_x \left( \frac{ (\partial_t + v \cdot \nabla_x - \nabla_x \phi_0 \cdot \nabla_v) \sqrt{\mu} }{ \sqrt{\mu} } f^\varepsilon \right),   \e{\theta }_0 \nabla_x f^\varepsilon \right\rangle\\
			&-   \sum_{i=1}^{2k-1} \varepsilon^i \left\langle \nabla_x \left( \nabla_x \phi_i \cdot \frac{v - u_0}{2{\theta }_0} f^\varepsilon \right),   \e{\theta }_0\nabla_x f^\varepsilon \right\rangle	-   \varepsilon^k \left\langle \nabla_x \left( \nabla_x \phi_R^\varepsilon \cdot \frac{v - u_0}{2{\theta }_0} f^\varepsilon \right),   \e{\theta }_0 \nabla_x f^\varepsilon \right\rangle,  \\
			\mathcal{S}_6=&\varepsilon^{k-1}\left\langle  \nabla_x \Gamma(f^\varepsilon,   f^\varepsilon),       \e{\theta }_0 \nabla_x f^\varepsilon \right\rangle +
			\left\langle  \sum_{i=1}^{2k-1} \varepsilon^{i-1}\left\{ \nabla_x \Gamma \left( \frac{F_i}{\sqrt{\mu}},   f^\varepsilon \right)+ \nabla_x \Gamma \left( f^\varepsilon,   \frac{F_i}{\sqrt{\mu}} \right)\right\},   \e{\theta }_0 \nabla_x f^\varepsilon \right\rangle
			,  \\
			\mathcal{S}_7=& \left\langle \nabla _x\left(   \nabla _x \phi ^\varepsilon \cdot \nabla_v f^\varepsilon\right),   \e{\theta }_0\nabla_x f^\varepsilon \right\rangle+ \sum_{i=1}^{2k-1} \varepsilon^i
			\left\langle  \nabla_x\left(\nabla_x \phi_R^\varepsilon \cdot \frac{ \nabla_v F_i }{ \sqrt{\mu} } \right ),  \e{\theta }_0 \nabla_x f^\varepsilon \right\rangle
			+   \varepsilon^{k-1} \left\langle \nabla_x \overline{A},   \e{\theta }_0\nabla_x f^\varepsilon \right\rangle.
		\end{align*}
		Adding \eqref{r2} to \eqref{e20}, we obtain
		\begin{align}\label{e2}
			&\frac{1}{2}\frac{\d}{\d t}\{\|\sqrt{\e{\theta }_0} \nabla_x f^\varepsilon\|^2 + \frac{{\delta_1}  {\theta }_M}{2} \|\nabla_x ( \mathbb{P}^{\bot}f^{\varepsilon}) \|_\nu^2 \\	\nonumber
			\leq &C\{(1+t)^{-16/15}\|f^\e\|^2 +\delta_0\|  \mathbb{P}^{\bot}f^\e\|_{\nu}^2+\delta_0  \|\langle v\rangle^2\mathbb{P}^{\bot} f^\e\|_{\nu}^2 \}+R_2-\varepsilon \int u_0  \Bigl( \int \sqrt{\mu}\, f^\varepsilon \,\mathrm{d}v \Bigr)\cdot \nabla_x^3 \phi_R^\varepsilon \,\mathrm{d}x+\sum_{i=4}^{7}  \mathcal{S}_i.
		\end{align}
		By \eqref{decay} and \eqref{phif},  it holds that
		\begin{align}\label{r2u}
			&\left|-\varepsilon \int u_0  \Bigl( \int \sqrt{\mu}\, f^\varepsilon \,\mathrm{d}v \Bigr)\cdot \nabla_x^3 \phi_R^\varepsilon \,\mathrm{d}x \right|\lesssim
			\e	 \|f^\e\|_{  }\|\nabla_x^3\phi_R^\e\|_{  }
			\lesssim  \sqrt{\e}\|f^\e\|^2+\sqrt{\e}\|\sqrt{\e}\nabla_xf^\e\|^2+\e.
		\end{align}
		Applying    \eqref{decay},  and \eqref{phif},   we have
		\begin{align}\label{s4}
			\mathcal{S}_4	\lesssim &\e\|\nabla_x\phi_R^\e\|_{H^1}\|f^\e\|_{H^1_x}
			\lesssim \sqrt{\e}\|f^\e\|^2+ \sqrt{\e}\|\sqrt{\e} f^\e\|_{H^1_x} ^2+\e.
		\end{align}
		Using the  same method as $	\mathcal{S}_1 $ in \eqref{vg11},    we have	 		
		\begin{align}\label{vg0031}
			\mathcal{S}_5
			\lesssim\;
			\varepsilon^4\left\|h^\varepsilon\right\|_{W^{1,  \infty}}\left\|\nabla_xf^\varepsilon\right\|
			+ (1+t)^{-16/15}  \left\|\sqrt{\e}f^\varepsilon\right\|_{H^1_x}^2+ {\kappa^2}
			\left\| \mathbb{P}^{\bot} f^\varepsilon
			\right\|_{H^1_x(\nu)} ^2+\e.
		\end{align}		 		 		
		$\nabla_x f^\e=\mathbb{P}(\nabla_x f^\e)+[\nabla_x,  \mathbb{P}]f^\e+\nabla_x (\mathbb{P}^{\bot}f^\e)$,  and $\langle \Gamma(f^\e,   f^\e),  \mathbb{P}(\nabla_x f^\e)\rangle=0$,   using  the  similar method as $	\mathcal{S}_2 $ in \eqref{vg11},    we have	 		
		\begin{align}
			\mathcal{S}_6
			\lesssim\; &
			\left(
			\varepsilon^{k}\|h^\varepsilon\|_{W^{1,  \infty}}
			+ \varepsilon \sum_{i=1}^{2k-1} [\varepsilon(1+t)]^{i-1}
			\right)\left( \|f^\e\|_{\nu}+\|\nabla_xf^\e\|_{\nu}\right)\left( \|f^\e\| +\|\nabla_x(\mathbb{P}^{\bot}f^\e)\|_{ \nu}\right)\nonumber\\
			\lesssim\; &\sqrt{\e}\|f^\e\|^2+
			\sqrt{\varepsilon}\,  \|\sqrt{\varepsilon}\,  \nabla_xf^\varepsilon\|^2	+ \sqrt{\e}\|\mathbb{P}^{\bot}f^\e\|_{\nu}^2
			+ \sqrt{\e}\|\nabla_x(\mathbb{P}^{\bot}f^\e)\|_{\nu}^2.
		\end{align}
		By \eqref{phi},  we have
		\begin{align*}
			\left\langle \nabla _x\left(   \nabla _x \phi ^\varepsilon \cdot \nabla_v f^\varepsilon\right),   \e{\theta }_0\nabla_x f^\varepsilon \right\rangle\lesssim\e\|\nabla_x^2 \phi^\e\|_{\infty}\|\nabla_v f^\e \|	\|\nabla_x f^\varepsilon \|\lesssim (1+t)^{-16/15}  \left(\|\sqrt{\e} \nabla_x  f^\e \| ^2+\|\sqrt{\e} \nabla_v  f^\e \| ^2\right).
		\end{align*}	
		The other terms in $\mathcal{S}_7$ are handled in the same way as $\mathcal{S}_3$ in \eqref{s3},  then
		\begin{align}\label{s7}
			\mathcal{S}_7
			\lesssim& (1+t)^{-16/15}  \left(\|\sqrt{\e} \nabla_x  f^\e \| ^2+\|\sqrt{\e} \nabla_v  f^\e \| ^2\right)+    \e.
		\end{align}
		Substituting \eqref{r},  \eqref{r2u}--\eqref{s7},  into \eqref{e20},    we have
		\begin{align} \label{l2x}
			&\frac{1}{2}\frac{\d}{\d t} \|\sqrt{\e{\theta }_0} \nabla_x f^\varepsilon\|^2 +\frac{ { {\delta_1}  {\theta }_M} }{2} \|\nabla_x  \left(\mathbb{P}^{\bot}f^\varepsilon \right) \|_\nu^2 \\	\nonumber
			\lesssim&\varepsilon^4\left\|h^\varepsilon\right\|_{W^{1,  \infty}}\left\|\nabla_xf^\varepsilon\right\| +\left( (1+t)^{-16/15}+\sqrt{\e}\right)\left( \|f^\e\|^2+\|\sqrt{\e} \nabla_x  f^\e \| ^2+\|\sqrt{\e} \nabla_v  f^\e \| ^2\right)\\\nonumber
			&+(\delta_0+\kappa^2+\sqrt{\e})
			\left\| \mathbb{P}^{\bot} f^\varepsilon
			\right\|_{H_x^1(\nu)} ^2
			+ \delta_0  \|\langle v\rangle^2\mathbb{P}^{\bot} f^\e\|_{\nu}^2	  +\e.
		\end{align}
		{\bf \emph{Step 3.} $L_{x,   v}^2$-estimate of $\sqrt{\e}  \nabla_v\left(\mathbb{P}^{\bot}f^\e\right)$.\;}
		Denote  	$\mathcal{A}_{\phi^\e}:= v\cdot \nabla_x -  \nabla_x \phi^\e\cdot \nabla_v+\frac{v-u_0}{2{\theta }_{0}}  \cdot\nabla_x \phi^\e $,
		applying the microscopic projection $\mathbb{P}^{\bot}$ to both sides of \eqref{fe}	and noting that $\mathbb{P}^{\bot}( \frac{v-u_0}{{\theta }_0}\cdot \nabla_x \phi_R^\e\sqrt{\mu})=0$,   then we  obtain
		\begin{align}\label{ipgg}
			&\;\pt_t\mathbb{P}^{\bot}f^\e+\ v\cdot  \nabla_x (\mathbb{P}^{\bot} f^\e)+\frac{1}{\e}L(\mathbb{P}^{\bot}f^\e)\\\nonumber
			=&\; \nabla_x \phi^\e\cdot  \nabla_v (\mathbb{P}^{\bot} f^\e)-\frac{v-u_0}{2{\theta }_{0}} \cdot\nabla_x \phi^\e\mathbb{P}^{\bot} f^\e-\mathbb{P}^{\bot}\left(\frac{\{\partial_t+v\cdot\nabla_x\}\sqrt\mu}{\sqrt\mu} f^\varepsilon
			\right)\\\nonumber
			& \;+\sum_{i=1}^{2k-1}\e^{i}\mathbb{P}^{\bot}\left( \frac{1}{\sqrt{\mu}}\nabla_x\phi_R^\e\cdot\nabla_v F_i\right)+\varepsilon
			^{k-1}\Gamma (f^{\varepsilon },  f^{\varepsilon
			})+\sum_{i=1}^{2k-1}\varepsilon ^{i-1}\left\{\Gamma \left(\frac{F_{i}}{\sqrt{\mu }}
			,  f^{\varepsilon }\right)+\Gamma \left(f^{\varepsilon },  \frac{F_{i}}{\sqrt{\mu }}\right)\right\}\\\nonumber
			&\; +\varepsilon ^{k-1}\mathbb{P}^{\bot} (\overline{A} )+[[\mathbb{P},  \mathcal{A}_{\phi^\e}]]f^\e\nonumber,
		\end{align}
		where $[[\mathbb{P},  \mathcal{A}_{\phi^\e}]]:=\mathbb{P}\mathcal{A}_{\phi^\e}-\mathcal{A}_{\phi^\e}\mathbb{P}$.
		Applying   $  \nabla_v$ to \eqref{ipgg},    taking the $L^2 $-inner product of the differentiated equation with $ \e \nabla_v(\mathbb{P}^{\bot}f^\varepsilon)$,    using integration by parts,   and combining \eqref{vl},
		we deduce
		\begin{align}\label{ev}
			&\; \frac{1}{2}\frac{d }{d t} \| \sqrt{\e}  \nabla_v\left(\mathbb{P}^{\bot}f^\e\right)\|_{ }^2+\langle  \nabla_x (\mathbb{P}^{\bot} f^\e),   \e\nabla_v\left(\mathbb{P}^{\bot}f^\e\right) \rangle+ \frac{{\delta_1}}{2} \|   \nabla_v\left(\mathbb{P}^{\bot}f^\e\right)\|_{ \nu}^2 \\\nonumber
			\le&C\left(C_\eta+\delta_0+\kappa^2 \right)\| \mathbb{P}^{\bot} f^\e\|_{\nu}^2+C\left(\delta_0+\kappa^2\right)\| \langle v \rangle \mathbb{P}^{\bot} f^\e\|_{\nu}^2
			+\sum_{i=8}^{10}  \mathcal{S}_i,
		\end{align}
		where
		\begin{align}
			\mathcal{S}_8&:=-\left\langle \frac{v-u_0}{2{\theta }_0} \cdot \nabla_x \phi^\varepsilon \,     \nabla_v(\mathbb{P}^{\bot}f^\varepsilon) 	+   \nabla_v\left\{\mathbb{P}^{\bot}\left(\frac{\{\partial_t+v\cdot\nabla_x\}\sqrt\mu}{\sqrt\mu} f^\varepsilon\right)\right\},  \;  \e  \nabla_v (\mathbb{P}^{\bot}f^\varepsilon) \right\rangle,  \nonumber \\
			%-----------------------------------------------------------------------
			\mathcal{S}_9&:=\Big\langle   \varepsilon^{k-1}   \nabla_v \Gamma(f^\varepsilon,   f^\varepsilon)+ \sum_{i=1}^{2k-1} \varepsilon^{i-1}   \nabla_v \left\{
			\Gamma\!\left( \frac{F_i}{\sqrt{\mu}},   f^\varepsilon \right)
			+ \Gamma\!\left( f^\varepsilon,   \frac{F_i}{\sqrt{\mu}} \right) \right\},     \e \nabla_v\left(\mathbb{P}^{\bot}f^\e\right)\Big \rangle,  \\\nonumber
			\mathcal{S}_{10}&:=\Big\langle  \sum_{i=1}^{2k-1} \varepsilon^{i}   \nabla_v\left\{ \mathbb{P}^{\bot}
			\left( \frac{1}{\sqrt{\mu}} \nabla_x \phi_R^\varepsilon \cdot \nabla_v F_i \right)\right\}+\varepsilon^{k-1}   \nabla_v \{\mathbb{P}^{\bot}(\overline{A}) \} +   \nabla_v \left([[\mathbb{P},  \mathcal{A}_{\phi^\varepsilon}]] f^\varepsilon \right),       \e \nabla_v\left(\mathbb{P}^{\bot}f^\e\right)\Big \rangle.
		\end{align}
		By young's inequality,   we have
		\begin{align}\label{dis1}
			\langle  \nabla_x (\mathbb{P}^{\bot} f^\e),   \e\nabla_v\left(\mathbb{P}^{\bot}f^\e\right) \rangle \leq & \eta  \| \nabla_v\left(\mathbb{P}^{\bot}f^\e\right)\|_{\nu}^2+C_{\eta}\e^2\| \langle v \rangle^{-\gamma} \nabla_x\mathbb{P}^{\bot}f^\e\|_{\nu}^2\\\nonumber
			\leq&\eta  \| \nabla_v\left(\mathbb{P}^{\bot}f^\e\right)\|_{\nu}^2+C_{\eta}\e^2\| \langle v \rangle^{-\gamma} \nabla_x f^\e\|_{\nu}^2+C_{\eta}\e^2\|  f^\e\|_{H^1}^2.
		\end{align}
		Using the same similar method as $	\mathcal{S}_{1}$ in \eqref{vg11},  we obtain
		\begin{align}\label{3. 53}
			&-\Bigg\langle \frac{v-u_0}{2{\theta }_0} \cdot \nabla_x \phi^\varepsilon \,     \nabla_v  f^\varepsilon	
			+   \nabla_v \Big(\frac{(\partial_t+v\cdot\nabla_x)\sqrt\mu}{\sqrt\mu} f^\varepsilon\Big),  \;  \e  \nabla_v (\mathbb{P}^{\bot}f^\varepsilon) \Bigg\rangle \nonumber\\
			\lesssim\;&
			\varepsilon^4 \|h^\varepsilon\|_{W^{1,  \infty}} \|\nabla_v (\mathbb{P}^{\bot}f^\varepsilon)\|
			+ (1+t)^{-16/15}\  \|\sqrt{\e}f^\varepsilon\|_{H^1}^2
			+ \kappa^2 \|  \mathbb{P}^{\bot} f^\varepsilon\|_{H^1(\nu)}^2.
		\end{align}
		By \eqref{decay},  we have
		\begin{equation}\label{pg}
			\left\|v^n  \nabla_v( \mathbb{P}[v^ng])\right\|=\left\|\sum_{i=0}^{4}\langle v^ng,  \chi_i\rangle(\nabla_v\chi_i) v^n\right\|
			\lesssim \|g\| 	,  n>- 4.
		\end{equation}
		Then, applying \eqref{pg},\eqref{phi} and \eqref{decay},  we obtain
		\begin{align}\label{3. 55}
			&\e\left\langle\frac{v-u_0}{2{\theta }_0} \cdot \nabla_x \phi^\varepsilon \,     \nabla_v  (\mathbb{P}f^\varepsilon )+  \nabla_v \left\{\mathbb{P} \left( \frac{\{\partial_t+v\cdot\nabla_x\}\sqrt\mu}{\sqrt\mu} f^\varepsilon\right)\right\}\langle v \rangle^{-\frac{\gamma}{2}},  \; \langle v \rangle^{\frac{\gamma}{2}}   \nabla_v (\mathbb{P}^{\bot}f^\varepsilon) \right\rangle \\\nonumber
			\lesssim&\e \|\nabla_x (\phi^\varepsilon,\rho_0,u_0) \|_{\infty}	 \|\langle v \rangle^{1-\frac{\gamma}{2}}\nabla_v\left(\mathbb{P}f^\e\right)+\langle v \rangle^{-\frac{\gamma}{2}}\nabla_v\mathbb{P}\left(\langle v \rangle^3f^\e\right)\|  \|\nabla_v(\mathbb{P}^{\bot}f^\varepsilon)\|_{\nu}\\\nonumber
			\lesssim&\e\|\sqrt{\e}f^\e\| ^2 + \sqrt{\e} \|\nabla_v(\mathbb{P}^{\bot}f^\varepsilon)\|_{\nu}^2.
		\end{align}
		Combining \eqref{3. 53} and \eqref{3. 55},  we have
		\begin{align}\label{s8}
			\mathcal{S}_8\lesssim	\varepsilon^4 \|h^\varepsilon\|_{W^{1,  \infty}} \|f^\varepsilon\|_{H^1}
			+ ((1+t)^{-16/15}+\sqrt{\e })\  \|\sqrt{\e}f^\varepsilon\|_{H^1}^2
			+ (\kappa^2+\sqrt{\e}) \|  \mathbb{P}^{\bot} f^\varepsilon\|_{H^1(\nu)}^2.
		\end{align}
		By Lemma \ref{gamma} and the a  $\mathit{priori}$  assumption in \eqref{pri},   we have
		\begin{align}\label{s9}
			\mathcal{S}_9 \lesssim &\,   \left(\varepsilon^{k }
			\Big\|
			e^{\eta |v|^2}
			f^\varepsilon
			\Big\|_{\infty}+\e
			\sum_{i=1}^{2k-1} \varepsilon^{i-1}
			\Big\|
			e^{\eta |v|^2}
			\left(\frac{F_i}{\sqrt{\mu}}+\nabla_v\left(\frac{F_i}{\sqrt{\mu}}\right)\right)\
			\Big\|_{\infty}\right)\|f^\e\|_{H^1(\nu)}\big\|
			\nabla_v(\mathbb{P}^{\bot}f^{\varepsilon})
			\big\|_{\nu}  \nonumber\\
			\lesssim &\,
			\sqrt{\e}
			\|\sqrt{\e}f^\e\|_{H^1}^2 +	\sqrt{\e}
			\|\mathbb{P}^{\bot} f^\e\|_{H^1(\nu)}^2
			.
		\end{align}
		Using \eqref{fir3}, Lemma \ref{oa}, \eqref{pg},   and \eqref{phi},  we obtain
		\begin{align}\label{s10}
			\mathcal{S}_{10} \lesssim&\e	\sum_{i=1}^{2k-1}
			\varepsilon^i
			\| \nabla_x\phi_R^\varepsilon\|
			\left(
			\int
			\left|\nabla_v\left\{\mathbb{P}^{\bot}\left(\frac{\nabla_v F_i}{\sqrt{\mu}} \right)\right\}
			\right| ^2
			\d v
			\right)^{1/2}
			\|	\nabla_v\left(\mathbb{P}^{\bot}f^\e\right) \|\nonumber\\
			&+\e^{k }\|  \nabla_v \mathbb{P}^{\bot}(\overline{A})    \|\|  \nabla_v\left(\mathbb{P}^{\bot}f^\e\right)\|+\e\| \nabla_v \left( [[\mathbb{P},  \mathcal{A}_{\phi^\varepsilon}]] f^\varepsilon \right) \langle v \rangle^{-\gamma/2}  \|\|  \nabla_v\left(\mathbb{P}^{\bot}f^\e\right)\|_{\nu}\nonumber\\
			\lesssim&\varepsilon^2\sum_{i=1}^{2k-1} \{\varepsilon  (1+t)\}^{i-1}\|\nabla_x\phi_R^\varepsilon\|\|  \nabla_v\left(\mathbb{P}^{\bot}f^\e\right) \|   +\varepsilon ^{k}\sum_{2k+1\leq i+j\leq 4k-2}{\varepsilon
				^{i+j-2k}(1+t)^{i+j-2}}\|  \nabla_v\left(\mathbb{P}^{\bot}f^\e\right)  \| \nonumber\\
			&+ \e \|f^\e\|_{H^1} \|\nabla_v\left(\mathbb{P}^{\bot}f^\e\right)\|_{\nu}\\\nonumber
			\lesssim&\e^2\left( \|f^\e\|+\e\right)\|\nabla_v\left(\mathbb{P}^{\bot}f^\e\right) \| +\e^{3/2}\|\nabla_v\left(\mathbb{P}^{\bot}f^\e\right) \|+ \e \|f^\e\|_{H^1} \|\nabla_v\left(\mathbb{P}^{\bot}f^\e\right)\|_{\nu} \\\nonumber
			\lesssim& \sqrt{\e}\|\sqrt{\e}f^\e\|_{H^1}^2+\sqrt{\e}\|\nabla_v\left(\mathbb{P}^{\bot}f^\e\right)\|_{\nu}^2+\e.
		\end{align}
		Substituting \eqref{dis1} and \eqref{s8}--\eqref{s10} into \eqref{ev},     we have
		\begin{align}\label{l2v}
			&\;  \frac{1}{2}\frac{d }{d t} \|  \sqrt{\e} \nabla_v\left(\mathbb{P}^{\bot}f^\e\right)\|_{ }^2+ {\delta_1} \|   \nabla_v\left(\mathbb{P}^{\bot}f^\e\right)\|_{ \nu}^2 \\\nonumber
			\lesssim &\varepsilon^4 \|h^\varepsilon\|_{W^{1,  \infty}} \|\nabla_v (\mathbb{P}^{\bot}f^\varepsilon)\| +\left( (1+t)^{-16/15}+\sqrt{\e}\right)\left( \|\sqrt{\e}f^\e\|_{H^1}^2\right)\\\nonumber
			&+C_\eta	\left\| \mathbb{P}^{\bot} f^\varepsilon
			\right\|_{\nu}+( \delta_0+\kappa^2 +\sqrt{\e}+\eta)
			\left\|\mathbb{P}^{\bot} f^\varepsilon
			\right\|_{H^1(\nu)} ^2
			+\left(\delta_0+\kappa^2\right)\| \langle v \rangle \mathbb{P}^{\bot} f^\e\|_{\nu}^2	 +C_{\eta}\e^2\| \langle v \rangle^{-\gamma} \nabla_x f^\e\|_{\nu}^2  +\e.
		\end{align}
		\par	To control  $C_{\eta}\varepsilon^2\|\langle v\rangle^{-\gamma}\nabla_x f^\varepsilon\|_{\nu}^2$ in \eqref{l2v},   we proceed as follows.
        \smallskip
		\par
		\noindent{\bf	\emph Step 4. $L_{x,   v}^2$-estimate of $\varepsilon^{5/4}\langle v\rangle^{-\gamma}\nabla_x f^\varepsilon$.}\;	  Applying $\nabla_x$ to \eqref{fe} and taking the $L^2_{x,   v} $ inner product with $\e^{5/2} \langle v \rangle^{-2\gamma}{\theta }_0 \nabla_xf^\e$ on both sides,   combining		
		\eqref{wxl1},  we  obtain
		\begin{align}\label{wgev}
			&\frac{1}{2}\frac{\d}{\d t}\|\e^{5/4}\sqrt{ {\theta }_0}\langle v \rangle^{-\gamma} \nabla_x f^\varepsilon\|^2 +\frac{\delta_1 \e^{3/2}}{2}\|\langle v \rangle ^{-\gamma}\nabla_x  f^\e\|_{\nu}^2 \\	\nonumber
			\leq&C_{\eta} \sqrt{\e}\|\sqrt{\e} f^\e\|_{H^1}^2
			+C\e\| \nabla_x(\mathbb{P} f^\e)\|_{\nu}^2+C\e\|\langle v \rangle^{2-2\gamma} \mathbb{P}f^\e\|_{\nu}^2+\sum_{i=11}^{14}  \mathcal{S}_i,
		\end{align}
		where
		\begin{align*}
			\mathcal{S}_{11} &= \left\langle \nabla_x \left( \frac{v - u_0}{{\theta}_0} \sqrt{\mu} \cdot \nabla_x \phi_R^\varepsilon \right),   \varepsilon^{5/2} \langle v \rangle^{-2\gamma} \theta_0 \nabla_x f^\varepsilon \right\rangle,   \\[2mm]
			\mathcal{S}_{12} &= \frac{1}{2}  \left\langle (\partial_t + v \cdot \nabla_x) \theta_0, \varepsilon^{5/2}  \langle v \rangle^{-2\gamma} |\nabla_x f^\varepsilon|^2 \right\rangle \\
			&\quad - \left\langle \nabla_x \left( \frac{ (\partial_t + v \cdot \nabla_x - \nabla_x \phi_0 \cdot \nabla_v) \sqrt{\mu} }{\sqrt{\mu}} f^\varepsilon \right),   \varepsilon^{5/2} \langle v \rangle^{-2\gamma} \theta_0 \nabla_x f^\varepsilon \right\rangle \\
			&\quad - \sum_{i=1}^{2k-1} \varepsilon^i \left\langle \nabla_x \left( \nabla_x \phi_i \cdot \frac{v - u_0}{2 \theta_0} f^\varepsilon \right),   \varepsilon^{5/2} \langle v \rangle^{-2\gamma} \theta_0 \nabla_x f^\varepsilon \right\rangle \\
			&\quad - \varepsilon^k \left\langle \nabla_x \left( \nabla_x \phi_R^\varepsilon \cdot \frac{v - u_0}{2 \theta_0} f^\varepsilon \right),   \varepsilon^{5/2} \langle v \rangle^{-2\gamma} \theta_0 \nabla_x f^\varepsilon \right\rangle,   \\[1mm]
			\mathcal{S}_{13} &= \varepsilon^{k-1} \left\langle \nabla_x \Gamma(f^\varepsilon,   f^\varepsilon),   \varepsilon^{5/2} \langle v \rangle^{-2\gamma} \theta_0 \nabla_x f^\varepsilon \right\rangle \\
			&\quad + \left\langle \sum_{i=1}^{2k-1} \varepsilon^{i-1} \left\{ \nabla_x \Gamma\left( \frac{F_i}{\sqrt{\mu}},   f^\varepsilon \right) + \nabla_x \Gamma\left( f^\varepsilon,   \frac{F_i}{\sqrt{\mu}} \right) \right\},   \varepsilon^{5/2} \langle v \rangle^{-2\gamma} \theta_0 \nabla_x f^\varepsilon \right\rangle,   \\[1mm]
			\mathcal{S}_{14} &= \left\langle \nabla_x \left( \nabla_x \phi^\varepsilon \cdot \nabla_v f^\varepsilon \right),   \varepsilon^{5/2} \langle v \rangle^{-2\gamma} \theta_0 \nabla_x f^\varepsilon \right\rangle \\
			&\quad + \sum_{i=1}^{2k-1} \varepsilon^i \left\langle \nabla_x \left( \nabla_x \phi_R^\varepsilon \cdot \frac{\nabla_v F_i}{\sqrt{\mu}} \right)+ \varepsilon^{k-1}\nabla_x \overline{A},   \varepsilon^{5/2} \langle v \rangle^{-2\gamma} \theta_0 \nabla_x f^\varepsilon \right\rangle.
		\end{align*}
		Since the power of $\varepsilon^{5/2}<\varepsilon^{3/2}$   in  $ \mathcal{S}_{11}$,    we first apply H\"older's  inequality in the $v$-variable,   then in the $x$-variable,   and finally invoke \eqref{phi1} to obtain
		\begin{align}\label{s41}
			\mathcal{S}_{11}	  \lesssim \e^{5/2}\|\nabla_x  \phi_R^\varepsilon \|_{H^1}\|\nabla_xf^\e\|\lesssim    \e^{5/2}\left( \|  f^\varepsilon \| +\varepsilon\right)\|f^\e\|_{H^1}
			\lesssim \sqrt{\e}\|\sqrt{\e}f^\e\|_{H^1}^2+\e.
		\end{align}
		Using the same method as $\mathcal{S}_5$ in \eqref{vg031},  $\langle v \rangle^{-2\gamma}$ does not create new difficulties,   we  have
		\begin{align}\label{vg031}
			\mathcal{S}_{12}
			\lesssim
			\varepsilon^4\left\|h^\varepsilon\right\|_{W^{1,  \infty}}\left\|\nabla_xf^\varepsilon\right\|
			+ \sqrt{\e} \left\|\sqrt{\e}f^\varepsilon\right\|_{H^1}^2+ \e^{3/2}
			\left\| \mathbb{P}^{\bot} f^\varepsilon
			\right\|_{H^1(\nu)} ^2+\e.
		\end{align}		 		 		
		Using \eqref{Gf},    \eqref{fir3},  and the a  $\mathit{priori}$  assumption   $\varepsilon^{k } \|h^\varepsilon\|_{W^{1,  \infty}}\leq \e$ in \eqref{pri},  we have	
		\begin{align}\label{s13. 3}
			\mathcal{S}_{13}\lesssim &  \left(\varepsilon^{k+3/2}
			\Big\|
			e^{\eta |v|^2}\langle v \rangle^{-\gamma}
			f^\varepsilon
			\Big\|_{W^{1,  \infty}}+\e^{5/2}
			\sum_{i=1}^{2k-1} \varepsilon^{i-1 }
			\Big\|
			e^{\eta |v|^2}
			\frac{F_i }{\sqrt{\mu}}
			\Big\|_{W^{1,  \infty} }\right)
			\|\langle v \rangle^{-\gamma}f^\varepsilon\|_{H_x^1(\nu)}
			\big\|\langle v \rangle^{-\gamma}
			\nabla_x f^{\varepsilon}
			\big\|_{\nu}    \nonumber\\
			\lesssim&\sqrt{\e}\|\sqrt{\e}f^\e\|^2+\e^{5/2}\|\langle v \rangle^{-\gamma}\mathbb{P}^{\bot}f^\e\|_{\nu}^2+\e^{5/2}\|\langle v \rangle^{-\gamma}\nabla_xf^\e\|_{\nu}^2		.
		\end{align}
		Applying integration by parts and using \eqref{phi},   we obtain
		\begin{align*}
			&\left\langle \nabla_x \left( \nabla_x \phi^\varepsilon \cdot \nabla_v f^\varepsilon \right),
			\varepsilon^{5/2} \langle v \rangle^{-2\gamma} \theta_0 \nabla_x f^\varepsilon \right\rangle \\
			=&-\frac{1}{2}\varepsilon^{5/2}\theta_0(-2\gamma)
			\left\langle \nabla_x \phi^\varepsilon \cdot \langle v \rangle^{-2\gamma-2}v,
			|\nabla_x f^\varepsilon|^2 \right\rangle
			+\left\langle \nabla_x^2 \phi^\varepsilon \cdot \nabla_v f^\varepsilon,
			\varepsilon^{5/2} \langle v \rangle^{-2\gamma} \theta_0 \nabla_x f^\varepsilon \right\rangle \\
			\lesssim\;&
			\varepsilon^{5/2}\|\nabla_x \phi^\varepsilon\|_{\infty}
			\|\langle v \rangle^{-\gamma} f^\varepsilon\|^2
			+\varepsilon^{5/2}
			\|\langle v \rangle^{-2\gamma}\nabla_v f^\varepsilon
			\mathbf{1}_{\{\langle v \rangle^{-5\gamma}\ge 1/\varepsilon\}}\|_{\infty}
			\|\nabla_x^2 \phi^\varepsilon\|\,  \|\nabla_x f^\varepsilon\| \\
			&+	\|\nabla_x^2 \phi^\varepsilon\|_{\infty}\varepsilon^{5/2}\Big(
			\|\langle v \rangle^{-2\gamma}\nabla_v \mathbb{P}f^\varepsilon\|
			+\|\langle v \rangle^{-2\gamma-\gamma/2}
			\nabla_v (\mathbb{P}^{\perp}f^\varepsilon)
			\mathbf{1}_{\{\langle v \rangle^{-5\gamma}\le 1/\varepsilon\}}\|_{\nu}
			\Big)\|\nabla_x f^\varepsilon\| \\
			\lesssim\;&
			(1+t)^{-16/15}\|\varepsilon^{5/4}\langle v \rangle^{-\gamma}f^\varepsilon\|^2
			+\varepsilon^{4}\|h^\varepsilon\|_{W^{1,  \infty}}\|\nabla_x f^\varepsilon\|
			+\sqrt{\varepsilon}\|\sqrt{\varepsilon}f^\varepsilon\|_{H^1}^2
			+\sqrt{\varepsilon}\|\nabla_v(\mathbb{P}^{\perp}f^\varepsilon)\|_{\nu}^2.
		\end{align*}
		The remaining terms in $\mathcal{S}_{14}$ can be treated by same method as  $\mathcal{S}_3$ in \eqref{s3}.
		Invoking Lemma~\ref{oa} to control $\|\nabla_x \overline{A}\langle v \rangle^{-2\gamma}\|$,   we conclude
		\begin{align}\label{s71}
			\mathcal{S}_{14}
			\lesssim&\e^{4}\|h^\e\|_{W^{1,  \infty}}\|\nabla_x f^\varepsilon\| +\sqrt{\e}\|\sqrt{\e}f^\e\|_{H^1}^2+(1+t)^{-16/15} \| \e^{5/4}\langle v \rangle^{-\gamma} f^\varepsilon \|^2+\sqrt{\e}\|\nabla_v(\mathbb{P}^{\bot}f^\varepsilon) \|_{\nu}^2 +\e.
		\end{align}
		Substituting   \eqref{s41}--\eqref{s71} into \eqref{wgev},    we have	
		\begin{align}\label{l2wxip}
			&\frac{1}{2}\frac{\d}{\d t}\|\e^{5/4}\sqrt{ {\theta }_0}\langle v \rangle^{-\gamma} \nabla_x f^\varepsilon\|^2 +\frac{\delta_1 \e^{3/2}}{2}\|\langle v \rangle ^{-\gamma}\nabla_x  f^\e\|_{\nu}^2 \\	\nonumber
			\lesssim&\e^{4}\|h^\e\|_{W^{1,  \infty}}\|\nabla_x f^\varepsilon\|+  \sqrt{\e}\|\sqrt{\e} f^\e\|_{H^1}^2+(1+t)^{-16/15} \| \e^{5/4}\langle v \rangle^{-\gamma} f^\varepsilon \|^2\\\nonumber
			& +\sqrt{\e}\| \mathbb{P}^{\bot}f^\varepsilon \|_{H^1(\nu)}^2+\e\|\langle v \rangle^{2-2\gamma} \mathbb{P}f^\e\|_{\nu}^2+\e^{5/2}\|\langle v \rangle^{-\gamma}\nabla_xf^\e\|_{\nu}^2	+\e.
		\end{align}
		
		Combining the above four steps,   we observe that the remaining dominant dissipation terms which cannot yet be closed is
		$
		\|\langle v\rangle^{2-2\gamma}\,  \mathbb{P}^{\bot} f^\varepsilon\|_{\nu}.
		$
		To control this term,   we proceed to the next estimate.
		
		\medskip
		\noindent{\bf	\emph Step 5. $L_{x,   v}^2$-estimate of $\sqrt{\varepsilon}\,  \langle v\rangle^{2-2\gamma}\mathbb{P}^{\bot} f^\varepsilon$.}\;	
		Taking $L^2$ inner product to \eqref{ipgg} with $\e {\langle v \rangle^{{4-4\gamma}}} \mathbb{P}^{\bot} f^\e $,   $ \langle \ v\cdot  \nabla_x (\mathbb{P}^{\bot} f^\e),  \e {\langle v \rangle^{4-4\gamma}} \mathbb{P}^{\bot} f^\e\rangle=0$ from  integration by parts and using \eqref{wl},   we have	
		\begin{align}\label{wev}
			&\; \frac{1}{2}\frac{d }{d t} \| \sqrt{\e}  \langle v\rangle^{2-2\gamma}\mathbb{P}^{\bot}f^\e\|_{ }^2+	  \frac{{\delta_1}}{2} \| \langle v\rangle^{2-2\gamma}   \mathbb{P}^{\bot}f^\e\|_{ \nu}^2
			\le C C_{\eta}	\| \mathbb{P}^{\bot} f\|_{\nu}
			+\sum_{i=15}^{17}  \mathcal{S}_i,
		\end{align}
		where
		\begin{align*}
			\mathcal{S}_{15}=&	\left\langle   \nabla_x \phi^\varepsilon \cdot \nabla_v (\mathbb{P}^{\bot}f^\varepsilon) - \frac{v-u_0}{2{\theta }_0} \cdot \nabla_x \phi^\varepsilon \,       \mathbb{P}^{\bot}f^\varepsilon	-   \mathbb{P}^{\bot}\left(\frac{\{\partial_t+v\cdot\nabla_x\}\sqrt\mu}{\sqrt\mu} f^\varepsilon\right),  \; \e   {\langle v \rangle^ {4-4\gamma}} \mathbb{P}^{\bot}f^\varepsilon \right\rangle,  \nonumber \\
			\mathcal{S}_{16}=&	 \left\langle  \varepsilon^{k-1}    \Gamma(f^\varepsilon,   f^\varepsilon)+ \sum_{i=1}^{2k-1} \varepsilon^{i-1}     \left\{
			\Gamma\!\left( \frac{F_i}{\sqrt{\mu}},   f^\varepsilon \right)
			+ \Gamma\!\left( f^\varepsilon,   \frac{F_i}{\sqrt{\mu}} \right) \right\},    \; \e   {\langle v \rangle^{4-4\gamma}} \mathbb{P}^{\bot}f^\varepsilon \right\rangle,  \nonumber \\
			\mathcal{S}_{17}
			=&\;
			\left\langle
			\sum_{i=1}^{2k-1} \varepsilon^{i}\,   \mathbb{P}^{\bot}
			\Big( \frac{1}{\sqrt{\mu}}\,   \nabla_x \phi_R^\varepsilon \cdot \nabla_v F_i \Big)
			+ \varepsilon^{k-1}\,   \mathbb{P}^{\bot}(\overline{A})
			+ [[\mathbb{P},  \mathcal{A}_{\phi^\varepsilon}]] f^\varepsilon,\;
			\varepsilon\,   \langle v \rangle^{4-4\gamma} \mathbb{P}^{\bot}f^\varepsilon
			\right\rangle.
		\end{align*}
		Using integration by parts,
		$ |\mathbb{P}^{\bot}f^\varepsilon|\leq |f^\e|$,
		\eqref{decay},  \eqref{phi},    \eqref{h},   and Young's inequality,   we have
		\begin{align}\label{s11}
			\mathcal{S}_{15}
			\lesssim&\e \|\langle v \rangle^{7-4\gamma } f^\e\mathbf{1}_{\{\langle v \rangle^{14-9\gamma }\geq \kappa^2/\e	
				\}}\|_{\infty} \|\nabla_x(\phi^\e, \rho_0,   u_0,  \theta_0)\| \|\mathbb{P}^{\bot}f^\varepsilon\|\nonumber \\ 		\nonumber	&+\e  \|\nabla_x(\phi^\e,\rho_0,   u_0,  {\theta_0})\|_{\infty} \left(\|\langle v \rangle^{7-4\gamma }\mathbb{P}f^\e\|+\|\langle v \rangle^{7-\frac{9\gamma}{2} }\mathbb{P}^{\bot}f^\varepsilon\mathbf{1}_{\{\langle v \rangle^{14-9\gamma }\leq \kappa^2/\e	
				\}}\|_{\nu} \right)\|\mathbb{P}^{\bot}f^\varepsilon\| \\
			\lesssim&\e^4\|h^\e\|_{\infty}\|f^\e\|+(1+t)^{-16/15} \|\sqrt{\e}f^\e\|^2+  {{\kappa}^2 }  \|\mathbb{P}^{\bot}f^\varepsilon\|_{\nu}^2.
		\end{align}
		Using the same argument as for $\mathcal{S}_{13}$ in \eqref{s13. 3},   we obtain
		\begin{align}
			\mathcal{S}_{16}	
			\lesssim&\sqrt{\e}\|\sqrt{\e}f^\e\|^2+\sqrt{\e}\|\langle v \rangle^{2-2\gamma}\mathbb{P}^{\bot}f^\e\|_{\nu}^2.
		\end{align}
		Applying \eqref{pg},  \eqref{fir3},  Lemma \ref{oa} and \eqref{phi},  it holds that
		\begin{align}\label{s13}
			\mathcal{S}_{17}	\lesssim&\e	\sum_{i=1}^{2k-1}
			\varepsilon^i
			\| \nabla_x\phi_R^\varepsilon\|	\left(
			\int
			\left| \langle v \rangle^{4-4\gamma} \frac{\nabla_v F_i}{\sqrt{\mu}}
			\right| ^2
			\d v
			\right)^{1/2} \left\|
			\mathbb{P}^{\bot}f^\e \right\|\nonumber\\
			&+\e^{k }\|   \langle v \rangle^{4-4\gamma} \mathbb{P}^{\bot}(\overline{A})    \|\|   \mathbb{P}^{\bot}f^\e\|+\e\|  [[\mathbb{P},  \mathcal{A}_{\phi^\varepsilon}]] f^\varepsilon  \langle v \rangle^{4-\frac{9\gamma}{2} }  \|\|   \mathbb{P}^{\bot}f^\e\|_{\nu}\nonumber\\
			\lesssim& \sqrt{\e}\|\sqrt{\e}f^\e\|_{H^1}^2+\sqrt{\e}\| \mathbb{P}^{\bot}f^\e\|_{\nu}^2+\e.
		\end{align}
		Substituting \eqref{s11}-\eqref{s13} into \eqref{wev},  we have
		\begin{align}\label{wev1}
			&\; \frac{1}{2}\frac{\d }{\d t} \| \sqrt{\e}  \langle v\rangle^{2-2\gamma}\mathbb{P}^{\bot}f^\e\|_{ }^2+ \frac{{\delta_1}}{2} \| \langle v\rangle^{2-2\gamma}   \mathbb{P}^{\bot}f^\e\|_{ \nu}^2\\\nonumber
			\lesssim& \e^4\|h^\e\|_{\infty}\|f^\e\| +((1+t)^{-16/15}+\sqrt{\e})\|\sqrt{\e}f^\e\|_{H^1}^2  +\left(	C_{\eta}+\kappa^2  +	\sqrt{\e} \right) \|\mathbb{P}^{\bot}f^\varepsilon\|_{\nu}^2+\sqrt{\e}\|\langle v \rangle^{2-2\gamma}\mathbb{P}^{\bot}f^\e\|_{\nu}^2+\e.
		\end{align}
		Combining \eqref{l2},  \eqref{l2x},  \eqref{l2v},  \eqref{l2wxip} and \eqref{wev1},  and choosing $\kappa^2, \, \eta,\,  \delta_0$ small enough. This completes the proof of \eqref{L2a}.
	\end{proof}
	\section{\texorpdfstring{$W_{x,v}^{1,\infty}$ Estimates for the Remainder with Soft Potentials}{W  Estimates for the Remainder with Soft Potentials}}
	In this  section, we   consider the $W_{x, v}^{1,\infty}$-estimate of  $F_{R}^{\protect\varepsilon%
	}$. Define the characteristics $[X(\tau
	;t, x, v),V(\tau;t, x, v)]$ passing though $(t, x, v)$ such that
	\begin{equation}
		\left\{
		\begin{array}{ll} \label{char}
			\displaystyle \frac{\d X(\tau;t, x, v)}{\d \tau}=\frac{1}{\e}V(\tau;t, x, v),  \quad\quad\quad\quad\quad\quad           X(t;t, x, v)=x,&\\[2mm]
			\displaystyle \frac{\d V(\tau;t, x, v)}{\d \tau}=-\nabla_x\phi^\e (\tau, X(\tau;t, x, v)), \quad\quad  V(t;t, x, v)=v.&\\
		\end{array}
		\right.
	\end{equation}
	Denote
	\begin{equation}
		-\frac{1}{\sqrt{\mathcal{M}}}\{Q(\mu,\sqrt{\mathcal{M}}g )+Q( \sqrt{{\mathcal{M}}}g,\mu )\}=\mathcal{L}\left(g\right)=\{\nu (\mu )-\mathcal{K}\}g \label{K_M}.
	\end{equation}%
	Let  $ \mathcal{K}_{w}(h^\e):=\; w  \mathcal{K}\Big(\frac{h^\e} {w }\Big)$,	
	with   $F^\e_R=\frac{\sqrt{\mathcal{M}}h^\e}{w}$ in \eqref{f}, (\ref{F_R}) can be rewritten as
	\begin{equation} \label{eq:h}
		\begin{split}
			& \partial_t h^{\varepsilon}
			+ v \cdot \nabla_x h^{\varepsilon}
			-\nabla_x \phi^{\varepsilon} \cdot \nabla_v h^{\varepsilon}
			+ \frac{\widetilde{\nu} }{\varepsilon} h^{\varepsilon}
			\\
			=& \frac{1}{\varepsilon} \mathcal{K}_{w} h^{\varepsilon}+ \varepsilon^{k-1} w\Gamma_M\left(  \frac{h^{\varepsilon}  }{w},  \frac{h^{\varepsilon  }} {w} \right)
			+ \sum_{i=1}^{2k-1} \varepsilon^{i-1}  {w}  \left\{\Gamma_M\left(  \frac{F_i  }{\sqrt{\mathcal{M}} },  \frac{h^{\varepsilon  }} {w} \right)+ \Gamma_M\left(\frac{h^{\varepsilon  }} {w},  \frac{F_i  }{\sqrt{\mathcal{M}} }   \right) \right\} \\
			&
			+\frac{w}{\sqrt{\mathcal{M}}}\nabla _{x}\phi
			_{R}^{\varepsilon }\cdot \nabla _{v}\left(\mu
			+\sum_{i=1}^{2k-1}\varepsilon ^{i}F_{i}\right)+\varepsilon ^{k-1}\frac{w}{\sqrt{
					\mathcal{M}}}A,
		\end{split}
	\end{equation}
	where $\frac{\widetilde{\nu} }{\varepsilon}$ is defined by
	\begin{align}\label{nuw}
		\frac{\widetilde{\nu} }{\varepsilon}:=\frac{\nu}{\e}-	 \nabla _{x}\phi ^{\varepsilon }\cdot \frac{w\nabla_v \left( \frac{\sqrt{\mathcal{M}}}{w}  \right)}{\sqrt{\mathcal{M}}}  -\frac{\partial_t w}{w}.
	\end{align}
	Notice that
	\(\mathcal{M} \lesssim \mu \lesssim \mathcal{M}^{\varpi},  \frac{1}{2}<\varpi<1,\) we     obtain
	\begin{align}\label{kmo}
		\nonumber	\mathcal{K}  |g| \lesssim& \;\iint_{\mathbb{R}^3\times\mathbb{S}^2 }|v-u|^{\gamma}  b_0(\alpha) {\mathcal{M}^{\varpi-1/2}(u)}\left({\mathcal{M}^{\varpi-1/2}(u^{\prime})}  |g|(v^{\prime}) +{\mathcal{M}^{\varpi-1/2}(v^{\prime})}  |g|(u^{\prime}) \right)\d u\d\omega\\
		&- \iint_{\mathbb{R}^3\times\mathbb{S}^2 }|v-u|^{\gamma}  b_0(\alpha){\mathcal{M}^{\varpi-1/2} (v)}{\mathcal{M}^{\varpi-1/2} (u)} |g|(u)  \d u\d\omega \\\nonumber
		:=&\int_{\mathbb{R}^3} \l  (v, u)|g(u)|\d u,
	\end{align}
	and
	\begin{align}\label{kmnwh}
		\mathcal{K}_{w}  g=w\mathcal{K}   \left(\frac{g}{w}\right)\lesssim
		\int_{\mathbb{R}^3} l (v, u)\frac{w(v)}{w(u)}g(u)\d u=	\int_{\mathbb{R}^3} l_{w} (v, u) g(u)\d u.
	\end{align}
	Note that   $\mathcal{M}^{\varpi-1/2}$ in $l_w$ is parallel to $\mathcal{M}$.
	Employing the same method as in \cite{strainarma2008}, we can deduce that $l_w$ satisfies the properties stated in Lemma~\ref{eq:es:K}.

	In the case of soft potentials, to treat the singularity of  $\mathcal{K}_w$ in \eqref{kmnwh}, we   introduce a smooth cutoff function $0 \leq \chi \leq 1$, $m\leq 4^{\frac{-1}{\gamma+3}}$ such that
	\begin{align*}
		\chi (s)=\left\{
		\begin{array}{ll}%\label{tezhengxian}
			\displaystyle 0,  \quad\quad          \text{for} \;|s|\leq m,&\\[2mm]
			\displaystyle 1, \quad\quad       \text{for} \;|s|\geq 2m.& \\
		\end{array}
		\right.
	\end{align*}
	Then from $\eqref{kmo}$, we  use $ \chi $ to split $ \mathcal{K}|g| =  \mathcal{K}^{ \chi}|g|+  {\mathcal{K}}^{1- \chi}|g|$ and $ \mathcal{K}_wg =  \mathcal{K}_w^{ \chi}|g|+  {\mathcal{K}}_w^{1- \chi}|g|$. Here, $ \mathcal{K}^{\chi} |g| $
	and $ \mathcal{K}^{\chi}_w |g| $ are defined as
	\begin{align*}
		\nonumber	\mathcal{K}^{\chi} |g| \lesssim& \;\iint_{\mathbb{R}^3\times\mathbb{S}^2 }|v-u|^{\gamma} \chi (|v-u|)b_0(\alpha) {\mathcal{M}^{\varpi-1/2}(u)}\left({\mathcal{M}^{\varpi-1/2}(u^{\prime})}  |g(v^{\prime})| +{\mathcal{M}^{\varpi-1/2}(v^{\prime})}  |g(u^{\prime}) | \right)\d u\d\omega\\
		&- \iint_{\mathbb{R}^3\times\mathbb{S}^2 }|v-u|^{\gamma} \chi (|v-u|)b_0(\alpha){\mathcal{M}^{\varpi-1/2} (v)}{\mathcal{M}^{\varpi-1/2} (u)} |g(u)|  \d u \d\omega \\\nonumber
		:=&\int_{\mathbb{R}^3} l^{ \chi}(v, u)|g(u)|\d u.
	\end{align*}
	and
	\begin{align}\label{kmnw}
		\mathcal{K}_{w}^{\chi} |g|=w\mathcal{K} ^{\chi} \left(\frac{|g|}{w}\right)\lesssim
		\int_{\mathbb{R}^3} l^{ \chi}(v, u)\frac{w(v)}{w(u)}|g(u)|\d u=	\int_{\mathbb{R}^3} l_{w}^{\chi}(v, u) |g(u)|\d u.
	\end{align}

	Our main task is
	to derive $W^{1,\infty }_{x,v}$ estimates of $h^{\varepsilon }$ as follows.
	%\subsection{  \texorpdfstring{$W^{{1,\infty}}_{x, v}$}{Lg}-estimate  for Soft Potentials}\label{W-infty-soft}
	%	\hspace*{\fill}
	\begin{proposition}\label{softin}
		Suppose  $-3<\gamma<0$, it holds that
		\begin{align}\label{hin}
			\e^{3/2}	\sup_{0\leq t \leq \e^{-1/2}}  \| {  h}^\varepsilon (t)\|_{W^{1, \infty}}	\lesssim \e^{3/2}  \| {  h}^\varepsilon (0)\|_{W^{1, \infty}} +\sup_{0\le s\le t} \{\|f^\e(s )\|_{H^1}\}+\e^{5/2}.
		\end{align}
	\end{proposition}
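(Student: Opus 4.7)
The plan is to work along the characteristics $[X(\tau;t,x,v),V(\tau;t,x,v)]$ from \eqref{char} and apply Duhamel's formula to the transport equation \eqref{eq:h}. Writing $h^{\varepsilon}(t,x,v)$ as the sum of the initial contribution $\exp\{-\tfrac{1}{\varepsilon}\int_0^t\widetilde{\nu}(\tau)\,d\tau\}\,h^{\varepsilon}(0,X(0),V(0))$ plus the Duhamel integrals of the right-hand side, I estimate each piece separately. For soft potentials the decay factor $\exp\{-\tfrac{1}{\varepsilon}\int_s^t\widetilde{\nu}\,d\tau\}$ is controlled by Lemma~\ref{nuinter}, specifically \eqref{nus}, giving the effective bound $\lesssim e^{-a t^{\varrho}+a s^{\varrho}}$ with $a=\varepsilon^{2/(\gamma-2)}$ and $\varrho\in(0,1)$ obtained by restricting $\sigma\in(1/3,2/3)$. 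Most source terms---the $\Gamma_M$ quadratic contribution bounded via the Grad-splitting of Lemma~\ref{eq:es:K}, the field terms $\nabla_x\phi_R^{\varepsilon}\cdot\nabla_v(\mu+\sum\varepsilon^i F_i)$ handled by \eqref{phiinfnity} and the coefficient bounds for $F_i$ from Appendix~A, and the remainder $\varepsilon^{k-1}wA/\sqrt{\mathcal{M}}$---yield contributions of order $\sqrt{\varepsilon}\|h^{\varepsilon}\|_{W^{1,\infty}}^{2}$, $\varepsilon^{k}\|h^{\varepsilon}\|_{W^{1,\infty}}$, and $\varepsilon^{k-1}$ respectively, all controllable once multiplied by $\varepsilon^{3/2}$ on the left.

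The chief obstacle is the gain term $\tfrac{1}{\varepsilon}\mathcal{K}_w h^{\varepsilon}$, since a naive pointwise bound would only give $\varepsilon^{-1}\|h^{\varepsilon}\|_{\infty}$, which cannot be closed by Grönwall. I would apply the standard double-Duhamel trick: substitute the mild formulation once more inside $\int_0^t \exp\{-\tfrac{1}{\varepsilon}\int_s^t\widetilde{\nu}\}\,\tfrac{1}{\varepsilon}\mathcal{K}_w h^{\varepsilon}(s,X(s),V(s))\,ds$. After the second iteration the triple integral over $(s_1,v',v'')$ contains $h^{\varepsilon}(s_1,X(s_1;s,X(s),v'),v'')$; on the piece where $|v'|\le 2N$ and $|v''|\le 3N$ one performs the change of variables $v'\mapsto X(s_1)$ with Jacobian $\sim(\tfrac{s-s_1}{\varepsilon})^{3}$. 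This is where the new $H^{1}_{x,v}$ framework pays off: instead of closing on $\|h^{\varepsilon}\|_{W^{1,\infty}}$ as in \cite{GuoCMP2010,lisiam2023}, one invokes the sharper inequality \eqref{1h}, which converts the integrated quantity to $\varepsilon^{-3/2}\|f^{\varepsilon}(s_1)\|_{H^1}$. The singular piece $|v-u|\le 2m$ in $l^\chi_w$ contributes at most $O(m^{\gamma+3})$, absorbable once $m$ is small, while the large-velocity remainder gains smallness from the weighted decay of $l_w$ provided by Lemma~\ref{eq:es:K}; the short-time contribution $\int_{t-\eta}^{t}\cdots$ with $\eta\ll\varepsilon$ is absorbed by the $\widetilde{\nu}/\varepsilon$-prefactor via \eqref{nus}.

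For the gradient, I differentiate \eqref{eq:h} in $x$ and $v$. The commutators produce extra source terms of three types: the loss-type $\nabla(\widetilde{\nu}/\varepsilon)h^{\varepsilon}$, the gain-type $\nabla(\mathcal{K}_w h^{\varepsilon})$, and field-times-derivative terms $\nabla^2\phi^{\varepsilon}\cdot\nabla_v h^{\varepsilon}$. By \eqref{dnus} the integral $\int_0^t\exp\{-\tfrac{1}{\varepsilon}\int_s^t\widetilde{\nu}\}|\nabla(\widetilde{\nu}/\varepsilon)|\,ds$ is controlled in exactly the same form as the exponential itself, so the loss term closes by the same Grönwall scheme used for $h^{\varepsilon}$. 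The $\nabla\mathcal{K}_w$ contribution is again handled by double Duhamel, now pulling $\nabla h^{\varepsilon}$ through the inner integral and applying \eqref{1h} to $\nabla h^{\varepsilon}$ to recover $\varepsilon^{-3/2}\|f^{\varepsilon}\|_{H^1}$. Field-times-derivative terms are bounded by \eqref{phiinfnity} together with the $L^\infty$ estimate already established. Summing the $L^\infty$ and $\nabla$ bounds and multiplying by $\varepsilon^{3/2}$ yields \eqref{hin}.

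The hard part is precisely the absence of a positive lower bound on $\nu$ for $-3<\gamma<0$: this forces the simultaneous use of the weight $w=e^{\widetilde\vartheta|v|^2}$, the refined effective decay $e^{-a t^{\varrho}}$ with sub-exponential exponent $\varrho<1$ from Lemma~\ref{nuinter}, and the novel $H^{1}_{x,v}$-bound \eqref{1h}. Without \eqref{1h} the double-Duhamel iteration returns $\|h^{\varepsilon}\|_{W^{1,\infty}}$ with a divergent $\varepsilon^{-1}$ prefactor and cannot close; it is exactly the interplay between \eqref{1h} and the sharp decay in Lemma~\ref{nuinter} that produces the precise $\varepsilon^{3/2}$ scaling on both sides of \eqref{hin}.
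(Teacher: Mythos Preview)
Your overall strategy---Duhamel along characteristics, double iteration on the $\mathcal{K}_w$ term, change of variables $v'\mapsto X(s_1)$, and the $H^1$ bound \eqref{1h}---matches the paper. However, there is a genuine gap in your treatment of the gradient estimate.

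When you differentiate \eqref{eq:h} in $v$, the transport commutator $[\nabla_v,\,v\cdot\nabla_x]=\nabla_x$ produces an extra source term $\nabla_x h^{\varepsilon}$ with \emph{no} small prefactor---no $\varepsilon$, no $\delta_0$, no decay in $t$. In the paper this is the term $\mathcal{G}_2$, bounded only by $T_0\,e^{-at^{\varrho}}\sup_{0\le s\le t}\{e^{as^{\varrho}}\|\nabla h^{\varepsilon}(s)\|_\infty\}$, where the smallness comes solely from restricting to a \emph{short} time interval $[0,T_0]$ with $T_0\ll1$ fixed. Your list of commutator terms (``loss-type $\nabla(\widetilde{\nu}/\varepsilon)h^{\varepsilon}$, gain-type $\nabla(\mathcal{K}_w h^{\varepsilon})$, field-times-derivative $\nabla^2\phi^{\varepsilon}\cdot\nabla_v h^{\varepsilon}$'') omits this term entirely. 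Attempting to close by Gr\"onwall, as you suggest, would fail: over $[0,\varepsilon^{-1/2}]$ a unit-coefficient Gr\"onwall gives growth $e^{C\varepsilon^{-1/2}}$, destroying the bound.

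The paper's remedy is a contraction-plus-iteration argument, not Gr\"onwall. One first works on $[0,T_0]$, where the combination of $T_0\ll1$ (absorbing $\mathcal{G}_2$) and the decay factor $e^{-aT_0^{\varrho}}\le\tfrac12$ (damping the initial datum and the iterated $\mathcal{K}_w$ pieces) yields
\[
\varepsilon^{3/2}\|h^{\varepsilon}(T_0)\|_{W^{1,\infty}}\le\tfrac12\,\varepsilon^{3/2}\|h^{\varepsilon}(0)\|_{W^{1,\infty}}+C\sup_{0\le s\le t}\|f^{\varepsilon}(s)\|_{H^1}+C\varepsilon^{5/2}.
\]
This contraction is then iterated over successive intervals $[nT_0,(n+1)T_0]$ to reach $t=\varepsilon^{-1/2}$; the geometric series in the $\tfrac12$ factor keeps the bound uniform. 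Two smaller corrections: the Jacobian from Lemma~\ref{le:jac} is $\sim|s-s_1|^3$, not $(|s-s_1|/\varepsilon)^3$---the $\varepsilon^{-3/2}$ in \eqref{jh} comes from the lower bound $|s-s_1|\ge\eta\varepsilon$ on the long-time piece; and the short-time cutoff is $s-s_1\le\eta\varepsilon$ with $\eta$ small but fixed, not $\eta\ll\varepsilon$.
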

	\begin{proof}
		\medskip
		\emph{ \bf Step 1. $L_{x, v}^\infty$-estimate of $ {h}^\e$.}\;	We first consider $\|  h^\e\|_{{ \infty}}$  on the time interval
		$ 0\leq s\leq   t \le  T_0 \ll 1$.
		Applying Duhamel's principle to the system \eqref{eq:h}, we have,
		\begin{align}\label{eq:h1}
			& h^{\varepsilon }(t, x, v)=\exp \left\{-\frac{1}{\varepsilon }\int_{0}^{t}\widetilde{\nu}
			(\tau )d\tau \right\}h^{\varepsilon }(0, X(0;t, x, v),V(0;t, x, v))+\sum_{i=1}^5  \mathcal{J}_i,
		\end{align}
		where
		\begin{align*}
			\mathcal{J}_1=&\int_{0}^{t}\exp \{-\frac{1}{\varepsilon }\int_{s}^{t}\widetilde{\nu} (\tau )d\tau
			\}\left( \frac{1}{\varepsilon }{\mathcal{K}_{w}}h^{\varepsilon }\right) (s, X(s),V(s))\d s,\\
			\mathcal{J}_2=&\int_{0}^{t}\exp \{-\frac{1}{\varepsilon }\int_{s}^{t}\widetilde{\nu} (\tau )d\tau
			\}\left( \frac{\varepsilon ^{k-1}w}{\sqrt{\mu _{M}}}Q(\frac{%
				h^{\varepsilon }\sqrt{\mu _{M}}}{w},\frac{h^{\varepsilon }\sqrt{\mu
					_{M}}}{w})\right) (s, X(s),V(s))\d s, \\	
			\mathcal{J}_3=&   \int_{0}^{t}\exp \{-\frac{1}{\varepsilon }\int_{H^s}^{t}\widetilde{\nu} (\tau )d\tau
			\}\left( \sum_{i=1}^{2k-1}\varepsilon ^{i-1}\frac{w}{\sqrt{\mu _{M}}}%
			\left(Q(F_{i},\frac{h^{\varepsilon }\sqrt{\mu _{M}}}{w})+Q(\frac{h^{\varepsilon }\sqrt{\mu _{M}}}{w}),F_{i}\right)\right) (s, X(s),V(s))\d s,
			\\
			\mathcal{J}_4=& \int_{0}^{t}\exp \{-\frac{1}{\varepsilon }\int_{s}^{t}\widetilde{\nu} (\tau )d\tau
			\}\left( \nabla _{x}\phi _{R}^{\varepsilon }\cdot \frac{w}{\sqrt{\mathcal{M}}%
			}\nabla _{v}(\mu +\sum_{i=1}^{2k-1}\varepsilon ^{i}F_{i})\right)
			(s, X(s),V(s))\d s,  \\
			\mathcal{J}_5=&\int_{0}^{t}\exp \{-\frac{1}{\varepsilon }\int_{s}^{t}\widetilde{\nu} (\tau )d\tau
			\}\left( \varepsilon ^{k-1}\frac{w}{\sqrt{\mathcal{M}}}A\right)
			(s, X(s),V(s))\d s.
		\end{align*}
		\medskip
		\emph{\bf Step 1.1. Estimates for $\mathcal{J}_2$--$\mathcal{J}_5$.}
		By  \eqref{wt}, \eqref{fir3}, \eqref{pri}  and \eqref{nus}, we have
		\begin{align}
			\mathcal{J}_2+\mathcal{J}_3 \lesssim&\left(\e^{k}\|h^\e\|_{\infty}+\e\sum_{i=1}^{2k-1}\e^{i-1}\|F_i\|_{\infty}  \right)\int_{0}^{t}\exp \{-\frac{1}{\varepsilon }\int_{s}^{t}\widetilde{\nu} (\tau )d\tau
			\}\frac{1}{\e}\widetilde{\nu}(s) e^{-as^{\varrho}}\d s\sup_{0 \le s \le t}\{e^{as^{\varrho}}\|h^\e\|_{\infty}\}\nonumber\\
			\lesssim\;&\e e^{-at^{\varrho}}\sup_{0 \le s \le t}\{e^{as^{\varrho}}\|h^\e(s)\|_{\infty}\}.
		\end{align}		
		Using \eqref{nus}, \eqref{fir3}, and \eqref{phi1}, with $\widetilde{\nu}^{-1} \lesssim \nu^{-1} \lesssim \langle v \rangle^{-\gamma}$ and $\sum_{i=1}^{2k-1} \varepsilon^i (1+t)^{i-1} \lesssim \varepsilon$, we derive
		\begin{align}
			\mathcal{J}_4\lesssim&\e \int_{0}^{t}\exp\Big\{-\frac{1}{\e}\int_{s}^{t}\widetilde{\nu}(\tau)\d \tau\Big\}\frac{1}{\e}\widetilde{\nu}(s) e^{-as^{\varrho}}\d s \sup_{0 \le s \le t}\{e^{as^{\varrho}}\|\nabla_x \phi_R^\e\|_{\infty}\} \left\|\langle v \rangle^{-\gamma}	\frac{w}{\sqrt{\mathcal{M}}}	\nabla_v \left(\mu+\sum_{i=1}^{2k-1}\e^i F_i\right) \right\|_{\infty}\nonumber\\
			\lesssim&\e e^{-at^{\varrho}} \left(1+\sum_{i=1}^{2k-1}\e^i(1+t)^{i-1}\right)\sup_{0 \le s \le t}\{e^{as^{\varrho}}\|\nabla_x \phi_R^\e(s)\|_{H^2}\} \\\nonumber
			\lesssim&\e e^{-at^{\varrho}}\sup_{0\le s\le t}\left\{e^{{a} s^{\varrho}}
			\| {f}^\varepsilon (s)\|_{H^1}\right\}+\e.
		\end{align}
		By \eqref{nuexp}, the Sobolev embedding $ H^2(\mathbb{R}^3)\hookrightarrow L^\infty(\mathbb{R}^3)$, and Lemma \ref{fi}, we obtain
		\begin{align}
			\mathcal{J}_5\lesssim&  \int_{0}^{t}  \left( \varepsilon ^{k-1}\frac{w}{\sqrt{\mathcal{M}}}|A|\right)\d s\lesssim t\varepsilon ^{k-1}\sum_{2k+1\leq i+j\leq 4k-2}\e^{i+j-2k} (1+t)^{i+j-2}
			\lesssim  \varepsilon ^{k }  (1+T_0)^{2k}\lesssim {\e^k}.		
		\end{align}
		\medskip
		\emph{\bf Step 1.2. Estimates for $\mathcal{J}_1  $}.
		To handle the singularity of  $\mathcal{J}_1  $, we spilt it into
		\begin{align*}
			\mathcal{J}_1= \mathcal{J}_1^1+ \mathcal{J}_1^2,
		\end{align*}
		where
		\begin{align*}
			\mathcal{J}_1^1=	&\int_{0}^{t}\exp \{-\frac{1}{\varepsilon }\int_{s}^{t}\widetilde{\nu} (\tau )d\tau
			\}\left( \frac{1}{\varepsilon }\mathcal{K}_{w}^{1-\chi} h^{\varepsilon }\right) (s, X(s),V(s))\d s,\\
			\mathcal{J}_1^2=&\int_{0}^{t}\exp \{-\frac{1}{\varepsilon }\int_{s}^{t}\widetilde{\nu} (\tau )d\tau
			\}\left( \frac{1}{\varepsilon }\mathcal{K}_{w}^{\chi} h^{\varepsilon }\right) (s, X(s),V(s))\d s.
		\end{align*}
		By  \eqref{nus} and \eqref{eq:es:Kc:2}, we have
		\begin{align}
			\mathcal{J}_1^1
			&\lesssim
			\varepsilon^{\gamma+3} e^{-a t^{\varrho}}
			\sup_{0\le s\le t}
			\left\{
			e^{a s^{\varrho}}
			\|h^\varepsilon(s)\|_{\infty}
			\right\}.
		\end{align}
		The expression of $h^{\varepsilon}(s, X(s),V(s))$ is analogous to that of $h(t)$ in \eqref{eq:h1}. Substituting it into $\mathcal{J}_1^2$ and
		using the similar estimates as $\mathcal{J}_2$--$\mathcal{J}_5 $ and $\mathcal{J}_1^1 $, we   obtain
		\begin{align}\label{j12}
			\mathcal{J}_1^2 \lesssim	 &   e^{-at^{\varrho}}\|h^\e(0)\|_{\infty}+ \left( {\e}+{\e}^{\gamma+3}\right)e^{-{a} t^{\varrho}}\sup_{0\le s\le t}\left\{e^{{a} s^{\varrho}}
			\| {h}^\varepsilon (s)\|_{\infty}\right\}
			\\\nonumber
			&+\e e^{-at^\varrho }\sup_{0\le s\le t}\left\{e^{{a} s^{\varrho}}
			\| {f}^\varepsilon (s)\|_{H^1}\right\} +\e+\widetilde{\mathcal{J}_1^2}.
		\end{align}
		Here, $\widetilde{\mathcal{J}_1^2}$ is defined as
		\begin{align*}
			\widetilde{\mathcal{J}_1^2}
			= \int_0^t & \exp\Big\{-\frac{1}{\varepsilon}\int_s^t \widetilde{\nu}(\tau)\,d\tau\Big\}
			\int_0^s \exp\Big\{-\frac{1}{\varepsilon}\int_{s_1}^s \widetilde{\nu}(\tau')\,d\tau'\Big\} \\
			& \times \iint
			\big| l_w^\chi(V(s),v') l_w^\chi(V(s_1),v'') h^\varepsilon(s_1, X(s_1),V(s_1)) \big| \, \d v''\, \d v' \, ds_1\, ds.
		\end{align*}
		Using the same method to estimate $\widetilde{\mathcal{J}_1^2}$ as in \cite{lisiam2023}, we have
		\begin{align}\label{j121}
			\widetilde{\mathcal{J}_1^2}	 \lesssim&o(1)e^{-{a} t^{\varrho}}\sup_{0\le s\le t}\left\{e^{{a} s^{\varrho}}
			\| {h}^\varepsilon (s)\|_{\infty}\right\}+\frac{1}{\e^{{3}/{2}}}e^{-{a} t^{\varrho}}\sup_{0\le s\le t}\left\{e^{{a} s^{\varrho}}
			\| {f}^\varepsilon (s)\|  \right\}.
		\end{align}
		Substituting the above estimates on $\mathcal{J}_1\sim\mathcal{J}_5$ into (\ref{eq:h1}), and    combining $ \exp \{-\frac{1}{\varepsilon }\int_{0}^{t}\nu
		(\tau )d\tau \}h^{\varepsilon }(0 )\lesssim e^{-at^{\varrho}}\|h_0^{\varepsilon }\|_{\infty},$ `for $0\leq s\leq t\leq T_0 \ll 1$, we derive
		that,
		\begin{align*}
			\| {h}^\varepsilon (t)\|_{ {\infty}}\lesssim 	& e^{-at^{\varrho}}\|h^\e(0)\|_{\infty}+ \left( {\e}+{\e}^{\gamma+3}+o(1)\right)e^{-{a} t^{\varrho}}\sup_{0\le s\le t}\left\{e^{{a} s^{\varrho}}
			\| {h}^\varepsilon (s)\|_{\infty}\right\}\nonumber
			\\
			&+\e e^{-at^\varrho }\sup_{0\le s\le t}\left\{e^{{a} s^{\varrho}}
			\| {f}^\varepsilon (s)\|_{H^1}\right\}+\frac{1}{\e^{{3}/{2}}}e^{-{a} t^{\varrho}}\sup_{0\le s\le t}\left\{e^{{a} s^{\varrho}}
			\| {f}^\varepsilon (s)\|  \right\} +\e.
		\end{align*}
		Multiplying the above inequality by \( e^{a s^{\varrho}} \),   taking the supremum, letting \( s = T_0 \) on the left-hand side and multiplying both sides by \( \varepsilon^{3/2} e^{-a T_0^{\varrho}} \), with $e^{-{a}{{T_0}^\varrho}}\leq \frac{1}{2}$, we obtain
		\begin{align}\label{4.16}
			\e^{3/2} \| {h}^\varepsilon (T_0)\|_{ {\infty}}
			\leq &\frac{1}{2}\e^{3/2}\| {h}^\varepsilon ( 0) \|_{\infty}+ C\sup_{0\le s\le t}
			\| {f}^\varepsilon (s)\|   +C \e^{5/2}\sup_{0\le s\le t}
			\| {f}^\varepsilon (s)\|_{H^1} +C\e^{5/2}.
		\end{align}
		\smallskip
		\emph{\bf Step 2.  $L_{x, v}^\infty$-estimates of $\nabla_x {h}^\varepsilon$  and $\nabla_v {h}^\varepsilon$.}\;
		Applying $  \nabla_x$ and  $  \nabla_v$ to the equation (\ref{eq:h}) respectively, denoting  $ {\nabla}h^\e:=   \nabla_xh^\e +   \nabla_vh^\e $, and using Duhamel's principle, we have
		\begin{align}\label{eq:Dh1}
			\nabla h^{\varepsilon }(t, x, v)=\exp \{-\frac{1}{\varepsilon }\int_{0}^{t}\widetilde{\nu}
			(\tau )d\tau \}\nabla  h^{\varepsilon }(0, X(0;t, x, v),V(0;t, x, v))
			+\sum_{i=1}^{8}\mathcal{ G }_i,
		\end{align}
		where $\mathcal{ G }_1$--$\mathcal{ G }_8$ is given by
		\begin{align*}
			\mathcal{ G }_1&:=\int_0^t \text{exp}\Big\{-\frac{1}{\e}\int_s^t\widetilde{\nu}(\tau)\d \tau\Big\}|[\nabla_x(  \nabla_x \phi^\e)\cdot \nabla_v h^\e](s, X(s),V(s))|\d s,\\
			\mathcal{ G }_2&:= -\int_0^t \text{exp}\Big\{-\frac{1}{\e}\int_s^t\widetilde{\nu}(\tau)\d \tau\Big\}|(  \nabla_x  h^\e)(s, X(s),V(s))|\d s,\\
			\mathcal{ G }_3&:=- \int_0^t \text{exp}\Big\{-\frac{1}{\e}\int_s^t\widetilde{\nu}(\tau)\d \tau\Big\}\left|\left[  \nabla_x \left(\frac{\widetilde{\nu}}{\e} \right)h^\e+  \nabla_v \left(\frac{\widetilde{\nu}}{\e} \right)h^\e \right](s, X(s),V(s))\right|\d s,\\
			\mathcal{ G }_4&:= -\int_0^t \text{exp}\Big\{-\frac{1}{\e}\int_s^t\widetilde{\nu}(\tau)\d \tau\Big\}\frac{1}{\e} {\nabla}\left[{\mathcal{K}_{w}}(h^\e)\right](s, X(s),V(s))\d s,\\
			\mathcal{ G }_5&:=\e^{k-1}\int_0^t \text{exp}\Big\{-\frac{1}{\e}\int_s^t\widetilde{\nu}(\tau)\d \tau\Big\}\nabla\Big[\frac{w}{\sqrt{\mathcal{M}}} Q\Big( \frac{{\sqrt{\mathcal{M}}} h^\e}{w}, \frac{{\sqrt{\mathcal{M}}} h^\e}{w}\Big)\Big] (s, X(s),V(s))\d s,\\
			\mathcal{ G }_6
			&:=\int_0^t \text{exp}\Big\{-\frac{1}{\e}\int_s^t\widetilde{\nu}(\tau)\d \tau\Big\}\\
			&\quad\,\times\sum_{i=1}^{2k-1}\e^{i-1}\nabla\Big\{\frac{w}{\sqrt{\mathcal{M}}}\Big[Q\Big(F_i, \frac{{\sqrt{\mathcal{M}}} h^\e}{w}\Big)
			+Q\Big( \frac{{\sqrt{\mathcal{M}}} h^\e}{w},F_i\Big)\Big]\Big\} (s, X(s),V(s))\d s,\\
			\mathcal{ G }_7&:= \int_0^t \text{exp}\Big\{-\frac{1}{\e}\int_s^t\widetilde{\nu}(\tau)\d \tau\Big\}  {\nabla}\Big[\frac{w}{\sqrt{\mathcal{M}}}\nabla_x\phi_R^\e\cdot\nabla_v\Big(\mu+\sum_{i=1}^{2k-1}\e^{i}F_i\Big)\Big] (s, X(s),V(s))\d s,\\
			\mathcal{ G }_8&:=\int_0^t \text{exp}\Big\{-\frac{1}{\e}\int_s^t\widetilde{\nu}(\tau)\d \tau\Big\} \Big[ \e^{k-1} {\nabla}\Big(\frac{
				wA}{\sqrt{\mathcal{M}}}  \Big) \Big] (s, X(s),V(s))\d s.
		\end{align*}
		\emph{	 \bf Step 2.1. Estimates for $\mathcal{G}_1$--$\mathcal{G}_3,\, \mathcal{G}_5$--$\mathcal{G}_8.$}
		Applying \eqref{phi} and \eqref{nuexp}, we get
		\begin{align*}
			\mathcal{ G }_1+ \mathcal{ G }_2\lesssim&T_0\| \nabla^2_x \phi^\e \|_{ \infty}e^{-at^{\varrho}}  \sup_{0\le s\le t}\Big\{ e^{ {a}s^{\varrho}}\| {  \nabla_v} h^\e  \|_{ \infty}\Big\}+T_0 e^{-at^{\varrho}}  \sup_{0\le s\le t}\Big\{ e^{ {a}s^{\varrho}}\| {  \nabla_x} h^\e  \|_{ \infty}\Big\}\\
			\lesssim &T_0 e^{-at^{\varrho}}  \sup_{0\le s\le t}\Big\{ e^{ {a}s^{\varrho}}\|  {\nabla} h^\e  \|_{\infty }\Big\}.
		\end{align*}	
		Using  \eqref{dnus}, it holds that
		$
		\mathcal{ G }_3
		\lesssim e^{-at^{\varrho}}  \sup_{0\le s\le t}\Big\{ e^{ {a}s^{\varrho}}\|   h^\e  \|_{\infty }\Big\}.
		$
		By \eqref{wt}, \eqref{wtv},  \eqref{nus},    the a  $\mathit{priori}$  assumption \eqref{pri}  and \eqref{fir2}, we have
		\begin{align*}
			\mathcal{ G }_5+		\mathcal{ G }_6
			\lesssim&\left(\e^k\|h^\e\|_
			{W^{1,\infty}}+\e\sum_{i=1}^{2k-1}[\e(1+t)]^{i-1}  \right)e^{-at^{\varrho}}\sup_{0\le s\le t}\Big\{ e^{ {a}s^{\varrho}}\|     h^\e  \|_{\infty }\Big\}
			\lesssim \e e^{-at^{\varrho}}\sup_{0\le s\le t}\Big\{ e^{ {a}s^{\varrho}}\|     h^\e  \|_{W^{1,\infty} }\Big\}.
		\end{align*}	
		Applying the same method used for $\mathcal{G}_4$ and $\mathcal{G}_5$ to $\mathcal{G}_7$ and $\mathcal{G}_8$ respectively, and combining \eqref{phiinfnity},  we obtain
		\begin{align}
			\mathcal{ G }_7+	\mathcal{ G }_8
			\lesssim&\e e^{-{a}t^\varrho}\sup_{0\le s\le t}\Big\{ e^{{a}s^\varrho}\|\nabla_x \phi_R^\e\|_{W^{1,\infty}}\big\}+\e^k
			\lesssim\varepsilon e^{-{a}t^\varrho}\sup_{0\le s\le t}\Big\{ e^{{a}s^\varrho} ( \|h^{\varepsilon}\|_{W^{1,\infty} }+\|f^{\varepsilon}\|_{H^{1}   }	 ) \big\} +\varepsilon.
		\end{align}
		\emph{\bf Step 2.2. Estimate for $\mathcal{G}_4$.}
		Decompose
		\begin{align}\label{g4}
			\mathcal{G}_4=	 \mathcal{G}_4^1+\mathcal{G}_4^2,
		\end{align}
		where
		\begin{align*}
			\mathcal{G}_4^1=& -\int_0^t \text{exp}\Big\{-\frac{1}{\e}\int_s^t\widetilde{\nu}(\tau)\d \tau\Big\}\frac{1}{\e}\left( {\nabla}[\mathcal{K}^{1-\chi}_w(h^\e)]+[\left(\nabla_v\mathcal{K}^{\chi}_{w }\right)(h^\e)]\right) (s, X(s),V(s))\d s	, \\
			\mathcal{G}_4^2=& -\int_0^t \text{exp}\Big\{-\frac{1}{\e}\int_s^t\widetilde{\nu}(\tau)\d \tau\Big\}\frac{1}{\e} [\mathcal{K}^{\chi}_w(\nabla h^\e)](s, X(s),V(s))\d s	.
		\end{align*}
		By ${\nabla}[\mathcal{K}^{1-\chi}_w(h^\e)]=\mathcal{K}^{1-\chi}_w(\nabla h^\e)+\left(\nabla_v\mathcal{K}^{1-\chi}_w\right)(h^\e)$ and applying \eqref{eq:es:Kc:2},  \cite[Lemma 2.2]{guo2002cpam}, the kernel of  $\left(\nabla_v\mathcal{K}^{\chi}_{w }\right)$ has the same properties as $\mathcal{K}^{\chi}_{ w }$, with \eqref{eq:es:Kc:1}, we gain
		\begin{align}\label{g41}
			\mathcal{G}_4^1\lesssim  \e^{\gamma+3}e^{-{a}t^\varrho}\sup_{0\le s\le t}\Big\{ e^{{a}s^\varrho}  \|h^{\varepsilon}\|_{W^{1,\infty} } \Big\} +e^{-{a}t^\varrho}\sup_{0\le s\le t}\Big\{ e^{{a}s^\varrho}  \|h^{\varepsilon}\|_{ {\infty} } \Big\}	.
		\end{align}
		Iterating \eqref{eq:Dh1} of $ {\nabla}h^\e$ in $\mathcal{G}_4^2$, using above method  to estimate term by term. Applying \eqref{eq:es:Kc:1}, \eqref{nus}  and  $ \widetilde{\nu}\gtrsim \nu$，
		we have
		\begin{align}\label{g42}
			\mathcal{G}_4^2\lesssim &^{-at^{\varrho}}\|\nabla h^\e(0)\|_{\infty}	+\left(T_0 +\e+\e^{\gamma+3} \right)e^{-at^{\varrho}}  \sup_{0\le s\le t}\Big\{ e^{ {a}s^{\varrho}}\|  {\nabla} h^\e  \|_{\infty }\Big\} \nonumber\\
			&+\left(1+\e+\e^{\gamma+3}\right)e^{-{a}t^\varrho}\sup_{0\le s\le t}\Big\{ e^{{a}s^\varrho}  \|h^{\varepsilon}\|_{ {\infty} } \Big\}	+\e e^{-{a}t^\varrho}\sup_{0\le s\le t}\left\{e^{{a}s^\varrho}  \|f^{\varepsilon}(s)\|_{ {H^1}   }	 \right\}+\e +\widetilde{\mathcal{G}}_4^2,
		\end{align}
		where $\widetilde{\mathcal{G}}_4^2$ is given by
		\begin{align*}
			\widetilde{\mathcal{G}}_4^2 = \int_0^t&\text{exp}
			\Big\{-\frac{1}{\e}\int_s^t\widetilde{\nu}( \tau)\text{d}\tau\Big\}\frac{\widetilde{\nu}(s)}{\e}   \int_0^s  \text{exp}\Big\{-\frac{1}{\e}\int_{s_1}^{s}\widetilde{\nu} (\tau')\text{d}\tau'\Big\}\frac{\widetilde{\nu}(s_1)}{\e}   \\\nonumber
			&  \times \nu^{-1}(V(s)) \nu^{-1}(V(s_1))\int_{ v'\in \mathbb{R}^3}\int_{ v''\in \mathbb{R}^3}\left| {l_{w}^{\chi} } (V(s),v') {l_{w}^{\chi}}  (V(s_1),v'')  {\nabla h}^\varepsilon (s_1, X(s_1),V(s_1))\right|\text{d}v''\text{d}v'\text{d}s_1\d s.
		\end{align*}
		From  \eqref{phi}, \eqref{char}
		and $0\leq s\leq t\leq T_0 \ll 1$, fix $N>0$ large enough such that
		\begin{equation*}
			\sup_{0\leq t\leq T,\;0\leq s\leq T}|V(s)-v|\leq \|\nabla_x \phi_R^\e\|_{\infty}	|s-t|\leq C|s-t|\leq \frac{N}{2}  	.
		\end{equation*}
		Then,
		according to the   range of values of $v, v^{\prime },v^{\prime
			\prime }$,we divide
		\begin{align}\label{g421}
			\widetilde{\mathcal{G}}_4^2=\sum_{i=1}^{4}		\widetilde{\mathcal{G}}_{4, i}^2	,
		\end{align}
		where
		\begin{align*}
			\widetilde{\mathcal{G}}_{4, 1}^2	 &= \mathbf{1}_{\{|v|\ge N\}}\,\widetilde{\mathcal{G}}_4^2,
			&\qquad
			\widetilde{\mathcal{G}}_{4, 2}^2	&= \mathbf{1}_{\{|v|\le N,\; |v'|\ge 2N \;\text{or}\; |v'|\le 2N,\; |v''|\ge 3N\}}\,\widetilde{\mathcal{G}}_4^2,
			\\
			\widetilde{\mathcal{G}}_{4, 3}^2	&= \mathbf{1}_{\{|v|\le N,\; |v'|\le 2N,\; |v''|\le 3N,\; s-s_1\le \varepsilon \eta\}}\,\widetilde{\mathcal{G}}_4^2,
			&
			\widetilde{\mathcal{G}}_{4, 4}^2	 &= \mathbf{1}_{\{|v|\le N,\; |v'|\le 2N,\; |v''|\le 3N,\; s-s_1\ge \varepsilon \eta\}}\,\widetilde{\mathcal{G}}_4^2.
		\end{align*}
		Using the same argument as the corresponding term in \cite{GuoCMP2010}, we have	
		\begin{align}\label{g123}
			\sum_{i=1}^{3}	\widetilde{\mathcal{G}}_{4, i}^2\lesssim  \left(\frac{1}{N}+e^{- \frac{\eta }{8}N^{2}}+\eta\right)e^{-at^{\varrho}} \sup_{0 \le s \le t}\{e^{as^{\varrho}}\|\nabla h\|_{\infty}\}.
		\end{align}
		As in \eqref{kmnw}, $l^{\chi}_{w }(v, u)$ has possible integrable singularity of $\frac{%
			1}{|v-u|},$ we can choose $l_{N}(p, u)$ smooth with
		compact support such that
		\begin{equation*}
			\sup\limits_{|p|\leq3N}\int_{|u|\leq3N}\left| l_N(p, u)- \nu^{-1}{l^{\chi}_{ w}} (p, u)\right|\d u\leq\frac{1}{N}.
		\end{equation*}
		Splitting
		\begin{equation*}
			\begin{split}
				(V(s),v^{\prime })\,\nu^{-1}l^{\chi}_{w}(V(s_{1}),v^{\prime \prime
				})=\{	\nu^{-1} l^{\chi}_{w}(V(s),v^{\prime })-  l_{N}(V(s),v^{\prime
				})\} \nu^{-1}l^{\chi}_{w}(V(s_{1}),v^{\prime \prime })&  \\
				+\{ \nu^{-1}l^{\chi}_{w}(V(s_{1}),v^{\prime \prime })- l_{N}(V(s_{1}),v^{\prime \prime
				})\}  l_{N}(V(s),v^{\prime })+  l_{N}(V(s),v^{\prime })&
				\,  l_{N}(V(s_{1}),v^{\prime \prime }),
			\end{split}%
		\end{equation*}%
	and
		 using   \eqref{nus} and \eqref{eq:es:Kc:1}, we deduce
		\begin{align}\label{i4}
			\widetilde{\mathcal{G}}_{4, 4}^2	\lesssim     \frac{1}{N}  e^{-at^{\varrho}}
			\sup_{0\leq s\leq t}\{e^{a s^{\varrho}}\|\nabla h^{\varepsilon}(s)\|_{\infty}\}+\overline{\mathcal{G}}
			,
		\end{align}
		where $\overline{\mathcal{G}}$ is given by
		\begin{align}
			\overline{\mathcal{G}}=&\int_0^t\text{exp}
			\Big\{-\frac{1}{\e}\int_s^t\widetilde{\nu}( \tau)\text{d}\tau\Big\}\frac{\widetilde{\nu}(s)}{\e}
			\int_0^{s-\e \eta}  \text{exp}\Big\{-\frac{1}{\e}\int_{s_1}^s\widetilde{\nu}(s_1) (\tau')\text{d}\tau'\Big\} \frac{\widetilde{\nu}}{\e}   		\\\nonumber
			&\times
			\iint_{\{|v'|\leq 2N,| v''|\leq 3N\}}| l_{N}  (V(s),v')  l_{N}  (V(s_1),v'')  {\nabla h}^\varepsilon (s_1, X(s_1),v'')|\d s \text{d}s_1 \text{d}v'\text{d}v''.  	
		\end{align}
		Introducing a new variable, by \eqref{char}, we have
		\begin{equation*}
			y=X(s_{1})=X(s_{1};s, X(s;t, x, v),v^{\prime }),\quad V(s_{1})=V(s_{1};s, X(s;t, x, v),v^{\prime }) \label{vidav}.
		\end{equation*}%
		With \eqref{phi} and  $|v'|\leq 2N$, we have
		\begin{align*}
			&	y - X(s) = (s - s_1)v' - \int_{s_1}^{s} \int_{s'}^{s} \nabla_x \phi^\varepsilon\big(\tau, X(\tau; s, X(s;t, x, v), v')\big) \, d\tau \, \d s'
			%				&\le (s - s_1)2N +\|\nabla_x \phi^\varepsilon\|_{\infty}  (s - s_1)^2
			\le C N (s - s_1).
		\end{align*}
		We now apply Lemma \ref{le:jac} to $X(s_{1};s, X(s;t, x, v),v^{\prime })$ with $%
		x=X(s;t, x, v),\tau =s_{1},t=s$. By (\ref{tezhengxianjielun2}), we can choose small but
		fixed $T_{0}>0$ such that for $s-s_{1}\geq \eta \varepsilon,$
		$
		|\frac{dy}{\d v^{\prime }}|\geq\frac{|s-s_1|^3}{2}\geq \frac{\eta ^{3}\varepsilon ^{3}}{2}\,
		$.
		Noting  $l_{N}(V(s),v^{\prime })\,l_{N}(V(s_{1}),v^{\prime \prime })$ is
		bounded, and combining
		\begin{align}\label{jh}
			&  	\iint_{\{|v'|\leq 2N,| v''|\leq 3N\}}
			\, |\nabla h^{\varepsilon}(s_{1},X(s_{1}),v'')|
			\d v'\d v''	    \\\nonumber
			\le &  	\frac{C_{N,\eta}}{\eta ^{3/2}\varepsilon^{3/2}}\{\int_{\{\left|y-X(s)\right|\leq CN(s-s_1),| v''|\leq 3N\}}
			\, |\nabla h^{\varepsilon}(s_{1},y, v'')|^2
			\d y \, \d v'' \}^{\frac{1}{2}}   \\\nonumber
			\leq&	\frac{C_{N,\eta}}{\eta ^{3/2}\varepsilon^{3/2}}
			\left\{\int_{\{\left|y-X(s)\right|\leq CN(s-s_1),| v''|\leq 3N\}}
			\left|\nabla \left(w\frac{\sqrt{\mu}f^{\varepsilon}}{\sqrt{\mathcal{M}}}\right)(s_{1},y, v'')\right|^{2}
			\d y  \d v''
			\right\}^{1/2}\\\nonumber
			\leq&	\frac{C_{N,\eta}}{ \varepsilon^{3/2}}\|f^\e(s_1)\|_{H^1}
			,
		\end{align}
		then, by \eqref{nus}, it holds that
		\begin{align}\label{i411}
			\overline{\mathcal{G}}
			&\lesssim
		e^{-at^{\varrho}}		\frac{C_{N,\eta}}{ \varepsilon^{3/2}}\sup_{0 \le s \le t}\{e^{as ^{\varrho}}\|f^\e(s )\|_{H^1} \}.
		\end{align}
		
		Combining  \eqref{i411},   \eqref{i4},  \eqref{g123},  and  \eqref{jh},
		we have
		\begin{align}\label{g421r}
			\widetilde{\mathcal{G}}_{4 }^2	 \lesssim 	\left( \frac{1}{N}+e^{- \frac{\eta }{8}N^{2}}+\eta \right) e^{-at^{\varrho}}
			\sup_{0\leq s\leq t}\{e^{a s^{\varrho}}\|\nabla h^{\varepsilon}(s)\|_{\infty}\}+	 	e^{-at^{\varrho}}		\frac{C_{N,\eta}}{ \varepsilon^{3/2}}\sup_{0 \le s \le t}\{e^{as ^{\varrho}}\|f^\e(s )\|_{H^1} \}.
		\end{align}
		Substituting \eqref{g421r}  into \eqref{g42} and combining \eqref{g41}, \eqref{g4} to obtain
		\begin{align}
			\mathcal{G}_4 \lesssim &e^{-at^{\varrho}}\|\nabla h^\e(0)\|_{\infty}	+\left(T_0 +\e+\e^{\gamma+3}+\frac{1}{N}+e^{- \frac{\eta }{8}N^{2}}+\eta \right)e^{-at^{\varrho}}  \sup_{0\le s\le t}\Big\{ e^{ {a}s^{\varrho}}\|  {\nabla} h^\e  \|_{\infty }\Big\} \\\nonumber
			&+\left(1+\e+\e^{\gamma+3}\right)e^{-{a}t^\varrho}\sup_{0\le s\le t}\Big\{ e^{{a}s^\varrho}  \|h^{\varepsilon}\|_{ {\infty} } \Big\}	+ 	e^{-at^{\varrho}}		\frac{C_{N,\eta}}{ \varepsilon^{3/2}}\sup_{0 \le s \le t}\{e^{as ^{\varrho}}\|f^\e(s )\|_{H^1} \}+\e.
		\end{align}
		\par
		Putting the above estimates on $\mathcal{G}_1$--$\mathcal{G}_8$ into (\ref{eq:Dh1}),   we derive
		that
		\begin{align*}
			\|\nabla h^\e(t)\|_{\infty} \lesssim & e^{-at^{\varrho}}\|\nabla h^\e(0)\|_{\infty}	+\left(T_0 +\e+\e^{\gamma+3}+\frac{1}{N}+e^{- \frac{\eta }{8}N^{2}}+\eta \right)e^{-at^{\varrho}}  \sup_{0\le s\le t}\Big\{ e^{ {a}s^{\varrho}}\|  {\nabla} h^\e(s)  \|_{\infty }\Big\} \\\nonumber
			&+\left(1+\e+\e^{\gamma+3}\right)e^{-{a}t^\varrho}\sup_{0\le s\le t}\Big\{ e^{{a}s^\varrho}  \|h^{\varepsilon}(s)\|_{ {\infty} } \Big\}+e^{-at^{\varrho}}		\frac{C_{N,\eta}}{ \varepsilon^{3/2}}\sup_{0 \le s \le t}\{e^{as ^{\varrho}}\|f^\e(s )\|_{H^1} \}+\e.			
		\end{align*}
		Multiplying the above inequality by \( e^{a s^{\varrho}} \),   taking the supremum, choosing $T_0, \e,  \frac{1}{N}, \eta$ small enough, letting \( s = T_0 \) on the left-hand side and multiplying both sides by \( \varepsilon^{3/2} e^{-a T_0^{\varrho}} \), with $e^{-{a}{{T_0}^\varrho}}\leq \frac{1}{2}$, we obtain
		\begin{align}\label{dh}
			&	\e^{3/2}  \| {\nabla h}^\varepsilon (T_0)\|_{ {\infty}}
			\leq \frac{1}{2}\e^{3/2} \|\nabla h_0\|_{\infty} +C\e^{3/2}\left(1+\e+\e^{\gamma+3}\right)  \sup_{0\le s\le t}\Big\{  \|h^{\varepsilon}(s)\|_{ {\infty} } \Big\}	 +  {C_{N,\eta}}\sup_{0\le s\le t} \{\|f^\e(s )\|_{H^1}\}+C\e^{5/2}.
		\end{align}	
		\emph{\bf Step 3. Iteration.}\;
		When $0\leq t \leq T_0\ll 1$, there exists a sufficiently large constant $C_1$,
		adding $C_1  \eqref{4.16}$ and  \eqref{dh}, we have		
		\begin{align}\label{t0t}
			\e^{3/2}  \| {  h}^\varepsilon (T_0)\|_{W^{1, \infty}} \leq \frac{1}{2} \e^{3/2}  \| {  h}^\varepsilon (0)\|_{W^{1, \infty}} +C\sup_{0\le s\le t} \{\|f^\e(s )\|_{H^1}\}+C\e^{5/2}.
		\end{align}		
		When $T_0\leq t \leq \e^{-1/2}$, there exists a positive integer $n$ such that $t = nT_0 + \tilde{\tau}, 0 \leq \tilde{\tau}   \leq  T_0.$ Using the same iteration method in \cite[proof of roposition 5.1]{GuoCMP2010} for  \eqref{t0t},   we obtain \eqref{hin}.
	\end{proof}

	\section{Proof of the Main Results for the Remainder }
	\noindent
\begin{proof}[\bf The proof of Theorem~\ref{t1}]
		Denote
		\begin{align*}
			\mathcal{E}(t)=& \| \sqrt{{\theta }_0} f^{\varepsilon} \|^2+	  \| \sqrt{\e {\theta }_0} \nabla_xf^{\varepsilon} \|^2+   \| \sqrt{\e} \nabla_v(\mathbb{P}^{\bot}f^\varepsilon)\|^2+ \| \sqrt{\e} \langle v \rangle^{2-2\gamma}\mathbb{P}^{\bot}f^\varepsilon\|^2+\|\e^{5/4}\langle v \rangle^{- \gamma }\nabla_x  f^\e\|^2\\
			& + \| \nabla \phi_R^\varepsilon \| ^2 +\| \sqrt{e^{\phi_0}  e^{\varepsilon^k \phi_R^\varepsilon}} \phi_R^\varepsilon \| ^2.
		\end{align*}
		By the a  $\mathit{priori}$  assumption \eqref{pri}, we have
		\begin{align*}
			\mathcal{E}(t)	\sim \|f\|^2
			+ \|\sqrt{\e}\nabla f\|^2
			+ \|\sqrt{\e}\langle v\rangle^{2-2\gamma} f\|^2+\|\e^{5/4}\langle v \rangle^{- \gamma }\nabla_x  f^\e\|^2 +\|\phi_R^\varepsilon\|_{H^1}^2.
		\end{align*}
		Substituting \eqref{hin} into \eqref{L2},  we have
		\begin{align*}
			\frac{\d}{\d t}\mathcal{E} (t)
			\lesssim &	\e^{5/2}\left\{\e^{3/2}  \| {  h}^\varepsilon (0)\|_{W^{1,   \infty}} +\sup_{0\le s\le t} \{\|f^\e(s )\|_{H^1}\}+\e^{5/2}
			\right\}	\| {f}^\varepsilon  \| _{H^1}  +\left((1+t)^{-16/15}+\sqrt{\e}\right)\mathcal{E}  (t)  +\sqrt{\e} \\
			\lesssim&\left((1+t)^{-16/15}+\sqrt{\e}\right)\mathcal{E}  (t)+ \e^{11/2 }\| {h}^\varepsilon ( 0) \|^2_{W^{1,  \infty}}+ \sup_{0\le s\le  t}
			\e^{5/2}	\| {f}^\varepsilon (s)\| ^2_{H^1}    +\sqrt{\e}.
		\end{align*}
		Applying Gr\"onwall's inequality,   with $\left|\int_0^{\e^{-1/2}} (1+t)^{-16/15} dt \right|\leq C$ and $\left|\int_0^{\e^{-1/2}} \sqrt{\e} dt \right|=1$,  we have
		\begin{align*}
			\mathcal{E} (t)
			\lesssim& \mathcal{E} (0)+\e^{5} \| {h}^\varepsilon ( 0) \|^2_{W^{1,  \infty}} +\e \sup_{0\le s\le  t}	\|\sqrt{\e} {f}^\varepsilon (s)\| ^2_{H^1}+1,
		\end{align*}
		where ${\varepsilon }
		\sup_{0 \le s \le t}
		\|\sqrt{\e}f^\varepsilon(s)\|_{H^1}^2$
		can be absorbed by $	\mathcal{E}(t)$. Hence,we obtain
		\begin{align*}
			&\sup_{0\leq t \leq \e^{-1/2}}\left\{	 \|f^\e(t)\| + \| \sqrt{\e}\nabla f^\e(t)\|  + \|\sqrt{\e}\langle v\rangle^{2-2\gamma} f^\e(t)\| +\|\e^{5/4}\langle v \rangle^{- \gamma }\nabla_x  f^\e\|^2 +\|\phi_R^\varepsilon(t)\|_{H^1}\right\}\\
			\lesssim& 1+ \|f^\e(0)\| + \| \sqrt{\e}\nabla f^\e(0)\| +  \|\sqrt{\e}\langle v\rangle^{2-2\gamma} f^\e(0)\| + \|\e^{5/4}\langle v\rangle^{-\gamma} \nabla_xf^\e(0)\| +\|\phi_R^\varepsilon(0)\|_{H^1}+\e^{5/2} \| {h}^\varepsilon ( 0) \|_{W^{1,  \infty}}
			.
		\end{align*}
		Combining $\eqref{hin}$,  we obtain \eqref{t1s}.
	\end{proof}
	\noindent
	\begin{proof}[\bf The proof of Theorem~\ref{t2}]
		Using the same arguments as in Step~1,   Step~2 and Step~3  of the proof of the  Proposition~\ref{L2},
		we note that the term corresponding to \eqref{dis1} can be controlled directly by
		\begin{align*}
			\big\langle \nabla_x \mathbb{P}^{\bot}f^\varepsilon,  \;\varepsilon \nabla_v (\mathbb{P}^{\bot}f^\varepsilon) \big\rangle
			&\lesssim \varepsilon \|\nabla_v (\mathbb{P}^{\bot}f^\varepsilon)\|_{\nu}^2
			+ \varepsilon \|\langle v\rangle^{-\gamma} \nabla_x \mathbb{P}^{\bot}f^\varepsilon\|_{\nu}^2 \lesssim \varepsilon \|\nabla_v (\mathbb{P}^{\bot}f^\varepsilon)\|_{\nu}^2
			+ \varepsilon \|\nabla_x \mathbb{P}^{\bot}f^\varepsilon\|_{\nu}^2.
		\end{align*}
		Therefore,   Step~4 in the proof of Proposition~\ref{L2} is unnecessary,   and it suffices to take $\gamma=0$ directly in Step~5.
		Let $0 \le \gamma \le 1$ and suppose $0 \le t \le \varepsilon^{-1/2}$,   we   obtain
		\begin{align}\label{l2a}
			&\frac{\d}{\d t}\left\{
			\|\sqrt{\theta_0} f^{\varepsilon}\|^2
			+\|\sqrt{\varepsilon \theta_0}\nabla_x f^\varepsilon\|^2
			+\|\sqrt{\varepsilon}\nabla_v (\mathbb{P}^{\bot}f^\varepsilon)\|^2
			+\|\sqrt{\varepsilon}\langle v\rangle^2 \mathbb{P}^{\bot}f^\varepsilon\|^2
			+\|\nabla \phi_R^\varepsilon\|^2
			+\| \sqrt{e^{\phi_0} e^{\varepsilon^k \phi_R^\varepsilon}} \phi_R^\varepsilon \|^2
			\right\} \nonumber\\
			&\quad
			+ \frac{\delta_1 \theta_M}{\varepsilon} \|\mathbb{P}^{\bot} f^\varepsilon\|_{\nu}^2
			+ \delta_1 \theta_M \|\nabla_x (\mathbb{P}^{\bot}f^\varepsilon)\|_{\nu}^2
			+ \delta_1 \|\nabla_v (\mathbb{P}^{\bot}f^\varepsilon)\|_{\nu}^2
			+ \delta_1 \|\langle v\rangle^2 \mathbb{P}^{\bot}f^\varepsilon\|_{\nu}^2 \nonumber\\
			\lesssim &
			\varepsilon^4 \|h^\varepsilon\|_{W^{1,  \infty}} \|f^\varepsilon\|_{H^1}
			+\big((1+t)^{-16/15} + \sqrt{\varepsilon}\big)
			\big( \|f^\varepsilon\|^2 + \|\sqrt{\varepsilon}\nabla f^\varepsilon\|^2 +\|\sqrt{\varepsilon}\langle v\rangle^2 \mathbb{P}^{\bot}f^\varepsilon\|^2\big)
			+ \sqrt{\varepsilon}.
		\end{align}
		\par	
		We now apply the same procedure as in the proof of Proposition~\ref{softin} to derive the corresponding estimate for the hard potentials case.
		The main difference lies in the treatment of the collision frequency $\nu$ and the operator $\mathcal{K}_w$.
		See Lemma~\ref{nuinter} for $\nu$.
		The essential distinction concerns $\mathcal{K}_w$:
		in the soft potentials case,   $\mathcal{K}_w$ is singular and must be decomposed as
		$
		\mathcal{K}_w = K^\chi_w + K^{1-\chi}_w
		$
		using a cutoff function,   whereas in the hard potentials case $\mathcal{K}_w$ is non-singular.
		Hence,   the term involving $\mathcal{K}_w$ can be estimated directly by \eqref{lhard}.
		Let $0 \le \gamma \le 1$ and suppose $0 \le t \le \varepsilon^{-1/2}$,   we obtain
		\begin{equation}\label{hinH}
			\varepsilon^{3/2}
			\sup_{0 \le t \le \varepsilon^{-1/2}}
			\| h^\varepsilon(t) \|_{W^{1,  \infty}}
			\lesssim
			\varepsilon^{3/2} \| h^\varepsilon(0) \|_{W^{1,  \infty}}
			+ \sup_{0 \le s \le t} \|f^\varepsilon(s)\|_{H^1}
			+ \varepsilon^{5/2}.
		\end{equation}
		\par
		Using the same arguments as in the proof of Theorem~\ref{t1},   based on \eqref{l2a} and \eqref{hinH},   we complete the proof of Theorem~\ref{t2}.
	\end{proof}

	\appendix
	\section{Estimates on the Time-Dependent Coefficients }
	In this section, for \(i = 0, \ldots, 2k-1\), we derive time-dependent estimates for \(\rho_i\), \(u_i\), \(\phi_i\), and \(F_i\). These estimates play a crucial role in the \(H^1_{x,v}\) and \(W^{1,\infty}_{x,v}\) bounds.

	\begin{lemma} \label{F0th}
		Let the  integer $s_1\ge 5$ and $n_0$ be an given positive constant. There exists $\delta_0>0$ such that if the initial perturbation
		$(\rho_0^{\mathrm{in}}(x),  u_0^{\mathrm{in}}(x))$ satisfies $\nabla\times u_0^{\mathrm{in}}(x)=0 $ and
		\begin{align*}
			\|(\rho_0^{\mathrm{in}}(x)-n_0,   u_0^{\mathrm{in}}(x))	\|_{H^{2s_1+1}} +\|(\rho_0^{\mathrm{in}}(x)-n_0,   u_0^{\mathrm{in}}(x))	\|_{W^{s_1+\frac{12}{5},  \frac{10}{9}}} \leq 2\delta_0,
		\end{align*}
		then \eqref{ep}  admits a global solution $(\rho_0,   u_0,  \phi_0)(t,   x)$. Moreover,   provided $\delta_0$ is sufficiently small,   for every integer $0\leq s_2 < s_1-1 $,  it holds that
		\begin{equation} \label{decay}
			\|({\rho_0-n_0},  u_0,  \phi_0 )\|_{W^{{s_2},  \infty}}\lesssim\delta_0 (1+t)^{-\frac{16}{15} },  \quad   \|({\rho_0-n_0},  u_0,  \phi_0)\|_{H^{2s_2 }}\lesssim {\delta_0}.
		\end{equation}
		and
		\begin{align}\label{ttphi0}
			\|\partial_{t}\phi_0 \|_{\infty}\lesssim \delta_0 (1+t)^{-16/15}.
		\end{align}
	\end{lemma}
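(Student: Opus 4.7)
The plan is to combine the global existence and scattering theory of Guo--Pausader \cite{GuoCMP2011} for the ionic Euler--Poisson system with elliptic regularity for the semilinear Poisson--Boltzmann relation $\eqref{ep}_3$. First I would verify that \eqref{ep} fits their framework: setting $\tilde\rho_0 := \rho_0 - n_0$ and linearising around the equilibrium $(n_0, 0, \log n_0)$, the coupled density/velocity equations yield a Klein--Gordon type system (the quadratic $\nabla(K\rho^{5/3})$ and the semilinear Poisson term both contribute to an effective mass $\sqrt{(5K/3)n_0^{2/3} + 1}$ after eliminating $\phi_0$), and the irrotationality $\nabla_x \times u_0^{\mathrm{in}} = 0$ is propagated by the Euler flow because both $\nabla(K\rho^{5/3})/\rho$ and $\nabla\phi$ are conservative. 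Under the smallness assumption in $H^{2s_1+1}\cap W^{s_1+12/5,10/9}$, \cite{GuoCMP2011} then supplies a unique global classical solution $(\tilde\rho_0, u_0)$ together with the energy bound $\|(\tilde\rho_0, u_0)(t)\|_{H^{2s_1+1}}\lesssim \delta_0$ and the dispersive decay $\|(\tilde\rho_0, u_0)(t)\|_{L^\infty}\lesssim \delta_0 (1+t)^{-16/15}$, the exponent $16/15$ arising from the $L^p\!-\!L^q$ dispersive estimate for the Klein--Gordon propagator applied to data in the weighted $W^{s_1+12/5,10/9}$ norm.

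Next I would derive the $W^{s_2,\infty}$ estimate for $s_2 < s_1-1$ by Gagliardo--Nirenberg interpolation between the high-regularity bound $\|(\tilde\rho_0, u_0)\|_{H^{2s_1+1}}\lesssim\delta_0$ and the pointwise decay: for each multi-index of length $\le s_2$, one interpolates $\|\partial^\alpha(\tilde\rho_0, u_0)\|_\infty \lesssim \|(\tilde\rho_0, u_0)\|_\infty^{\theta}\|(\tilde\rho_0, u_0)\|_{H^{2s_1+1}}^{1-\theta}$ with $\theta$ chosen close to $1$, which preserves the $(1+t)^{-16/15}$ rate up to a harmless loss of exponent inside the admissible range $s_2<s_1-1$. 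The Sobolev bound $\|(\tilde\rho_0, u_0)\|_{H^{2s_2}}\lesssim\delta_0$ is then immediate since $2s_2<2s_1+1$. To transfer the estimates to $\phi_0$, I rewrite $\eqref{ep}_3$ as the semilinear elliptic equation $(\Delta - n_0)\tilde\phi_0 = -\tilde\rho_0 + N(\tilde\phi_0)$ with $N(\tilde\phi_0)=O(|\tilde\phi_0|^2)$ after setting $\tilde\phi_0:=\phi_0-\log n_0$, and invoke a contraction/bootstrap argument on the operator $(\Delta-n_0)^{-1}$: since $\|\tilde\phi_0\|_\infty$ is small by the bootstrap, standard elliptic regularity for the massive Helmholtz operator gives $\|\tilde\phi_0\|_{W^{s_2,\infty}} + \|\tilde\phi_0\|_{H^{2s_2}} \lesssim \|\tilde\rho_0\|_{W^{s_2,\infty}} + \|\tilde\rho_0\|_{H^{2s_2}}$, yielding \eqref{decay}.

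Finally, for \eqref{ttphi0}, I would differentiate $\eqref{ep}_3$ in time and substitute the continuity equation to obtain
\begin{equation*}
(\Delta - e^{\phi_0})\,\partial_t\phi_0 \;=\; \partial_t\rho_0 \;=\; -\nabla_x\!\cdot(\rho_0 u_0).
\end{equation*}
Because $e^{\phi_0}\ge e^{-\|\phi_0\|_\infty}>0$ is uniformly bounded below, the operator $\Delta-e^{\phi_0}$ is uniformly elliptic and invertible on $L^\infty$-based spaces, so Schauder/$L^p$ elliptic regularity combined with the Sobolev embedding gives $\|\partial_t\phi_0\|_\infty \lesssim \|\nabla_x\!\cdot(\rho_0 u_0)\|_{H^{s}}$ for some $s\le 2s_2$; the product rule together with \eqref{decay} then yields $\|\nabla_x\!\cdot(\rho_0 u_0)\|_{H^s}\lesssim \delta_0(1+t)^{-16/15}$, proving \eqref{ttphi0}. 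The main obstacle is the first step, namely obtaining the sharp $(1+t)^{-16/15}$ decay: this is the heart of \cite{GuoCMP2011} and relies on a vector-field/space-time resonance analysis for the Klein--Gordon equation whose mass comes from the Debye screening in the Poisson--Boltzmann relation; once that decay is in hand, the remaining assertions are routine interpolation and elliptic estimates.
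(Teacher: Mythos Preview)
Your interpolation argument for the $W^{s_2,\infty}$ decay has a genuine gap. Gagliardo--Nirenberg interpolation between $\|(\tilde\rho_0,u_0)\|_{L^\infty}\lesssim\delta_0(1+t)^{-16/15}$ and the non-decaying bound $\|(\tilde\rho_0,u_0)\|_{H^{2s_1+1}}\lesssim\delta_0$ does \emph{not} preserve the rate $(1+t)^{-16/15}$: the exponent $\theta$ on the $L^\infty$ factor is \emph{determined} by the scaling relation, not freely chosen. For $|\alpha|=s_2$ one gets $\theta=1-\tfrac{2s_2}{4s_1-1}$, so at $s_1=5$, $s_2=s_1-2=3$ this gives $\theta=13/19$ and a decay of only $(1+t)^{-208/285}\approx(1+t)^{-0.73}$, which is not time-integrable. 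The exponent $16/15>1$ is not cosmetic here: the Gr\"onwall closure in the proof of Theorem~\ref{t1} needs $\int_0^\infty(1+t)^{-16/15}\,\d t<\infty$, so any drop below exponent $1$ breaks the whole scheme. The same defect recurs in your argument for \eqref{ttphi0}: the $H^s$ norm of $\nabla_x\!\cdot(\rho_0 u_0)$ contains the linear piece $n_0\|\nabla\!\cdot u_0\|_{H^s}$, which is $O(\delta_0)$ uniformly in time and does \emph{not} decay, so the claimed bound $\|\nabla_x\!\cdot(\rho_0 u_0)\|_{H^s}\lesssim\delta_0(1+t)^{-16/15}$ is false.

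The repair is to use what \cite{GuoCMP2011} actually proves: decay of a \emph{high-derivative} $L^{10}$ norm, namely $(1+t)^{16/15}\|(1-\Delta)^{s_1/2}(\rho_0-n_0,u_0)\|_{L^{10}}\lesssim\delta_0$, not merely of the $L^\infty$ norm. From this, the Sobolev--Morrey embedding $W^{1,10}(\mathbb{R}^3)\hookrightarrow L^\infty(\mathbb{R}^3)$ gives
\[
\|\nabla_x^{s_2}(\rho_0-n_0,u_0)\|_{L^\infty}\lesssim\|\nabla_x^{s_2}(\rho_0-n_0,u_0)\|_{W^{1,10}}\lesssim\|(1-\Delta)^{s_1/2}(\rho_0-n_0,u_0)\|_{L^{10}}\lesssim\delta_0(1+t)^{-16/15}
\]
directly for every $s_2<s_1-1$, with no interpolation loss. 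The same $L^{10}$-based route handles $\partial_t\phi_0$: one bounds $\|\partial_t\phi_0\|_{L^\infty}\lesssim\|\partial_t\phi_0\|_{W^{1,10}}\lesssim\|\partial_t\rho_0\|_{W^{3,10}}\lesssim\|\nabla(\rho_0,u_0)\|_{W^{3,10}}$, and the last quantity decays by the estimate above. Your treatment of $\phi_0$ through the massive elliptic operator and a bootstrap is fine and agrees with the paper's approach.
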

	\begin{proof}
		As \cite[Theorem 1]{GuoCMP2011},    \eqref{ep}  admits a global solution $(\rho_0,   u_0,  \phi_0)(t,   x)$ and  one has
		\begin{align}\label{guo}
			&	\sup_{t\ge0}
			\!\Big(
			\||\nabla|^{-1}(1-\Delta)^{s_1+\frac12}({\rho_0-n_0},  u_0)(t,   x)\|_{L^2}
			+(1+t)^{\frac{16}{15}}
			\|(1-\Delta)^{\frac{s_1}{2}}({\rho_0-n_0},  u_0)(t)\|_{L^{10}}
			\Big)
			\le 2\delta_0.
		\end{align}		
		For the    integer $0\leq s_2 < s_1-1 $,   by the  Sobolev--Morrey Embedding in $\mathbb{R}^3$,     and  \eqref{guo},  we have
		\begin{align*}
			&	\|\nabla_x^{s_2}({\rho_0-n_0},  u_0) \|_{ \infty}\lesssim\|\nabla_x^{s_2}({\rho_0-n_0},  u_0) \|_{W^{1,   10}  }\lesssim    	\|(1-\Delta)^{\frac{s_1}{2}}({\rho_0-n_0},  u_0) \| _{L^{10}}
			\lesssim
			\delta_0\,  (1+t)^{-\frac{16}{15} }.
		\end{align*}
		Then,    with \cite[Theorem 1.1]{DuanSIAM2009},  we have
		\begin{align*}
			\|\phi_0\|_{W^{{s_2},  \infty}}\lesssim	 \|\rho_0(t,   x)-n_0\|_{W^{{s_2},  \infty}}\lesssim \delta_0(1+t)^{-\frac{16}{15} }.
		\end{align*}
		Using elliptic estimates for $\eqref{ep}_3$,  let $n_0=1$ and  applying the Taylor expansion
		$e^{\phi_0}-1-\phi_0= {O}(\phi_0^2)$,   we have
		\begin{align*}
			\|\phi_0\|_{H^{2s_2+2}} \leq& \|\phi_0-e^{\phi_0}+1\|_{H^{2s_2}}+	 \|{\rho_0-1}\|_{H^{2s_2}}\\\nonumber
			\lesssim&\|\phi_0   \|_{\infty}\|\phi_0    \|_{H^{2s_2}}
			+	 \|{\rho_0-1}\|_{H^{2s_2}}\\\nonumber
			\lesssim&\delta_0(1+t)^{-\frac{16}{15} }  \|\phi_0    \|_{H^{2s_2}}+	 \|{\rho_0-1}\|_{H^{2s_2}},
		\end{align*}
		where $C\delta_0(1+t)^{-\frac{16}{15} } \|\phi_0    \|_{H^{2s_2}}$ can be absorbed by the left side,   and by  \eqref{guo},  it holds that
		\begin{align*}
			\|\phi_0\|_{H^{2s_2+2}} \lesssim \|{\rho_0-1}\|_{H^{2s_2}} \lesssim\delta_0.
		\end{align*}
		Thus,   we prove \eqref{decay}. Next,   we prove \eqref{ttphi0}. Taking the time derivative of $\eqref{ep}_3$,  using the  elliptic estimates and Sobolev--Morrey embedding in \(\mathbb{R}^3\),         $\eqref{ep}_1$ and \eqref{decay},   we conclude
		\begin{align*}
			\|\partial_{t}\phi_0\|_{\infty}\lesssim 	\|\partial_{t}\phi_0\|_{W^{1,   10}} \lesssim  	\|\partial_{t}\rho_0\|_{W^{3,   10}}	\lesssim  \|\nabla (\rho_0,   u_0)\|_{W^{3,   10}}	   \lesssim \delta_0 (1+t)^{-16/15}.
		\end{align*}
		
	\end{proof}
	\begin{lemma}\label{fi}
		Let $s \geq 0$,  given the initial data
		$\rho^{\mathrm{in}}_{n+1}(0,   x),  u^{\mathrm{in}}_{n+1}(0,   x),  %
		{\theta }^{\mathrm{in}}_{n+1}(0,   x)\in H^{s}$,   the linear system \eqref{eqF_k} is well-posed in $H^s$. Moreover,     	let $1\le i\le 2k-1$,   for $t\ge0$,   it holds that
		\begin{align}
			\label{fir1}&\|\left(\rho_{i},  u_{i},  \theta_i	\right) \|_{H^s}\lesssim (1+t)^{i-1}.\\
			\label{fir2}&\|\phi_i \|_{H^{s+2}}\lesssim (1+t)^{i-1},  \quad \|\partial_{t}\phi_i \|_{H^{s+1}}\lesssim (1+t)^{i-1}.\\
			\label{fir3}& |F_i|\lesssim(1+t)^{i-1}(1+|v|^{3i})\mu\;,    |\nabla_vF_i|\lesssim(1+t)^{i-1}(1+|v|^{3i+1})\mu,  \; \;|\nabla_xF_i|\lesssim(1+t)^{i-1}(1+|v|^{3i+2})\mu,   \nonumber\\
			&|\nabla_v\nabla_vF_i|\lesssim(1+t)^{i-1}(1+|v|^{3i+2})\mu,  \;
			|\nabla_x\nabla_vF_i|\lesssim(1+t)^{i-1}(1+|v|^{3i+3})\mu.  	
		\end{align}		
		
	\end{lemma}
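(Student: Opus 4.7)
The plan is to prove all three estimates by strong induction on $i=1,\dots,2k-1$, treating simultaneously the macroscopic triple $(\rho_i,u_i,\theta_i)$ and the potential $\phi_i$ at each level, and then deducing the velocity pointwise bounds on $F_i$ as a corollary. For the well-posedness statement, I would observe that \eqref{eqF_k} is a symmetrizable linear hyperbolic system (the principal part is the linearization of compressible Euler around $(\rho_0,u_0,\theta_0)$) coupled with the elliptic equation $(\Delta-e^{\phi_0})\phi_{n+1}=e^{\phi_0}\widetilde{B}_{n+1}-\rho_{n+1}$, whose lower-order coefficient $e^{\phi_0}\ge c>0$. Given source terms in $H^s$ and coefficients in $W^{s,\infty}$ controlled by Lemma~\ref{F0th}, standard Friedrichs--Picard theory and elliptic regularity yield a unique global $H^s$ solution, with $\phi_{n+1}$ gaining two derivatives from $\rho_{n+1}$ and $\widetilde{B}_{n+1}$.

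For the quantitative bounds I would run the following induction. At the base step $n=0$ one has $\widetilde{B}_1=0$ and the forcing $f_0,g_0$ only involves $\mu,\phi_0$ and $\mathbb{P}^{\bot}(F_1/\sqrt{\mu})$, which is explicit via \eqref{i-p} and decays in $t$ thanks to \eqref{decay}. Applying $\partial_x^{\alpha}$ for $|\alpha|\le s$ to \eqref{eqF_k}, taking the natural symmetrizing inner product with $(U_1,V_1):=(\rho_1,u_1,\theta_1,\nabla\phi_1)$-type variables, and using the elliptic equation to exchange $\nabla\phi_1$ for $\rho_1$ produces an energy identity of the schematic form
\begin{equation*}
\tfrac{d}{dt}\bigl(\|U_1\|_{H^s}^2+\widetilde{\mathcal{E}}_1^2\bigr)+\langle A_0V_1,U_1\rangle\lesssim\|(f_0,g_0)\|_{H^s}\|U_1\|_{H^s}+(1+t)^{-16/15}\|U_1\|_{H^s}^2,
\end{equation*}
with $\widetilde{\mathcal{E}}_1^2\sim\|\phi_1\|_{H^{s+1}}^2$. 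Since $|A_0|\sim 1$ is not small, I would follow the hint in the remark and integrate $\langle A_0V_1,U_1\rangle$ by parts in $x$, using the Poisson equation at order $\varepsilon^1$ in \eqref{coeff} to rewrite it as $\tfrac{d}{dt}\widetilde{\mathcal{E}}_1^2-\mathcal{U}_1$ with $\mathcal{U}_1\lesssim(1+t)^{-16/15}(\|U_1\|^2+\|\phi_1\|_{H^1}^2)$; absorbing the time-derivative into the energy and invoking Gr\"onwall with the integrable factor $(1+t)^{-16/15}$ gives $\|(\rho_1,u_1,\theta_1)\|_{H^s}+\|\phi_1\|_{H^{s+2}}\lesssim 1$, which is $(1+t)^{0}$ as required.

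For the inductive step, assume \eqref{fir1}--\eqref{fir2} hold for all $j<i$. The source terms $f_{i-1},g_{i-1}$ and $\widetilde{B}_i$ are polynomials in $(\rho_j,u_j,\theta_j,\phi_j)$ and integrals of $F_i$ (whose microscopic part is given by the explicit formula \eqref{i-p} applied to the already-known coefficients). A product count shows $\|(f_{i-1},g_{i-1})\|_{H^s}+\|\widetilde{B}_i\|_{H^{s+2}}\lesssim(1+t)^{i-2}$, since the slowest decay comes from pairs summing to $i$. Repeating the same symmetrized energy estimate plus Poisson integration-by-parts trick, and applying Gr\"onwall against the integrated source, produces the factor $(1+t)^{i-1}$. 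The time derivative bound on $\phi_i$ follows by differentiating the Poisson equation in $t$, substituting $\partial_t\rho_i$ from $\eqref{eqF_k}_1$, and invoking elliptic regularity with the just-proved bounds.

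For \eqref{fir3}, I would split $F_i/\sqrt{\mu}=\mathbb{P}(F_i/\sqrt{\mu})+\mathbb{P}^{\bot}(F_i/\sqrt{\mu})$: the hydrodynamic part is a polynomial in $(v-u_0)/\sqrt{\theta_0}$ of degree at most $2$ with coefficients $(\rho_i,u_i,\theta_i)$, whose Sobolev norms were just bounded by $(1+t)^{i-1}$, and Sobolev embedding together with the Gaussian factor $\sqrt{\mu}$ yields a pointwise bound of the form $(1+t)^{i-1}(1+|v|^2)\sqrt{\mu}$. The microscopic part, computed inductively via \eqref{i-p}, raises the polynomial degree in $v$ by at most $3$ at each application of $L^{-1}[\sqrt{\mu}^{-1}(\partial_t+v\cdot\nabla_x-\nabla_x\phi_0\cdot\nabla_v)(\cdot)]$, because $L^{-1}$ preserves rapid decay with only polynomial loss on $\langle v\rangle$. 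Iterating gives the claimed weight $|v|^{3i}$, and the $\nabla_v,\nabla_x$ derivative estimates follow from the corresponding gradient of $\mu$ (bringing down one extra $|v|$) combined with $\nabla_{x,v}$ acting on the explicit polynomials.

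The main obstacle throughout is the base step: precisely because $A_0$ is $O(1)$ rather than small, a naive Gr\"onwall loses the uniform-in-time bound and inflates to exponential growth. The coupled Poisson integration-by-parts (exploiting that $\nabla\phi_1$ is algebraically controlled by $\rho_1$ through the semilinear elliptic equation) is what keeps the energy finite and sets up the inductive scheme at the correct rate $(1+t)^{i-1}$.
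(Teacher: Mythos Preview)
Your proposal is correct and follows essentially the same approach as the paper: induction on $i$, symmetric-hyperbolic energy estimate for $(\rho_i,u_i,\theta_i)$ with the key step of rewriting $\langle A_0 V_1,U_1\rangle=\tfrac{d}{dt}\widetilde{\mathcal{E}}_1^2-\mathcal{U}_1$ via integration by parts and the Poisson equation (exactly \eqref{avu} in the paper), Gr\"onwall against the integrable factor $(1+t)^{-16/15}$, elliptic regularity for $\phi_i$ and $\partial_t\phi_i$, and the macro--micro split plus Caflisch's decay-preservation of $L^{-1}$ for \eqref{fir3}. One minor slip: the macroscopic pointwise bound should read $(1+|v|^2)\mu$ rather than $(1+|v|^2)\sqrt{\mu}$, since $\sqrt{\mu}\,\mathbb{P}(F_i/\sqrt{\mu})$ carries the full Maxwellian.
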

	\begin{proof}
		The well-posedness can be established as an immediate consequence of the linear theory;cf.
		\cite{evans1998partial}. We prove
		\eqref{fir1}--\eqref{fir3} by mathematical induction.
		We begin with the case $F_1$.
		For $n=0$, the system \eqref{eqF_k} can be written in the symmetric hyperbolic form:
		\begin{equation}\label{U}
			A_0\{\partial_tU_1+V_1\}+\sum_{i=1}^3A_i\partial_iU_1 +\mathcal{B}U_1=\mathcal{F}_1
		\end{equation}		
		where  	
		\[
			\begin{array}{l}
			U_{i} = \begin{pmatrix} \rho_{i} \\ (u_{i})^t \\ \theta_{i} \end{pmatrix},   V_{i} = \begin{pmatrix} 0 \\ (\nabla\phi_{i})^t \\ 0 \end{pmatrix},   A_0 = \begin{pmatrix} (\theta_0)^2 & 0 & 0 \\ 0 & (\rho_0)^2\theta_0\,  \mathbb{I} & 0 \\ 0 & 0 & \dfrac{(\rho_0)^2}{6} \end{pmatrix}, \\
              A_i = \begin{pmatrix} (\theta_0)^2 u_0^i & \rho_0(\theta_0)^2 e_i & 0 \\ \rho_0(\theta_0)^2 (e_i)^t & (\rho_0)^2\theta_0 u_0^i\,  \mathbb{I} & \dfrac{(\rho_0)^2\theta_0}{3}(e_i)^t \\ 0 & \dfrac{(\rho_0)^2\theta_0}{3}e_i & \dfrac{(\rho_0)^2 u_0^i}{6} \end{pmatrix}.
		\end{array}
		\]
		$(\cdot)^t$ denotes the transpose of row vectors,   $e_i$'s for
		$i=1,   2,   3$ are the standard unit (row) base vectors in
		$\mathbb{R}^3$,   $\partial_j(\cdot)$ denotes the derivative with
		respect to $x_j$ with implicit summation over $j=1,   2,   3$,  and $\mathbb{I}$ is the $3\times 3$ identity matrix.
		$\eqref{eqF_k}_4$ equals to
		\begin{align}\label{phi10}
			\Delta \phi _{1}= e^{\phi _{0}} \phi _{1}-\rho_1.
		\end{align}
		By    elliptic estimates to \eqref{phi10} with  $\|1-e^{\phi_0}\|_{\infty}\lesssim \|\phi_0\|_{\infty}\lesssim \delta_0$ from \eqref{decay},   we have
		\begin{align}
			\|\phi_1\|_{H^{s+2}}\lesssim \|\rho_1\|_{H^s}.	
		\end{align}
		Applying $\partial_t$ derivative for  \eqref{phi10},
		using elliptic estimates,   with  $\eqref{eqF_k}_1$,   \eqref{decay},    and \eqref{ttphi0},  we have
		\begin{align*}
			\|\partial_{t} \phi_1\|_{H^{s+1}}\lesssim& \| \partial_{t}\phi_0e^{\phi_0}\phi_1\|_{H^{s-1}}+\|  (1-e^{\phi_0})\partial_{t}\phi_1\|_{H^{s-1}} + \| \partial_{t}\rho_1\|_{H^{s-1}}
			\\
			\lesssim&(1+t)^{-16/15}\|\phi_1\|_{H^{s}}+\delta_0(1+t)^{-16/15}\|\partial_{t}\phi_1\|_{H^{s}}+ \|(\rho_1,   u_1 )\|_{H^s}.
		\end{align*}
		Since  $\delta_0 \ll 1$,  we have
		\begin{align}\label{2. 12}
			\|\partial_{t} \phi_1\|_{H^{s+1}} \lesssim \|(\rho_1,   u_1 )\|_{H^{s}}+(1+t)^{-16/15}\|\phi_1\|_{H^{s}}.
		\end{align}
		\par		Taking the $L^2$ inner product of \eqref{U} with $U_1$ gives
		\begin{align}\label{1. 24}
			\frac{1}{2}\frac{\d}{\d t}\|\sqrt{A_0}U_1\|^2+\langle A_0 V_1,   U_1 \rangle =\langle \frac{1}{2}\partial_tA_0 U_1+ \sum_{j=1}^3A_j\partial_jU_1 +\mathcal{B}U_1-\mathcal{F}_1,   U_1 \rangle.
		\end{align}
		By integration by parts and $\nabla_x \cdot u_1=\frac{-\partial_{t}\rho_1-u_0\cdot\nabla_x  \rho_1-\rho_1\nabla_x \cdot u_0}{\rho_0}$ from \eqref{eqF_k},  we obtain
		\begin{align*}
			&\langle A_0 V_1,   U_1 \rangle=\langle \rho_0^2{\theta }_0,  \nabla_x\phi_1\cdot u_1 \rangle=-\langle \rho_0^2{\theta }_0\phi_1,  \nabla_x\cdot u_1 \rangle-\langle \nabla_x\left(\rho_0^2{\theta }_0\right)\cdot u_1,   \phi_1 \rangle=\langle \rho_0{\theta }_0\phi_1,  \partial_t \rho_1 \rangle
			+ \mathcal{U}_1^1,
		\end{align*}
		where,
		\begin{align*}
			\langle \rho_0{\theta }_0\phi_1,  \partial_t \rho_1 \rangle=	 \partial_t\langle \rho_0{\theta }_0\phi_1,   \rho_1 \rangle-\langle \rho_0{\theta }_0\partial_t\phi_1,   \rho_1 \rangle-\langle\partial_t\left( \rho_0{\theta }_0\right)\phi_1,   \rho_1 \rangle.
		\end{align*}
		Using \eqref{phi10}  and integration by parts,   we obtain
		\begin{align*}
			-\langle \rho_0{\theta }_0\partial_t\phi_1,   \rho_1 \rangle=&\langle \rho_0{\theta }_0\partial_t\phi_1,   \Delta \phi _{1}-e^{\phi _{0}} \phi _{1} \rangle=-\frac{1}{2}\partial_t \left\|\sqrt{\rho_0{\theta }_0e^{\phi _{0}} }\phi_1\right\|^2 -\frac{1}{2}\partial_t \left\|\sqrt{\rho_0{\theta }_0  }\nabla\phi_1\right\|^2
			\\\nonumber
			&  +\frac{1}{2}\langle \partial_t\left(\rho_0{\theta }_0\right),   |\nabla\phi _{1}|^2  \rangle+\frac{1}{2}\langle \partial_t\left(\rho_0{\theta }_0e^{\phi_0}\right),   | \phi _{1}|^2  \rangle - \langle \nabla\left(\rho_0{\theta }_0\right)\partial_t\phi_1,   \nabla\phi _{1}  \rangle.
		\end{align*}
		Based on the above decomposition of $\langle A_0 V_1,   U_1 \rangle$,   	we have \begin{align}\label{avu}
			\langle A_0 V,   U_1 \rangle=\frac{\d}{\d t}	\widetilde{\mathcal{E}_1}^2-\mathcal{U}_1,  		
		\end{align}
		where   $\widetilde{\mathcal{E}_1}^2= \langle \rho_0{\theta }_0\phi_1,   \rho_1 \rangle	 -\frac{1}{2} \|\sqrt{\rho_0{\theta }_0e^{\phi _{0}} }\phi_1\|^2 -\frac{1}{2} \|\sqrt{\rho_0{\theta }_0  }\nabla\phi_1\|^2$ and
		\begin{align*}
			\mathcal{U}_1=& \mathcal{U}_1^1+\frac{1}{2}\langle \partial_t\left(\rho_0{\theta }_0\right),   |\nabla\phi _{1}|^2  \rangle+\frac{1}{2}\langle \partial_t\left(\rho_0{\theta }_0e^{\phi_0}\right),   | \phi _{1}|^2  \rangle - \langle \nabla\left(\rho_0{\theta }_0\right)\partial_t\phi_1,   \nabla\phi _{1}  \rangle.
		\end{align*}
		Applying \eqref{phi10} and integration by parts,   we have
		\begin{align*}
			\langle \rho_0{\theta }_0\phi_1,   \rho_1 \rangle=\langle \rho_0{\theta }_0\phi_1,   e^{\phi_0}\phi_1-\Delta\phi_1
			\rangle=\|\sqrt{\rho_0{\theta }_0e^{\phi_0}}\phi_1\|^2+\|\sqrt{\rho_0{\theta }_0 }\nabla\phi_1\|^2+\langle \nabla\left(\rho_0{\theta }_0\right)\phi_1,   \nabla\phi_1 \rangle.
		\end{align*}
		By \eqref{decay} and young's inequality,   we have
		$
		\left|\langle \nabla\left(\rho_0{\theta }_0\right)\phi_1,   \nabla\phi_1 \rangle\right|\leq \frac{C\delta_0 	 }{2} \|\phi_1\|^2+\frac{C\delta_0 	 }{2} \|\nabla\phi_1\|^2.
		$
		Since $\rho_0,   {\theta }_0$   is a positive constant and $\delta_0\ll 1$,   we have
		\begin{align}\label{1. 29}
			\widetilde{\mathcal{E}_1}^2\sim \|\phi_1\|^2_{H^1}.
		\end{align}
		Substituting \eqref{avu} into \eqref{1. 24},
		denoting $\mathcal{E}_1^2=\frac{1}{2}\|\sqrt{A_0}U_1\|^2+\widetilde{\mathcal{E}_1}^2$,   $\|\sqrt{A_0}U_1\|^2\sim \| U_1\|^2$,   combining \eqref{1. 29},   we obtain
		\begin{align}\label{eup}
			\mathcal{E}_1^2\sim  {U}^2_1	 +\|\phi_1\|^2_{H^1}.
		\end{align}
		$\mathcal{U}_1$ in \eqref{avu}   was moved to the left side of  \eqref{1. 24},    $\partial_i A_i,  \mathcal{B}$ and $\mathcal{F}_1$ consist of $\rho_0,     u_0,   \theta_0$ and their first spatial derivatives,   using integration by parts,   \eqref{eqF_k},     \eqref{decay},   \eqref{ttphi0} and \eqref{2. 12},    we have
		\begin{align*}
			\frac{\d}{\d t}	\mathcal{E}_1^2\lesssim&\left(\|\partial_t A_0\|_{\infty}+\sum_{i=1}^{3}\|\partial_i A_i\|_{\infty}+\|\mathcal{B}\|_{\infty}\right)\|U_1\|^2+\|\mathcal{F}_1\|\|U_1\|+ \mathcal{U}_1
			\\
			\lesssim& \|\nabla_x( \rho_0,   u_0)\|_{\infty}\left(\|U_1\|^2+ \|U_1\|\right)+(\|(\nabla \rho_0,   u_0)\|_{W^{1,  \infty}}+\|\partial_{t} \phi_0\|_{\infty})(\|(\rho_1,   u_1)\|^2+	\| \phi_1\|_{H^1}^2+	\| \partial_{t}\phi_1\| ^2) \\
			\lesssim&\ (1+t)^{-16/15}\left(\|U_1\|^2+ \|U_1\|\right)+\ (1+t)^{-16/15}\|\phi_1\| _{H^1}^2
			.
		\end{align*}
		Then,
		$
		{\mathcal{E}_1}\frac{\d}{\d t}\left( {\mathcal{E}_1}+1\right)\lesssim	 (1+t)^{-16/15} {\mathcal{E}_1}\left( {\mathcal{E}_1}+1\right).
		$
		Applying Gr\"onwall's inequality and combining  \eqref{eup}  yield
		\begin{align}\label{us0}
			\| U_1\|   +\|\phi_1\|_{H^1}\leq C.
		\end{align}		
		Applying \( \partial^\alpha (|\alpha|\leq s) \) to \eqref{U},  taking inner product with \( \partial^\alpha U_1 \),    using the same method as estimating   the 0--th derivative \eqref{us0},   summing over all
		$
		\alpha
		$  yields
		\begin{align}\label{us}
			\| U_1\|_{H^{s}}+\|\phi_1\|_{H^{s+1}}\leq C,
		\end{align}
		which implies \eqref{fir1} holds for $i=1$.
		By elliptic estimate to \eqref{phi10},  it also holds  $\|\phi_1\|_{H^{s+2}}\lesssim \|\rho_1\|_{H^{s}}\lesssim \|U_1\|_{H^{s}}\leq C$.	Substitute  above results into  \eqref{2. 12},  we have $\|\partial_t\phi_1\|_{H^{s+1}}\lesssim 1$,  which completes the proof of \eqref{fir2} for $i=1$.
		\par
		Substituting \eqref{us} into   \eqref{F_1},   it holds that $|\sqrt{\mu}\mathbb{P}(F_1/\sqrt{\mu})|\lesssim (1+|v|^2)\mu$.
		Combining \eqref{i-p} for $n=0$, $L^{-1}$ preserves decay in $v$ \cite{Caflisch1980} and \eqref{decay},  we have $\sqrt{\mu}
		\bigl| \mathbb{P}^{\bot}\!\left(\frac{F_{1}}{{\sqrt{\mu}}}\right)\bigr|
		\;\le\;
		\left\|\left(\nabla_x \rho_0,    \nabla_x u_0,   \nabla_x \phi_0 \right)\right\|_{\infty}
		\,   (1+|v|^{3})  { {\mu}}\,  \lesssim (1+|v|^{3})  { {\mu}}.
		$ Thus
		$
		|F_1|\lesssim(1+|v|^{3})  {\mu}.
		$
		$	|\nabla_v \mu| \lesssim (1 + |v| ) \mu,     |\nabla_x \mu| \lesssim (1 + |v|^2) \mu$,    then we can prove  \eqref{fir3} holds for $i=1$.
		
		\par	 	
		Suppose that \eqref{fir1}--\eqref{fir3} hold for $1 \le i \le n$. Adopting the same approach as in \cite[Lemma 2.2]{GuoCMP2010}, we can show that \eqref{fir1}--\eqref{fir3} hold for \(i = n+1\).
	\end{proof}
	\section{Other Preliminary Lemmas}
	The operators $ {L}_M  $ is defined in \eqref{delm}.
	We   list the following results of $ \L_M$  :
	\begin{lemma}[\cite{Duan2012VlasovPoissonBoltzmann, strainarma2008}] \label{lm}
		For any   multi-index  $\b$ with  $|\b|\geq 0$,   and   $\lambda_1\geq 0$,
		we have  \\
		$(1)$ If $-3< \gamma< 0$,   then for any  $\eta \geq 0$,   there exists $C_{\eta} > 0$ such that
		\begin{align}
			\label{es-energy-linear:1}
			\left\langle \langle v \rangle ^{2\lambda_1}  \partial_v^\b( 		\L_Mf),    \partial_v^\b f \right\rangle \geq \;& \|  \langle v \rangle ^{\lambda_1}
			\partial_v^{\b} f\|_{\nu}^2
			-\eta\sum_{|\b'|\leq |\b|} \| \langle v \rangle ^{\lambda_1} \partial_v^{\b'}
			f\|_{\nu}^2 -C_{\eta}\|\chi_{\{|v|\leq 2C_{\eta}\}} f\| ^2.
		\end{align}
		$(2)$
		If $0\leq \gamma\leq1$,
		then for any $\eta \geq 0$,   there exists $C_{\eta} > 0$ such that
		\begin{align}
			\label{es-energy-linear:1-hard}
			\left\langle   \langle v \rangle ^{2\lambda_1}  \partial_v^\b( 		\L_Mf),   \partial_v^\b f  \right\rangle\geq  & \|  \langle v \rangle ^{\lambda_1}
			\partial_v^{\b} f\|_{\nu}^2
			-\eta\sum_{|\b'|\leq |\b|} \| \langle v \rangle ^{\lambda_1}  \partial_v^{\b'}
			f\|_{\nu}^2 -C_{\eta}\| f\|_{\nu}^2.
		\end{align}
	\end{lemma}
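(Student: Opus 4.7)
The plan is to proceed by induction on $|\beta|$, starting from the base case $|\beta|=0$, and then using commutator identities to reduce higher derivatives to lower ones plus controllable error. Throughout I will write $L_M = \nu_M(v) - \mathcal{K}_M$, where $\nu_M(v)=\int_{\mathbb R^3}|v-u|^\gamma \mathcal M(u)\,\d u \sim \langle v\rangle^\gamma$ by \eqref{nusimnum}, and $\mathcal{K}_M$ is the standard integral (Grad--type) part of the linearized operator at $\mathcal M$.

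First I would handle $|\beta|=0$. The multiplier $\nu_M$ contributes directly the coercive term $\|\langle v\rangle^{\lambda_1}f\|_\nu^2$ since $\langle \langle v\rangle^{2\lambda_1}\nu_M f,f\rangle = \|\langle v\rangle^{\lambda_1}f\|_\nu^2$. The $\mathcal{K}_M$-contribution is controlled using the kernel estimates of $\mathcal{K}_M$ (analogous to the bound \eqref{eq:es:Kc:1}--\eqref{eq:es:Kc:2} alluded to in the excerpt): the kernel $l_M(v,u)$ decays in $|v|$, so one splits into $|v|\le 2C_\eta$ and $|v|\ge 2C_\eta$. On the high-velocity region one absorbs $\mathcal K_M$ into $\eta\|\langle v\rangle^{\lambda_1}f\|_\nu^2$ by choosing $C_\eta$ large so that the weighted kernel norm is as small as $\eta$. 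On the low-velocity region one bounds the remainder by $C_\eta\|\chi_{\{|v|\le 2C_\eta\}}f\|^2$ in the soft-potential case, giving \eqref{es-energy-linear:1}; in the hard-potential case the uniform bound $\nu_M\gtrsim 1$ lets the low-velocity piece be absorbed into $\|f\|_\nu^2$, yielding \eqref{es-energy-linear:1-hard}.

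For the inductive step at multi-index $\beta$ with $|\beta|\ge 1$, I write
\begin{equation*}
\partial_v^\beta (L_M f) = L_M(\partial_v^\beta f) + [\partial_v^\beta, L_M]f,
\qquad
[\partial_v^\beta, L_M]f = (\partial_v^\beta \nu_M) f + \sum_{0<\beta'\le\beta}\binom{\beta}{\beta'}\bigl(\partial_v^{\beta'}\nu_M\bigr)\partial_v^{\beta-\beta'}f - [\partial_v^\beta,\mathcal K_M]f.
\end{equation*}
Pairing with $\langle v\rangle^{2\lambda_1}\partial_v^\beta f$, the main term $\langle \langle v\rangle^{2\lambda_1}L_M(\partial_v^\beta f),\partial_v^\beta f\rangle$ is handled by the induction hypothesis applied to $\partial_v^\beta f$ (with the same weight $\lambda_1$), producing the dissipation $\|\langle v\rangle^{\lambda_1}\partial_v^\beta f\|_\nu^2$ minus the allowed lower-order error. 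The multiplier-derivative terms $(\partial_v^{\beta'}\nu_M)\partial_v^{\beta-\beta'}f$ are pointwise bounded by $\langle v\rangle^{\gamma-|\beta'|}|\partial_v^{\beta-\beta'}f|$, so by Cauchy--Schwarz and Young's inequality each contributes $\eta\|\langle v\rangle^{\lambda_1}\partial_v^\beta f\|_\nu^2 + C_\eta\|\langle v\rangle^{\lambda_1}\partial_v^{\beta-\beta'}f\|_\nu^2$, fitting the right-hand side of \eqref{es-energy-linear:1}--\eqref{es-energy-linear:1-hard}. For the commutator $[\partial_v^\beta,\mathcal K_M]f$ one differentiates the kernel $l_M(v,u)$ in $v$ and applies the same split into high and low velocities as in the base case, using the fact that differentiating $l_M$ in $v$ preserves the same type of kernel estimates (this is essentially \cite[Lemma 2.2]{guo2002cpam}, already invoked in the excerpt around \eqref{g41}).

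The main obstacle I expect is precisely the treatment of $[\partial_v^\beta,\mathcal K_M]f$ together with the velocity weight $\langle v\rangle^{2\lambda_1}$: one needs that differentiating the integral kernel in $v$ does not destroy the decay that lets $\mathcal K_M$ be absorbed into $\eta$ times the dissipation on the large-velocity region. In the soft-potential setting this is delicate because $\nu_M$ vanishes at infinity, so the $\eta$-absorption must be done with care--using Carleman-type representations of $\mathcal{K}_M$ to get pointwise kernel bounds for $\partial_v^\beta l_M(v,u)$, then verifying that the weighted integral operator with kernel $\langle v\rangle^{\lambda_1}|\partial_v^\beta l_M(v,u)|\langle u\rangle^{-\lambda_1}$ is small on $\{|v|\ge 2C_\eta\}$. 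Once this is established, the inductive estimate closes and the two assertions follow by picking $\eta$ small (and the implicit $C_\eta$ in the low-velocity compensation term as in the base case).
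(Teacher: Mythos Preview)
The paper does not supply its own proof of this lemma: it is stated with citations to \cite{Duan2012VlasovPoissonBoltzmann, strainarma2008} and used as a black box. Your inductive scheme---splitting $L_M=\nu_M-\mathcal K_M$, extracting the $\nu_M$-part as the dissipation, and controlling the $\mathcal K_M$-part and the commutator $[\partial_v^\beta,\mathcal K_M]$ by a high/low velocity decomposition with the Grad kernel estimates---is exactly the route taken in those references (see in particular \cite[Lemmas 2--3]{strainarma2008} for the soft case and the weighted estimates in \cite{Duan2012VlasovPoissonBoltzmann}). So there is nothing to compare against in the present paper, and your sketch matches the standard argument behind the cited result.

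One small clarification: in the inductive step you say you apply ``the induction hypothesis to $\partial_v^\beta f$,'' but what you actually use there is the $|\beta|=0$ estimate with $\partial_v^\beta f$ in place of $f$; the genuine induction is only needed to absorb the commutator terms $(\partial_v^{\beta'}\nu_M)\partial_v^{\beta-\beta'}f$ and $[\partial_v^{\beta'},\mathcal K_M]\partial_v^{\beta-\beta'}f$ into the sum over $|\beta'|\le|\beta|$ on the right-hand side. This is a phrasing issue, not a gap.
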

	The operators $\Gamma $ and $\Gamma_M $ are defined in \eqref{deln} and  \eqref{delm} respectively. We also can write $\Gamma=\Gamma_{+}-\Gamma_{-}$,  where
	\begin{align*}
		&\Gamma_{-}(  f_1,     f_2) = \int_{\mathbb{R}^3 \times \mathbb{S}^2} |u-v|^\gamma b_0(\alpha) \sqrt{\mu(u)} \,     f_1(u) \,     f_2(v) \,   \d u \,   \d \omega,  	\\		
		&	\Gamma_{+}(  f_1,     f_2) = \int_{\mathbb{R}^3 \times \mathbb{S}^2} |u-v|^\gamma b_0(\alpha) \sqrt{\mu(u)} \,     f_1(u') \,   f_2(v') \,   \d u \,   \d \omega.
	\end{align*}
	Then,    we have
	\begin{align*}
		\nabla_x \Gamma(f_1,   f_2)	=  \Gamma(\nabla_xf_1,   f_2)+ \Gamma(f_1,  \nabla_xf_2)+ \Gamma_x(f_1,   f_2),
	\end{align*}
	\begin{align*}
		\nabla_v \Gamma(f_1,   f_2)	=  \Gamma(\nabla_vf_1,   f_2)+ \Gamma(f_1,  \nabla_vf_2)+ \Gamma_v(f_1,   f_2),
	\end{align*}
	\begin{align*}
		\nabla_v \Gamma_M(f_1,   f_2)	=  \Gamma_M(\nabla_vf_1,   f_2)+ \Gamma_M(f_1,  \nabla_vf_2)+ \Gamma_{M,   v}(f_1,   f_2).
	\end{align*}
	Here,
	\begin{align*}
		\Gamma_x(f_1,   f_2)=&	\int_{\mathbb{R}^3 \times \mathbb{S}^2} |u-v|^\gamma b_0(\alpha)\nabla_x \sqrt{\mu(u)} \,    \{  f_1(u') \,   f_2(v')- f_1(u) \,     f_2(v)    \}\,   \d u \,   \d \omega,  \\
		\Gamma_v(f_1,   f_2)=&	\int_{\mathbb{R}^3 \times \mathbb{S}^2} |u-v|^\gamma b_0(\alpha)\nabla_u \sqrt{\mu(u)} \,    \{  f_1(u') \,   f_2(v')- f_1(u) \,     f_2(v)    \}\,   \d u \,   \d \omega,  \\
		\Gamma_{M,   v}(f_1,   f_2)=&	\int_{\mathbb{R}^3 \times \mathbb{S}^2} |u-v|^\gamma b_0(\alpha)\nabla_u \sqrt{\mathcal{M}(u)} \,    \{  f_1(u') \,   f_2(v')- f_1(u) \,     f_2(v)    \}\,   \d u \,   \d \omega.
	\end{align*}
	\begin{lemma}\label{gamma}
		Let $-3 <\gamma \leq 1$,    $f_i(i=1,   2,   3) $  be smooth functions,   and   $(i,   j) = (1,   2)$  or $(i,   j) = (2,   1)$.
		For any constants  $\eta>0$  denote $ {w_{ \eta}}(v)= e^{\eta|v|^2}$and  ${\overline{w} }(v)=\langle v \rangle^{\lambda_1}e^{\lambda_2|v|^2},     \lambda_1 \geq 0,    \lambda_2 \geq 0  $,
		it holds that
		\begin{align}\label{Gf}
			\langle \Gamma_X(f_1,   f_2),  {\overline{w} } ^2f_3\rangle  \lesssim \|  w_{\eta}{\overline{w} }  f_i \|_\infty \|{{\overline{w} }}f_j\|_\nu \|{\overline{w} } f_3\|_\nu,
		\end{align}	
		and
		\begin{align}\label{gw}
			\langle \Gamma_{X}( f_1,      f_2),   {\overline{w} } ^2f_3 \rangle \lesssim \|  w_{\eta}{\overline{w} }^2 f_i \|_\infty \|{\overline{w} }^2 f_j\|_\nu \|  f_3\|_\nu,
		\end{align}
		where $ \Gamma_{X}$	present $ \Gamma$,   $ \Gamma_{M}$,   $ \Gamma_{x}$,   $ \Gamma_{v}$ or $ \Gamma_{M,   v}$.

	\end{lemma}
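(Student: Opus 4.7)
The plan is to reduce all five operators $\Gamma_X \in \{\Gamma,\Gamma_M,\Gamma_x,\Gamma_v,\Gamma_{M,v}\}$ to a common form
$$
\Gamma_X(f_1,f_2)(v) = \iint |u-v|^\gamma b_0(\alpha)\,\mathcal{G}(u)\,\bigl\{f_1(u') f_2(v') - f_1(u) f_2(v)\bigr\}\,du\,d\omega,
$$
where the factor $\mathcal{G}(u)$ stands for $\sqrt{\mu(u)}$, $\sqrt{\mathcal{M}(u)}$, $\nabla_x\sqrt{\mu(u)}$, $\nabla_u\sqrt{\mu(u)}$, or $\nabla_u\sqrt{\mathcal{M}(u)}$. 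In every case $|\mathcal{G}(u)|\lesssim \langle u\rangle^{m}\, e^{-c|u|^2}$ for some constants $c>0,\,m\ge 0$: for the $\mu$-based operators the $x$-dependence of $\rho_0,u_0,\theta_0$ is harmless thanks to \eqref{decay}, which gives uniform upper/lower bounds on the local Maxwellian parameters. Hence it suffices to prove the two inequalities for a generic $\Gamma_X$ of the above form.

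Next, I would split $\Gamma_X = \Gamma_X^+ - \Gamma_X^-$ into gain and loss parts. For the loss part, the factor $f_2(v)$ can be pulled out, and one is left with $\iint|u-v|^\gamma b_0(\alpha)\mathcal{G}(u) f_1(u)\,du\,d\omega$; after putting $\overline{w}(v)f_2(v)\cdot \overline{w}(v)f_3(v)$ inside the $v$-integral, H\"older yields $\|\overline{w}f_2\|_\nu\|\overline{w}f_3\|_\nu$ times $\|w_\eta\overline{w}f_1\|_\infty$, using the classical Grad-type bound
$
\iint_{\mathbb{R}^3\times\mathbb{S}^2}|u-v|^\gamma e^{-c|u|^2}\,du\,d\omega \lesssim \nu(v)
$
to absorb the collision kernel into $\nu$; the factor $w_\eta=e^{\eta|v|^2}$ with $\eta$ small compensates for the Gaussian margin lost to $w_\eta^{-1}$. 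For the gain part, I apply the standard collisional change of variables $(u,v,\omega)\mapsto(u',v',\omega)$, which is measure-preserving on $\mathbb{R}^6\times\mathbb{S}^2$, together with the conservation identity $|v|^2+|u|^2=|v'|^2+|u'|^2$, and the distributional estimate
$$
\overline{w}(v)\overline{w}(u) \lesssim \overline{w}(v')\overline{w}(u') e^{2\lambda_2(|v|^2+|u|^2)},
$$
so that the exponential in $\overline{w}$ recombines correctly. Provided $\lambda_2\ll 1$, the residual Gaussian from $\mathcal{G}(u)$ still dominates and we can repeat the loss-term argument with $f_1,f_2,f_3$ replaced by their primed counterparts.

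With the Gaussian weight preserved, I place one of the factors $\overline{w}f_i$ ($i\in\{1,2\}$) in $L^\infty_{x,v}$ (with the extra margin $w_\eta$ to absorb polynomial growth from $\mathcal{G}(u)$ in the $\Gamma_x,\Gamma_v,\Gamma_{M,v}$ cases, where $\mathcal{G}$ picks up a prefactor linear in $u-u_0$ or $u$), and estimate the other two factors via Cauchy--Schwarz in $(x,v,u,\omega)$. The resulting bound is precisely \eqref{Gf}. The second estimate \eqref{gw} is obtained by a slightly different apportionment: instead of distributing $\overline{w}^2$ evenly between $f_j$ and $f_3$, I assign the entire weight $\overline{w}^2$ to the $L^\infty$ factor (requiring $\|w_\eta\overline{w}^2 f_i\|_\infty$), so that $f_3$ appears unweighted in the final Cauchy--Schwarz; the coercivity coming from $\nu(v)$ still produces $\|f_3\|_\nu$ on the right-hand side.

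The main obstacle is the weight bookkeeping in the gain term: one must verify that after the change of variables the weights $\overline{w}(v')\overline{w}(u')$ can be dominated by $\overline{w}(v)\overline{w}(u)$ uniformly in $\omega\in\mathbb{S}^2$ without destroying the Gaussian decay of $\mathcal{G}(u)$, which is precisely what forces $\lambda_2$ to be small (it appears naturally in the hypothesis $0\le \lambda_2\ll 1$). Once this is established, the treatment of $\Gamma_x,\Gamma_v,\Gamma_{M,v}$ differs from that of $\Gamma,\Gamma_M$ only through an extra factor $\langle u\rangle$ in $\mathcal{G}(u)$, which is absorbed into the residual Gaussian, and by symmetry the same conclusions hold if the $L^\infty$-$L^2_\nu$ roles of $f_1$ and $f_2$ are swapped, giving the stated dichotomy $(i,j)=(1,2)$ or $(i,j)=(2,1)$.
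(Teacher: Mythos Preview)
Your plan is essentially the paper's proof: reduce all five $\Gamma_X$ to a common form with a Gaussian-decaying factor $\mathcal{G}(u)$, split into gain and loss, apply Cauchy--Schwarz, and for the gain term use the pre/post-collisional change of variables $(u,v)\mapsto(u',v')$; the derivative variants $\Gamma_x,\Gamma_v,\Gamma_{M,v}$ are absorbed via $|\nabla_x\sqrt{\mu}|,|\nabla_u\sqrt{\mu}|\lesssim \mu^{1/4}$, exactly as the paper does.

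The one place you overcomplicate is the weight bookkeeping. You write
$\overline{w}(v)\overline{w}(u)\lesssim \overline{w}(v')\overline{w}(u')\,e^{2\lambda_2(|v|^2+|u|^2)}$
and conclude that $\lambda_2\ll 1$ is required. But the only weight to be redistributed is $\overline{w}^2(v)$, coming from the test function $\overline{w}^2 f_3$; there is no $\overline{w}(u)$ to move. The paper uses the clean inequality
\[
\overline{w}^2(v)\le \overline{w}^2(v')\,\overline{w}^2(u'),
\]
which holds for \emph{every} $\lambda_2\ge 0$: the exponential part follows from $|v|^2\le |v'|^2+|u'|^2$ (energy conservation plus $|u|^2\ge 0$), and the polynomial part from $\langle v\rangle^2\le \langle v'\rangle^2\langle u'\rangle^2$. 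No residual Gaussian factor appears, so no smallness on $\lambda_2$ is needed, matching the lemma's hypotheses. For \eqref{gw} the paper simply uses $\overline{w}^4(v)\le \overline{w}^4(v')\,\overline{w}^4(u')$, which places $\overline{w}^2$ on each of $f_i$ and $f_j$ (not only on the $L^\infty$ factor, as your description suggests) and leaves $f_3$ unweighted.
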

	\begin{proof}
		With  $\int_{\mathbb{R}^3} |u-v|^\gamma \sqrt{\mu(u)} \,   \d u \lesssim \langle v \rangle^\gamma \lesssim \nu(v)$,  it holds that
		\begin{align*}
			&\langle \Gamma_{-}(  f_1,     f_2),   {\overline{w} } ^2f_3 \rangle
			\le \|w_{\eta} f_1   \|_\infty \|{\overline{w} } f_2\|_\nu \|{\overline{w} } f_3\|_\nu.	
		\end{align*}
		Noticing that $|u-v| = |u'-v'|$,   $\d u\,  \d v = \d u'\,  \d v'$,   combining  ${\overline{w} }^2 (v)\leq {\overline{w} }^2 (v'){\overline{w} }^2 (u')$ and $\sqrt{\mu(u')}\leq 1$,  we obtain
		\begin{align}\label{ga+}
			\langle \Gamma_{+}(  f_1,     f_2),   {\overline{w} } ^2f_3 \rangle
			\lesssim& \left(\int_{\mathbb{R}^3 \times \mathbb{R}^3} |u-v|^\gamma \sqrt{\mu(u)} \,   |    f^2_1(u')   f^2_2(v')  | {\overline{w} }^2 (u'){\overline{w} }^2 (v')\,   \d u \,   \d v\right)^{\frac{1}{2}}\|{\overline{w} } f_3\|_{\nu}\\\nonumber
			\lesssim&\|w_{\eta}  {\overline{w} }  f_1\|_{\infty}\left(\int_{\mathbb{R}^3 \times \mathbb{R}^3} |u-v|^\gamma e^{-2\eta|u|^2}  \,   |  f^2_2(v)  |  {\overline{w} }^2 (v)\,   \d u \,   \d v\right)^{\frac{1}{2}}
			\lesssim\|  w_{\eta}{\overline{w} } f_1\|_{\infty} \|{\overline{w} } f_2\|_\nu \|{\overline{w} } f_3\|_\nu.	
		\end{align}
		Based on symmetry,    we also have
		\begin{align*}
			&\langle \Gamma_{-}(  f_1,      f_2),   {\overline{w} } ^2f_3 \rangle\lesssim \|w_{\eta}  f_2 \|_\infty \|{\overline{w} } f_1\|_\nu \|{\overline{w} } f_3\|_\nu,  \\
			&\langle \Gamma_{+}( f_1,      f_2),   {\overline{w} } ^2f_3 \rangle \lesssim \|  w_{\eta}{\overline{w} } f_2 \|_\infty \|{\overline{w} } f_1\|_\nu \|{\overline{w} } f_3\|_\nu.	
		\end{align*}
		The above argument also can apply to $\mathcal{M}$. By \eqref{decay},
		$
		|\nabla_x \sqrt{\mu(u)}|\lesssim 	\mu(u)^{1/4}, \:  |\nabla_u \sqrt{\mu(u)}|\lesssim 	\mu(u)^{1/4},
		$
		which preserves the Gaussian decay.
		Thus,   \eqref{Gf} is proved.
		\par
		Using the same method as above and applying ${\overline{w} }^4(v)\leq {\overline{w} }^4 (v'){\overline{w} }^4 (u')$,   we can 	  prove   \eqref{gw}.
	\end{proof}
	\begin{lemma}\label{oa} Let ${\overline{w} }=\langle v \rangle^{\lambda_1}e^{\lambda_2|v|^2}, \,  0\leq \lambda_1, \,  0\leq \lambda_2 \ll 1	$,  it holds that
		\begin{align}\label{oa1}
			\|\overline{w}\overline{A}\|_{H^1_{x,   v}}\lesssim \sum_{\substack{2k+1\leq i+j \leq 4k+2  }} 	\varepsilon^{\,  i+j-2k} (1+t)^{\,  i+j-2}.
		\end{align}	
	\end{lemma}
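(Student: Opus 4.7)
A natural approach is to split $\overline{A}$ according to its two structural types in \eqref{A}:
\[
\overline{A}=\overline{A}_Q+\overline{A}_T,\qquad
\overline{A}_Q:=\sum_{\substack{i+j\ge 2k+1\\2\le i,j\le 2k-1}}\varepsilon^{i+j-2k}\,\Gamma\!\left(\frac{F_i}{\sqrt{\mu}},\frac{F_j}{\sqrt{\mu}}\right),\quad
\overline{A}_T:=-\sum_{\substack{i+j\ge 2k\\1\le i,j\le 2k-1}}\varepsilon^{i+j-2k+1}\,\frac{\nabla_x\phi_i\cdot\nabla_v F_j}{\sqrt{\mu}},
\]
where the collision piece exploits the identity $Q(F_i,F_j)/\sqrt{\mu}=\Gamma(F_i/\sqrt{\mu},F_j/\sqrt{\mu})$ from \eqref{deln}. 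The plan is to estimate each summand in the weighted $L^2_{x,v}$ norm, then in the weighted $H^1_{x,v}$ norm, and finally sum with the $\varepsilon^{i+j-2k}$ prefactors.

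For the $L^2_{x,v}$ part of $\overline{A}_Q$, I would place one factor, say $F_i/\sqrt{\mu}$, in $L^\infty_{x,v}$ and the other in $L^2_{x,v}$. By \eqref{fir3} the pointwise bound $|F_i/\sqrt{\mu}|\lesssim (1+t)^{i-1}(1+|v|^{3i})\sqrt{\mu}$ holds, with the $x$-dependence controlled by Sobolev embedding $H^2\hookrightarrow L^\infty$ applied to the macroscopic coefficients whose $H^s$ norms are supplied by \eqref{fir1}. Choosing $\lambda_2\ll 1/(4\theta_M)$ guarantees that $\langle v\rangle^{\lambda_1}e^{\lambda_2|v|^2}(1+|v|^{3i})\sqrt{\mu}$ remains uniformly bounded, so $\|w_\eta\overline{w}\,F_i/\sqrt{\mu}\|_\infty\lesssim(1+t)^{i-1}$. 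Applying \eqref{Gf} in dual form (absorbing the $\nu^{1/2}$ weights into the Gaussian tail of $F_j/\sqrt{\mu}$) yields $\|\overline{w}\,\Gamma(F_i/\sqrt{\mu},F_j/\sqrt{\mu})\|_{L^2_{x,v}}\lesssim(1+t)^{i-1}\|(\rho_j,u_j,\theta_j)\|\lesssim(1+t)^{i+j-2}$. For $\overline{A}_T$, H\"older together with \eqref{fir2} on $\nabla_x\phi_i$ and the pointwise estimate on $\nabla_v F_j/\sqrt{\mu}$ from \eqref{fir3} gives the same $(1+t)^{i+j-2}$ growth, and the extra $\varepsilon$ factor in $\overline{A}_T$ is absorbed into the prefactor.

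For the $H^1_{x,v}$ part I would invoke the Leibniz identities $\nabla_x\Gamma(f,g)=\Gamma(\nabla_x f,g)+\Gamma(f,\nabla_x g)+\Gamma_x(f,g)$, and analogously for $\nabla_v$. Each differentiated summand has the same bilinear structure as before, with one entry replaced by $\nabla_x F_i/\sqrt{\mu}$, $\nabla_v F_i/\sqrt{\mu}$, or a second derivative; the pointwise bounds $|\nabla_x F_i|\lesssim(1+t)^{i-1}(1+|v|^{3i+2})\mu$, $|\nabla_v F_i|\lesssim(1+t)^{i-1}(1+|v|^{3i+1})\mu$, and the mixed $\nabla_x\nabla_v F_i$ bound in \eqref{fir3} preserve the time growth $(1+t)^{i-1}$ while only inflating the polynomial factor in $|v|$, which is absorbed by the Gaussian $\sqrt{\mu}$ under the same choice of $\lambda_2$. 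The commutator remainders $\Gamma_x,\Gamma_v$ are estimated exactly as $\Gamma$ via \eqref{Gf}, since $|\nabla_x\sqrt{\mu}|$ and $|\nabla_v\sqrt{\mu}|$ retain Gaussian decay up to a harmless polynomial. Differentiating $\overline{A}_T$ puts derivatives either on $\phi_i$ (with $\|\nabla_x\phi_i\|_{H^{s+1}}\lesssim(1+t)^{i-1}$ by \eqref{fir2}) or on $F_j$ (handled by the higher-order pointwise bounds in \eqref{fir3}), so H\"older closes the estimate in the same way.

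The main obstacle I foresee is the simultaneous bookkeeping of velocity weights and indices: the single small constant $\lambda_2$ must be chosen independently of $(i,j)$ so that $\overline{w}^2(1+|v|^{3i+3})\mu$ remains uniformly Gaussian across the full range of summation, and the auxiliary parameter $\eta$ in \eqref{Gf} must be compatible with this choice. Once these constants are fixed, summing over the index set $\{(i,j):2k+1\le i+j\le 4k+2\}$, where the slightly enlarged upper bound $4k+2$ accommodates index shifts from the derivative product rules and from the transport piece, produces precisely $\sum\varepsilon^{i+j-2k}(1+t)^{i+j-2}$, as claimed.
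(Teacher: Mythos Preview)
Your overall strategy matches the paper's: split $\overline{A}$ into the collision piece $\overline{A}_Q$ and the transport piece $\overline{A}_T$, place one factor in $L^\infty_{x,v}$ via the pointwise bounds \eqref{fir3}, place the other in a weighted $L^2_{x,v}$, then differentiate via the Leibniz-type identities for $\Gamma$. The transport piece is handled exactly as you describe, and the $H^1$ upgrade is routine.

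There is, however, a genuine gap in your treatment of $\overline{A}_Q$. You write
\[
\|\overline{w}\,\Gamma(F_i/\sqrt{\mu},F_j/\sqrt{\mu})\|_{L^2_{x,v}}\lesssim(1+t)^{i-1}\|(\rho_j,u_j,\theta_j)\|,
\]
but $\|(\rho_j,u_j,\theta_j)\|$ only controls the \emph{macroscopic} part $\mathbb{P}(F_j/\sqrt{\mu})$. The microscopic part $\mathbb{P}^{\bot}(F_j/\sqrt{\mu})$ is orthogonal to the hydrodynamic moments and is \emph{not} bounded in $L^2_x$ by them; nor can you extract $L^2_x$ integrability from \eqref{fir3}, since the right-hand side there is constant in $x$ (it gives $L^\infty_x$ only). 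Because $\mathbb{P}^{\bot}(F_j/\sqrt{\mu})$ is defined recursively through \eqref{i-p} in terms of derivatives and collisions of all lower-order $F_m$, its weighted $L^2_{x,v}$ norm cannot be read off from any single lemma already available.

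The paper closes this gap by first establishing, by induction on $i$, the auxiliary bound
\[
\big\|\overline{w}\,\mathbb{P}^{\bot}(F_i/\sqrt{\mu})\big\|_{H^1_{x,v}}\lesssim(1+t)^{i-1},
\]
using \eqref{i-p} at each step: the transport and force terms contribute spatial derivatives of $(\rho_m,u_m,\theta_m,\phi_m)$ (in $L^2_x$ by \eqref{fir1}--\eqref{fir2}) together with the inductive control of $\mathbb{P}^{\bot}(F_m/\sqrt{\mu})$, and the collision terms $\Gamma(F_a/\sqrt{\mu},F_b/\sqrt{\mu})$ with $a+b=i$ are bounded via the bilinear estimate combined with the inductive hypothesis for $a,b<i$. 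Only after this induction is in place does the paper decompose $F_j/\sqrt{\mu}=\mathbb{P}(F_j/\sqrt{\mu})+\mathbb{P}^{\bot}(F_j/\sqrt{\mu})$ and obtain $\|\overline{w}\Gamma(F_i/\sqrt{\mu},F_j/\sqrt{\mu})\|\lesssim(1+t)^{i+j-2}$. You should insert this inductive step; without it the $L^2$ factor in your collision estimate is not justified.
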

	\begin{proof}
		Using mathematical induction,   we	  first prove
		\begin{align}	\label{fir4}
			\left\|{\overline{w} }\mathbb{P}^{\bot}\left( \frac{F_i}{\sqrt{\mu}}\right)\right\|_{H^1_{x,   v}}\lesssim	 (1+t)^{i-1}.
		\end{align}
		By \eqref{i-p},  we have
		\begin{align*}
			&\left\|{\overline{w} }\mathbb{P}^{\bot}\left( \frac{F_1}{\sqrt{\mu}}\right)\right\| _{L^2_{x,   v}}	\lesssim \left\|{\overline{w} }\frac{\left\{\partial
				_{t}+v\cdot \nabla _{x}\right\}\mu}{\sqrt{\mu}}\right\|_{L^2_{x,   v}} +\left\|{\overline{w} }\nabla_x \phi_0\frac{ \nabla_v \mu}{\sqrt{\mu}}\right\| _{L^2_{x,   v}} \\
			\lesssim&\|\nabla_x(\rho_0,   u_0)\|_{L^2_x}\|(1+|v|^3)\sqrt{\mu}{\overline{w} }\|_{L^2_v}+\|\nabla_x \phi_0\|_{L^2_x}\|\langle v \rangle \sqrt{\mu}{\overline{w} }\|_{L^2_v}
			\lesssim 1.
		\end{align*}
		Assume $	\left\|{\overline{w} }\mathbb{P}^{\bot}\left( \frac{F_i}{\sqrt{\mu}}\right)\right\| \lesssim	 (1+t)^{i-1}$ holds for $1\leq i \leq n$,  we prove it holds for $i=n+1$ as follows. Using the same method as \eqref{ga+},  combining  ${\overline{w} }(v')\langle v' \rangle^{2\gamma}\lesssim e^{\eta|v|^2}e^{\eta|u|^2}$,  \eqref{fir1} and \eqref{fir3},  we have
		\begin{align*}
			&	\left\| {\overline{w} }
			\Gamma_{+} \left(\frac{F_i}{\sqrt{\mu}},   \frac{F_j}{\sqrt{\mu}}\right)
			\right\|\\
			\lesssim &  \left(\iint
			\left(	\int_{\mathbb{R}^3 \times \mathbb{S}^2} |u-v|^\gamma b_0(\theta) \,   \sqrt{\mu(u')} \,
			\left(	\frac{F_i}{\sqrt{\mu}}(u)\right) ^2\,   	\left(	\frac{F_j}{\sqrt{\mu}}(v)\right) ^2 \,   \d u \d \omega \right) {\overline{w} }(v') \langle v' \rangle^{2\gamma} \d v \d x\right) ^{1/2}\\
			\lesssim& \left\| e^{\eta|u|^2}  	\frac{F_i}{\sqrt{\mu}}(u) \right\|_{\infty} \left(\left\|e^{ \eta|v|^2}\left(\mathbb{P}	\frac{F_j}{\sqrt{\mu}}(v)\right)\right\|_{L^2_{x,   v}}+\left\|e^{ \eta|v|^2}\mathbb{P}^{\bot}	\left(\frac{F_j}{\sqrt{\mu}}(v)\right)\right\|_{L^2_{x,   v}}\right)\\
			\lesssim&(1+t)^{i-1} \left(  \|(\rho_j,   u_j,  \theta_j)\|   +\left\|{\overline{w} }\mathbb{P}^{\bot}	\left(\frac{F_j}{\sqrt{\mu}}(v)\right)\right\| \right) \\
			\lesssim&(1+t)^{i+j-2}.
		\end{align*}
		Similarly,
		$
		\left\|{\overline{w} }(v) 	\Gamma_{-} \left(\frac{F_i}{\sqrt{\mu}},   \frac{F_j}{\sqrt{\mu}}\right)
		\right\|
		\lesssim(1+t)^{i+j-2},
		$
		thus
		\begin{align}\label{gammaw}
			\left\|{\overline{w} }	\Gamma\left(\frac{F_{i}}{\sqrt{\mu}},  \frac{F_{j}}{\sqrt{\mu}} \right)\right\|_{L^2_{x,   v}}	 	\lesssim&(1+t)^{i+j-2}.
		\end{align}
		Then,  by \eqref{decay},  \eqref{fir2},  \eqref{fir3} and  \eqref{gammaw}, we obtain
		\begin{align*}
			&\left\|{\overline{w} }\mathbb{P}^{\bot}\left( \frac{F_{n+1}}{\sqrt{\mu}}\right)\right\| 	\lesssim \left\|{\overline{w} }\frac{\left\{\partial
				_{t}+v\cdot \nabla _{x}\right\}F_n}{\sqrt{\mu}}\right\| +\sum_{\substack{ i+j=n \\ i,   j\geq 0}}\left\|{\overline{w} }\nabla_x \phi_i\frac{ \nabla_v F_j}{\sqrt{\mu}}\right\| +\sum_{\substack{ i+j=n+1 \\ i,   j\geq 1}} \left\|{\overline{w} } \Gamma\left( \frac{F_i}{\sqrt{\mu}},  \frac{F_j}{\sqrt{\mu}}\right)\right\|  \\
			\lesssim&\|\nabla_x(\rho_0,   u_0)\|_{L^2_x}(1+t)^{n-1}\|{\overline{w} }(1+|v|^{3(n+1)})\sqrt{\mu}\|_{L^2_v}+ \|\nabla_x \phi_i\|_{L^2_x}\|{\overline{w} }(1+|v|^{3(j+1)})\sqrt{\mu}\|_{L^2_v}+(1+t)^{i+j-2}  \\
			\lesssim& (1+t)^{n}
			.
		\end{align*}
		Using the similar argument as 	$\left\|{\overline{w} }\mathbb{P}^{\bot}\left( \frac{F_i}{\sqrt{\mu}}\right)\right\| $ to 	$\left\|\nabla\left({\overline{w} }\mathbb{P}^{\bot}\left( \frac{F_i}{\sqrt{\mu}}\right)\right)\right\| $,  we can prove \eqref{fir4}.
		As the definition of $\overline{A}=A/\sqrt{\mu}$ in \eqref{A},  by \eqref{fir2}-\eqref{fir3},  using the same method as the proof of \eqref{gammaw} to $\Gamma\left(\frac{F_{i}}{\sqrt{\mu}},  \frac{F_{j}}{\sqrt{\mu}} \right)$,   we have
		\begin{align*}
			\| {\overline{w} }\,  \overline{A} \|
			&\lesssim
			\sum_{\substack{ i+j \ge 2k+1 \\ 2 \le i,   j \le 2k-1}}
			\varepsilon^{\,  i+j-2k}
			\Big\|
			{\overline{w} }\,
			\Gamma\!\left(
			\frac{F_i}{\sqrt{\mu}},
			\frac{F_j}{\sqrt{\mu}}
			\right)
			\Big\| _{L^2_{x,   v}}
			+\sum_{\substack{ i+j\geq 2k \\ 1\leq i,   j\leq 2k-1}}%
			\varepsilon ^{i+j-2k+1}
			\| \nabla_x \phi_i \|_{L^2_x}\,
			\|{\overline{w} }\nabla _{v}F_{j}/\sqrt{\mu}\|_{L^2_v} \\
			&\lesssim 	\sum_{\substack{2k+1\leq i+j \leq 4k+2  }} 	\varepsilon^{\,  i+j-2k} (1+t)^{\,  i+j-2}.
		\end{align*}
		Using the same argument as $	\| {\overline{w} }\,  \overline{A} \|$ to  $\| {\overline{w} }\,  \nabla\overline{A} \|$,  we complete  the  proof of \eqref{oa}.
	\end{proof}

	\begin{lemma}\label{le:jac}
		Let $\partial=\partial_x$ or $\partial_v$,  suppose $ T_0 $ be a small  fixed number and  $0\le s \le t\leq   T_0$,  then it holds that
		\begin{equation}
			\frac{1}{2} |t-s|^3 \le \left| \det \left( \frac{\partial X(s;t,   x,   v)}{\partial v} \right) \right| \le 2 |t-s|^3. \label{tezhengxianjielun2}
		\end{equation}
	\end{lemma}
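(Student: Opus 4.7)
The plan is to view the map $v \mapsto X(s;t,x,v)$ as a small perturbation of the free-transport flow $v \mapsto x + (s-t)v$ over the short time window $|t-s|\le T_0 \ll 1$, and to propagate this smallness to the Jacobian determinant. Concretely, I introduce the variational matrices
\[
J(\tau) := \frac{\partial X(\tau;t,x,v)}{\partial v},\qquad K(\tau) := \frac{\partial V(\tau;t,x,v)}{\partial v},
\]
each $3\times 3$. Differentiating the characteristic system \eqref{char} in $v$ yields the linear variational equations
\[
\dot J(\tau) = K(\tau), \qquad \dot K(\tau) = -\nabla_x^2 \phi^\varepsilon(\tau, X(\tau))\, J(\tau),
\]
with initial data $J(t)=0$, $K(t)=I$. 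The key uniform input is $\|\nabla_x^2 \phi^\varepsilon(\tau,\cdot)\|_{L^\infty_x}\le C$ on $[0,T_0]$; this follows by splitting $\phi^\varepsilon=\sum_{n=0}^{2k-1}\varepsilon^n\phi_n+\varepsilon^k\phi_R^\varepsilon$ and combining the coefficient bounds from Lemmas~\ref{F0th} and \ref{fi} with the remainder estimate \eqref{phiinfnity}, \eqref{phi} under the a priori assumption \eqref{pri}.

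Next I would run one round of Picard iteration around the free-transport ansatz. Integrating the ODEs backwards from $t$ to $\tau\in[s,t]$ gives
\[
K(\tau) = I + \int_\tau^t \nabla_x^2 \phi^\varepsilon(\sigma, X(\sigma))\, J(\sigma)\, \d\sigma,
\qquad
J(\tau) = -\int_\tau^t K(\sigma)\, \d\sigma.
\]
Writing $K=I+\widetilde K$ and $J=-(t-\tau)I+\widetilde J$, the corrections satisfy
\[
\|\widetilde J(\tau)\|\le \int_\tau^t \|\widetilde K(\sigma)\|\, \d\sigma,
\qquad
\|\widetilde K(\tau)\|\le C\int_\tau^t \bigl(|t-\sigma|+\|\widetilde J(\sigma)\|\bigr)\, \d\sigma.
\]
A Gr\"onwall-type bootstrap on $[s,t]\subset[0,T_0]$ then yields
\[
\|\widetilde K(\tau)\| \le C(t-\tau)^2, \qquad \|\widetilde J(\tau)\| \le C(t-\tau)^3, \qquad \tau \in [s,t].
\]

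Finally, I would compute
\[
\det J(s) = \det\bigl(-(t-s)I + \widetilde J(s)\bigr) = -(t-s)^3 \det\!\Bigl(I - \tfrac{1}{t-s}\widetilde J(s)\Bigr).
\]
Since $\|\widetilde J(s)/(t-s)\|\le C(t-s)^2 \le CT_0^2$, choosing $T_0$ sufficiently small forces the last determinant to lie in $[1/2,2]$, which delivers the two-sided estimate \eqref{tezhengxianjielun2}.

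I do not expect any genuine obstacle here, since all the ingredients are already in place. The one point that must be verified carefully is the uniform-in-$\varepsilon$ bound on $\|\nabla_x^2 \phi^\varepsilon\|_{L^\infty}$, which is exactly what \eqref{phiinfnity} plus the a priori assumption \eqref{pri} provides; with that in hand, the variational system is a genuine $O(T_0^2)$ perturbation of the free-transport variational system $\dot J=K$, $\dot K=0$, and the Picard argument above closes routinely on the small time window.
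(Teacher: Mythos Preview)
Your proposal is correct and follows essentially the same approach as the paper: both set up the variational equations for the characteristic flow, invoke the uniform bound $\|\nabla_x^2\phi^\varepsilon\|_{L^\infty}\lesssim 1$ from \eqref{phi}, and conclude that $\partial_v X(s)$ is a small perturbation of $(s-t)I$ on the short window $[0,T_0]$. The only organizational difference is that the paper uses a second-order Taylor expansion of $\partial_v X(s)$ about $s=t$ with Lagrange remainder (bounding the remainder via the crude estimate $\|\partial X\|\lesssim 1$), whereas you use the integral/Picard formulation and obtain the slightly sharper remainder $\|\widetilde J(s)\|\lesssim (t-s)^3$; both routes suffice and are equivalent here.
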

	\begin{proof}
		By  \eqref{char},   we have
		\begin{align}
			\frac{\d^2}{\d s^2}\,  \partial  X(s;t,   x,   v)
			=&-	\nabla^2_x\phi^{\varepsilon}\!\big(s,   X(s;t,   x,   v)\big)
			\cdot \partial X(s;t,   x,   v),
			\label{tezhengxian1}
			\\
			\frac{\d}{\d s}\,  \partial V(s;t,   x,   v)
			= &-	\nabla^2_x\phi^{\varepsilon}\!\big(s,   X(s;t,   x,   v)\big)
			\cdot \partial X(s;t,   x,   v).
			\label{tezhengxian2}
		\end{align}
		Note that $\dfrac{\d }{\d s}\pt X(t;t,   x,   v) =0$. Integrating (\ref{tezhengxian1}) and (\ref{tezhengxian2}) along the characteristics respectively,   we get
		\begin{align}
			\partial  X(s;t,   x,   v)
			&\lesssim \partial  X(t;t,   x,   v)
			+  |t-s|^{2}
			\sup_{0 \le s \le t}
			\big\{ \| \nabla^2_x\phi^{\varepsilon}(s) \|_{{\infty}} \big\}
			\sup_{0 \le s \le t}
			\big\{ |\partial_t X(s;t,   x,   v)| \big\},
			\label{tezhengxian3}
			\\
			\partial  V(s;t,   x,   v)
			&\lesssim \partial  V(t;t,   x,   v)
			+   |t-s|
			\sup_{0 \le s \le t}
			\big\{ \| \nabla^2_x\phi^{\varepsilon}(s) \|_{ \infty} \big\}
			\sup_{0 \le s \le t}
			\big\{ |\partial_t X(s;t,   x,   v)| \big\}.
			\label{tezhengxian4}
		\end{align}
		Combining $\|\nabla^2_x \phi ^\varepsilon\|_{ \infty}\lesssim 1$ form \eqref{phi},  (\ref{tezhengxian3}) and (\ref{tezhengxian4}),  we have
		\begin{equation}
			\sup_{0 \le s \le t} \left\{ \| \partial_x X(s;t,   x,   v) \|_\infty + \| \partial_vV(s;t,   x,   v) \|_\infty \right\} \lesssim 1. \label{tezhengxianjielun1}
		\end{equation}
		To prove \eqref{tezhengxianjielun2},   we consider the Taylor expansion of
		$\partial_v X(s;t,   x,   v)$ in $s$ around $t$:
		\begin{align}
			\begin{split}
				\partial_v X(s;t,   x,   v)
				&= \partial_v X(t;t,   x,   v)
				+ (s-t)\left.\frac{\d}{\d s}\partial_v X(s;t,   x,   v)\right|_{s=t}   + \frac{(s-t)^2}{2}
				\frac{\d^2}{\d s^2}\partial_v X(\tilde{s};t,   x,   v),
			\end{split}
			\label{tezhengxian5}
		\end{align}
		where $\tilde{s}$ lies between $s$ and $t$.
		Noticing
		$
		\partial_v X(t;t,   x,   v)=0,
		\left| \frac{\d}{\d s}\partial_v X(s;t,   x,   v)\right|_{s=t}
		=  I_{3\times3}
		$,  combining \eqref{tezhengxian1},  $\|\nabla^2_x \phi ^\varepsilon\|_{ \infty}\lesssim 1$ form \eqref{phi} and \eqref{tezhengxianjielun1},   for $T_0$ is small,   we obtain
		\begin{align}
			\begin{split}
				\left|	 \frac{s-t}{2}\frac{\d^2}{\d s^2}\partial_v X(\tilde{s} )\right|
				&\le \left|	 \frac{s-t}{2}\right|
				\sup_{0\le s\le t}
				\big\{ \|\nabla^2_{x }\phi^{\varepsilon}(\tilde{s})\|_{ {\infty}}
				\|\partial   X(\tilde{s} )\|_{ {\infty}} \big\}\leq C\left|	 \frac{s-t}{2}\right|\leq \frac{CT_0}{2}\leq \frac{1}{8},
			\end{split}
			\label{tezhengxian6}
		\end{align}
		which implies (\ref{tezhengxianjielun2}).
		
	\end{proof}
	The operator $\mathcal{K}_w$ is defined in \eqref{kmnwh}. The following results describe  its key properties.
	\begin{lemma}[\cite{lisiam2023,   LiYangZhong2021}]\label{eq:es:K}
		Let $\nabla{\mathcal{K}_{w}}:=|  \nabla_x{\mathcal{K}_{w}}|+|  \nabla_v{\mathcal{K}_{w}}|$,  there exists a sufficiently small constant $\eta>0$, it holds that
		
		\smallskip
		\noindent
		(1) \textbf{Soft potentials.}
		If $-3<\gamma<0$,   then the kernel $l^{\chi}_{ w}(v,   u)$ of  $ K ^{\chi}_{M,   w}$ and  the kernel  $\tilde{l}^{\chi}_{ w}(v,   u)$ of  $ \nabla K^{\chi}_{M,   w}$ satisfy
		\begin{align}\label{eq:es:Kc:1}
			\nu^{-1}	\int_{\mathbb{R}^3} \big| l^{\chi}_{ w}(v,   u) e^{ {\eta} |v-u|^2} \big|\,  \d u+ \nu^{-1}	\int_{\mathbb{R}^3} \big| \tilde{l}^{\chi}_{ w}(v,   u) e^{ {\eta} |v-u|^2} \big|\,  \d u
			\;\lesssim\;  \langle v\rangle^{-2}.
		\end{align}
		Moreover,
		\begin{align}\label{eq:es:Kc:2}
			\big| \nu^{-1} \mathcal{K}_{w}^{1-{\chi}}g \big|+	\big| \nu^{-1}(\nabla_v   \mathcal{K}_{w}^{1-{\chi}})g \big|
			\;\lesssim\; \e^{3+\gamma}e^{-C|v|^2}\|g\|_{\infty}.
		\end{align}
		\smallskip
		\noindent
		(2) \textbf{Hard potentials.}
		If  $0\le \gamma \le 1$,   then the kernel $l_{w}(v,   u)$ of $\, {\mathcal{K}_{w}}$ and $ \tilde{l}_{ w}(v,   u)$ of  $\, \nabla{\mathcal{K}_{w}}$ satisfy
		\begin{align} 	\label{lhard}
			\nu^{-1}	\int_{\mathbb{R}^3} \big| l_{w}(v,   u) e^{ {\eta} |v-u|^2} \big|\,  \d u+ \nu^{-1}	\int_{\mathbb{R}^3} \big|  \tilde{l}_{ w}(v,   u) e^{ {\eta} |v-u|^2} \big|\,  \d u
			 \lesssim  \langle v\rangle^{-2}.
		\end{align}
		
	\end{lemma}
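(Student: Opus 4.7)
The statement is cited from \cite{lisiam2023, LiYangZhong2021}, and my plan is to reconstruct it through the standard Grad--Carleman analysis adapted to the weighted operator $\mathcal{K}_w$. Starting from the representation in \eqref{kmnwh}, one first writes the unweighted kernel $l(v,u)$ of $\mathcal{K}$ in the classical form
\begin{equation*}
l(v,u) \;\lesssim\; |v-u|^{\gamma-1}\,\exp\!\Bigl(-\tfrac{|v-u|^2}{8\theta_M} - \tfrac{(|v|^2-|u|^2)^2}{8\theta_M|v-u|^2}\Bigr)
\;+\; |v-u|^{\gamma}\exp\!\Bigl(-\tfrac{|v|^2+|u|^2}{8\theta_M}\Bigr),
\end{equation*}
where the first (gain) piece is obtained by the Carleman change of variables in $Q_+$ and the second (loss) piece is immediate. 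Multiplying by $w(v)/w(u)=e^{\widetilde{\vartheta}(|v|^2-|u|^2)}$ and completing the square via the identity $|v|^2-|u|^2=(v+u)\cdot(v-u)$, the Gaussian factor in $(|v|^2-|u|^2)/|v-u|$ absorbs the weight at the cost of an extra term $+2\theta_M\widetilde{\vartheta}^2|v-u|^2$, which is harmless precisely because the weight parameter was fixed so small in \eqref{f} that $\tfrac{1}{8\theta_M}-2\theta_M\widetilde{\vartheta}^2-\eta>0$ uniformly in $t$.

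\smallskip

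For \eqref{eq:es:Kc:1}, I would then parametrize $u=v+r\sigma$ with $r=|v-u|$ and $\sigma\in\mathbb{S}^2$, splitting $\sigma$ into components parallel and perpendicular to $v$. The surviving Gaussian in $(|v|^2-|u|^2)/|v-u|$ restricts the parallel component to a band of width $\sim r/|v|$ around the sphere, while the $r$-integration yields
\begin{equation*}
\int_0^\infty r^{\gamma+1}\,\min\!\Bigl\{1,\tfrac{r}{|v|}\Bigr\}\,e^{-cr^2}\,dr \;\lesssim\; \langle v\rangle^{\gamma-2};
\end{equation*}
dividing by $\nu\sim\langle v\rangle^{\gamma}$ produces the claimed $\langle v\rangle^{-2}$ bound. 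The derivative kernel $\widetilde{l}^{\,\chi}_w$ is handled analogously: $\nabla_x$ only touches the weight ratio, producing at most a polynomial prefactor in $(v,u)$, and $\nabla_v$ differentiates the Maxwellian and the $|v-u|^{\gamma-1}$ factor, both of which are absorbed by a slight shrinking of the Gaussian coefficient. The extra factor $e^{\eta|v-u|^2}$ is incorporated from the start by using that $\eta\ll 1/\theta_M$.

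\smallskip

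The near-diagonal bound \eqref{eq:es:Kc:2} exploits that $1-\chi$ is supported on $|v-u|\le 2m$ with $m$ taken proportional to $\e$: on this region the remaining Gaussians behave like $e^{-c|v|^2}$, so one obtains
\begin{equation*}
\nu^{-1}|\mathcal{K}_w^{1-\chi}g(v)| \;\lesssim\; \langle v\rangle^{-\gamma}\,e^{-c|v|^2}\|g\|_\infty \int_{|v-u|\le 2m}|v-u|^{\gamma}\,du \;\lesssim\; \e^{3+\gamma}\,e^{-c|v|^2}\|g\|_\infty,
\end{equation*}
the volume integral supplying the $m^{3+\gamma}\sim\e^{3+\gamma}$ factor; the gradient version uses the additional estimate $|\nabla_v(1-\chi(|v-u|))|\lesssim m^{-1}$ combined with the same volume count. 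For the hard-potential case $0\le\gamma\le 1$ in part (2), the singularity in $|v-u|^\gamma$ is absent, so the cutoff $\chi$ is unnecessary and the same Grad--Carleman argument applied directly to $l_w$ and $\widetilde{l}_w$ gives \eqref{lhard} in one shot.

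\smallskip

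The main technical obstacle throughout is tracking the compatibility between $\widetilde{\vartheta}$, the Gaussian rates generated by the collision operator, and the extra factor $e^{\eta|v-u|^2}$. Because $\widetilde{\vartheta}(t)=\vartheta[1+(1+t)^{-\sigma}]$ is time-dependent, one must verify that the smallness condition $\widetilde{\vartheta}(t)<(16\theta_M)^{-1}$ holds uniformly in $t\ge 0$; this is exactly the point of fixing $0<\vartheta\ll 1$ in \eqref{f}. Once this is enforced, the remaining work is bookkeeping of polynomial prefactors produced by differentiation, all of which are absorbed by shrinking the Gaussian exponents slightly and then performing the $u$-integration as above.
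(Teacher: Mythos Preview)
The paper does not prove this lemma; it is quoted from \cite{lisiam2023,LiYangZhong2021}, with the remark just above \eqref{kmnwh} that ``employing the same method as in \cite{strainarma2008}, we can deduce that $l_w$ satisfies the properties stated in Lemma~\ref{eq:es:K}.'' Your reconstruction via the Grad--Carleman representation, completion of the square to absorb $w(v)/w(u)$, and the polar integration yielding $\langle v\rangle^{\gamma-2}$ is exactly that standard route, and your reading that $m$ is taken proportional to $\varepsilon$ (so that the volume integral over $\{|v-u|\le 2m\}$ produces $m^{3+\gamma}\sim\varepsilon^{3+\gamma}$) is the only way \eqref{eq:es:Kc:2} can hold as stated.

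One slip to correct: you write that ``$\nabla_x$ only touches the weight ratio.'' In fact the weight $w(v)=e^{\widetilde{\vartheta}(t)|v|^2}$ in \eqref{f} depends only on $(t,v)$, so $\nabla_x\bigl(w(v)/w(u)\bigr)=0$. What $\nabla_x$ actually hits is the local Maxwellian $\mu=\mu(t,x,v)$ inside the kernel of $\mathcal{K}$ (see \eqref{K_M}); since $\nabla_x\mu$ is bounded by a polynomial in $v$ times $\mu$ (cf.\ \eqref{decay}), your conclusion---that the resulting prefactor is absorbed by slightly shrinking the Gaussian exponent---remains valid, but the mechanism is the $x$-dependence of $\mu$, not of $w$.
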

	Next,   we turn to deducing the $L^{\infty}_{x,   v}$-estimate on the terms related to the nonlinear collision operator $Q$.
	\begin{lemma}\cite[Lemma 3.3]{WuZhouLi2023Diffusive}
		Let $3< \gamma\leq1$,  it  holds that
		\begin{align}\label{wt}
			\left\|	\nu^{-1}\frac{w }{\sqrt{\mathcal{M}}}Q \left(
			\frac{\sqrt{\mathcal{M}}}{w }f,
			\frac{\sqrt{\mathcal{M}}}{w }g\right)\right\|_{ {\infty} }\lesssim  \|  f \|_{ {\infty} }
			\| g\|_{ {\infty} },
		\end{align}
		and
		\begin{align}\label{wtv}
			\left\|	\nu^{-1}  \nabla_v\left(\frac{w }{\sqrt{\mathcal{M}}}Q \Big(
			\frac{\sqrt{\mathcal{M}}}{w }f,
			\frac{\sqrt{\mathcal{M}}}{w }g\Big)\right)\right\|_{ {\infty} } \lesssim  \|  \nabla_vf \|_{ {\infty} }
			\| g\|_{ {\infty} }+ \|  f \|_{ {\infty} }
			\| \nabla_vg\|_{ {\infty} }+ \|  f \|_{ {\infty} }
			\| g\|_{ {\infty} }.
		\end{align}
	\end{lemma}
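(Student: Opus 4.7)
The plan is to decompose $Q=Q_+-Q_-$ and conjugate each part by the weight $\sqrt{\mathcal{M}}/w$. First I would handle the loss. A direct substitution using $Q_-(G_1,G_2)(v) = G_1(v)\int b_0|u-v|^\gamma G_2(u)\,\mathrm{d}u\,\mathrm{d}\omega$ gives
\[
\frac{w(v)}{\sqrt{\mathcal{M}(v)}}\,Q_-\!\left(\tfrac{\sqrt{\mathcal{M}}}{w}f,\,\tfrac{\sqrt{\mathcal{M}}}{w}g\right)(v)
= f(v)\int_{\mathbb{R}^3\times\mathbb{S}^2} |u-v|^{\gamma}\,b_0(\alpha)\,\frac{\sqrt{\mathcal{M}(u)}}{w(u)}\,g(u)\,\mathrm{d}u\,\mathrm{d}\omega,
\]
and since $\widetilde{\vartheta}\ll 1$, the factor $\sqrt{\mathcal{M}(u)}/w(u) = \mathcal{M}(u)^{1/2}e^{-\widetilde{\vartheta}|u|^2}$ still has Gaussian decay, so the integral is controlled by $\|g\|_\infty\int |u-v|^{\gamma}\mathcal{M}(u)^{1/2}\,\mathrm{d}u \lesssim \nu(v)\|g\|_\infty$, using the equivalence \eqref{nusimnum}.

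For the gain, the key algebraic identity is $\mathcal{M}(v)\mathcal{M}(u)=\mathcal{M}(v')\mathcal{M}(u')$, which yields
\[
\frac{w(v)}{\sqrt{\mathcal{M}(v)}}\,Q_+\!\left(\tfrac{\sqrt{\mathcal{M}}}{w}f,\,\tfrac{\sqrt{\mathcal{M}}}{w}g\right)(v)
= \int |u-v|^{\gamma}\,b_0(\alpha)\,\frac{w(v)\sqrt{\mathcal{M}(u)}}{w(v')w(u')}\,f(v')g(u')\,\mathrm{d}u\,\mathrm{d}\omega.
\]
Since $w(v)=e^{\widetilde{\vartheta}|v|^2}$ and $|v|^2+|u|^2=|v'|^2+|u'|^2$, the weight ratio simplifies exactly to $e^{-\widetilde{\vartheta}|u|^2}$, and the whole expression is bounded by $\|f\|_\infty\|g\|_\infty\int|u-v|^{\gamma}\mathcal{M}(u)^{1/2}\,\mathrm{d}u\lesssim \nu(v)\|f\|_\infty\|g\|_\infty$. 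Combining the two pieces and dividing by $\nu(v)$ proves \eqref{wt}.

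For \eqref{wtv}, I would differentiate inside $Q$. When $\nabla_v$ lands on $f$ or $g$ (the post-collisional velocities $v',u'$ depend on $v$), the resulting pieces are treated exactly as in \eqref{wt} and produce the first two terms on the right. The remaining contributions arise from $\nabla_v$ applied to the external factor $w(v)/\sqrt{\mathcal{M}(v)}$ and to the kernel $|u-v|^{\gamma}b_0(\alpha)$. The former contributes at most a polynomial factor $\langle v\rangle$, which is absorbed by the Gaussian surplus in $\sqrt{\mathcal{M}(u)}/w(u)$ thanks to $\widetilde{\vartheta}\ll 1$; the latter produces a kernel $|u-v|^{\gamma-1}$.

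The main obstacle is precisely this $|u-v|^{\gamma-1}$ singularity for soft potentials close to $\gamma=-3$, where direct integration in $u$ is borderline. The standard remedy, which I would follow, is a Carleman-type pre-collisional change of variables (cf.\ the proof of Lemma~\ref{eq:es:K} and the arguments in \cite{guo2002cpam}) that effectively transfers the $v$-derivative onto the Maxwellian factor, yielding an integrand locally integrable in $u$ for all $\gamma>-3$. This contributes the third term $\|f\|_\infty\|g\|_\infty$ on the right of \eqref{wtv} and closes the argument; everything else is a routine combination of the weight algebra above and the equivalence \eqref{nusimnum}.
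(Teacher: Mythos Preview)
The paper does not actually prove this lemma; it is quoted directly from \cite[Lemma~3.3]{WuZhouLi2023Diffusive} with no argument given, so there is no ``paper's proof'' to compare against. Your sketch is therefore assessed on its own merits.

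Your argument for \eqref{wt} is correct and is exactly the standard one: the weight cancels on the loss term, and energy conservation $|v|^2+|u|^2=|v'|^2+|u'|^2$ collapses the gain-term weight ratio to $e^{-\widetilde\vartheta|u|^2}$, so in both cases one is left with $\int |u-v|^{\gamma}\mathcal{M}(u)^{1/2}\,du\lesssim\nu(v)$.

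For \eqref{wtv} your outline is also basically right, but two remarks. First, there is no ``external factor $w(v)/\sqrt{\mathcal{M}(v)}$'' left to differentiate: as you yourself showed in the proof of \eqref{wt}, that factor has already been absorbed (it cancels in $Q_-$ and becomes $e^{-\widetilde\vartheta|u|^2}$ in $Q_+$), so the simplified integrands $h_\pm(v)$ contain no growing prefactor in $v$. Second, the change of variables that disposes of the $|u-v|^{\gamma-1}$ singularity is not really Carleman; it is the simple translation $u\mapsto u+v$, which makes the collision kernel $|u|^{\gamma}b_0(\omega\cdot\hat u)$ independent of $v$. After this translation the $\nabla_v$ falls entirely on $f(v')$, $g(u')$, and the Gaussian factor $e^{-\widetilde\vartheta|u+v|^2}\sqrt{\mathcal{M}(u+v)}$; changing back, the last piece becomes $\nabla_u\!\big(e^{-\widetilde\vartheta|u|^2}\sqrt{\mathcal{M}(u)}\big)\sim |u|\,\mathcal{M}(u)^{1/2}$, which still integrates against $|u-v|^{\gamma}$ to give $\nu(v)$. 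This is the mechanism that produces the third term $\|f\|_\infty\|g\|_\infty$ and closes the estimate for all $-3<\gamma\le 1$.
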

	\medskip
	
	\medskip
	\noindent\textbf{Acknowledgments.}  This research is supported by NSFC (No.12271179),
	Basic and Applied Basic Research Project of Guangdong (No.2022A1515012097),
	Basic and Applied Basic Research Project of Guangzhou (No.2023A04J1326).
	%Guangdong Provincial Universities Young Innovative Talent Project (No.2024KQNCX143)
	%and Nonlinear Partial Differential Equations Research Team Project of Guangdong University %of Education
	%(No.2024KYCXTD018).
	
	\medskip
	
	\noindent\textbf{Conflict of Interest Statement}
	
	The authors declare that they have no conflicts of interest.
	\medskip
	
	\noindent\textbf{Data Availability Statement}
	
	No data was used for the research described in the article.
	\medskip

\end{document}